\newcommand{\g}{\geqslant}
\newcommand{\RR}{\mathbb{R}}
\newcommand{\ZZ}{\mathbb{Z}}
\newcommand{\CC}{\mathbb{C}}
\newcommand{\s}{\mathcal{S}}
\newcommand{\sph}{\mathbb{S}}
\newcommand{\NN}{\mathbb{N}}
\newcommand{\p}{\partial}
\newcommand{\les}{\leqslant}
\newcommand{\lesa}{\lesssim}
\newcommand{\supp}{\text{supp }\,}
\newcommand{\mc}[1]{\mathcal{#1}}
\newcommand{\mf}[1]{\mathfrak{#1}}
\DeclareSymbolFont{bbold}{U}{bbold}{m}{n}
\DeclareSymbolFontAlphabet{\mathbbold}{bbold}
\newcommand{\ind}{\mathbbold{1}}
\DeclareMathOperator*{\sgn}{\text{sgn}}
\theoremstyle{plain}
\newtheorem{theorem}{Theorem}[section]
\newtheorem{proposition}[theorem]{Proposition}
\newtheorem{lemma}[theorem]{Lemma}
\newtheorem{corollary}[theorem]{Corollary}
\theoremstyle{remark}
\newtheorem{remark}[theorem]{Remark}
\begin{document}
\title{Global well-posedness for the massless cubic Dirac equation}

\author{Nikolaos Bournaveas}
\address{Department of Mathematics, University of Edinburgh,
Edinburgh EH9 3JE, United Kingdom}
\email{n.bournaveas@ed.ac.uk}

\author{Timothy Candy}
\address{Department of Mathematics,
Imperial College London, London SW7 2AZ, United Kingdom}
\email{t.candy@imperial.ac.uk}

\date{\today}

\begin{abstract}
We show that the cubic Dirac equation with zero mass is globally well-posed for small data in the scale invariant space $\dot{H}^{\frac{n-1}{2}}(\RR^n)$ for $n=2, 3$. The proof proceeds by using the Fierz identities to rewrite the equation in a form where the null structure of the system is readily apparent. This null structure is then exploited via bilinear estimates in spaces based on the  null frame spaces of Tataru. We hope that the spaces and estimates used here can be applied to other nonlinear Dirac equations in the scale invariant setting. Our work complements recent results of Bejenaru-Herr who proved a similar result for $n=3$ in the  massive case.
\end{abstract}

\maketitle

\tableofcontents

\section{Introduction}

Given a mass $m \g 0$ we consider the nonlinear Dirac equation
        \begin{equation} \label{eqn - NLD eqn} \begin{split}- i \gamma^\mu \p_\mu \psi + m \psi &= F(\psi)\\
                                    \psi(0) &= \psi_0 \end{split}
        \end{equation}
for a spinor $\psi(t, x): \RR^{1+n} \rightarrow \CC^N$ where $N = 2^{ [ \frac{n+1}{2}]}$ and $[x]$ denotes the integer part of $x \in \RR$. The Gamma matrices $\gamma^\mu$ are constant $N \times N$ matrices satisfying the anti-commutativity properties
        $$ \gamma^\mu \gamma^\nu + \gamma^\nu \gamma^\mu =  2 I g^{\mu \nu}$$
where $g^{\mu \nu}$ is the Minkowski metric $g = \text{ diag }(1, -1, ... , -1)$, and repeated upper and lower indices are summed over $\mu = 0, ..., n$. We are interested in the special case of (\ref{eqn - NLD eqn}) where the nonlinearity $F$ is cubic and has some additional structure. More precisely, we consider the Lorentz invariant cubic nonlinearities
        \begin{equation}\label{eqn - cond on F} F(\psi) = \begin{cases}
      (\overline{\psi} \psi) \psi \\
      (\overline{\psi} \gamma^\mu \psi) \gamma_\mu \psi
    \end{cases}\end{equation}
which are known as the Soler model \cite{Soler1970}, and the Thirring model \cite{Thirring1958} respectively. Here $\overline{\psi} = \psi^\dagger \gamma^0$ is the Dirac adjoint, and $\psi^\dagger$ is the complex conjugate transpose of the vector $\psi$.  The nonlinear Dirac equation is an important equation in relativistic quantum mechanics, and models the self-interaction of Dirac fermions, we refer the reader to \cite{Finkelstein1956, Thaller1992} for more on the physical background of the Dirac equation.\\

The nonlinear Dirac equation (\ref{eqn - NLD eqn}) with cubic nonlinearity (\ref{eqn - cond on F}) and mass $m=0$ is invariant under the scaling $\psi(t, x) \mapsto \lambda^\frac{1}{2} \psi( \lambda t, \lambda x)$. Thus the scale invariant regularity is $s_c = \frac{n-1}{2}$ and it is expected that we have some form of ill-posedness for data $\psi_0 \in H^s(\RR^n)$ with $s< \frac{n-1}{2}$. In terms of the well-posedness of the Cauchy problem, in the $n=3$ case, work of Tzvetkov \cite{Tzvetkov1998} via the method of commuting vector fields, shows that we have global existence in time for small smooth data in the case $|F(\psi)| \lesa |\psi|^p$, $p>2$. This extends earlier results of Reed \cite{Reed1976}, Dias-Figueira \cite{Dias1987}, and Escobedo-Vega \cite{Escobedo1997}. In the low regularity setting, Machihara-Nakanishi-Ozawa \cite{Machihara2003} obtained global existence for small data in $H^s(\RR^3)$  in the almost critical case $s>1$ for positive mass $m>0$, and cubic nonlinearities (\ref{eqn - cond on F}). This was improved to radial data (or data with some additional angular regularity) in $H^1(\RR^3)$ by Machihara-Nakamura-Nakanishi-Ozawa \cite{Machihara2005a}. Very recently, Bejenaru-Herr
\cite{Bejenaru2013} proved that provided $m>0$ and $F(\psi) = (\overline{\psi} \psi)\psi$, we have global well-posedness and scattering for small data in the critical space $H^1(\RR^3)$.

On the other hand, in the $n=2$ case, it was shown by Pecher \cite{Pecher2013a, Pecher2015} that we have local well-posedness from data in $H^s(\RR^2)$ in the almost critical case $s>\frac{1}{2}$. In the $n=1$ case, global well-posedness for the Thirring model with large data in $H^s(\RR)$ with $s\g 1$ is due to Delgado \cite{Delgado1978}, this was improved to $s>\frac{1}{2}$ by Selberg-Tesfahun \cite{Selberg2010b}. The critical case $s=0$ was considered by the second author in \cite{Candy2010} where it was shown that the Thirring model is globally well-posed for large data in $L^2(\RR)$. The question of scattering for the massive case $m>0$ is still open. As well as the above mentioned results, if $n=1$ it is known that the Thirring model $F(\psi) = (\overline{\psi}\gamma^\mu \psi) \gamma_\mu \psi$ is completely integrable \cite{Kaup1977, Zakharov1980}, and the stability of stationary solutions has been studied \cite{Contreras2013, Pelinovsky2014}. The existence of stationary solutions in $n=3$ is also known \cite{Cazenave1986, Merle1988, Strauss1986}. \\

In the current article we are interested in the global well-posedness of small data in the critical space $\dot{H}^\frac{n-1}{2}(\RR^n)$ for $n=2, 3$. Our main result is the following.

\begin{theorem}\label{thm - main thm intro}
Let $n=2, 3$, $m=0$,  and $s \g \frac{n-1}{2}$. Assume $F$ is as in (\ref{eqn - cond on F}). There exists $\epsilon>0$ such that if $\psi_0 \in \dot{H}^\frac{n-1}{2} \cap \dot{H}^s(\RR^n)$ with
    $$\| \psi_0 \|_{\dot{H}^\frac{n-1}{2}(\RR^n)} < \epsilon$$
then we have a global  solution $\psi \in C\big( \RR, \dot{H}^\frac{n-1}{2} \cap \dot{H}^s(\RR^n)\big)$ to (\ref{eqn - NLD eqn}) with
        $$\sup_{t\in \RR} \,\, \| \psi(t) \|_{\dot{H}^\frac{n-1}{2} \cap \dot{H}^s(\RR^n)} \lesa \| \psi_0 \|_{\dot{H}^\frac{n-1}{2} \cap \dot{H}^s(\RR^n)}.$$
Moreover, the solution $\psi$ depends continuously on the initial data  and is the unique limit of smooth solutions. Finally, there exists  $\psi_{\pm \infty} \in C\big( \RR, \dot{H}^\frac{n-1}{2} \cap \dot{H}^s(\RR^n)\big)$ with $\gamma^\mu \p_\mu \psi_{\pm \infty} = 0$ such that
        $$ \lim_{t \rightarrow \pm \infty} \big\| \psi(t) - \psi_{\pm \infty}(t) \big\|_{\dot{H}^\frac{n-1}{2} \cap \dot{H}^s(\RR^n)} = 0 .$$
\end{theorem}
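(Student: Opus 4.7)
The plan is to close a Picard iteration at the critical regularity in a resolution space adapted to the half-wave evolutions underlying $\gamma^\mu \p_\mu$. First I would diagonalize the system: since $m=0$, the projectors $\Pi_\pm(D) = \frac{1}{2}\big(I \mp i^{-1}|D|^{-1}\gamma^0 \gamma^j \p_j\big)$ split $\psi = \psi_+ + \psi_-$ with $\psi_\pm := \Pi_\pm(D)\psi$ satisfying the half-wave equations $(\p_t \mp i|D|)\psi_\pm = -i\Pi_\pm(D)\gamma^0 F(\psi)$. Expanding $F(\psi)$ in \eref{eqn - cond on F} over the eight sign combinations yields trilinear terms of the form $\Pi_{\pm}\bigl[\,\overline{\psi_{\epsilon_1}}\, \Gamma\, \psi_{\epsilon_2}\cdot \Gamma'\, \psi_{\epsilon_3}\,\bigr]$. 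The crucial algebraic step is to apply the Fierz identities, together with the symbol relation $\gamma^0 \Pi_\pm = \Pi_\mp \gamma^0$, to rearrange the spinor contractions so that in every ``resonant'' interaction---one in which two paired factors carry the same sign and nearly parallel Fourier supports---the resulting trilinear symbol exhibits null structure, gaining a factor comparable to the angle between the paired Fourier variables.

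Second, I would construct the resolution space out of $U^2_\pm$ and $V^2_\pm$ atomic spaces of Koch--Tataru adapted to each half-wave flow, Littlewood--Paley localized at the critical regularity $s_c = \frac{n-1}{2}$, and augmented by the null frame and plane-wave norms of Tataru associated to an angular cap decomposition at each modulation scale. The $U^2_\pm$ component controls energy, Strichartz, and $X^{s,b}$-type modulation bounds, while the null frame norms capture concentrations along characteristic directions that are invisible to standard dispersive estimates but essential at the critical scale, particularly when $n=2$. The nonlinearity is then placed in a dual atomic $N$-type space built from $L^1_t L^2_x$ together with dual null frame pieces, chosen so that Duhamel against $e^{\pm it|D|}$ maps it back into $U^2_\pm$.

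The heart of the argument is the trilinear estimate bounding $F(\psi)$ in the above $N$-space by the cube of the resolution norm at the critical regularity. I would perform a full dyadic Littlewood--Paley decomposition of the three inputs and the output, coupled with an angular cap decomposition tuned to the modulation sizes, and treat each frequency/angle/sign case separately. When two factors share a sign and their Fourier supports are nearly parallel, the Fierz-derived null symbol supplies an angle gain that precisely absorbs the loss in the standard bound; when the interaction is already transverse (opposite signs or separated angles), it is handled by bilinear $L^2_{t,x}$ estimates of Klainerman--Tataru type together with Strichartz and null frame square-function bounds. Summing dyadically at the critical scaling leaves essentially no slack, and I expect the main technical obstacle to be the $n=2$ case, where one must exploit the full angular concentration encoded by the null frame norms to close the estimate.

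Once the trilinear estimate is in hand, small-data global existence, uniqueness as the limit of smooth solutions, and Lipschitz continuous dependence follow from the standard contraction mapping argument applied to the Duhamel formula in the resolution space. Persistence of the higher Sobolev norm $\dot H^s$ is obtained by running the same trilinear estimate with one factor measured at $\dot H^s$ and the other two at $\dot H^{s_c}$, combined with the a priori smallness. Finally, scattering is automatic since every element of $U^2_\pm$ has a strong limit under the free half-wave flow, so the profiles $\psi_{\pm\infty}$ are constructed by passing to the limit in $e^{\mp it|D|}\psi_\pm(t)$ as $t \to \pm \infty$ and reassembling, with convergence in $\dot H^{s_c} \cap \dot H^s$ following directly from membership in the resolution space.
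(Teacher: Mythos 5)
Your high-level strategy is closely related to the paper's, but you take a genuinely different starting formulation and miss a key structural ingredient that the authors introduce precisely because the route you propose runs into trouble. The paper deliberately \emph{avoids} the scalar half-wave diagonalization $(\p_t \mp i|\nabla|)\Pi_\pm\psi$ that you propose (this is the approach of Pecher and Bejenaru--Herr, which the authors mention and then depart from). Instead they rewrite \eref{eqn - NLD eqn} in the vector-valued form \eref{eqn - general form of eqn}, where the linear part is $\p_t \pm \sigma\cdot\nabla$ with $\sigma\cdot\nabla$ a genuine (matrix-valued) differential operator rather than $|\nabla|$. This is not cosmetic: the null frame analysis requires passing to rotated coordinates $(t_\omega, x_\omega)$, and $\p_t \pm \sigma\cdot\nabla$ transforms cleanly into the first-order system \eref{eqn - inhomogeneous Dirac in null coordinates}, which separates $\Pi_{\pm\omega} u$ into a dispersive component and an ``elliptic'' component. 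The multiplier $|\nabla|$ has no such compatibility with affine changes of frame.

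The concrete gap in your proposal is the absence of the pointwise angular projections $\Pi_{\pm\omega}$ inside the null frame and plane-wave norms. You write ``null frame and plane-wave norms of Tataru'' as if the scalar versions from wave maps suffice, but the paper's spaces $NF^\pm(\kappa)$, $PW^\pm(\kappa)$, and $[NF^\pm]^*(\kappa)$ are weighted asymmetrically in $\Pi_{\pm\omega}F$ precisely to encode the vectorial null structure, and the remark following Theorem \ref{thm - null frame bounds} explains that replacing $PW^\mp(\kappa)$ by $PW^\pm(\kappa)$ degrades the gain $(\alpha\lambda)^{(n-1)/2}$ to $\alpha^{(n-3)/2}\lambda^{(n-1)/2}$, which is catastrophic in $n=2$. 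With scalar half-waves and Tataru's original spaces you would be trying to close the $A_{III}$ interaction of Theorem \ref{thm - bilinear null frame est close cone} with insufficient angular gain. You would either need to rediscover the $\Pi_{\pm\omega}$-twisted spaces within the half-wave formulation (unnatural, since $e^{\mp it|\nabla|}$ does not interact cleanly with $\Pi_{\pm\omega}$) or find some substitute, and neither is indicated in your sketch. Separately, your choice of $U^2_\pm/V^2_\pm$ as the \emph{primary} iteration space differs from the paper, which iterates in $F^{s,\pm}$ (built from $L^\infty_t L^2_x$, $L^1_t L^2_x$, $\dot{\mc{X}}^{-1/2,1}_\pm$, and $NF^\pm_\lambda$ atoms, plus the $\mc{Y}^\pm$ norm to upgrade $\dot B^{(n-1)/2}_{2,1}$ to $\dot H^{(n-1)/2}$ data), and uses $V^2$ only internally to prove the Strichartz bound, Theorem \ref{thm - F controls strichartz and angular sum}; this is a more minor difference but your proposal does not explain how to marry the atomic $U^2/V^2$ structure with the $\ell^2$-over-caps $NF$-type atoms that carry the close-cone interaction.
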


\begin{remark}
  If $m>0$ then for small data in $\psi_0 \in \dot{H}^\frac{n-1}{2}(\RR^n)$ we have existence of a solution up to time $T \ll m^{-1}$, see Remark \ref{rem on positive mass}. This is essentially due to the fact that for times $T \ll m^{-1}$ the solution to the wave equation and Klein-Gordon equation is more or less the same. Of course if $m>0$ and $n=3$, then we already have global existence due to the work of Bejenaru-Herr \cite{Bejenaru2013}. On the other hand, local existence for $m>0$ with data in $\dot{H}^\frac{1}{2}(\RR^2)$ is new in the case $n=2$. Similarly, it is possible to use finite speed of propagation to deduce local in time existence for large data in $\dot{H}^{\frac{n-1}{2}}(\RR^n)$ (this is true for any $m \g 0$).
\end{remark}

\begin{remark}
  If $n=3$ and we let $\gamma^5 = i \gamma^0 \gamma^1 \gamma^2 \gamma^3$, then Theorem \ref{thm - main thm intro} also holds in the cases where $F(\psi)$ is given by
    $$ (\bar{\psi} \gamma^5 \psi) \gamma^5 \psi, \qquad (\bar{\psi} \psi) \gamma^5 \psi, \qquad (\bar{\psi} \gamma^5 \psi) \psi.$$
   In other words, we can more or less handle any nonlinearity built up using the bilinear Dirac null forms $\overline{\psi} \psi$ and $\overline{\psi} \gamma^5 \psi$.
\end{remark}

\begin{remark}
  The nonlinear Dirac equation (\ref{eqn - NLD eqn}) together with the nonlinearity (\ref{eqn - cond on F}), satisfies \emph{conservation of charge} $\| \psi(t) \|_{L^2_x} = \| \psi(0) \|_{L^2_x}$. Thus in Theorem \ref{thm - main thm intro} we may replace the homogeneous Sobolev spaces $\dot{H}^s$ with the inhomogeneous spaces $H^s$. We should point out that the Dirac equation has other conserved quantities. However, they are not strictly positive, and thus do not appear to be immediately useful in the large data theory.
\end{remark}

The first step in the proof of Theorem \ref{thm - main thm intro} is to rewrite the equation so that the null structure of the system is easy to exploit. The standard way to do this is to use projections to reduce (\ref{eqn - NLD eqn}) to studying the scalar half wave equations $(\p_t \pm |\nabla|)$. In particular this method was used in the recent work of Pecher \cite{Pecher2013a} and Bejenaru-Herr \cite{Bejenaru2013}. In the current article, we instead work with a \emph{vector valued} formulation.  Working in the vector valued setting has two key advantages. The first is that it only makes use of the derivatives\footnote{This can be thought of as a higher dimensional analogue of the two $n=1$ formulations of the Dirac equation $(\p_t \pm \p_x)$ and $(\p_t \pm |\p_x|)$. The first formulation has the benefit that it is easy to write in the null coordinates $(t \pm x)$, and this is more or less the key property that led to the $L^2(\RR)$ critical result in \cite{Candy2010}.} $\p_\mu$ (as opposed to the Fourier multipliers $|\nabla|$), and thus behaves well under changes of coordinates. The second advantage is that, after an application of a \emph{Fierz type identity} \cite{Fierz1937}, the null structure hidden in the nonlinearities (\ref{eqn - cond on F}) manifests itself in products of vector valued waves traveling in opposite directions. On the other hand, the cost of avoiding the $\p_t \pm |\nabla|$ formulation of (\ref{eqn - NLD eqn}) and using a vector valued formulation, is that the function spaces we construct need to retain this vectorial information in order to be able to prove the bilinear estimates that are needed to close an iteration argument.

The second, and more difficult, step in the proof of Theorem \ref{thm - main thm intro} is to construct appropriate function spaces and prove a number of bilinear null form estimates. The spaces used are a combination of vector valued version of the null frame spaces of Tataru \cite{Tataru2001}, together with $X^{s, b}$ and energy type components. In slightly more detail, we define a norm that is schematically of the form $\| \psi \|_{F} = \| \psi \|_{L^\infty_t \dot{H}^\frac{n-1}{2}_x} + \| \gamma^\mu \p_\mu \psi \|_{Y}$ and take $Y = L^1_t \dot{H}^\frac{n-1}{2}_x + X^{-\frac{1}{2}, 1} + NF$ where $X^{-\frac{1}{2}, 1}$ is an $X^{s, b}$ type space with $\ell^1$ sum over distances to the cone, and $NF$ is based on the null frame spaces of Tataru \cite{Tataru2001}.  The construction of the required spaces and the study of their basic properties is rather involved, and takes up a significant portion of the current paper. However, we believe that these spaces should be applicable to other endpoint well-posedness results for related systems such as the Dirac-Klien-Gordon, Maxwell-Dirac, Chern-Simons-Dirac etc. We plan to return to this problem in the future. \\

To explain the key difficulties in the proof of Theorem \ref{thm - main thm intro}, note that the well-posedness theory for (\ref{eqn - NLD eqn}) would follow easily via standard energy estimates provided we had the $L^2_t L^\infty_x$ estimate
            \begin{equation}\label{eqn - intro L2Linfty est} \| \psi \|_{L^2_t L^\infty_x(\RR^{1+n})} \lesa \| \psi(0) \|_{\dot{H}^\frac{n-1}{2}_x(\RR^n) } \end{equation}
for solutions to (\ref{eqn - NLD eqn}) with $F=0$. Unfortunately, it is well-know that this estimate \emph{just} fails in the $n=3$ case, and is far from true in the $n=2$ case. Thus the low regularity well-posedness theory for (\ref{eqn - NLD eqn}) has more or less proceeded by trying to find suitable substitutes for the missing $L^2_t L^\infty_x$ Strichartz estimate (\ref{eqn - intro L2Linfty est}). One approach, used by Pecher \cite{Pecher2013a, Pecher2015}, is to move to the \emph{bilinear} setting, and exploit the additional structure of the nonlinearity (\ref{eqn - cond on F}) via bilinear estimates in $X^{s, b}$ type spaces. While this works in the  subcritical setting, it does not appear sufficient to handle the critical case $s = \frac{n-1}{2}$, see Remark \ref{rem - Xsb not sufficient to prove critical result}. An alternative approach used in the work of Machihara-Nakamura-Nakanishi-Ozawa \cite{Machihara2005a}, is to exploit the fact that (\ref{eqn - intro L2Linfty est}) is in fact true for \emph{radial} data in $n=3$. This is not quite enough on its own, as the Dirac equation does not commute with rotations, and thus radial data does not lead to radial solutions. Instead, Machihara-Nakamura-Nakanishi-Ozawa proved a version of (\ref{eqn - intro L2Linfty est}) with additional regularity in the angular variable.\\

The approach in the current article relies on the following crucial observation of Tataru \cite{Tataru2001}. Although the estimate (\ref{eqn - intro L2Linfty est}) fails for $n=2, 3$ (and $m=0$), the solution \emph{can} be place into spaces of the form $L^2 L^\infty$ provided we work in rotated null frames $(t_\omega, x_\omega)$ where $\sqrt{2} t_\omega =  (t, x) \cdot ( 1, \omega)$, $x_\omega = (t, x) - \tfrac{1}{\sqrt{2}}t_\omega ( 1, \omega)$ where $\omega \in \sph^{n-1}$ is a direction on the sphere. These spaces exploit the fact that if $\supp \widehat{f} \subset \{ |\xi| \approx \lambda, |\tfrac{\xi}{|\xi|} - \omega| \les (T \lambda)^{-\frac{1}{2}}\}$, then for times $|t|\les T$ we expect $(e^{i t |\nabla|} f)(x) \approx f( x + t \omega)$. A computation then shows that $\| \ind_{|t|<T}(t) e^{i t |\nabla|} f \|_{L^2_{t_\omega} L^\infty_{x_\omega}} \lesa (\frac{\lambda}{T})^{\frac{n-1}{2}} \| f\|_{L^2}$. Of course to exploit this concentration property, requires localising the Fourier support to small sets. Thus to control a general function, we need to use many frames simultaneously.

In the $n=1$ case, the gain formed by working in null frames is particularly easy to observe as the solution can only propagate in 2 directions $x \pm t$. More precisely, note that in the case $n=1$, we can write the solution to (\ref{eqn - NLD eqn}) as $\psi(t, x) = f(x - t) + g( x + t)$. Clearly $\psi \not \in L^2_t L^\infty_x$, however, we \emph{do} have
        $$ \| f( x - t) \|_{L^2_{x-t} L^\infty_{ x+ t}(\RR^{1+1})} = \| f \|_{L^2_x(\RR)}.$$
Thus despite the fact that (\ref{eqn - intro L2Linfty est}) fails, we can place our solution in spaces of the form $L^2_{t\pm x} L^\infty_{t \mp x}$. This simple observation played a key role in the $n=1$ proof of critical well-posedness \cite{Candy2010}. In higher dimensions, the solution can now travel in many directions $\omega \in \sph^{n-1}$, thus instead of a fixed frame $L^2_{t_\omega} L^\infty_{x_\omega}$, following the work of Tataru \cite{Tataru2001} we are forced to work in atomic Banach spaces made up of $\ell^1$ sums of $L^2_{t_\omega} L^\infty_{x_\omega}$ functions for various directions $\omega$. \\

It is worth comparing the results presented here with the work of Bejenaru-Herr \cite{Bejenaru2013} on the positive mass case $m>0$. There it was observed that if $m>0$ and $n=3$, then the estimate (\ref{eqn - intro L2Linfty est}) is true,  provided we localise to frequencies $\lesa 1$, or small angular caps. Unfortunately, while the additional dispersion given by the positive mass is helpful for small frequencies, the loss of scaling and the additional curvature of the characteristic surface complicates the analysis for high frequencies. In particular, to control the high frequency components of the evolution, the work of Bejenaru-Herr required the use of null frames adapted adapted to the  hyperboloid $\tau= \pm \sqrt{ |\xi|^2 + m}$.\\

The outline of the paper is as follows. In Subsection \ref{subsec - struc of Dirac} we rewrite the equation (\ref{eqn - NLD eqn}) in a more accessible form, and use this formulation to provide a simple proof of a bilinear null form estimate in $L^2_{t, x}$. The main notation used is introduced in Section \ref{sec - notation}. In Section \ref{sec - function spaces}, we define the function spaces used to prove Theorem \ref{thm - main thm intro}. The main linear estimates we require are stated in Section \ref{sec - linear est}. In Sections \ref{sec - bilinear est} and \ref{sec - cubic est} we prove our key  bilinear and trilinear estimates. The proof of Theorem \ref{thm - main thm intro} is then given in Section \ref{sec - proof of GWP}. In Sections \ref{sec - proof of null frame bounds} and \ref{sec - proof of stricharz est} we prove the linear estimates stated in Section \ref{sec - linear est}. Finally, in Section \ref{sec - energy inequality}, we prove a version of the energy inequality needed in the proof of Theorem \ref{thm - main thm intro}. \\

\noindent

\textbf{Acknowledgements.} The authors would like to thank Prof. Bejenaru and Prof. Herr for corrections and helpful conversations regarding the work \cite{Bejenaru2013}.\\

\subsection{Structure of Dirac equation}\label{subsec - struc of Dirac}

We take the standard representations of the Gamma matrices in the terms of the Pauli matrices
 $$ \sigma^1 = \begin{pmatrix}
      0 & 1 \\
      1 & 0
    \end{pmatrix}, \qquad \sigma^2 = \begin{pmatrix}
      0 & -i \\
      i & 0
    \end{pmatrix},\qquad  \sigma^3 = \begin{pmatrix}
      1 & 0 \\
      0 & -1
    \end{pmatrix}. $$
In particular, for $n=2$ we take
    $$ \gamma^0 = \sigma^3, \qquad \gamma^1 = i \sigma^2, \qquad \gamma^2 = -i \sigma^1$$
and if $n=3$ we let
    $$ \gamma^0 = \begin{pmatrix}
      I & 0 \\
      0 & -I
    \end{pmatrix}, \qquad \gamma^j = \begin{pmatrix} 0 & \sigma^j \\ -\sigma^j & 0 \end{pmatrix}, \qquad j=1, 2, 3, \qquad \gamma^5 = i \gamma^0 \gamma^1 \gamma^2 \gamma^3 = \begin{pmatrix} 0 & I \\ I & 0 \end{pmatrix}.$$
To proceed further, we note that we have the special case of a Fierz type identity\footnote{This is essentially a special case of a \emph{Fierz Identity} \cite{Fierz1937} which states that, in the $n=3$ case,  given $z_j \in \CC^4$ we have
    $$ (\overline{z_1} \gamma^\mu z_2) ( \overline{z_3} \gamma_\mu z_4) = (\overline{z_1} z_4) (\overline{z_3} z_2)  - \tfrac{1}{2} (\overline{z_1} \gamma^\mu z_4) (\overline{z_3}\gamma_\mu z_2) - \tfrac{1}{2} (\overline{z_1} \gamma^\mu \gamma^5 z_4) (\overline{z_3} \gamma_\mu \gamma^5 z_2) - (\overline{z_1} \gamma^5 z_4) (\overline{z_3} \gamma^5 z_2),$$
see also  \cite{Nieves2004}. The appendix to \cite{Ortin2004} contains the identities in general dimensions. Rearranging the Fierz identity easily gives the identity (\ref{eqn - spinor identity}). Alternatively one can show (\ref{eqn - spinor identity}) by first noting that for vectors $w_j \in \CC^2$ we have
        ${ \sum_{j =1}^3 (w_1^\dagger \sigma^j w_2 ) \sigma^j w_3 = 2 ( w_1^\dagger w_3) w_2 - ( w^\dagger_1 w_2) w_3 }$
and then computing the identity by hand. }
\begin{equation}\label{eqn - spinor identity} \big(\overline{\psi} \gamma^\mu \psi\big)   \gamma_\mu \psi = \begin{cases}
      \big(\overline{\psi} \psi \big)  \psi \qquad \qquad \qquad \qquad & n=2\\
      \big(\overline{\psi} \psi \big) \psi - (\overline{\psi} \gamma^5 \psi)\gamma^5 \psi &n=3.
    \end{cases}\end{equation}
This somewhat magical identity is the key to showing that the Thirring model nonlinearity is also a null form, and also shows that in $n=2$ the Thirring and Soler models are identical. Define
        $$ \sigma \cdot \nabla = \sigma^j \p_j = \begin{cases} \sigma^1 \p_1  + \sigma^2 \p_2   \qquad &n=2\\
                                                    \sigma^1 \p_1  + \sigma^2 \p_2 + \sigma^3 \p_3 &n=3 \end{cases} \qquad \text{ and } \qquad \beta = \begin{cases} \sigma^3 \qquad &n=2 \\
                0 \qquad &n=3. \end{cases}$$
We claim that (\ref{eqn - NLD eqn}) with $m=0$  is a special case of the system
    \begin{equation}\label{eqn - general form of eqn} \begin{split}
      (\p_t + \sigma \cdot \nabla) u  &= B_1(u, v) v + B_2(u, v) \beta u \\
      (\p_t - \sigma \cdot \nabla) v  &= B_3(u, v) u + B_4(u, v) \beta v
    \end{split}
    \end{equation}
where the $B_j(u, v)$ are a linear combination of the bilinear forms
    \begin{equation}\label{eqn - bilinear uv null forms} u^\dagger v, \qquad v^\dagger u, \qquad v^\dagger \beta v, \qquad u^\dagger \beta u\end{equation}
and $u, v : \RR^{1+n} \rightarrow \CC^2$. To prove the claim, if $n=3$ we decompose  $\psi = \begin{pmatrix} u + v \\ u-v \end{pmatrix}$ into left and right spinors\footnote{Essentially we are decomposing into standard left and right spinors $\psi = \psi_L + \psi_R$ where $\psi_R = \tfrac{1}{2}( I - \gamma^5) \psi$ and $\psi_L = \tfrac{1}{2}( I + \gamma^5) \psi$ and then writing $\psi_L = \begin{pmatrix} u \\ u \end{pmatrix}$ and $\psi_R = \begin{pmatrix} v \\ - v \end{pmatrix}$.}, then a short computation using the Fierz identity (\ref{eqn - spinor identity}) shows that the pair $(u, v)$ is a solution to (\ref{eqn - general form of eqn}) with $B_1(u, v) = B_3(u, v) = 2 i ( u^\dagger v + v^\dagger u)$ in the Soler model case, and $B_1(u, v) = 4 i (v^\dagger u)$, $B_3(u, v) = 4 i ( u^\dagger v)$ in the Thirring model case (note that $\beta = 0$ when $n=3$ so the $B_2$, $B_4$ terms vanish). On the other hand, in the $n=2$ case, if we multiply both sides of (\ref{eqn - NLD eqn}) by $\beta=\gamma^0 = \sigma^3$ and use the Fierz type identity (\ref{eqn - spinor identity}) together with $\gamma^0 \gamma^1 = \sigma^1$, $\gamma^0 \gamma^2 = \sigma^2$ we get (\ref{eqn - general form of eqn}) with $B_1 = B_3 = B_4 = 0$ and $B_2(u, v)= (u^\dagger \beta u) $. To summarise, Theorem \ref{thm - main thm intro} follows from the following.

\begin{theorem}\label{thm - main thm with u, v}
Let $n=2, 3$ and $s \g \frac{n-1}{2}$. There exists $\epsilon>0$ such that if $(u(0), v(0)) \in \dot{H}^\frac{n-1}{2}\cap \dot{H}^s(\RR^n)$ with
    $$\| u(0) \|_{\dot{H}^\frac{n-1}{2}(\RR^n)} + \| v(0) \|_{\dot{H}^\frac{n-1}{2}} < \epsilon$$
then we have a global  solution $(u, v) \in C\big( \RR, \dot{H}^\frac{n-1}{2}\cap \dot{H}^s(\RR^n)\big)$ to (\ref{eqn - general form of eqn}) such that
        $$ \sup_{t \in \RR} \,\,\| (u, v)(t) \|_{\dot{H}^\frac{n-1}{2}\cap \dot{H}^s(\RR^{n})} \lesa \| (u, v)(0)  \|_{\dot{H}^\frac{n-1}{2}\cap \dot{H}^s(\RR^n)}.$$
Moreover, the solution $(u, v)$ depends continuously on the initial data and  is the unique limit of smooth solutions. Finally, there exists $u_{\pm \infty}, v_{\pm \infty}\in C\big( \RR, \dot{H}^\frac{n-1}{2}\cap \dot{H}^s(\RR^n)\big)$ with $(\p_t + \sigma \cdot \nabla) u_{\pm \infty} = (\p_t - \sigma \cdot \nabla) v_{\pm \infty} = 0$ such that
        $$ \lim_{t \rightarrow \pm \infty} \Big( \big\| u(t) - u_{\pm \infty}(t) \big\|_{\dot{H}^\frac{n-1}{2}\cap \dot{H}^s(\RR^n)} + \| v(t) - v_{\pm \infty} \big\|_{\dot{H}^\frac{n-1}{2}\cap \dot{H}^s(\RR^n)}\Big) = 0 .$$
\end{theorem}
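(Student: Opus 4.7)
The plan is to solve the system (\ref{eqn - general form of eqn}) by Picard iteration in a Banach space $F = F_+ \times F_-$ tailored to the two half-wave operators $\partial_t \pm \sigma\cdot\nabla$, following the blueprint already sketched in the introduction. Concretely, I would take
\[
\|u\|_{F_+} = \|u\|_{L^\infty_t \dot H^{(n-1)/2}_x} + \|(\partial_t + \sigma\cdot\nabla)u\|_{Y_+},
\]
with $Y_+ = L^1_t\dot H^{(n-1)/2}_x + X^{-1/2,1}_+ + NF_+$, and symmetrically for $F_-$. The $X^{s,b}$ piece handles off-cone interactions, the null-frame piece $NF_\pm$ provides a substitute for the failing estimate (\ref{eqn - intro L2Linfty est}) by placing pieces whose Fourier support lies in small angular caps into $L^2_{t_\omega}L^\infty_{x_\omega}$, and the $L^1_t\dot H^{(n-1)/2}_x$ piece feeds the energy component $L^\infty_t\dot H^{(n-1)/2}_x$ through Duhamel's formula for $\partial_t\pm\sigma\cdot\nabla$.

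Given such spaces, small-data global existence reduces to verifying (i) a linear estimate $\|u\|_{F_+} \lesssim \|u(0)\|_{\dot H^{(n-1)/2}} + \|(\partial_t+\sigma\cdot\nabla)u\|_{Y_+}$ together with its counterpart for $v$, which should hold by construction once the core null-frame bounds are established, and (ii) a trilinear estimate of the schematic form
\[
\|B(\phi_1,\phi_2)\phi_3\|_{Y_\pm} \lesssim \prod_{j=1}^3 \|\phi_j\|_{F_{\pm_j}}
\]
covering each bilinear form $B$ in (\ref{eqn - bilinear uv null forms}) and each sign pattern appearing in (\ref{eqn - general form of eqn}). A standard contraction argument in a ball of radius $O(\epsilon)$ in $F$ then produces a unique global fixed point and Lipschitz continuous dependence in the $\dot H^{(n-1)/2}$ topology. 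Persistence of higher $\dot H^s$ regularity follows from re-running the linear estimate in an $\dot H^s$ variant of $F$ with the already-constructed small solution appearing as a multiplier; uniqueness as a limit of smooth solutions follows from continuous dependence applied to mollifications of the data; and scattering is immediate by defining $u_{\pm\infty}(t) = e^{-t\sigma\cdot\nabla}u(0) + \int_0^{\pm\infty} e^{-(t-s)\sigma\cdot\nabla}[B_1 v + B_2\beta u](s)\,ds$, and similarly for $v_{\pm\infty}$, and observing that the tail integrals converge in $\dot H^{(n-1)/2}$ because the $L^1_t\dot H^{(n-1)/2}_x$ summand of $Y_\pm$ bounds the nonlinear contribution.

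The main obstacle, by some margin, is the trilinear estimate at the scale-invariant regularity $s_c = (n-1)/2$. As flagged in Remark \ref{rem - Xsb not sufficient to prove critical result}, $X^{s,b}$ alone is not sufficient, so both the null structure of the bilinear forms in (\ref{eqn - bilinear uv null forms}) and the null-frame component of $F_\pm$ must be exploited simultaneously. In the genuinely null pieces $u^\dagger v$ and $v^\dagger u$ the two factors are transported by opposite characteristics $\partial_t\pm\sigma\cdot\nabla$, so after simultaneous Littlewood--Paley and Whitney-type angular decompositions one recovers a quantitative gain depending on the angle between the interacting frequencies, while the third factor in the trilinear product can be placed into $L^2_{t_\omega}L^\infty_{x_\omega}$ after a matching angular localisation. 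The case analysis will consist of matching, in each dyadic/angular regime, the right component of $Y_\pm$ on the output with the right atomic component of each $F_{\pm_j}$ on the inputs so that the geometric null gains exactly offset the scaling losses and leave a summable residual at $s_c$. The non-null bilinear terms $u^\dagger \beta u$ and $v^\dagger \beta v$, which appear only for $n=2$, are less favourable and will have to be handled using the special algebraic fact that $B_1=B_3=B_4=0$ in that reduction, so that they contribute only to the first equation and can be treated essentially as a self-interaction of $u$.
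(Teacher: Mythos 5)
Your high-level architecture matches the paper's: an iteration space built from an energy piece $L^\infty_t \dot H^{(n-1)/2}$, an $X^{s,b}$-type piece, a null-frame piece, and an $L^1_t$ piece, fed by bilinear null-form estimates and a trilinear nonlinear estimate. However, there are two genuine gaps.

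First, the space $F_\pm$ you propose is missing the extra ``far-cone'' seminorm $\mc{Y}^\pm$ that the paper builds into $G^\pm_\lambda$, namely $\sup_d d\,\|\mathfrak{C}^\pm_d u\|_{L^{4n/(3n-1)}_t L^2_x}$ (appropriately weighted by $\lambda^{-\frac{n+1}{4n}}$). The paper is explicit that the space $F^\pm_\lambda$ alone only yields the result for data in $\dot B^{(n-1)/2}_{2,1}$, i.e.\ with an $\ell^1$ sum over dyadic frequencies; to upgrade to $\dot H^{(n-1)/2}$ one needs a $(N_{min}/N_{med})^\epsilon$ gain in the trilinear estimate which off-diagonal ($N_1 \approx N_2 \gg N_3$ and similar) frequency interactions away from the light cone do not provide unless one spends the extra temporal integrability encoded in $\mc{Y}^\pm$ (compare Theorems~\ref{thm - trilinear estimate} and~\ref{thm - control of Y norm} in the paper, specifically Cases~2 and 3 of Theorem~\ref{thm - trilinear estimate}). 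With your choice of $F_\pm$ the trilinear bound in the schematic form $\|B(\phi_1,\phi_2)\phi_3\|_{Y_\pm}\lesssim\prod\|\phi_j\|_{F_{\pm_j}}$ would fail at $s_c$ for the full $\ell^2$ Littlewood--Paley sum, so you would end up proving Theorem~\ref{thm - main thm with u, v} only in the slightly smaller Besov space, not in $\dot H^{(n-1)/2}$.

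Second, the scattering argument is too optimistic. You write that the Duhamel tail converges ``because the $L^1_t\dot H^{(n-1)/2}_x$ summand of $Y_\pm$ bounds the nonlinear contribution,'' but the trilinear estimate only lands the cubic term in the atomic sum $Y_\pm \cong L^1_t\dot H^{(n-1)/2}_x + \dot{\mc{X}}^{-1/2,1}_\pm + NF$, and precisely the dangerous close-cone interactions are placed in the $NF$ component, not in $L^1_t$. The absolute convergence of $\int_t^{\pm\infty}\mc{U}_\pm(t-s)F(s)\,ds$ in $\dot H^{(n-1)/2}$ does not follow from $F\in Y_\pm$ by a naive Minkowski argument. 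The paper circumvents this via a $V^2$-variation argument (see Theorem~\ref{thm - energy inequality + invariance under cutoffs}(iii) together with Theorem~\ref{thm - F controls V2}): one shows $\mc{U}_\pm(-t)u$ has bounded $2$-variation, which by a classical lemma forces the existence of limits at $t\to\pm\infty$. This is a genuinely different and nontrivial step, not a one-line corollary of the function-space definition.

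A smaller technical point: a direct Picard iteration in a global-in-time space $F$ is awkward here because the $PW^\pm(\kappa)$ bound in Theorem~\ref{thm - null frame bounds} is only established for $\rho(t/T)u$, and the bilinear/trilinear estimates therefore require compactly supported inputs in $t$. The paper works around this by solving locally in a classical smooth space, extending the truncated solution by free flow, defining $a_s(T)$ for the extension, and running a continuity/bootstrap argument as $T$ increases, before taking $T\to\infty$ using the time-cutoff stability of Theorem~\ref{thm - energy inequality + invariance under cutoffs}(ii). Your proposal would need to incorporate this, but that is an implementation detail compared with the two gaps above.
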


To prove Theorem \ref{thm - main thm with u, v}, we need to study the linear operator $(\p_t \pm \sigma \cdot \nabla)$. To start with, note that in the $n=2$ case, we have
    \begin{equation}\label{eqn - beta flips sign of linear eqn}( \p \pm \sigma \cdot \nabla) \beta = \beta (\p_t \mp \sigma \cdot \nabla).\end{equation}
In particular, if $(\p_t + \sigma \cdot \nabla) u = 0$, then $(\p_t - \sigma \cdot \nabla) \beta u = 0$. This has the important, and very useful, consequence that to study the nonlinearity in (\ref{eqn - general form of eqn}), it suffices to study products $u^\dagger v$ where $u$ and $v$ are solutions to
        \begin{equation}\label{eqn - u v eqns homogeneous}\begin{split}
          (\p_t + \sigma \cdot \nabla) u &= 0 \\
          (\p_t - \sigma \cdot \nabla) v &= 0
        \end{split}\end{equation}
since, clearly, products like $u^\dagger \beta u$ can be reduced to products of the form $u^\dagger v$ after an application of (\ref{eqn - beta flips sign of linear eqn}). We now claim that the product $u^\dagger v$ is a \emph{null form}, in other words it satisfies improved bilinear estimates when compared to a product like $|u|^2$. This is intuitively clear as since $u$ and $v$ should resemble waves traveling in opposite directions, we expect that their product should decay faster than a corresponding product like $|u|^2$. An estimate that makes this idea more explicit, is the following.

\begin{lemma}\label{lem - hom bilinear est} Let $n=1, 2, 3$. Assume $(u, v)$ solve (\ref{eqn - u v eqns homogeneous}) with $u(0) = f$ and $v(0) = g$. Then
        \begin{equation}\label{eqn - L2 bilinear est} \| u^\dagger v \|_{L^2_{t, x}(\RR^{n+1})} \lesa \| f \|_{L^2_x(\RR^n)} \|g \|_{\dot{H}^{\frac{n-1}{2}}(\RR^n)}. \end{equation}
\end{lemma}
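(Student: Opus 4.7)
We diagonalise the operator $\sigma\cdot D$ using the orthogonal projections $\Pi_\pm(\xi)=\tfrac12(I\pm\sigma\cdot\xi/|\xi|)$ onto the $\pm|\xi|$-eigenspaces of the Hermitian matrix $\sigma\cdot\xi$. Setting $u_\pm=\Pi_\pm(D)u$ and $v_\pm=\Pi_\pm(D)v$, each component is an explicit half-wave
$$ \widehat{u_\pm}(t,\xi)=e^{\mp it|\xi|}\Pi_\pm(\xi)\hat f(\xi), \qquad \widehat{v_\pm}(t,\xi)=e^{\pm it|\xi|}\Pi_\pm(\xi)\hat g(\xi), $$
so that $u^\dagger v=\sum_{\epsilon_1,\epsilon_2\in\{+,-\}}u_{\epsilon_1}^\dagger v_{\epsilon_2}$ reduces to four bilinear terms. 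The space-time Fourier transform of each is concentrated on the hypersurface $\{\tau=\epsilon_1|\xi_1|+\epsilon_2|\xi_2|,\ \xi=\xi_2-\xi_1\}$, with spinor density $\hat f(\xi_1)^\dagger\Pi_{\epsilon_1}(\xi_1)\Pi_{\epsilon_2}(\xi_2)\hat g(\xi_2)$.

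Writing $\omega_i=\xi_i/|\xi_i|$ and $\phi(\xi_1;\xi)=\epsilon_1|\xi_1|+\epsilon_2|\xi_1+\xi|$, elementary calculation yields the twin identities
$$ |\nabla_{\xi_1}\phi|=|\epsilon_1\omega_1+\epsilon_2\omega_2|, \qquad \epsilon_1\epsilon_2\,|\epsilon_1\omega_1+\epsilon_2\omega_2|^2=\frac{\tau^2-|\xi|^2}{|\xi_1||\xi_2|}, $$
so the phase gradient vanishes precisely on the light cone $\tau^2=|\xi|^2$. Applying Plancherel followed by Cauchy-Schwarz on the level sets of $\phi$, each piece is controlled by a surface-measure integral of the form $\int d\sigma/|\nabla\phi|$ weighted by the spinor factor, which would appear to be singular on the cone. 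This is precisely where the Dirac null structure intervenes: expanding $\Pi_{\epsilon_1}(\xi_1)\Pi_{\epsilon_2}(\xi_2)=\tfrac14(I+\epsilon_1\sigma\cdot\omega_1)(I+\epsilon_2\sigma\cdot\omega_2)$ and using the Clifford relation $\{\sigma^i,\sigma^j\}=2\delta^{ij}I$ gives the matching pointwise bound
$$ |\Pi_{\epsilon_1}(\xi_1)\Pi_{\epsilon_2}(\xi_2)|\lesssim|\epsilon_1\omega_1+\epsilon_2\omega_2|, $$
so the spinor density supplies a zero of the same order as $|\nabla\phi|$ that exactly cancels the singular weight.

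After this cancellation the remaining surface-measure integral is bounded by the area of the level set, which for the dyadic localisation $|\xi_1|\sim\lambda$, $|\xi_2|\sim\mu$ is $\lesssim\min(\lambda,\mu)^{n-1}$. This produces the frequency-localised estimate
$$ \|(P_\lambda u)^\dagger(P_\mu v)\|_{L^2}^2\lesssim\min(\lambda,\mu)^{n-1}\|P_\lambda f\|_{L^2}^2\|P_\mu g\|_{L^2}^2, $$
and summing in $\lambda,\mu$ via $\min(\lambda,\mu)\leq\mu$, together with the (quasi-)orthogonality of the Littlewood-Paley pieces, recovers \eref{eqn - L2 bilinear est}. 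The main obstacle in the argument is the algebraic matching of the spinor cancellation with the Jacobian zero of the phase; once the two identities above are in place, the remainder is a routine Plancherel/coarea computation on the wave-cone geometry.
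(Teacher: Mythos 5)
The approach you take — passing to the space-time Fourier side, representing each of the four spinor products $u_{\epsilon_1}^\dagger v_{\epsilon_2}$ as a restriction integral over the level sets of the phase $\phi(\xi_1;\xi)=\epsilon_1|\xi_1|+\epsilon_2|\xi_1+\xi|$, and matching the Jacobian singularity $|\nabla_{\xi_1}\phi|=|\epsilon_1\omega_1+\epsilon_2\omega_2|$ against the spinor cancellation $|\Pi_{\epsilon_1}(\xi_1)\Pi_{\epsilon_2}(\xi_2)|\lesssim|\epsilon_1\omega_1+\epsilon_2\omega_2|$ — is genuinely different from the paper's proof, which decomposes $v$ into traveling waves in physical space and applies H\"older on a null hyperplane. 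Both of your algebraic identities for $\nabla\phi$ are correct, as is the null-form bound, and the observation that the two zeros match is exactly the heart of the matter. This is the Foschi--Klainerman route the paper itself alludes to but declines to follow.

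The gap is in the final summation. The dyadic estimate $\|(P_\lambda u)^\dagger(P_\mu v)\|_{L^2}^2\lesssim \min(\lambda,\mu)^{n-1}\|P_\lambda f\|_{L^2}^2\|P_\mu g\|_{L^2}^2$ cannot be summed to give the stated $\dot H^{(n-1)/2}$ bound. In the high--low regime $\lambda\gg\mu$ the output frequency is $\sim\lambda$, so the orthogonality of the Littlewood--Paley pieces only runs over $\lambda$: for fixed $\lambda$ you are left summing $\sum_{\mu\ll\lambda}\mu^{\frac{n-1}{2}}\|P_\mu g\|_{L^2}$, which is the $\dot B^{\frac{n-1}{2}}_{2,1}$ norm, strictly stronger than $\dot H^{\frac{n-1}{2}}$; there is no frequency orthogonality available on the low-frequency factor to upgrade this $\ell^1_\mu$ sum to $\ell^2_\mu$. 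So as written your argument only proves the weaker Besov version of \eref{eqn - L2 bilinear est}.

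The fix is to avoid the dyadic decomposition entirely and run Cauchy--Schwarz on the level set with the weight $|\xi_1+\xi|^{n-1}$ (equivalently $|\xi_2|^{n-1}$) in place. If you parameterize $S_{\tau,\xi}$ by the direction $\omega=\xi_2/|\xi_2|$ of the low factor, so that $\xi_2=r(\omega)\omega$ with $r$ determined by the level-set constraint, then the surface measure becomes $d\sigma=|\nabla\phi|\,r^{n-1}|\partial_r\phi|^{-1}d\mathbb{S}(\omega)$ where $\partial_r\phi=\epsilon_2+\epsilon_1\omega_1\cdot\omega_2$, and the identity
$$\frac{|\nabla\phi|^2}{|\partial_r\phi|}=\frac{2|1+\epsilon_1\epsilon_2\,\omega_1\cdot\omega_2|}{|\epsilon_2+\epsilon_1\,\omega_1\cdot\omega_2|}=2$$
holds pointwise. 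Hence after the null-form cancellation the relevant surface integral is
$$\int_{S_{\tau,\xi}}\frac{|\nabla\phi|\,d\sigma}{|\xi_2|^{n-1}}=\int_{\mathbb{S}^{n-1}}\frac{|\nabla\phi|^2}{|\partial_r\phi|}\,d\mathbb{S}(\omega)\les 2|\mathbb{S}^{n-1}|,$$
uniformly in $(\tau,\xi)$. Feeding this into the Cauchy--Schwarz step and undoing the coarea formula gives $\|u_{\epsilon_1}^\dagger v_{\epsilon_2}\|_{L^2}^2\lesssim\int\int|\xi_2|^{n-1}|\hat f(\xi_1)|^2|\hat g(\xi_2)|^2\,d\xi_1d\xi_2=\|f\|_{L^2}^2\|g\|_{\dot H^{(n-1)/2}}^2$ directly, with no dyadic loss. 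Note this angular parameterization of the phase surface is precisely the Fourier-side mirror of the paper's decomposition $v=\int_{\sph^{n-1}}\Pi_\omega g_\omega(t+x\cdot\omega)\,d\sph(\omega)$ followed by Cauchy--Schwarz in $\omega$: the paper gets the $\ell^2(\mathbb{S}^{n-1})$ structure that recovers $\dot H^{(n-1)/2}$ exactly from the same angular orthogonality that you should be exploiting on the level set. Once that step is replaced, the rest of your argument goes through.
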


Note that this estimate is certainly \emph{not} true for a product like $|u|^2$, if $n=1, 2$ this is easy to see as $u \not \in L^4_{t, x}(\RR^{1+2})$ for solutions to (\ref{eqn - u v eqns homogeneous}). It is also worth noting that (\ref{eqn - L2 bilinear est}) is closely related to the missing $L^2_t L^\infty_x$ Strichartz estimate. More precisely, if we had an $L^2_t L^\infty_x$ control over $v$, then the bilinear estimate (\ref{eqn - L2 bilinear est}) would follow from a simple application of H\"{o}lder's inequality. Thus, in some cases, Lemma \ref{lem - hom bilinear est} can form a suitable substitute to the missing endpoint Strichartz estimate. \\

One way to prove Lemma \ref{lem - hom bilinear est} (at least in the case $n=2, 3$) is to introduce potentials $\phi$ and $\varphi$ such that $$ (\p_t - \sigma \cdot \nabla) \phi = u, \qquad \qquad (\p_t + \sigma \cdot \nabla) \varphi = v.$$
Then a short computation shows that $\Box \phi = \Box \varphi = 0$ and furthermore, that $u^\dagger v$ is made up of a linear combination of the classical null forms
    $$ \p_t \phi \p_t \varphi - \nabla \phi \cdot \nabla \varphi, \qquad \p_\mu \phi \p_\nu \varphi - \p_\nu \phi \p_\mu \varphi.$$
Lemma \ref{lem - hom bilinear est} then follows by applying the sharp bilinear null form estimates of Foschi-Klainerman \cite{Foschi2000}. Alternatively, and more in the spirit of the current article, we present a softer argument that just relies on a decomposition into traveling waves, followed by H\"{o}lder's inequality and a change of variables. This is similar to the approach used by Tataru \cite{Tataru2001} and Klainerman-Rodnianski \cite{Klainerman2005}.

   \begin{proof}[Proof of Lemma \ref{lem - hom bilinear est}] In the $n=1$ case, we can reduce the estimate (\ref{eqn - L2 bilinear est}) to a product of the form $\| f(x-t)g(x+t) \|_{L^2_{t, x}(\RR^{1+1})}$ and so lemma follows by a simple change of variables. On the other hand, if $n=2, 3$ we begin by decomposing $v$ into an average of traveling waves. More precisely, define $\Pi_\omega = \frac{1}{2} \big( I + \sigma \cdot \omega\big)$ and $\widehat{\Pi_\pm f} = \Pi_{\pm \frac{\xi}{|\xi|}} \widehat{f}$, note that $\Pi_\omega^\dagger = \Pi_\omega$ and $\Pi_\omega^2 = \Pi_\omega$. Then writing the solution $v$ in Polar coordinates gives
     \begin{align} v(t, x) &= e^{ it |\nabla|} \Pi_+ g + e^{-i t |\nabla|} \Pi_- g \notag\\
                    &=\int_{ \sph^{n-1} } \Pi_{ \omega} \int_0^\infty e^{  i r (t  + x \cdot \omega)} \widehat{ g}(r\omega) r^{n-1}\, dr\, d\sph(\omega)  + \int_{ \sph^{n-1} }  \Pi_{-\omega} \int_0^\infty e^{  -i r (t  - x \cdot \omega) } \widehat{g}( r \omega) r^{n-1} \,dr\, d\sph(\omega)  \notag \\
                    &= \int_{\sph^{n-1} } \Pi_{\omega} g_\omega( t + x \cdot \omega )  \,d \sph(\omega)\label{eqn - hom - solution as average of free wave}
     \end{align}
where $g_\omega(a) = \int_0^\infty \big[e^{  i r a} \widehat{ g}(  r\omega) +  e^{ - i r a} \widehat{g}(- r \omega) \big] r^{n-1}\, dr$. Consequently, by the self-adjointness of the projections $\Pi_\omega$, we have the bound
    \begin{align*}
      \| u^\dagger v \|_{L^2_{t, x}} &\les  \int_{\sph^{n-1}} \big\| u^\dagger(t, x) \Pi_{\omega} g_\omega( t + x \cdot \omega )\big\|_{L^2_{t, x}} d\sph(\omega) \\
      &= \int_{\sph^{n-1}} \big\| \big(\Pi_\omega u\big)^\dagger(t-\omega \cdot x, x) g_\omega(t) \big\|_{L^2_{t, x}} d\sph(\omega) \\
      &\les  \sup_{\omega\in \sph^{n-1}} \big\| \big(\Pi_\omega u\big)(t-\omega \cdot x, x) \big\|_{L^\infty_t L^2_x} \int_{\sph^{n-1}} \|g_\omega\|_{L^2_t} d\sph(\omega).
         \end{align*}
It is easy enough to check that by undoing the Polar coordinates, and using an application of Holder in the $\omega$ variables we obtain
    $$ \int_{\sph^{n-1}} \|g_\omega\|_{L^2_t} d\sph(\omega) \lesa \| g \|_{\dot{H}^{\frac{n-1}{2}}}.$$
Thus we reduce (\ref{eqn - L2 bilinear est}) to proving
     $$  \sup_{\omega\in \sph^{n-1}} \big\| \big(\Pi_\omega u\big)(t-\omega \cdot x, x) \big\|_{L^\infty_t L^2_x} \lesa \| f \|_{L^2_x}.$$
Note that $(-x \cdot \omega, x)$ is a parameterisation of the null plane $NP(\omega)$ orthogonal to the null vector $(1, \omega)$,
        $$NP(\omega) = \big\{ (t, x) \in \RR^{1+n}\, \big| \,\, ( t, x) \cdot (1, \omega) = 0\, \big\}.$$
In other words, we need to control the integral of $\Pi_\omega u$ over $L^2\big(NP(\omega) \big)$. By a change of variables we deduce that
    \begin{align*}
      e^{ \mp i (t - x \cdot \omega) |\nabla|} f (x) =\int_{\RR^n}  \widehat{f}(\xi) e^{\mp i (t - x \cdot \omega) |\xi|} e^{ i x \cdot \xi} d\xi
            &= \int_{\RR^n} \Big[  \widehat{f}(\xi) e^{\mp i t |\xi|} J^{-1}(\xi)\Big](y) e^{i y \cdot x} dy
    \end{align*}
where $J(\xi) = 1 \pm \tfrac{\xi}{|\xi|} \cdot \omega \approx \theta(\omega, \mp \xi)^2$ is the Jacobian of the change of variables $y = \xi \pm |\xi| \omega$, and  $\theta(\xi, \xi')$ denotes the angle of the two vectors $\xi, \xi' \in \RR^n$. Hence using the ``null form'' estimate $|\Pi_\omega \Pi_{\pm \frac{\xi}{|\xi|}}| \lesa \theta(\omega, \mp \xi)$ (see (\ref{eqn - null structure estimate}) below) together with Plancheral, we have
    \begin{align} \big\|  e^{ \mp i (t - x \cdot \omega) |\nabla|} \Pi_\omega \Pi_\pm f (x) \big\|_{L^2_x} &= \Big\| \Big[  \Pi_\omega \Pi_{\pm\frac{\xi}{|\xi|}} \widehat{f}(\xi) e^{\mp i t |\xi|} J^{-1}(\xi)\Big](y)\Big\|_{L^2_y} \notag\\
                            &= \big\|  J^{-\frac{1}{2}}(\xi) \Pi_\omega \Pi_{\pm\frac{\xi}{|\xi|}} \widehat{f} \big\|_{L^2_\xi} \notag\\
                            &\lesa \big\| \theta(\omega, \mp \xi)^{-1} \theta(\omega, \mp \xi) \widehat{f} \big\|_{L^2_\xi} = \| f \|_{L^2}. \label{eqn - hom soln in null coord}
    \end{align}
If we apply this inequality to $u = e^{ i t |\nabla|} \Pi_- f + e^{-i t |\nabla|} \Pi_+ f$ we obtain (\ref{eqn - L2 bilinear est}). Thus lemma follows.
\end{proof}
\vspace{1cm}

\section{Notation}\label{sec - notation}

Throughout this article we take $n=2, 3$.  We use the notation $ a \lesa b$ to denote the inequality $a \les C b$ for some constant $C>0$ which is independent of the variables under consideration. Similarly, we write $a \ll b$ if $a \les C b$ with a small constant $C<\frac{1}{4}$.  For a complex valued $n\times m$ matrix $A$, we let $A^\dagger$ denote the conjugate transpose. If $\Omega \subset \RR^{1+n}$, we define $\ind_\Omega(t, x)$ to be the corresponding indicator function.  \\

Let $L^q_t L^r_x(\RR^{n+1})$ denote the usual mixed-norm Lebesgue space with the associated norm
    $$ \| u \|_{L^q_t L^r_x(\RR^{n+1})} = \Big( \int_\RR \Big[ \int_{\RR^n} |u(t, x)|^r dx \Big]^{\frac{q}{r}} dt \Big)^\frac{1}{q}.$$
Occasionally we omit the domain $\RR^{n+1}$ when we can do so without causing confusion. Most functions that occur in this paper are $\CC^2$ valued, although occasionally we make use of scalar valued maps as well. The Schwartz class of smooth functions on $\RR^{n}$  with rapidly decreasing derivatives is denoted by $\s(\RR^n)$, we let $\s'(\RR^n)$ denote its dual, the collection of all tempered distributions. For a function $f \in \s(\RR^n)$ we let
            $$ \widehat{f}(\xi) = \int_{\RR^n} f(x) e^{-i x \cdot \xi} dx$$
denote the spatial Fourier transform. Similarly for $u(t, x) \in \s(\RR^{n+1})$ we let $\widetilde{u}(\tau, \xi)$ denote the space-time Fourier transform. The Fourier transform is extended to $\s'$ by duality in the usual manner. For $s>-\frac{n}{2}$ we define the homogeneous Sobolev space $\dot{H}^s(\RR^n)$ as the completion of $\s$ using the norm
        $$ \| f \|_{\dot{H}^s(\RR^n)} = \big\| |\xi|^s \widehat{f}(\xi) \big\|_{L^2_\xi(\RR^n)}.$$
  Fix $\Phi \in C^\infty_0(\RR)$ with $\supp \Phi  \subset \{ 2^{-1} \les a \les 2\}$ and for $a \not = 0$
  \begin{equation}\label{eqn - defn of Phi sum} \sum_{\lambda \in 2^{ \ZZ}} \Phi( \lambda^{-1} a) = 1. \end{equation}
We define the (homogeneous) Besov-Lipschitz spaces $\dot{B}^s_{p, q}$ via the norm
        $$ \| f \|_{\dot{B}^s_{p, q}} = \bigg( \sum_{\lambda \in 2^\ZZ} \Big( \lambda^s \|P_\lambda f \|_{L^p} \Big)^q \bigg)^\frac{1}{q}$$
where $\widehat{P_\lambda f} = \Phi( \lambda^{-1} |\xi| ) \widehat{f}(\xi)$ is the Fourier cutoff to the region $|\xi| \approx \lambda$. Given a Banach space $X$, we let $C(\RR, X)$ denote the collection of all continuous maps $u: \RR \rightarrow X$.\\

Let $\sph^{n-1} = \big\{  x \in \RR^n\, \big| \,\, |x| = 1\, \big\}$ denote the standard unit sphere in $\RR^n$. If $\xi, \xi' \in \RR^n$, then we let $\theta(\xi, \xi')$ denote the positive, smallest, angle between the unit vectors $\tfrac{\xi}{|\xi|}, \tfrac{\xi'}{|\xi'|} \in \sph^{n-1}$.  We frequently use the estimate $\theta(\xi, \xi') \approx 1 - \tfrac{\xi}{|\xi|} \cdot \tfrac{\xi'}{|\xi'|}$ as well as the more explicit\footnote{This can be deduced by letting $\theta = \theta(\omega, \omega')$,
        $$|\omega - \omega'|^2 = 2 - 2 \omega \cdot \omega' = 2- 2 \cos(\theta) = 4 \sin^2\Big( \frac{\theta}{2}\Big).$$
  The estimate now follows by using the estimate $ \frac{\sin(a)}{a} x \les \sin(x) \les x$  for $0<x<a$ and the fact that $8\sin(\tfrac{1}{8}) \g \frac{49}{50}$.}
        \begin{equation}\label{eqn - sharp angle est}  \frac{49}{50} \theta(\omega, \omega') \les |\omega - \omega'| \les \theta(\omega, \omega') \end{equation}
which holds for $\omega, \omega' \in \sph^{n-1}$ provided $\theta(\omega, \omega') \les \frac{1}{4}$. Given a subset $\kappa \subset \sph^{n-1}$ and vector $\omega \in \sph^{n-1}$, we let $\theta(\omega, \kappa) = \inf \{ \theta(\omega, \omega') \,| \omega' \in \kappa\,\,\}$.\\

We often restrict the Fourier transform of a function to lie in a certain subsets of $\RR^{n+1}$. To exploit this restriction, we make use of Bernstein's inequality which states that if $\supp \widehat{f} \subset \Omega$ and $p \g 2$, then for any $q\les p$ we have
        $$ \| f \|_{L^p} \lesa |\Omega|^{\frac{1}{q} - \frac{1}{p}} \| f \|_{L^q}.$$
Similarly, when considering products, if  $\supp \widehat{f} \subset \Omega$ and $\supp \widehat{g} \subset \Omega'$, we observe that the product $fg$ satisfies $\widehat{fg} \subset \Omega + \Omega'$.

\subsection{Null Coordinates}

As mentioned in the introduction, the standard $(t, x)$ coordinate frame is not sufficient to give the bilinear estimates that we require in the present paper. Instead, to exploit the type of arguments leading used in the proof of Lemma \ref{lem - hom bilinear est}, we need the flexibility to be able to work in adapted \emph{null} coordinate frames which are chosen depending on the Fourier support of the function under consideration. The definitions are as follows. \\

Let $\omega \in \sph^{n-1}$ and $\vartheta = \frac{1}{\sqrt{2}}( 1, \omega) \in \RR^{1+n}$. We define the \emph{null coordinates} $(t_\omega, x_\omega) \in \RR \times \RR^n$ as
   $$ t_\omega =  (t, x) \cdot \vartheta = \frac{1}{\sqrt{2}} \big( t + \omega \cdot x \big), \qquad \qquad x_\omega = x - \tfrac{1}{\sqrt{2}}\big[ (t , x) \cdot \vartheta\big] \omega.$$
Note that $t_\omega \vartheta$ is the projection of $(t, x)$ onto the span of the null vector $\vartheta$, while $( - \omega \cdot x_\omega, x_\omega)$ is a parameterisation of the associated null hyperplane $\{ ( t, x) \in \RR^{1+n} \, |\, (t, x) \cdot \vartheta = 0\, \}$. Moreover we have the identity
        $$ (t, x) = t_\omega \vartheta  + ( - \omega \cdot x_\omega, \,x_\omega).$$
To facilitate the computations we use later, we also decompose $x_\omega =  x^\bot_\omega - \tfrac{1}{\sqrt{2}} x^1_\omega \omega$ where
        $$x^1_\omega = \tfrac{1}{\sqrt{2}} ( t - \omega \cdot x), \qquad x^\bot_\omega = x - (x\cdot \omega) \omega.$$
Thus $x^1_\omega$ denotes the component of the vector $(t, x)$ on the null cone in the direction $(1, -\omega)$, while $x^\bot_\omega$ is the remaining component orthogonal to $\omega$. We can translate from the $(t_\omega, x_\omega)$ coordinate frame back into the standard $(t, x)$ frame by using the identities
    \begin{align*}
        t &= \tfrac{1}{\sqrt{2}} t_\omega - \omega \cdot x_\omega        &&x = x_\omega +  \tfrac{1}{\sqrt{2}} t_\omega \,\omega \\
          &= \tfrac{1}{\sqrt{2}} (t_\omega + x^1_\omega)                    &&\phantom{x}= x^\bot_\omega + \tfrac{1}{\sqrt{2}} ( t_\omega -                                                                                                                x^1_\omega)\, \omega.
    \end{align*}
We also make use of the dual or frequency variables in null frames. If $(\tau, \xi)$ denote the Fourier variables associated to $(t, x)$, then we define the corresponding null frame versions $(\tau_\omega, \xi_\omega)$  by letting $(\tau, \xi) \cdot (t, x) = (\tau_\omega, \xi_\omega) \cdot (t_\omega, x_\omega)$. In other words we let
    $$ \tau_\omega = \frac{1}{\sqrt{2}} ( \tau + \xi \cdot \omega), \qquad \xi_\omega= \xi - \tau \omega = \xi^\bot_\omega - \sqrt{2} \xi^1_\omega \,\omega$$
where as before $\xi^\bot_\omega$ denotes the component of $\xi_\omega$ orthogonal to $\omega$, and $\xi^1_\omega = \frac{1}{\sqrt{2}} ( \tau- \xi \cdot \omega)$. We can translate from $(\tau_\omega, \xi_\omega)$ to $(\tau, \xi)$ by using the identities
    \begin{align*}
        \tau &= \tfrac{1}{\sqrt{2}} \tau_\omega - \tfrac{1}{2} \omega \cdot \xi_\omega        &&\xi = \xi_\omega +  ( \tfrac{1}{\sqrt{2}} \tau_\omega -\tfrac{1}{2} \omega \cdot \xi_\omega) \,\omega \\
          &= \tfrac{1}{\sqrt{2}} (\tau_\omega + \xi^1_\omega)                    &&\phantom{\xi}= \xi^\bot_\omega + \tfrac{1}{\sqrt{2}} ( \tau_\omega -                                                                                                                \xi^1_\omega)\, \omega.
    \end{align*}
Finally we note the fundamental fact that the symbol of the wave operator $\,\Box = \p^\gamma \p_\gamma\,$ satisfies the key inequality
        $$ \tau^2 - |\xi|^2 = 2 \tau_\omega \xi^1_\omega - |\xi^\bot_\omega|^2.$$
This simple identity plays an important role in the arguments used in this paper. \\

If we have a function $\phi(t, x)$ on $\RR^{1+n}$,  by default we use $(t, x)$ coordinates. If we want to specify that $\phi$ is in $(t_\omega, x_\omega)$ coordinates we write $\phi^*$, thus
        $$\phi(t, x) = \phi^*(t_\omega, x_\omega).$$
This convention also applies to the Fourier transform, $\widehat{\phi}(t, \xi)$ denotes the Fourier transform with respect to $x$, while $\widehat{\phi^*}(t_\omega, \xi_\omega)$ is the Fourier transform with respect to $x_\omega$. A similar comment applies to the spacetime Fourier transform $\widetilde{\phi}(\tau, \xi)$. \\

\subsection{The Projections $\Pi_\omega$ and $\Pi_\pm$.}

Let $\omega \in \sph^{n-1}$ and define the projections $\Pi_\omega$ by
        $$\Pi_\omega =  \frac{1}{2} \Big( I + \sigma \cdot \omega \Big)$$
where $I$ denotes the $2\times 2$ identity matrix. The properties of the matrices $\sigma$ implies that we have the important identities
    \begin{equation}\label{eqn - projection identities} I = \Pi_\omega + \Pi_{-\omega}, \qquad \sigma \cdot \omega =  \Pi_\omega - \Pi_{-\omega}, \qquad \Pi_\omega^\dagger = \Pi_\omega, \qquad \Pi_\omega \Pi_{-\omega}=0, \qquad \Pi_\omega^2 = \Pi_\omega.\end{equation}
Moreover we have the crucial (and well known) \emph{null structure} estimate $ \big| \Pi_\omega \Pi_{\omega'} \big| \lesa \theta(\omega, -\omega')$ which follows from the orthogonality of the projections $\Pi_{\pm \omega}$ by writing
    \begin{equation}\label{eqn - null structure estimate} \big|\Pi_\omega  \Pi_{\omega'} \big| = \big| \big( \Pi_\omega - \Pi_{-\omega'}\big) \Pi_{\omega'} \big| = \frac{1}{2} \big| (\omega + \omega') \cdot \sigma \Pi_{\omega'} \big| \lesa |\omega +  \omega'| \lesa \theta(\omega, -\omega').\end{equation}
This angle estimate plays a crucial role in eliminating a number of dangerous bilinear interactions. \\

Aside from using the projections $\Pi_\omega$ to exploit the null structure present in the Thirring model, they can also be used decompose the Dirac equation into half wave operators $\p_t \pm i |\nabla|$. More precisely, define the Fourier multipliers $\Pi_\pm$ as
        $$ \widehat{\Pi_\pm f}(\xi) = \Pi_{\pm \frac{\xi}{|\xi|}} \widehat{f}(\xi).$$
Then using the identities (\ref{eqn - projection identities}) we see that the Dirac equation $ (\p_t \pm \sigma \cdot \nabla) u = F$
is equivalent to
    \begin{align*}
      (\p_t \pm i |\nabla| ) \Pi_+ u &= \Pi_+ F \\
      (\p_t \mp i |\nabla|) \Pi_- u &= \Pi_- F.
    \end{align*}
This formulation for the Dirac equation has played a crucial role in the low regularity well-posedness theory developed over the last decade or so. See for instance the work of D'Ancona-Foschi-Selberg  \cite{D'Ancona2007b, D'Ancona2010a}, and Pecher
\cite{Pecher2006, Pecher2013a} and the second author \cite{Candy2013}, for the Dirac equation coupled to a scalar field,
as well as the of the current authors \cite{Bournaveas2012} for related ideas for the Spacetime-Monopole equation. \\

\subsection{Solution Operators}

Define the unitary operator $\mc{U}_\pm(t)$ on $L^2_x(\RR^n)$ by the formula
        $$ \mc{U}_\pm(t)[f] = e^{\mp i t |\nabla|} \Pi_+ f + e^{ \pm i t |\nabla|} \Pi_- f.$$
If we note that $\sigma \cdot \nabla =  i |\nabla| ( \Pi_+ + \Pi_-)$ then a short computation shows that
        $$ (\p_t \pm \sigma \cdot \nabla) \mc{U}_\pm(t)[f] = 0$$
and $\mc{U}_\pm(0) f = 0$. Thus $\mc{U}_\pm(t)f$ gives the homogeneous solution to $(\p_t \pm \sigma \cdot \nabla) u = 0$ with data $u(0) = f$. \\

\subsection{Sets and Multipliers}

The global well-posedness result in Theorem \ref{thm - main thm intro} depends on a number of sharp bilinear estimates. The proof of these bilinear estimates relies on being able to localise to certain frequency regions. The key tool to do this is the standard technique of dyadic decomposition. \\

 Take $\Phi \in C^\infty_0(\RR)$ as in (\ref{eqn - defn of Phi sum}) and let $\Phi_0(\xi) = \sum_{ \lambda \les 2^{-1}} \Phi( \tfrac{a}{\lambda})$ with $\Phi_0(0) = 1$. Define the Fourier multipliers $P_\lambda$, $C_d$, and $C^\pm_d$ via
    $$ \widehat{ P_\lambda f}(\xi) = \Phi\Big( \frac{|\xi|}{\lambda} \Big)\widehat{f}(\xi), \qquad \widetilde{C_d F}(\tau, \xi) = \Phi\Big( \frac{ \big| |\tau| - |\xi| \big|}{d} \Big) \widetilde{F}(\tau, \xi), \qquad \widetilde{C^\pm_d F}(\tau, \xi) = \Phi\Big( \frac{ \big|\tau \pm |\xi|\big|}{d} \Big) \widetilde{F}(\tau, \xi).$$
Note that $P_\lambda$ restricts the Fourier support to the set $\{ 2^{-1} \lambda \les |\xi| \les 2 \lambda\}$, $C_d$ restricts the Fourier support to be at distance $\approx d$ from the cone, and $C^\pm_d$ restricts the Fourier support onto the forward and backward components of the cone. Similarly we define multipliers $C_{\les d} $, $C^\pm_{\les d}$ as
  $$\widetilde{C_d F}(\tau, \xi) = \Phi_0\Big( \frac{ \big| |\tau| - |\xi| \big|}{d} \Big) \widetilde{F}(\tau, \xi), \qquad \widetilde{C^\pm_d F}(\tau, \xi) = \Phi_0\Big( \frac{\big| \tau \pm |\xi|\big|}{d} \Big) \widetilde{F}(\tau, \xi),$$
thus $C_{\les d}^\pm$ and $C_{\les d}$ are the (smooth) restriction of the Fourier support to the sets $\big\{ \big||\tau| - |\xi| \big| \les d \big\}$ and $\big\{ \big| \tau \pm |\xi| \big| \les d \big\}$. Note that if $F \in L^2(\RR^{1+n})$ we can decompose
            \begin{equation}\label{eqn - decomp of L^2 into distance from cone} F = \sum_{d \in 2^{\ZZ}} C^\pm_d F\end{equation}
where the sum converges in $L^2(\RR^{1+n})$. This is not true for $F \in L^\infty_t L^2_x$ for instance, as the the righthand side of (\ref{eqn - decomp of L^2 into distance from cone}) vanishes for functions with Fourier transforms supported on the lightcone, i.e. solutions to the wave equation. Thus some care has to be taken when decomposing functions into dyadic distances from the cone, as in general, (\ref{eqn - decomp of L^2 into distance from cone}) only holds modulo solutions to the wave equation. \\

The number of $\pm$ signs that will be floating around in various formula throughout this article can be daunting. To alleviate this somewhat, we define
        $$ \mf{C}^\pm_d = \Pi_+ C^\pm_d + \Pi_- C^\mp_d .$$
Thus $\mf{C}^\pm_d$ is the vector valued analogue of the $C^\pm_d$ multipliers. Note that $\mf{C}^\pm_d$ roughly corresponds to localising spacetime frequencies to distance $\sim d$ from the characteristic surface of the equation $(\p_t \pm \sigma \cdot \nabla) u = 0$. In a similar vein, we define
        $$ S^\pm_{\lambda, d} u =  \mf{C}^\pm_d P_\lambda u.$$
The multipliers $\mf{C}^\pm_{\les d}$ and $S^\pm_{ \lambda, \les d}$ are defined in the obvious manner. \\

As well as the above multipliers, we also need to be able to decompose into angular regions. Let $\alpha\ll 1$ and define $\mc{C}_\alpha$ to be a finitely overlapping cover of $\sph^{n-1}$ where every cap $\kappa \in \mc{C}_\alpha$ has radius $\alpha$. We use $\omega(\kappa)$ to denote the centre of the cap $\kappa \in \mc{C}_\alpha$ and so  $\kappa = \big\{ \omega \in \sph^{n-1} \, \big| \, \theta\big(\omega, \omega(\kappa)\big) \les \alpha \,\}$. For constants $C>1$ and $\kappa \in \mc{C}_\alpha$, we also define $C\kappa = \big\{ \omega \in \sph^{n-1}\, \big| \, \theta\big(\omega, \omega(\kappa)\big) \les C \alpha \big\}$.

Given a subset $\kappa \in \mc{C}_\alpha$ we define the sets
    $$A_\lambda(\kappa) = \big\{ (\tau, \xi) \in \RR^{1+n} \,  \big| \, \, \lambda 2^{-1} \les |\xi| \les 2 \lambda,\, \sgn(\tau) \tfrac{\xi}{|\xi|} \in \kappa \big\}, \qquad A^\pm_{\lambda}(\kappa) = \big\{\xi \in \RR^n \, \big| \,  \lambda 2^{-1} \les |\xi| \les 2 \lambda, \, \mp \tfrac{\xi}{|\xi|} \in \kappa\big\}$$
note that $A_{\lambda}(\kappa) \subset \RR^{1+n}$ while $A^\pm_{\lambda}(\kappa) \subset \RR^{n}$. These sets decompose the annulus $\{|\xi| \approx \lambda\}$ into radially directed, rectangularly shaped sets of size $\lambda \times (\alpha \lambda)^{n-1}$. Similarly we let
    $$A_{\alpha, \lambda}(\kappa) = \big\{ \lambda 2^{-1} \les |\xi| \les 2 \lambda,\, \big| |\tau| - |\xi| \big| \les c \alpha^2 \lambda, \, \sgn(\tau) \tfrac{\xi}{|\xi|} \in \kappa \big\},$$
and
     $$A^\pm_{\alpha, \lambda}(\kappa) = \big\{ \lambda 2^{-1} \les |\xi| \les 2 \lambda,\, \big| \tau \pm |\xi| \big| \les c\alpha^2 \lambda, \, \mp \tfrac{\xi}{|\xi|} \in \kappa \big\},$$
where $c\ll $ is some small constant. Clearly we have $\RR \times A^\pm_{\lambda}(\kappa) \subset A_{\lambda}(\kappa)$ and $A^{\pm}_{\alpha, \lambda} \subset A_{\alpha, \lambda}(\kappa)$. \\

For each of the angular sets defined above, we need the corresponding Fourier cutoffs. Fix $\alpha\ll 1$ and let $\Phi_\kappa$ be a smooth partition of unity on $\sph^{n-1}$ subordinate  to the caps $\kappa \in \mc{C}_\kappa$. Note that we may ensure that, after a rotation to centre the cap $\kappa$ on the $\xi_1$ axis, we have for $\xi \not = 0$ the derivative bounds
       \begin{equation}\label{eqn - derivative bounds for multipliers} \big|\p^N_{\xi_1} \big[\Phi_{\kappa}\big(\tfrac{\xi}{|\xi|})\big]\big| \lesa |\xi|^{-N}, \qquad \qquad \big|\p^N_{\xi_j} \big[\Phi_{\kappa}\big(\tfrac{\xi}{|\xi|})\big]\big| \lesa (\alpha |\xi|)^{-N}\qquad j\not = 1.\end{equation}
We now define the corresponding Fourier multiplier
        $$\widehat{{R^\pm_\kappa f}}(\xi) = \Phi_\kappa\big( \mp \tfrac{\xi}{|\xi|} \big) \widehat{f}(\xi)$$
and take
        $$R^\pm_{\kappa, \, d} = C^\pm_{\ll d} R^\pm_\kappa, \qquad  P_{\lambda, \kappa}^\pm = P_\lambda R^\pm_\kappa,  \qquad P^{\pm, \alpha}_{\lambda, \kappa} = C^\pm_{\ll \alpha^2 \lambda} P_\lambda R^\pm_\kappa.  $$
The multipliers and corresponding sets are summerised in Table \ref{table - sets + fourier multipliers}. \\

\begin{table}[h!]\centering
\begin{tabular}{|c|c|}
\hline
Sets & Fourier multipliers \\[2pt]
\hline
   $\{ |\xi| \approx \lambda \}$ & $P_\lambda$ \\[2pt]
   $\big\{ \big| |\tau| - |\xi| \big| \approx d \big\}$ & $C_d$ \\[2pt]
   $\big\{ \big| \tau \pm |\xi| \big| \approx d \big\}$ & $C^\pm_d$ \\[2pt]
   $\big\{ \mp\tfrac{\xi}{|\xi|} \in \kappa\big\} $ & $R_\kappa^\pm$ \\[2pt]
   $\big\{ \big| \tau \pm |\xi| \big| \ll d \,\, , \mp \tfrac{\xi}{|\xi|} \in \kappa\big\} $ & $R_{\kappa, d}^\pm$ \\[2pt]
   $A^\pm_{\lambda}(\kappa)$ & $P^\pm_{\lambda, \kappa}$ \\[2pt]
   $A^{\pm}_{\alpha, \lambda}( \kappa)$ & $P^{\pm, \alpha}_{\lambda, \kappa}$ \\[2pt]
  \hline
\end{tabular}\vspace{0.3cm}
 \caption{Sets and corresponding Fourier multipliers}\label{table - sets + fourier multipliers}
\end{table}

We would like to pretend that the operators introduced about are idempotent, i.e. satisfy $P^2 = P$. Unfortunately, this clearly fails (although it is \emph{almost} the case, in the sense that $P^2$ is a cutoff to the same region of frequency space). Thus, to work around this difficulty, we introduce cutoffs to slight enlargements of the sets used above. More precisely, if $A$ is one the sets defined above, then we let $^{\natural}A$ be the set which is $\frac{101}{100}$ times larger, thus $A \subset    {^{\natural}A}$. For example, we let
    $$^{\natural} A^\pm_{\lambda}(\kappa) = \{ \, \lambda 2^{-1} \tfrac{100}{101} \les |\xi| \les 2\lambda \tfrac{101}{100}, \,\, \tfrac{\xi}{|\xi|} \in \tfrac{101}{100} \kappa \}.$$
The sets $^{\natural}A_{\lambda}(\kappa)$, $^{\natural}A^\pm_{\lambda, \alpha}(\kappa)$, and $^{\natural}A_{\lambda, \alpha}(\kappa)$ are defined similarly. Moreover, if $A$ is one of previous sets,  we let $^{\natural}P$ denote a corresponding multiplier that is $1$ on $A$, and has support inside the corresponding set $^{\natural}A$. For instance $^{\natural}P^\pm_{\lambda, \kappa}$ restricts the Fourier transform to the set $^{\natural}A^\pm_\lambda(\kappa)$. Note that we always have identities of the form $^{\natural}P^\pm_{\lambda, \kappa} P^\pm_{ \lambda, \kappa} = P^\pm_{\lambda, \kappa}$ and furthermore we may assume that the new multipliers  $^{\natural}R^\pm_{\kappa}$ still satisfy the derivative bounds (\ref{eqn - derivative bounds for multipliers}).

\subsection{Estimate on coordinates in $A_{\alpha, \lambda}(\kappa)$}\label{subsec - est on dual coords}

For later use, we record here the following useful estimate on the dual coordinates $(\tau_\omega, \xi_\omega)$. We start by noting that
    \begin{equation}\label{eqn - estimate on dual coordinates no supp restrict}
       \big| \xi^1_\omega\big| =\frac{1}{\sqrt{2}} \big| |\tau| - |\xi| + |\xi| - \sgn(\tau) \xi \cdot \omega\big|, \qquad |\xi^\bot_\omega|^2 = \big| |\xi| + \xi \cdot \omega \big|\times \big| |\xi| - \xi \cdot \omega \big|, \qquad |\tau_\omega| \les |\tau| + |\xi|.
    \end{equation}
In particular, if $\alpha \ll 1$,  $\kappa \in \mc{C}_\alpha$, and $(\tau, \xi) \in A_{\lambda, \alpha}(\kappa)$ then have
    \begin{equation}\label{eqn - estimate on dual coordinates on the sets A} |\xi^1_\omega| \lesa  \big(\max\{ \alpha, \theta(\omega, \kappa)\}\big)^2 \lambda, \qquad |\xi^\bot_\omega| \lesa \theta(\omega, \kappa) \lambda , \qquad |\tau_\omega| \lesa \lambda.\end{equation}
Clearly the same bounds also hold for $(\tau, \xi) \in {^{\natural}A_{\lambda, \alpha}(\kappa)}$.

A slightly sharper estimate is available if $\omega \not \in 2 \kappa$. More precisely, the additional assumption on $\omega$ implies that $\alpha \les \theta(\omega, \kappa)$ and so $\big| |\tau| - |\xi| \big| \ll \theta^2(\omega, \kappa) \lambda$. Consequently
    \begin{equation}\label{eqn - estimate on dual coordinates on the sets A with omega restriction} |\xi^1_\omega| \approx  \theta^2(\omega, \kappa)\lambda, \qquad |\xi^\bot_\omega| \lesa   \theta(\omega, \kappa)\lambda, \qquad |\tau_\omega| \lesa \lambda.\end{equation}

\section{Function spaces}\label{sec - function spaces}

A standard method used to handle the critical wave equation, is to take a Banach space $Y$, and then define a norm at scale $\lambda$ via
            $$\| u \|_{F^\pm_\lambda} = \| P_\lambda u \|_{L^\infty_t L^2_x} + \| (\p_t \pm \nabla\cdot \sigma)P_\lambda u\|_{Y}$$
A good first choice for $Y$, (one that has worked well for the critical wave equation in high dimensions), is to take
        $$ Y= L^1_t L^2_x + X^{-\frac{1}{2}, 1}$$
where $X^{b, 1}_\lambda$ is the relevant $X^{s, b}$ type space with an $\ell^1$ sum in the distance to the cone, at scale $|\xi| \approx \lambda $. The idea is that away from the light cone, we use the $X^{s, b}$ type spaces, while close to the light cone, where the symbol blows up, we use the $L^1_t L^2_x$ type norm.

If now apply our $Y$ type norm to our well-posedness problem,  the essentially point would be to control the term
            $$ \big\| (\p_t + \sigma \cdot \nabla)^{-1} \big[(u^\dagger v) v\big] \big\|_{Y}. $$
 Let $F=u^\dagger v$. Since the product $u^\dagger v$ is a null form, we should be able to put $F \in L^2_{t, x}$ (i.e. as in Lemma \ref{lem - hom bilinear est}). If we also let $F_\lambda= P_\lambda F$, and $v_\mu = P_\mu v$, we need to prove estimates of the form
        \begin{equation}\label{eqn - hi-lo case intro} \| F_\lambda v_\mu \|_{Y} \lesa \mu \| F_\lambda \|_{L^2_{t,  x}} \| v_\mu \|_{F^-}. \end{equation}
This estimate is essentially true for $Y=L^1_t L^2_x + X^{-\frac{1}{2}, 1}_\lambda$ except for one particularly bad case where the output $F_\lambda v_\mu$ is concentrated near the null cone (so we are forced to use the $L^1_t L^2_x$ space), $v_\mu$ is also close to null cone (so is essentially a homogeneous solution), but $F$ is far from the null cone. Our only option is put $F_\lambda v_\mu$ in $L^1_t L^2_x$, but then since $F_\lambda \in L^2$, we need $v_\mu \in L^2_t L^\infty_x$ which fails (since $v_\mu$ is essentially a homogeneous solution). Note that this interaction is not a null interaction (as $F_\lambda$ is far from the cone) so null structure doesn't help.

The key observation, due to Tataru, is that we do have a $L^2 L^\infty$ type estimate, provided we look at null coordinates $(t_\omega, x_\omega)$ instead. This means that we can control the product in $L^1_{t_\omega} L^2_{x_\omega}$ \textit{but not} $L^1_t L^2_x$. Thus, for certain interactions, we need to replace the $L^1_t L^2_x$ component of the $Y$ norm, with a $L^1_{t_\omega} L^2_{x_\omega}$ type norm instead. This is possible but the construction of the required function spaces is a little involved.\\

In the rest of this section, we construct an appropriate replacement for the space $Y$. Essentially we will take $Y$ to be roughly $ L^1_t L^2_x + X^{s, b} +NF$ with an added term to deal with the regions far from the cone. Here $NF$ are the null frame spaces originally appearing in the work of Tataru \cite{Tataru2001}, and developed further by Tao \cite{Tao2001a}. See also the results in \cite{Krieger2003, Krieger2012, Sterbenz2010, Sterbenz2010a}.\\

\subsection{$X^{s, b}$ type norms}

We define the Dirac version of the Bourgain-Klainerman-Machedon spaces by using the norm\footnote{Note that $\| \cdot \|_{\dot{\mc{X}}^{b, q}_\pm}$ is not technically a norm, as it vanishes for distributions with Fourier support on the cone, thus it is only a $\emph{semi-norm}$. However we make the (fairly) standard abuse of notation and refer to all semi-norms as norms.}
            $$ \| u \|_{\dot{\mc{X}}^{b, q}_{ \pm}} = \Big( \sum_{d \in 2^{\ZZ}} d^{qb} \big\| \mf{C}^\pm_d u \big\|_{L^2_{t, x}}^q\Big)^\frac{1}{q}.$$
The norm $\dot{\mc{X}}^{b, q}_{ \pm}$ is related to the more standard norms   $\dot{X}^{b, q}_{\pm}$  adapted to the cone $\{ \tau \pm |\xi| = 0\}$ by the formula
           \begin{equation}\label{eqn - equivalence of Xsb defns} \| u \|_{\dot{\mc{X}}^{b, q}_{ \pm}} \approx \| \Pi_+ u \|_{\dot{X}^{b, q}_{\pm}} + \| \Pi_- u \|_{\dot{X}^{b, q}_{\mp}} \end{equation}
where
                $$ \| u \|_{\dot{X}^{b, q}_{\pm}} = \Big(\sum_{ d \in 2^{\ZZ}} d^{qb} \| C_d^{\pm} u \|_{L^2_{t,x}}^q \Big)^{\frac{1}{q}}$$
and we recall that $\mf{C}^\pm_d =  C^\pm_d \Pi_+ + C^\mp_d \Pi_-$. Note the this implies that $\| \Pi_+ u \|_{\dot{\mc{X}}^{b, q}_{ \pm}} \approx \| \Pi_+ u \|_{\dot{X}^{b, q}_{ \pm}}$ and a similar equality in the $\Pi_-$ case. The equivalence of the two norms (\ref{eqn - equivalence of Xsb defns}) follows by simply using the self-adjointness of the projections $\Pi_\pm$ to obtain $ \int (\Pi_\pm u)^\dagger \Pi_\mp v dx = \int u^\dagger \Pi_\pm \Pi_\mp v dx = 0$.

The $\dot{\mc{X}}^{b, q}_{\pm}$ norm is designed to exploit the fact that, at least for small times or small data, we expect that the $\Pi_+$ component of the solution to
    $$ (\p_t  \pm \sigma \cdot \nabla) u = F$$
to concentrate close to the cone $\{ \tau \pm |\xi| = 0\}$. The $X^{s, b}$ type norms have been a standard tool in the low regularity theory of nonlinear dispersive PDE since the work of Bourgain \cite{Bourgain1993a}, Kenig-Ponce-Vega \cite{Kenig1993}, and  Klainerman-Machedon \cite{Klainerman1995}. See also the earlier work of Beals \cite{Beals1983} who used similar spaces in the study of singularity formation for the nonlinear wave equation.\\

We make use of the following basic results.
\begin{lemma}\label{lem - Xsb decomp into free waves}
Let $u \in L^2_{t, x}$ with $\supp \widetilde{\Pi_+ u} \subset \big\{ \big| \tau \pm |\xi| \big| \approx d \}$ and  $\supp \widetilde{\Pi_- u} \subset \big\{ \big| \tau \mp |\xi| \big| \approx d \}$.
Then we can write
       \begin{equation}\label{eqn - Xsb decomp of u into average of free waves}  u(t, x) = \frac{1}{2\pi} \int_{|\tau| \approx d} e^{ i t \tau}\, \mc{U}_\pm(t)[f_\tau] d\tau \end{equation}
where $f_\tau \in L^2_x$ has the same $\xi$ support as $\widehat{u}$, and $\| f_\tau \|_{L^2_x} \lesa \| \widetilde{u}(\tau, \xi) \|_{L^2_\xi}$.
\begin{proof}
  We simply let
        $$ f_\tau(x) = \frac{1}{(2 \pi)^n} \int_{\RR^n} \big[ \widetilde{\Pi_+ u}(\tau \mp |\xi|, \xi)  + \widetilde{\Pi_- u}(\tau \pm |\xi|, \xi) \big]e^{ i x \cdot \xi}\, d \xi.$$
\end{proof}
\end{lemma}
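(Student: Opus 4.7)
The approach is to reverse-engineer $f_\tau$ from the desired identity and then verify it by direct Fourier computation. Decompose $u = \Pi_+ u + \Pi_- u$; by hypothesis, $\widetilde{\Pi_\pm u}$ is supported near one of the two cones $\tau' \pm |\xi|=0$, at distance $\approx d$. Separately, expand $\mc{U}_\pm(t)[f_\tau] = e^{\mp it|\nabla|}\Pi_+ f_\tau + e^{\pm it|\nabla|}\Pi_- f_\tau$ and insert into \eref{eqn - Xsb decomp of u into average of free waves}; taking the spatial Fourier transform shows that each modulation $e^{it\tau}\mc{U}_\pm(t)[f_\tau]$ contributes two temporal phases $e^{it(\tau \mp |\xi|)}$ and $e^{it(\tau \pm |\xi|)}$ on the Fourier side of $u$.

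Next, take the full space-time Fourier transform of both sides of \eref{eqn - Xsb decomp of u into average of free waves}: the $\tau$-integration acts as an inverse temporal Fourier transform, and matching phases with $\widetilde{u}(\tau',\xi)$ after the changes of variable $\tau = \tau' \pm |\xi|$ and $\tau = \tau' \mp |\xi|$ leads to the prescription
$$ \hat{f_\tau}(\xi) = \widetilde{\Pi_+ u}(\tau \mp |\xi|, \xi) + \widetilde{\Pi_- u}(\tau \pm |\xi|, \xi). $$
The natural plan is therefore to \emph{define} $f_\tau$ by this formula (equivalently via the inverse spatial Fourier transform, as in the author's notation), and then verify \eref{eqn - Xsb decomp of u into average of free waves} by running the computation in reverse, using $\Pi_+ + \Pi_- = I$ and $\Pi_+ \Pi_- = 0$ to eliminate cross terms.

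The three remaining tasks are routine. The restriction of the $\tau$-integral to $|\tau|\approx d$ is forced by the support hypothesis, since the shifted arguments $\tau\mp|\xi|$ and $\tau\pm|\xi|$ entering $\hat{f_\tau}$ must themselves sit at distance $\approx d$ from the light cone, which translates to $|\tau|\approx d$. The $\xi$-support of $\hat{f_\tau}$ matches that of $\widehat{u}$ because the shifts act only in the temporal variable and $\Pi_\pm$ is a radial multiplier. Finally, the norm estimate follows from Plancherel, $\|f_\tau\|_{L^2_x} = \|\hat{f_\tau}\|_{L^2_\xi}$, the triangle inequality applied to the two summands of $\hat{f_\tau}$, and the trivial bound $|\Pi_\pm|\les 1$. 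I expect no real obstacle: the content of the lemma is the algebraic identification of $f_\tau$, and the analytic parts are soft consequences of Fourier inversion.
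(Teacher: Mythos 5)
Your proposal is correct and arrives at exactly the same formula for $f_\tau$ that the paper writes down; the paper's proof consists of nothing more than stating that formula, so your reverse-engineering derivation and the accompanying verifications (using $\Pi_+ + \Pi_- = I$, $\Pi_+\Pi_- = 0$, the change of variables $\tau\mapsto\tau\mp|\xi|$, and Plancherel for the norm bound) are exactly the computations the authors left implicit.
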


The identity (\ref{eqn - Xsb decomp of u into average of free waves}) easily implies the well-known \emph{transference principle}. Namely,  if for every $\tau \in \RR$  we have the bound $\| e^{ i t \tau}  \mc{U}_\pm(t) f \|_X \lesa \| f \|_{L^2_x}$, then $\| u \|_X \lesa \| u \|_{\dot{\mc{X}}^{\frac{1}{2}, 1}_\pm}$ (for $u \in L^2_{t, x}$ say). In other words, any homogeneous estimate for the Dirac equation, immediately implies the same estimate holds for elements of $\dot{\mc{X}}^{\frac{1}{2}, 1}_\pm$. See for instance \cite[Proposition 5.1]{Tataru2001} or \cite[Proposition 3.7]{Klainerman2002}. In particular, $\dot{\mc{X}}^{\frac{1}{2}, 1}_\pm$ controls the Strichartz norms $L^q_t L^r_x$. More precisely, if say $u \in L^2_{t, x}$ with $\supp \widehat{u} \subset \{ |\xi| \approx \lambda\}$, then for any $\frac{1}{q} + \frac{n-1}{2 r} \les \frac{n-1}{4}$ with $(q, r) \not = (2, \infty)$ we
have\footnote{After an application of the triangle inequality and scaling, it is enough to consider the case $u = \mathfrak{C}^\pm_1 u$. We now apply (\ref{eqn - Xsb decomp of u into average of free waves}) followed by the homogeneous Strichartz estimate  to deduce
  $$ \| u \|_{L^q_t L^r_x} \les \int_{|\tau| \approx 1} \| \mc{U}_\pm(t) f_{\tau} \|_{L^q_t L^r_x} d\tau \lesa \lambda^{n(\frac{1}{2} - \frac{1}{r}) - \frac{1}{q}} \int_{|\tau| \approx 1}  \| f_\tau \|_{L^2_x} d\tau \lesa \lambda^{n(\frac{1}{2} - \frac{1}{r}) - \frac{1}{q}} \| u \|_{L^2_{t, x}}.  $$
}
        \begin{equation}\label{eqn - Xsb controls strichartz}
        \| u \|_{L^q_t L^r_x} \lesa \lambda^{n ( \frac{1}{2} - \frac{1}{r}) - \frac{1}{q}} \| u \|_{\dot{\mc{X}}^{\frac{1}{2}, 1}_\pm}
        \end{equation}
The transference principle can save a significant amount of work when working with $\dot{\mc{X}}^{\frac{1}{2}, 1}_\pm$ type norms. Finally, we recall the well-known fact that after truncating in time,  homogenous solutions belong to  $\dot{\mc{X}}^{\frac{1}{2}, 1}_\pm$.

\begin{lemma}[Homogeneous solutions belong to $\dot{\mc{X}}^{\frac{1}{2}, 1}_\pm$]\label{lem - Xsb, homogeneous solns, and time cutoffs}
Let $\rho \in C^\infty_0(\RR)$ and $T>0$. Then
    $$ \big\| \rho\big(\tfrac{t}{T}\big) \mc{U}_\pm(t) f \big\|_{\dot{\mc{X}}^{\frac{1}{2}, 1}_{\pm}} \lesa \| f \|_{L^2_x}$$
where the constant is independent of $T$.
\begin{proof}
  Let $\rho_T(t) = \rho( \frac{t}{T})$. If we observe that $\widetilde{[\rho_T(t) \mc{U}_\pm(t) f]}(\tau, \xi) = \widehat{\rho}( \tau \pm |\xi|) \widehat{\Pi_+ f}(\xi) +  \widehat{\rho}( \tau \mp |\xi|) \widehat{\Pi_- f}(\xi)$ then
    $$ \big\| \rho\big(\tfrac{t}{T}\big) \mc{U}_\pm(t) f \big\|_{\dot{\mc{X}}^{\frac{1}{2}, 1}_{\pm}} \les 2\| \rho_T \|_{\dot{B}^\frac{1}{2}_{2, 1}} \| f \|_{L^2_x}. $$
  Hence result follows by recalling that $\| \rho_T \|_{\dot{B}^{\frac{1}{2}}_{2, 1}(\RR)} = \| \rho \|_{\dot{B}^\frac{1}{2}_{2, 1}(\RR)}<\infty$.
\end{proof}
\end{lemma}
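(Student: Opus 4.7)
The plan is to reduce everything to a computation of the spacetime Fourier transform and then exploit the scale invariance of the one-dimensional Besov norm $\dot{B}^{1/2}_{2,1}(\RR)$, which is the only reason the bound can be uniform in $T$. First I would split the homogeneous solution via the spatial Fourier multipliers $\Pi_\pm$, writing $\mathcal{U}_\pm(t)f = e^{\mp it|\nabla|}\Pi_+ f + e^{\pm it|\nabla|}\Pi_- f$. Since $\Pi_\pm$ are orthogonal spatial projections and commute with $C^\pm_d$, the projection $\mf{C}^\pm_d = \Pi_+ C^\pm_d + \Pi_- C^\mp_d$ applied to $\rho_T(t)\mathcal{U}_\pm(t)f$ decouples into two pieces whose $L^2_{t,x}$ norms square-add, reducing matters to estimating the two scalar quantities $\|C^\pm_d[\rho_T(t) e^{\mp it|\nabla|}\Pi_+ f]\|_{L^2_{t,x}}$ and $\|C^\mp_d[\rho_T(t) e^{\pm it|\nabla|}\Pi_- f]\|_{L^2_{t,x}}$.

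Next, for each of these pieces I would take the spacetime Fourier transform, which factorises: for instance
\[
\widetilde{\bigl[\rho_T(t)e^{\mp it|\nabla|}\Pi_+ f\bigr]}(\tau,\xi) = \widehat{\rho_T}(\tau \pm |\xi|)\,\widehat{\Pi_+ f}(\xi),
\]
so the $C^\pm_d$ cutoff picks out the region $|\tau \pm |\xi||\approx d$. Plancherel in $t$ at a fixed $\xi$, followed by the change of variables $\sigma = \tau \pm |\xi|$, gives a factor $\|C_d \rho_T\|_{L^2_t}$ that is independent of $\xi$; then Plancherel in $\xi$ and orthogonality of $\Pi_\pm$ yield
\[
\|\mf{C}^\pm_d(\rho_T \mathcal{U}_\pm f)\|_{L^2_{t,x}} \lesssim \|C_d \rho_T\|_{L^2_t}\,\|f\|_{L^2_x}.
\]
Summing in dyadic $d\in 2^{\ZZ}$ with the weight $d^{1/2}$ produces exactly $\|\rho_T\|_{\dot{B}^{1/2}_{2,1}(\RR)}\,\|f\|_{L^2_x}$.

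Finally, I would observe that at the index $(s,p,q) = (1/2,2,1)$ the homogeneous Besov norm on $\RR$ is invariant under the scaling $\rho \mapsto \rho(\cdot/T)$; indeed $\|\rho_T\|_{\dot{B}^s_{2,1}} = T^{1/2 - s}\|\rho\|_{\dot{B}^s_{2,1}}$, which equals $\|\rho\|_{\dot{B}^{1/2}_{2,1}}$ precisely when $s=1/2$. Since $\rho\in\mathcal{S}(\RR)$, the right-hand side is finite, completing the proof with a constant independent of $T$. There is no real obstacle here; the only subtle point is recognising that the uniformity in $T$ is an expression of the fact that $\dot{B}^{1/2}_{2,1}(\RR)$ is the scale-invariant Besov space of one-dimensional functions at $L^2$-based regularity $1/2$, which matches the weighting in the definition of $\dot{\mathcal{X}}^{1/2,1}_\pm$.
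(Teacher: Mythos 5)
Your proposal is correct and follows essentially the same route as the paper: compute the spacetime Fourier transform of $\rho_T(t)\mc{U}_\pm(t)f$, factorise it through the $\Pi_\pm$ components, reduce to a $\dot{B}^{1/2}_{2,1}(\RR)$ norm of $\rho_T$ in the $t$-variable, and invoke the scale invariance of that Besov norm. You have simply written out the Plancherel and change-of-variables steps that the paper leaves implicit.
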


\begin{remark}\label{rem - Xsb not sufficient to prove critical result}
  An obvious question immediately arises, namely, can we simply prove Theorem \ref{thm - main thm with u, v} by iterating the equation in the norm $\dot{\mc{X}}^{\frac{1}{2}, 1}_\pm$? In other words, we are asking if we can bound the cubic term in $\dot{\mc{X}}^{-\frac{1}{2},1}_\pm$, which in view of Lemma \ref{lem - hom bilinear est} and the transference principle,  would more or less require the estimate
                \begin{equation}\label{eqn - Xsb failure} \| F_\lambda v_\mu \|_{\dot{\mc{X}}_+^{-\frac{1}{2}, 1}} \lesa \mu \| F_\lambda \|_{L^2_{t, x}} \| v_\mu \|_{\dot{\mc{X}}^{\frac{1}{2}, 1}_-} \end{equation}
  for $\mu \ll \lambda$.   Unfortunately, (\ref{eqn - Xsb failure}) fails. This can be seen by making the choice $\widetilde{F} = \chi_{\Omega_1}$, $\widetilde{v}=\chi_{\Omega_2}$ where
        $$ \Omega_1 = \{ \lambda - 4\les |\tau| \les \lambda + 4, \, \lambda - 4 \les |\xi| \les \lambda + 4 \}, \qquad \Omega_2= \{ |\tau| \les 1, \, 2\les |\xi| \les 3 \}.$$
  Note that if $ \frac{1}{2} d \les |\tau \pm |\xi|| \les 2 d$ and $\lambda \les |\xi| \les \lambda + 1$, (for $d\ll1 $ say), and $(\tau', \xi') \in \Omega_2$, then $(\tau - \tau', \xi - \xi') \in \Omega_1$ since
            $$ |\tau - \tau'| \les |\tau \pm |\xi| | + |\xi| + |\tau'| \les 2d + \lambda + 1 + 1 \les \lambda + 4$$
  and similarly
        $$ |\tau - \tau'| \g |\xi| - |\tau \pm |\xi| | - |\tau'| \g \lambda - 2d - 1 \g \lambda - 4.$$
The argument for the $\xi - \xi'$ variable is similar. Therefore,
    $$ \| C_d( F v) \|_{L^2_{t, x}} = \Big\| \int_{\Omega_2} \chi_{\Omega_1}(\tau - \tau', \xi - \xi') d\tau'd\xi' \Big\|_{L^2_{\tau, \xi}(|\tau \pm |\xi| \approx d)} \gtrsim |\Omega_1|  \,\big| \{ |\tau \pm |\xi|| \approx d, \lambda\les |\xi| \les \lambda + 1\}\big|^\frac{1}{2} \approx d^\frac{1}{2}$$
and consequently
        $$ \| F v \|_{X^{-\frac{1}{2}, 1}} \g \sum_{d\ll 1}  d^{-\frac{1}{2}} d^\frac{1}{2} = \infty.$$
On the other hand it is easy to check that the righthand side of (\ref{eqn - Xsb failure}) is finite, thus (\ref{eqn - Xsb failure}) fails. We make the remark that this counterexample does not include interactions close to the cone, thus null structure would not help. To summarise, endpoint $X^{s, b}$ type spaces together with bilinear estimates, do not appear to be enough to obtain critical well-posedness results. \end{remark}

\subsection{Atomic Banach Spaces}

The remaining function spaces used in this article have a complicated structure as they need to capture certain space-time integrability properties of our solution in arbitrary null frames. The method to define these spaces, going back to the work of Tataru \cite{Tataru2001}, is via an \emph{atomic} construction. The standard set up is as follows. We start with a subset $E \subset \s'$ such that for every  $\phi \in \s$ we have
        \begin{equation}\label{eqn - gen cond on atoms} \sup_{ f \in E} \big| f (\phi ) \big| <\infty. \end{equation}
The set $E$ consists of our \emph{atoms}. We then define the atomic Banach space $\mc{A}(E)$ as
        \begin{equation}\label{eqn - defn atomic banach space} \mc{A}(E) = \Big\{ \sum_{j \in \NN} c_j f_j \,\, \Big| \,\, (c_j)_{j \in \NN} \in \ell^1(\NN), \,\, f_j \in E \,\Big\} \end{equation}
with the norm
        \begin{equation}\label{eqn - defn atomic banach space norm} \| f \|_{\mc{A}(E)} = \inf \Big\{\, \sum_{j \in \NN} |c_j| \, \, \Big| \,\, f = \sum_j c_j f, \,\,\, (c_j)_{j \in \NN} \in \ell^1(\NN), \,\,\, f_j \in E \,\, \Big\}
        \end{equation}
It is easy to check that provided $c_j \in \ell^1(\NN)$ and $f_j \in E$, the condition (\ref{eqn - gen cond on atoms}) implies that the sum $\sum_j c_j f_j$ converges in $\s'$ and thus $\mc{A}(E)$ is a well-defined subset of $\s'$. Moreover  a standard computation shows that $\| \cdot \|_{\mc{A}(E)}$ is indeed a norm on $\mc{A}(E)$ (which is stronger than the standard Schwartz topology on $\s'$), and the pair $(\mc{A}(E), \| \cdot \|_{\mc{A}(E)})$ form a Banach space. \\

Given a linear operator $T$, and a Banach space $X \subset \s'$, we often need to prove inequalities of the form
    \begin{equation}\label{eqn - gen A(E) controls X}
            \big\| T f \big\|_X \lesa \| f \|_{\mc{A}(E)}.
    \end{equation}
In general, this can be broken down into two steps. The first step is to show that if $f = \sum_{ j \in \NN} c_j f_j$ is a decomposition of $f$ into atoms $f_j \in E$, then
        \begin{equation}\label{eqn - gen Tf = sum atoms}  Tf = \sum_{j \in \NN} c_j Tf_j \end{equation}
with convergence in $\s'$ say. The second is to obtain (\ref{eqn - gen A(E) controls X}) in the special case where  $f \in E$ is an atom. In other words show that we have the bound
        \begin{equation}\label{eqn - gen boundedness on atoms} \sup_{ f \in E} \big\| T f \big\|_X \lesa 1. \end{equation}
It is a simple exercise to show that (\ref{eqn - gen Tf = sum atoms}) and  (\ref{eqn - gen boundedness on atoms}), together with the uniqueness of limits in $\s'$, implies the bound (\ref{eqn - gen A(E) controls X}). Note that in general, it is \emph{not} true that boundedness on atoms (\ref{eqn - gen boundedness on atoms}) directly implies the bound (\ref{eqn - gen A(E) controls X}), see for instance  \cite{Bownik2005} for an example related to the Hardy space. Thus some care has to be taken to first check the identity (\ref{eqn - gen Tf = sum atoms}) as well as the boundedness on atoms. However, in the arguments used in the current paper, the identity (\ref{eqn - gen Tf = sum atoms}) is almost immediately, and thus we often leave the proof of (\ref{eqn - gen Tf = sum atoms}) to the reader. The reduction of (\ref{eqn - gen A(E) controls X}) to (\ref{eqn - gen boundedness on atoms}) is used frequently in the arguments to follow.\\

As a special case of (\ref{eqn - gen A(E) controls X}), note that if $X \subset \s'$ is a Banach space with $E \subset \{ \| f \|_{X} \lesa 1 \}$  (thus the set of atoms is contained inside the unit ball of $X$), then we immediately deduce the continuous embedding $\mc{A}(E) \subset X$. Conversely, if the unit ball of $X$ is contained in the set of atoms $E$, then we have $X \subset \mc{A}(E)$. Of course this condition can be weakened considerably, for instance if $E$ contains a \emph{dense} subset of $\{ \| f \|_{X}\les 1\}$, then we still have $X \subset \mc{A}(E)$. See \cite{Bonsall1991} for a more general result of this nature.

\subsection{Null Frame spaces - $NF^\pm(\kappa)$, $PW^\pm(\kappa)$, and $[NF^\pm]^*(\kappa)$.}

As mentioned previously, the wave equation satisfies improved regularity properties in certain null frames $(t_\omega, x_\omega)$. However, we cannot pick a fixed frame $(t_\omega, x_\omega)$ to work in, and instead have to work in certain \emph{averages} over directions $\omega \in \kappa$. The fact that we have to control our solution in many coordinates frames simultaneously forces us to use the rather complicated atomic construction (\ref{eqn - defn atomic banach space}) and  (\ref{eqn - defn atomic banach space norm}) to define the necessary spaces. The construction below is heavily based on the original work of Tataru on the wave maps problem \cite{Tataru2001}. Accordingly we follow, as much as possible, the notation introduce in \cite{Tataru2001}.  \\

The first null frame space we introduce is based on $L^1_{t_\omega} L^2_{x_\omega}$, and should be thought of as a suitable replacement for the $L^1_t L^2_x$ norm. It is designed to capture the improved space-time estimates that we get in null coordinates, and will handle the case where we are very close to the cone, in which case the  $\dot{\mc{X}}^{-\frac{1}{2}, 1}_{ \pm}$ norm is not so effective. The definition is as follows.

Let $\kappa \in \mc{C}_\alpha$ be a cap on the sphere. We say that $F$ is a $NF^\pm(\kappa)$ \emph{atom} if there exists $\omega \not \in 2 \kappa$ such that
        $$ \|  \Pi_{\pm\omega} F \|_{L^1_{t_\omega} L^2_{x_\omega}} + \theta(\omega, \kappa)^{-1} \| \Pi_{\mp \omega} F \|_{L^1_{t_\omega} L^2_{x_\omega}} \les 1.$$
We then define the atomic Banach space $NF^\pm(\kappa)$ via\footnote{Note that if $F$ is a $NF^\pm(\kappa)$ atom then for every $\phi \in \s(\RR^{1+n})$,
    $$ |F(\phi)| \les \big( \|  \Pi_{\pm\omega} F \|_{L^1_{t_\omega} L^2_{x_\omega}} + \theta(\omega, \kappa)^{-1} \| \Pi_{\mp \omega} F \|_{L^1_{t_\omega} L^2_{x_\omega}}\big) \| \phi \|_{L^\infty_{t_\omega} L^2_{x_\omega}} \lesa \big\| ( 1 + |t| + |x|)^{n+1} \phi \|_{L^\infty_{t, x}} $$
and so (\ref{eqn - gen cond on atoms}) holds.}
(\ref{eqn - defn atomic banach space}) where we take $E$ to be the set of all $NF^\pm(\kappa)$ atoms, thus
    $$ NF^\pm(\kappa) = \Big\{ \sum_j c_j F_j \,\,  \Big|\, \, (c_j ) \in \ell^1, \,\, F_j \text{ is a $NF^\pm(\kappa)$ atom } \, \Big\}$$
with the obvious norm defined as in (\ref{eqn - defn atomic banach space norm}). We frequently make use of the immediate inequality
        $$ \| F \|_{NF^\pm(\kappa)} \les \inf_{\omega \not \in 2 \kappa} \Big(  \big\|  \Pi_{\pm\omega} F \big\|_{L^1_{t_\omega} L^2_{x_\omega}} + \theta(\omega, \kappa)^{-1} \big\| \Pi_{\mp \omega} F \big\|_{L^1_{t_\omega} L^2_{x_\omega}}\Big).$$\\

The second null frame space we define forms a replacement for the missing $L^2_t L^\infty_x$ Strichartz estimate and is based on $L^2_{t_\omega} L^\infty_{x_\omega}$ type norms. Similar to the $NF^\pm(\kappa)$ space we use an atomic definition.

Let $\kappa \in \mc{C}_\alpha$. We say $\psi$ is a $PW^\pm(\kappa)$ \emph{atom}, if there exists $\omega \in 2 \kappa$ such that
        $$ \| \Pi_{\pm \omega} \psi \|_{L^2_{t_\omega} L^\infty_{x_\omega}} + \alpha^{-1} \| \Pi_{\mp \omega} \psi \|_{L^2_{t_\omega} L^\infty_{x_\omega}} \les 1.$$
The atomic Banach space $PW^\pm(\kappa)$ is then defined to be made up of sums of $PW^\pm(\kappa)$ atoms as in (\ref{eqn - defn atomic banach space}) with the induced norm (\ref{eqn - defn atomic banach space norm}). Provided we have two sufficiently separated caps $\kappa$ and $\bar{\kappa}$, the null frame space $NF^\pm(\bar{\kappa})$ and the plane wave type space $PW^\pm(\kappa)$ have a simple relation via what is essentially an application of Holder's inequality.

\begin{lemma}\label{lem - PW NF L2 duality}
Let $0<\alpha, \beta \ll 1$. Assume $\kappa \in \mc{C}_\alpha$ and $\bar{\kappa} \in \mc{C}_\beta$ with $\theta(\kappa, \bar{\kappa}) \g 5 \max\{ \alpha, \beta\}$.  Let $\psi : \RR^{n+1} \rightarrow \CC^2$, and $F$ a scalar valued function. Then
            $$ \| F \psi \|_{NF^\pm(\bar{\kappa})} \lesa \| F \|_{L^2_{t, x}} \| \psi \|_{PW^\pm(\kappa)}.$$
More generally, for a fixed $\kappa \in \mc{C}_\alpha$, we have the orthogonality property
    \begin{equation}\label{eqn - lem PW NF L2 duality - orthog est} \bigg( \sum_{\substack{\bar{\kappa} \in \mc{C}_\beta \\ \theta(\kappa, \bar{\kappa})\g 5 \max\{\alpha, \beta\}  }} \Big\| P^{\pm,  \beta}_{\lambda, \bar{\kappa}} \Pi_+ \big( F v \big) \Big\|_{NF^\pm(\bar{\kappa})}^2 + \Big\| P^{\mp,  \beta}_{\lambda, \bar{\kappa}} \Pi_- \big( F v \big) \Big\|_{NF^\pm(\bar{\kappa})}^2\bigg)^\frac{1}{2} \lesa \| F \|_{L^2_{t, x}} \| v \|_{PW^\pm(\kappa)}.\end{equation}
  \begin{proof}
   We start by assuming $\psi$ is a $PW^\pm(\kappa)$ atom $\psi$, thus there exists $\omega \in  2\kappa$ such that
                $$ \| \Pi_{\pm\omega} \psi \|_{L^2_{t_\omega} L^\infty_{x_\omega}} + \alpha^{-1} \| \Pi_{\mp\omega} \psi \|_{L^2_{t_\omega}L^\infty_{x_\omega}} \les 1.$$
    The assumption $\theta(\kappa, \bar{\kappa}) \g 5\max\{\alpha,  \beta\}$ implies that $\theta(\omega, \bar{\kappa}) \g \theta(\kappa, \bar{\kappa}) - \theta(\omega, \kappa) \g 3 \max\{ \alpha, \beta\}$. In particular, $\omega \not \in 2 \bar{\kappa}$ and $\theta(\omega, \bar{\kappa})^{-1} \lesa  \alpha^{-1}$. Hence via Holder's inequality, we obtain
        \begin{align*}
          \| F \psi \|_{NF^\pm(\kappa)} &\les   \big\| F\Pi_{\pm \omega}  \psi \big\|_{L^1_{t_\omega} L^2_{x_\omega}} + \theta(\omega, \bar{\kappa})^{-1} \big\| F \Pi_{\mp \omega} \psi \big\|_{L^1_{t_\omega} L^2_{x_\omega}}\\
                    &\lesa \| F \|_{L^2_{t, x}} \Big(\| \Pi_{\pm\omega}\psi \|_{L^2_{t_\omega} L^\infty_{x_\omega}} + \alpha^{-1} \| \Pi_{\mp\omega} \psi \|_{L^2_{t_\omega} L^\infty_{x_\omega}}\Big)\\
                    &\les \| F \|_{L^2_{t, x}}.
        \end{align*}
   The argument for a general $\psi \in PW^\pm(\kappa)$ follows by decomposing $\psi = \sum_j c_j \psi_j$ where $\psi_j$ are atoms, and noting that $F \psi = \sum_j c_j F \psi_j$ in $\s'$.

   The proof of (\ref{eqn - lem PW NF L2 duality - orthog est}) is similar, but requires the additional complication of the orthogonality estimate
        $$\bigg( \sum_{\substack{\bar{\kappa} \in \mc{C}_\beta \\ \theta(\kappa, \bar{\kappa})\g 5 \max\{\alpha, \beta\}  }} \Big\| P^{\pm,  \beta}_{\lambda, \bar{\kappa}} \Pi_+ G \Big\|_{NF^\pm(\bar{\kappa})}^2 + \Big\| P^{\mp,  \beta}_{\lambda, \bar{\kappa}} \Pi_- G \Big\|_{NF^\pm(\bar{\kappa})}^2\bigg)^\frac{1}{2}\lesa \|\Pi_{\pm \omega} G \|_{L^1_{t_\omega} L^2_{x_\omega}} + \alpha^{-1} \| \Pi_{\mp \omega} G \|_{L^1_{t_\omega} L^2_{x_\omega}}$$
   which can be found in (iii) Corollary \ref{cor - orthog in null frame} below.
  \end{proof}
\end{lemma}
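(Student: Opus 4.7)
The plan is to follow the usual two-step reduction for atomic Banach spaces: first decompose $\psi$ into $PW^\pm(\kappa)$ atoms, then verify the estimate for a single atom using Hölder's inequality in a well-chosen null frame. Since the inequality $\|F\psi\|_{NF^\pm(\bar\kappa)} \lesa \|F\|_{L^2_{t,x}} \sum_j |c_j|$ is linear on the right once we peel off an $\ell^1$ sum, it suffices (after checking that $F\psi = \sum_j c_j F\psi_j$ converges in $\s'$, which is routine) to prove the atomic version: if there exists $\omega \in 2\kappa$ with
$$\|\Pi_{\pm\omega}\psi\|_{L^2_{t_\omega}L^\infty_{x_\omega}} + \alpha^{-1}\|\Pi_{\mp\omega}\psi\|_{L^2_{t_\omega}L^\infty_{x_\omega}} \les 1,$$
then $\|F\psi\|_{NF^\pm(\bar\kappa)} \lesa \|F\|_{L^2_{t,x}}$.

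The key geometric observation is the separation of caps. From $\theta(\kappa,\bar\kappa) \g 5\max\{\alpha,\beta\}$ and $\omega \in 2\kappa$ we get $\theta(\omega,\bar\kappa) \g \theta(\kappa,\bar\kappa) - \theta(\omega,\kappa) \g 3\max\{\alpha,\beta\}$, so in particular $\omega \notin 2\bar\kappa$ and hence $\omega$ is an admissible direction for the $NF^\pm(\bar\kappa)$ norm, with $\theta(\omega,\bar\kappa)^{-1} \lesa \alpha^{-1}$. I would then use the defining inequality for $NF^\pm(\bar\kappa)$ with this $\omega$, split $F\psi = F\Pi_{\pm\omega}\psi + F\Pi_{\mp\omega}\psi$ (noting that $F$ is scalar, so the projections commute past it), and apply Hölder in the null frame $(t_\omega,x_\omega)$:
$$\|F\Pi_{\pm\omega}\psi\|_{L^1_{t_\omega}L^2_{x_\omega}} \les \|F\|_{L^2_{t_\omega,x_\omega}} \|\Pi_{\pm\omega}\psi\|_{L^2_{t_\omega}L^\infty_{x_\omega}}.$$
Since the Jacobian of the change of variables $(t,x) \mapsto (t_\omega,x_\omega)$ is $1$, we have $\|F\|_{L^2_{t_\omega,x_\omega}} = \|F\|_{L^2_{t,x}}$. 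Combining the two terms and absorbing $\theta(\omega,\bar\kappa)^{-1} \lesa \alpha^{-1}$ against the $\alpha^{-1}$ weight on $\Pi_{\mp\omega}\psi$ in the atom bound yields the desired estimate.

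For the orthogonality estimate \eref{eqn - lem PW NF L2 duality - orthog est}, the same reduction to a single $PW^\pm(\kappa)$ atom applies, but we cannot simply insert the projectors $P^{\pm,\beta}_{\lambda,\bar\kappa}\Pi_+$ and $P^{\mp,\beta}_{\lambda,\bar\kappa}\Pi_-$ term by term and sum in $\ell^2$ over $\bar\kappa$. Instead, I would use the stated null-frame orthogonality from Corollary~\ref{cor - orthog in null frame}(iii), which bounds the $\ell^2_{\bar\kappa}$-sum of the $NF^\pm(\bar\kappa)$ norms of $P^{\pm,\beta}_{\lambda,\bar\kappa}\Pi_\pm G$ by the single null-frame quantity $\|\Pi_{\pm\omega}G\|_{L^1_{t_\omega}L^2_{x_\omega}} + \alpha^{-1}\|\Pi_{\mp\omega}G\|_{L^1_{t_\omega}L^2_{x_\omega}}$ (where $\omega \in 2\kappa$ is the direction associated to the atom $\psi$). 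Applying this with $G = F\psi$ and then Hölder in $(t_\omega,x_\omega)$ exactly as above gives the desired bound by $\|F\|_{L^2_{t,x}}\|\psi\|_{PW^\pm(\kappa)}$.

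The main obstacle, conceptually, is not the Hölder step (which is clean once the frame is chosen) but ensuring that the separation hypothesis $\theta(\kappa,\bar\kappa) \g 5\max\{\alpha,\beta\}$ translates into uniform control of $\theta(\omega,\bar\kappa)^{-1}$ for every $\omega \in 2\kappa$; this is why the factor $5$ (rather than $2$) is needed. The orthogonality version further relies on having a dedicated square-function estimate in the null frame, which I would simply invoke from the referenced corollary rather than reprove here.
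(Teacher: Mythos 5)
Your proposal is correct and follows essentially the same route as the paper: reduce to a $PW^\pm(\kappa)$ atom, use the cap-separation hypothesis to show the atom's direction $\omega$ is admissible for $NF^\pm(\bar\kappa)$ with $\theta(\omega,\bar\kappa)^{-1}\lesssim\alpha^{-1}$, apply H\"older in the $(t_\omega,x_\omega)$ frame, and for the orthogonality estimate invoke Corollary \ref{cor - orthog in null frame}(iii).
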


The final null frame space we require is a version of the energy type norm $L^\infty_t L^2_x$ in null frames. Given a cap $\kappa \subset \mc{C}_\alpha$, we define the norm $\| \cdot \|_{[NF^\pm]^*(\kappa)}$ as
    $$ \| u \|_{[NF^\pm]^*(\kappa)} = \sup_{\omega \not \in 2 \kappa} \big( \| \Pi_{\pm\omega} u \|_{L^\infty_{t_\omega}L^2_{x_\omega}} + \theta(\omega, \kappa) \| \Pi_{\mp\omega} u \|_{L^\infty_{t_\omega} L^2_{x_\omega}}\big). $$
It is easy enough to check that we have the duality relation
    \begin{equation}\label{eqn - NF, NF* duality} \Big| \int u^\dagger v dx dt \Big| \les \| u \|_{NF^\pm(\kappa)} \| v \|_{[NF^\pm]^*(\kappa)} \end{equation}
and consequently, by a duality argument, we have the following counterpart to Lemma \ref{lem - PW NF L2 duality}.

\begin{lemma}\label{lem - PW NF* duality}
  Let $0<\alpha, \beta \ll 1$. Assume $\kappa \in \mc{C}_\alpha$ and $\bar{\kappa} \in \mc{C}_\beta$ with $\theta( \kappa,  \bar{\kappa}) \g 5 \max\{\alpha, \beta\}$. Let $u, v$ take values in $\CC^2$. Then
        $$ \| u^\dagger v \|_{L^2_{t, x}} \lesa \| u \|_{[NF^\pm]^*(\bar{\kappa})} \| v \|_{PW^\pm(\kappa)} .$$
\end{lemma}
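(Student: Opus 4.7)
The plan is to derive Lemma \ref{lem - PW NF* duality} directly from the previously established Lemma \ref{lem - PW NF L2 duality} via $L^2$--$L^2$ duality, using the pairing (\ref{eqn - NF, NF* duality}) to convert the $L^2$ norm of the scalar product $u^\dagger v$ into a bilinear form that can be handled by the null frame atomic structure.

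First, I would write
$$ \| u^\dagger v \|_{L^2_{t, x}} = \sup_{\substack{ F \in L^2_{t, x} \\ \| F \|_{L^2_{t,x}} \le 1}} \Big| \int_{\RR^{1+n}} F(t, x) \, u^\dagger(t,x) v(t, x) \, dt\, dx \Big|,$$
where the supremum runs over scalar valued test functions. Since $F$ is scalar it commutes with the matrix structure, so the integrand can be reorganised as $u^\dagger (F v)$. Applying the null frame duality (\ref{eqn - NF, NF* duality}) with the cap $\bar{\kappa}$ gives
$$ \Big| \int u^\dagger (Fv) \, dt\, dx \Big| \les \| Fv \|_{NF^\pm(\bar{\kappa})} \, \| u \|_{[NF^\pm]^*(\bar{\kappa})}.$$

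Next, I would invoke Lemma \ref{lem - PW NF L2 duality} exactly as stated: since $\kappa \in \mc{C}_\alpha$, $\bar{\kappa} \in \mc{C}_\beta$ satisfy the angular separation $\theta(\kappa, \bar{\kappa}) \g 5 \max\{\alpha, \beta\}$, and $F$ is scalar valued while $v : \RR^{1+n} \to \CC^2$, we have
$$ \| Fv \|_{NF^\pm(\bar{\kappa})} \lesa \| F \|_{L^2_{t, x}} \| v \|_{PW^\pm(\kappa)}.$$
Combining the two bounds and taking the supremum over $F$ with $\| F \|_{L^2} \les 1$ yields the desired estimate.

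There is really no obstacle of substance here; the argument is a one-line dualisation once the right ingredients are identified. The only mild subtlety is bookkeeping: one must keep the roles of $\kappa$ and $\bar{\kappa}$ straight (the $PW$ space lives on $\kappa$, the $NF$ / $[NF]^*$ spaces on $\bar{\kappa}$), verify that the angular separation hypothesis is symmetric so that Lemma \ref{lem - PW NF L2 duality} applies with this pairing, and check that the scalar--vector positioning is consistent so that $F$ may be commuted inside the $u^\dagger v$ pairing without disturbing the matrix structure. All of this is immediate, and the proof reduces to a direct concatenation of the two cited facts.
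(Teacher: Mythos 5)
Your proof is correct and coincides with the paper's own argument. The paper proves this lemma implicitly with exactly the two ingredients you identify: write $\| u^\dagger v \|_{L^2}$ by $L^2$ duality against a scalar test function $F$, commute $F$ inside to form $u^\dagger(Fv)$, bound the pairing via the $NF$--$[NF]^*$ duality relation (\ref{eqn - NF, NF* duality}) (using that the pairing is symmetric under complex conjugation so the slots may be interchanged), and then estimate $\| Fv \|_{NF^\pm(\bar{\kappa})}$ by Lemma \ref{lem - PW NF L2 duality}; your bookkeeping of $\kappa$ versus $\bar{\kappa}$ and of scalar versus vector placement is consistent throughout.
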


\begin{remark}
 In the original work of Tataru \cite{Tataru2001}, the null frame spaces were defined similarly but without the added complications of the projections $\Pi_{\pm\omega}$. The addition of the projections $\Pi_{\pm\omega}$ is needed to exploit the vector valued nature of the Dirac equation, and is motivated by the fact that if $\supp \widehat{f} \subset A_\lambda(\kappa)$, then from (\ref{eqn - hom - solution as average of free wave}) we can write the homogeneous solution $\mc{U}_-(t)f$ in the form
            $$ \mc{U}_-(t) f = \int_{\kappa} \Pi_\omega f_\omega( \sqrt{2} t_\omega) \, d \sph(\omega) .$$
 Thus the projections $\Pi_\omega$ appear naturally when we write the solution as an average of traveling waves. Furthermore, morally speaking, as $\Pi_\omega f_\omega( \sqrt{2} t_\omega)$ is a multiple of a $PW^\pm(\kappa)$ atom, we should have the bound
    $$  \| \rho(t) \mc{U}_-(t) f \|_{PW^+(\kappa)} \les \int_\kappa \| f_\omega \|_{L^2(\RR)} d\sph(\omega), \qquad \| \rho(t) \mc{U}_-(t) f \|_{PW^-(\kappa)} \les \alpha^{-1} \int_{\kappa} \| f_{\omega}\|_{L^2(\RR)} d \sph(\omega)$$
 where $\rho \in C^\infty_0(\RR)$ is a cutoff in time\footnote{It is unclear to the authors if this is true without the cutoff $\rho$.}  (see Corollary \ref{cor - null frame bounds hom case} below). In particular, as $\alpha \ll 1$, $\mc{U}_-(t) f$ obeys much better bounds in $PW^+(\kappa)$ than $PW^-(\kappa)$. Without the projections $\Pi_{\pm \omega}$ built into the spaces $PW^\pm(\kappa)$, this observation would be much harder to exploit. Finally, we note that the additional regularity given by placing $\mc{U}_-(t) f \in PW^+(\kappa)$, is a manifestation  of the \emph{null structure} of the Dirac equation, and plays a crucial role in the proof of Theorem \ref{thm - main thm with u, v}.
\end{remark}

\subsection{The space $\mc{N}^\pm_\lambda$.}

The space defined to hold the nonlinearity at scale $\lambda$, is made up of three components, a $L^1_t L^2_x$ component, an $X^{s, b}$ component, and a null frame $L^1_{t_\omega} L^2_{x_\omega}$ component. As previously, the definition is an atomic one, however, unlike the definition of $NF^\pm(\kappa)$ and $PW^\pm(\kappa)$, we require 3 different types of atoms.\\

\begin{enumerate}
\item We say $F$ is a $NF^\pm_\lambda$ \emph{atom} if there exists (a dyadic) $0<\alpha \ll 1$  and a decomposition  $F= \sum_{\kappa \in \mc{C}_\alpha} F_\kappa$ such that each $F_\kappa \in NF^\pm(\kappa)$ with
             \begin{equation}\label{eqn - supp prop of NF atoms} \supp \widehat{\Pi_+ F_\kappa} \subset A^\pm_{\alpha, \lambda}(\kappa), \qquad \supp \widehat{\Pi_- F_{\kappa}} \subset A^\mp_{\alpha, \lambda}(\kappa)\end{equation}
and we have the angular square function estimate
                        $$ \Big( \sum_{\kappa \in \mc{C}_\alpha}  \| F_\kappa \|_{NF^\pm(\kappa)}^2 \Big)^{\frac{1}{2}} \les 1.$$

\item We say that $F$ is a $P_\lambda(L^1_tL^2_x)$ atom, or \emph{energy atom}, if $\supp \widehat{F} \subset \{ |\xi| \approx \lambda\}$ and
                $$ \| F \|_{L^1_t L^2_x} \les 1.$$

\item We say that $F$ is a $\dot{\mc{X}}^{-\frac{1}{2}, 1}_\pm$ \emph{atom} if $$\supp \widetilde{\Pi_+ F} \subset \big\{ \,|\xi| \approx \lambda, \,\, \big| \tau \pm |\xi| \big| \approx d \, \big\}, \qquad  \supp \widetilde{\Pi_- F} \subset \big\{ \,|\xi| \approx \lambda, \,\, \big| \tau \mp |\xi| \big| \approx d \, \big\} $$
and
    $$ \|  F \|_{L^2_{t, x}} \les d^{\frac{1}{2}}.$$
\end{enumerate}
We now define
  $$ \mc{N}^\pm_\lambda = \Big\{ \sum_j c_j F_j \,\, \Big| \,\, (c_j) \in \ell^1(\NN), \,\,  F_j \text{ is either a $N^\pm_\lambda$ atom, an energy atom, or a $\dot{\mc{X}}^{-\frac{1}{2}, 1}_\pm$ atom } \,\, \Big\}$$
with the obvious norm given by (\ref{eqn - defn atomic banach space norm}). It is not so difficult to check that the condition (\ref{eqn - gen cond on atoms}) is satisfied, thus the space $\mc{N}^+_\lambda$ is a well-defined atomic Banach space. \\

In the proof of Theorem \ref{thm - main thm intro}, our aim will be to place the nonlinearity in $\mc{N}^\pm_\lambda$. Thus we shall frequently be aiming to estimate terms of the form $\| P_\lambda F \|_{\mc{N}^\pm_\lambda}$. To this end, we note that if $P_\lambda F \in L^1_t L^2_x$, then $P_\lambda F$ is multiple of an energy atom. Hence $P_\lambda F \in \mc{N}^\pm_\lambda$ and we have the immediate bound
            \begin{equation}\label{eqn - N controlled by L1L2} \| P_\lambda F \|_{\mc{N}^\pm_\lambda} \les \| P_\lambda F \|_{L^1_t L^2_x}.\end{equation}
Similarly, if we can write $P_\lambda F = \sum_{d \in 2^{\ZZ}} P_\lambda \mathfrak{C}^\pm_d F$, then as each $P_\lambda \mathfrak{C}^\pm_d F$ is a multiple of a $\dot{\mc{X}}^{-\frac{1}{2}, 1}_\pm$ atom, we have $P_\lambda F \in \mc{N}^\pm_\lambda$ and
            \begin{equation}\label{eqn - N controlled by X}\| P_\lambda F \|_{\mc{N}^\pm_\lambda} \les \| P_\lambda  F \|_{\dot{\mc{X}}^{-\frac{1}{2}, 1}_\pm}.\end{equation}
The general strategy to put $F \in \mc{N}^\pm_\lambda$ will be to decompose $F$ into certain frequency regions, and then make use of the previous bounds. Of course we will be unable to always place the nonlinearity in as nice a space as $L^1_t L^2_x$ (or $\dot{\mc{X}}^{-\frac{1}{2}, 1}_\pm$) and in certain frequency regions (notable when everything is close to the cone) we have to use the additional flexibility given by the $NF^\pm_\lambda$ type atoms.\\

\begin{remark} Let $F$ be a $NF^\pm_\lambda$ atom, and let $F = \sum_{\kappa \in \mc{C}_\alpha} F_\kappa$ be the corresponding decomposition into atoms. When we come to prove estimates for the $F_\kappa$, to use the fact that $F_\kappa \in NF^\pm(\kappa)$, we will be forced to decompose $F_\kappa = \sum_j F^{(j)}_\kappa$ into $NF^\pm(\kappa)$ atoms $F^{(j)}_\kappa$. Unfortunately this means that we may lose the support properties (\ref{eqn - supp prop of NF atoms}), as there is no guarantee that the $F^{(j)}_\kappa$ retain the same Fourier support as $F_\kappa$.  However, as we can write
            $$ \Pi_+ F_\kappa = {^{\natural}P^{\pm, \alpha}_{\lambda, \kappa}} \Pi_+ F_{\kappa} = \sum_j c_j {^{\natural}P^{\pm, \alpha}_{\lambda, \kappa}} \Pi_+ F^{(j)}_\kappa$$
  then as we have the bound
    \begin{align*}\big\| {^{\natural}P^{\pm, \alpha}_{\lambda, \kappa}} \Pi_{\pm \omega} \Pi_+ F^{(j)}_\kappa \big\|_{L^1_{t_\omega} L^2_{x_\omega}}  + \theta(\omega, \kappa)^{-1} \big\| &{^{\natural}P^{\pm, \alpha}_{\lambda, \kappa}} \Pi_{\mp \omega} \Pi_+ F^{(j)}_\kappa \big\|_{L^1_{t_\omega} L^2_{x_\omega}}\\
        &\lesa \big\|\Pi_{\pm \omega} F^{(j)}_\kappa \big\|_{L^1_{t_\omega} L^2_{x_\omega}}+ \theta(\omega, \kappa)^{-1} \big\| \Pi_{\mp \omega} F^{(j)}_\kappa \big\|_{L^1_{t_\omega} L^2_{x_\omega}}\end{align*}
(see Lemma \ref{lem - mult are disposable} below) the function ${^{\natural}P^{\pm, \alpha}_{\lambda, \kappa}} \Pi_+ F^{(j)}_\kappa$ is again a, perhaps slightly larger, $NF^\pm(\kappa)$ atom. Thus we may always assume that the functions $F^{(j)}_\kappa$ satisfy the slightly larger support properties
         $$ \supp \widehat{\Pi_+ F_\kappa}^{(j)} \subset {^{\natural}A}^\pm_{\alpha, \lambda}(\kappa), \qquad \supp \widehat{\Pi_- F_{\kappa}}^{(j)} \subset {^{\natural}A}^\mp_{\alpha, \lambda}(\kappa).$$
  This observation is frequently used without mention in the remainder of the article.
\end{remark}

When we come to prove estimates using the $\mc{N}^+_\lambda$ spaces, we often have to estimate a $NF^\pm_\lambda$ atom in $L^2_{t, x}$. The following lemma is very useful in this regard.

\begin{lemma}\label{lem - L2 bound on NF atom}
  Let $0<\alpha \ll 1$ and assume $F= \sum_{\kappa \in \mc{C}_\alpha} F_\kappa$ is a $NF^\pm_\lambda$ atom. Then
        $$ \|\mathfrak{C}^\pm_d F \|_{L^2_{t,x}} \lesa \big( \min\{ d, \alpha^2 \lambda \} \big)^\frac{1}{2}.$$
  \begin{proof}
   We only prove the $\pm=+$ case, the $-$ case is similar. Let $F= \sum_{\kappa \in \mc{C}_\alpha} F_\kappa$. By orthogonality in $L^2_{t, x}$, and the observation that $\mathfrak{C}^+_d F_\kappa = 0$ for $d > \alpha^2 \lambda$, it is enough to show that
            $$ \| \mathfrak{C}^+_d F_\kappa \|_{L^2_{t, x}} \lesa d^\frac{1}{2} \| F_{\kappa} \|_{NF^+(\kappa)}$$
   for $d \lesa \alpha^2 \lambda$.  Furthermore, by decomposing $F_\kappa$ into $NF^+(\kappa)$ atoms, we reduce to proving that for $\omega \not \in 2 \kappa$ we have
       \begin{equation}\label{eqn - lem L2 bound on NF atom - L2 bounded by L1L2 in null coord}\| C^\pm_{ \lesa d} {^\natural P^{\pm, \alpha}_{\lambda, \kappa}} \Pi_\pm G \|_{L^2_{t, x}} \lesa d^\frac{1}{2} \Big( \| \Pi_\omega G \|_{L^1_{t_\omega} L^2_{x_\omega}} + \theta(\omega, \kappa)^{-1} \| \Pi_{-\omega} G \|_{L^1_{t_\omega} L^2_{x_\omega}} \Big).\end{equation}
Note that if $(\tau, \xi) \in {^\natural A^\pm_{\lambda, \alpha}}(\kappa)$ and  $|\tau \pm |\xi| | \lesa d$, then from (\ref{eqn - estimate on dual coordinates on the sets A with omega restriction}), we have $|\xi^1_\omega| \approx \lambda \theta(\omega, \kappa)^2$ and hence
    $$ \Big| \tau_\omega - \frac{|\xi^\bot_\omega|}{2 \xi^1_\omega} \Big| = \frac{ \big| |\tau|^2- |\xi|^2 \big|}{2|\xi^1_\omega|} \lesa \frac{ d}{\theta(\omega, \kappa)^2}.$$
Thus, for fixed $\xi_\omega$,  $\tau_\omega$ varies in a set of size $\frac{d }{\theta(\omega, \kappa)^2}$. Therefore, by an application of Bernstein together with the null form estimate $|\Pi_{\pm \frac{\xi}{|\xi|}} \Pi_\omega| \lesa \theta(\omega, \mp \xi) \approx \theta(\omega, \kappa)$, we have
    \begin{align*}
      \| C^\pm_{ d} {^\natural P^{\pm, \alpha}_{\lambda, \kappa}} \Pi_\pm G \|_{L^2_{t, x}} &\lesa \| C^\pm_{ d} {^\natural P^{\pm, \alpha}_{\lambda, \kappa}} \Pi_\pm \Pi_\omega G \|_{L^2_{t, x}} + \| C^\pm_{d} {^\natural P^{\pm, \alpha}_{\lambda, \kappa}} \Pi_\pm \Pi_{-\omega} G \|_{L^2_{t, x}} \\
      &\lesa \theta(\omega, \kappa) \times \frac{ d^{\frac{1}{2}}}{\theta(\omega, \kappa)} \big\| \widetilde{\Pi_\omega G} \big\|_{L^2_{\xi_\omega} L^\infty_{\tau_\omega}} + \frac{ d^{\frac{1}{2}}}{\theta(\omega, \kappa)} \big\| \widetilde{\Pi_{-\omega} G} \big\|_{L^2_{\xi_\omega} L^\infty_{\tau_\omega}} \\
      &\lesa d^\frac{1}{2} \Big( \| \Pi_\omega G \|_{L^1_{t_\omega} L^2_{x_\omega}} + \theta(\omega, \kappa)^{-1} \| \Pi_{-\omega} G \|_{L^1_{t_\omega} L^2_{x_\omega}} \Big)
    \end{align*}
as required.
  \end{proof}
\end{lemma}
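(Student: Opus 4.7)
The plan is to peel off the outer layers of atomic structure one at a time and ultimately reduce to a single $L^2$-vs-$L^1_{t_\omega}L^2_{x_\omega}$ inequality that is controlled by a Bernstein step in the null-frame Fourier variable $\tau_\omega$.

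First I would use $L^2$ orthogonality. Since $\{A^\pm_{\alpha,\lambda}(\kappa)\}_{\kappa \in \mc{C}_\alpha}$ is a finitely overlapping collection of rectangles, I expect
$$ \| \mathfrak{C}^\pm_d F \|_{L^2_{t,x}}^2 \lesa \sum_{\kappa \in \mc{C}_\alpha} \| \mathfrak{C}^\pm_d F_\kappa \|_{L^2_{t,x}}^2.$$
The Fourier support condition on $F_\kappa$ forces $\mathfrak{C}^\pm_d F_\kappa = 0$ once $d \gg \alpha^2 \lambda$, which produces the $\min\{d, \alpha^2 \lambda\}$ threshold. Combined with the square-function hypothesis on $(\|F_\kappa\|_{NF^\pm(\kappa)})_\kappa$, this reduces the lemma to showing, for every $d \lesa \alpha^2 \lambda$ and every $\kappa$, the single-cap bound $\| \mathfrak{C}^\pm_d F_\kappa \|_{L^2_{t,x}} \lesa d^{1/2} \| F_\kappa \|_{NF^\pm(\kappa)}$.

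Next I would unfold the atomic definition of $NF^\pm(\kappa)$: write $F_\kappa = \sum_j c_j G_j$ with each $G_j$ a $NF^\pm(\kappa)$ atom. After reinserting the cutoff $^{\natural}P^{\pm,\alpha}_{\lambda,\kappa}$ (using that it is disposable on $L^1_{t_\omega}L^2_{x_\omega}$), it suffices to prove, for every atom $G$ and every admissible $\omega \notin 2\kappa$,
$$ \| C^\pm_{\lesa d}\, {^{\natural}P^{\pm,\alpha}_{\lambda,\kappa}} \Pi_\pm G \|_{L^2_{t,x}} \lesa d^{1/2} \bigl( \| \Pi_\omega G \|_{L^1_{t_\omega} L^2_{x_\omega}} + \theta(\omega,\kappa)^{-1} \| \Pi_{-\omega} G \|_{L^1_{t_\omega} L^2_{x_\omega}} \bigr).$$

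The heart of the argument is the Fourier geometry in null coordinates. From the identity $\tau^2 - |\xi|^2 = 2\tau_\omega \xi^1_\omega - |\xi^\bot_\omega|^2$ and the sharp coordinate bound (\ref{eqn - estimate on dual coordinates on the sets A with omega restriction}), the constraints $(\tau,\xi) \in {^{\natural}A^{\pm}_{\alpha,\lambda}}(\kappa)$ and $|\tau \pm |\xi|| \lesa d$ confine $\tau_\omega$ (at fixed $\xi_\omega$) to an interval of length $\lesa d/\theta(\omega,\kappa)^2$. Bernstein's inequality in $\tau_\omega$ therefore passes from $L^\infty_{\tau_\omega}$ to $L^2_{\tau_\omega}$ with a loss of only $(d/\theta(\omega,\kappa)^2)^{1/2}$. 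The $\Pi_\omega G$ part of $G$ is multiplied by the null-form factor $|\Pi_{\pm \xi/|\xi|} \Pi_\omega| \lesa \theta(\omega,\kappa)$, which exactly cancels the bad factor $\theta(\omega,\kappa)^{-1}$ coming from Bernstein; the $\Pi_{-\omega}$ part is not helped by null structure but is paired with the weight $\theta(\omega,\kappa)^{-1}$ already provided by the $NF^\pm(\kappa)$ atom norm. Taking a Plancherel in $x_\omega$ and the trivial $L^1_{t_\omega} \hookrightarrow L^\infty_{\tau_\omega}$ bound then closes the estimate.

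The main obstacle, as usual for these null-frame constructions, is bookkeeping rather than any deep inequality: one must carefully verify that the change of frame to $(t_\omega, x_\omega)$ converts the $\{|\tau \pm |\xi|| \approx d\}$ localization into a $\tau_\omega$-slab of width $d/\theta(\omega,\kappa)^2$ under the support assumption $\omega \notin 2\kappa$, and that the null-structure gain $\theta(\omega,\kappa)$ is correctly distributed to offset the Bernstein loss for $\Pi_\omega G$ while the $\Pi_{-\omega} G$ component absorbs its loss into the weight built into the atom.
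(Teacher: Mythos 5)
Your proposal is correct and follows essentially the same route as the paper's proof: $L^2$ orthogonality across the caps, the support observation that forces $\mathfrak{C}^\pm_d F_\kappa = 0$ for $d \gg \alpha^2\lambda$, reduction to $NF^\pm(\kappa)$ atoms, the $\tau_\omega$-slab of width $\lesa d/\theta(\omega,\kappa)^2$ from the null-coordinate identity, and then Bernstein in $\tau_\omega$ with the null-form gain $|\Pi_{\pm\xi/|\xi|}\Pi_\omega| \lesa \theta(\omega,\kappa)$ cancelling the Bernstein loss on the $\Pi_\omega$ component while the atom's built-in weight $\theta(\omega,\kappa)^{-1}$ absorbs it on the $\Pi_{-\omega}$ component. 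The only thing you state slightly more explicitly than the paper is the use of the identity $\tau^2 - |\xi|^2 = 2\tau_\omega\xi^1_\omega - |\xi^\bot_\omega|^2$ together with $|\xi^1_\omega| \approx \lambda\theta(\omega,\kappa)^2$ from the paper's display (\ref{eqn - estimate on dual coordinates on the sets A with omega restriction}), but this is exactly what the paper invokes.
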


\subsection{Iteration Space}

We now have the basic building blocks of the Banach space with which to prove Theorem \ref{thm - main thm intro}. Define
        $$ F^\pm_\lambda = \big\{ \,\, u \in L^\infty_t L^2_x\,\, \big| \,\, \supp \widehat{u} \subset \{ |\xi| \approx \lambda\}, \,\, (\p_t \pm \sigma \cdot \nabla) u \in \mc{N}^\pm_\lambda \, \big\}$$
with the associated norm
        $$ \| u \|_{F^\pm_\lambda} = \| u \|_{L^\infty_t L^2_x} + \big\| ( \p_t \pm \sigma \cdot \nabla) u \big\|_{\mc{N}^\pm_\lambda}.$$
We now sum up over frequencies to define
        $$ \| u \|_{F^{s, \pm}} = \Big( \sum_{\lambda \in 2^\ZZ} \lambda^{2s} \| P_\lambda u \|_{F_\lambda^\pm}^2 \Big)^\frac{1}{2}$$
and let
        $$ F^{s, \pm} = \big\{ u \in L^\infty_t \dot{H}^s_x \,\, \big| \,\, P_\lambda u \in F^\pm_\lambda, \,\, \| u \|_{F^{s, \pm}} < \infty\,\, \big\}.$$
The space $F^\pm_\lambda$ is essentially enough to prove the multi-linear estimates that we require, and via bilinear estimates of the form in Lemma \ref{lem - hom bilinear est}, it is possible to complete the proof of Theorem \ref{thm - main thm intro} with small data in the slightly smaller space $\dot{B}^\frac{n-1}{2}_{2, 1}$ (i.e. with an $\ell^1$ sum over frequencies instead of a $\ell^2$ sum). To get the more general $\dot{H}^\frac{n-1}{2}$ result, we need some additional gain away from the light cone. To this end, motivated by the recent work\footnote{In the work of Bejenaru-Herr, they also needed some additional integrability in time of functions supported away from the light cone. To accomplish this, they made use of the norm (in the notation used in the current paper)
    $$ \sup_{d} d \big\| \mathfrak{C}^\pm_d u \big\|_{L^\frac{4}{3}_t L^2_x(\RR^{1 + 3})}.$$
In the current paper, this norm is to strong, and we need to use the slightly weaker $\mc{Y}^\pm_\lambda$ norm (note that $\frac{4n}{3n-1} = \frac{3}{2}$ if $n=3$, thus we need \emph{less} integrability in time). } of Bejenaru-Herr \cite{Bejenaru2013}, we define an additional semi-norm  $\| \cdot \|_{\mc{Y}^\pm}$ as
 $$ \| u \|_{\mc{Y}^\pm} =  \sup_d   d \big\| \mathfrak{C}^\pm_d u \big\|_{L^\frac{4n}{3n-1}_t L^2_x}.$$
Then we take $G^\pm_\lambda$ as
    $$ G^\pm_\lambda = \big\{\,\,  u \in F^\pm_\lambda \,\, \big| \,\, \big\| u \big\|_{\mc{Y}^\pm}< \infty \,\big\}$$
with the norm
    $$ \| u \|_{G^\pm_\lambda} = \| u \|_{F^\pm_\lambda} + \lambda^{ -  \frac{n +1}{4n}} \|  u \|_{\mc{Y}^\pm}$$
where the $\lambda^{ - \frac{n+1}{4n}}$ term is to ensure that both components of the $G^\pm_\lambda$ norm scale the same way.
We now define
        $$ G^{s, \pm} = \big\{\,\, u \in L^\infty_t \dot{H}^s_x \,\, \big| \,\, P_\lambda u \in G^\pm_\lambda, \,\, \| u \|_{G^{s, \pm}}< \infty \,\, \}$$
where
        $$ \| u \|_{G^{s, \pm}} = \Big( \sum_{\lambda \in 2^\ZZ} \lambda^{2s} \big\| P_\lambda u \big\|_{G^\pm_\lambda}^2 \Big)^\frac{1}{2}.$$
Corresponding to the function spaces $F^{s, \pm}$ and $G^{s, \pm}$, we aim to put the nonlinearity in the summed up versions of the $\mc{N}^\pm_\lambda$ and $L^\frac{4n}{3n-1}_t L^2_x$ spaces. Namely, we define
    $$ \| F \|_{\mc{N}^{s, \pm} } = \bigg( \sum_{\lambda \in 2^\ZZ} \lambda^{2s} \big\| P_\lambda F \big\|_{\mc{N}^\pm_\lambda}^2 \bigg)^\frac{1}{2}$$
and
    $$ \| F \|_{(\mc{N}\cap\mc{Y})^{s, \pm} } =   \bigg( \sum_{\lambda \in 2^\ZZ} \lambda^{2s} \big\| P_\lambda F \big\|_{\mc{N}^\pm_\lambda}^2  + \lambda^{2(s - \frac{n+1}{4n})} \big\| P_\lambda F \big\|_{L^\frac{4n}{3n-1}_t L^2_x}^2 \bigg)^\frac{1}{2}.$$
These spaces satisfy the following important properties.

\begin{theorem}\label{thm - energy inequality + invariance under cutoffs}
\leavevmode
\begin{enumerate}
    \item \emph{(Energy inequality.)} Let $s \g 0$. Then $F^{s, \pm}$ is a Banach space, and moreover we have the energy inequality
        $$ \| u \|_{F^{s, \pm}} \les \| u(0) \|_{\dot{H}^s} + C\big\| (\p_t \pm \sigma \cdot \nabla) u \big\|_{\mc{N}^{s, \pm}}.$$
    Similarly we have
        $$ \| u \|_{G^{s, \pm}} \les \| u(0) \|_{\dot{H}^s} + C\big\| (\p_t \pm \sigma \cdot \nabla) u \big\|_{(\mc{N}\cap\mc{Y})^{s, \pm}}$$
    (here $C$ is some constant independent of $u$).\\

    \item \emph{(Stability with respect to time cutoffs.)} Let $\rho \in C^\infty_0(\RR)$ and $T>0$. Then
            $$ \big\| \rho(\tfrac{t}{T}) u \big\|_{F^{s, \pm}} \lesa \| u \|_{F^{s,
            \pm}}, \qquad \qquad \big\| \ind_{(-T, T)}(t) F \big\|_{\mc{N}^{s,
            \pm}} \lesa \| F \|_{\mc{N}^{s, \pm}}$$
    where the implied constants are independent of $T$. Similarly, if $\lambda \in 2^\ZZ$ and $T \g \lambda^{-1}$, we have the bound
            $$\big\| \rho(\tfrac{t}{T}) u \big\|_{G^\pm_\lambda}\lesa \| u \|_{G^\pm_\lambda}$$
    where the implied constant is again independent of $T$.\\

    \item \emph{(Scattering.)} Let $\rho \in C^\infty_0(\RR)$ with $\rho(t) = 1$ on $[-1, 1]$ and assume $\sup_{T>0} \| \rho( \frac{t}{T}) u \|_{F^{s, \pm}}< \infty$. Then there exists $f_{-\infty}, f_{+\infty} \in \dot{H}^{s}$ such that
            $$ \lim_{t \rightarrow \infty} \Big( \big\| u(t) - \mc{U}_\pm(t) f_{+\infty} \big\|_{\dot{H}^{s}} + \big\| u(-t) - \mc{U}_\pm(-t) f_{-\infty} \big\|_{\dot{H}^{s}}\Big) = 0.$$
\end{enumerate}
\begin{proof}
We leave the proof to Section \ref{sec - energy inequality}.
\end{proof}
\end{theorem}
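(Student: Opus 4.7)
The overall strategy is to reduce each of the three assertions to a single-frequency estimate, and then exploit the atomic structure of $\mc{N}^\pm_\lambda$. Since $P_\lambda$ commutes with $\p_t \pm \sigma \cdot \nabla$ and each of $F^{s,\pm}$, $G^{s,\pm}$, $\mc{N}^{s,\pm}$, $(\mc{N}\cap\mc{Y})^{s,\pm}$ is defined as an $\ell^2(2^\ZZ)$ sum over dyadic frequencies, each claim reduces to its scale-$\lambda$ counterpart. The homogeneous contribution $\mc{U}_\pm(t) P_\lambda u(0)$ is controlled in $L^\infty_t L^2_x$ by unitarity, so Part (i) boils down to the Duhamel bound $\big\| \int_0^t \mc{U}_\pm(t-s) F(s)\, ds \big\|_{L^\infty_t L^2_x} \lesa \|F\|_{\mc{N}^\pm_\lambda}$ when $\supp\widehat{F} \subset \{|\xi|\approx\lambda\}$.

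To establish this I would work by duality, pairing $\al u(t_0), g\ar = \int_0^{t_0} \int F^\dagger \phi\, dx\, ds$ with $\phi(s,x) = \mc{U}_\pm(s - t_0) g$ (a free solution satisfying $(\p_s \pm \sigma \cdot \nabla) \phi = 0$ with $\phi(t_0) = g$), and estimating on each atom class. For energy atoms ($L^1_t L^2_x$) the bound is immediate from H\"older. For $\dot{\mc{X}}^{-\frac{1}{2},1}_\pm$ atoms at modulation $d$, one applies Lemma \ref{lem - Xsb decomp into free waves} to write $F$ as an average of free waves and truncates $\phi$ to modulation $\les d$, balancing $d^{-1/2} \cdot d^{1/2}$. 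The delicate case is a $NF^\pm_\lambda$ atom $F = \sum_{\kappa \in \mc{C}_\alpha} F_\kappa$: here one uses the support property (\ref{eqn - supp prop of NF atoms}) to justify replacing $\phi$ by its sector-localised pieces $\phi_\kappa = {^\natural P^{\pm,\alpha}_{\lambda,\kappa}} \phi$, applies the duality (\ref{eqn - NF, NF* duality}) sector by sector, and closes with Cauchy--Schwarz in $\kappa$ against the angular square function hypothesis on $F_\kappa$. The required control $\|\phi_\kappa\|_{[NF^\pm]^*(\kappa)} \lesa \|g_\kappa\|_{L^2_x}$ for sector-localised homogeneous solutions is a null frame $L^\infty_t L^2_x$ estimate whose proof is deferred to Section \ref{sec - proof of null frame bounds}, and is the main obstacle of the whole theorem. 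The additional $\mc{Y}^\pm$ gain for $G^\pm_\lambda$ is easier: since $\mf{C}^\pm_d$ annihilates free solutions, only the Duhamel term contributes and symbolically $\mf{C}^\pm_d u \sim d^{-1} \mf{C}^\pm_d F$, so the factor $d$ in the $\mc{Y}^\pm$ norm cancels and one is left with the $L^{\frac{4n}{3n-1}}_t L^2_x$ bound built into $(\mc{N}\cap\mc{Y})^{s,\pm}$.

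For Part (ii), stability of time cutoffs must be verified on each atom type and on $\mc{Y}^\pm$. Multiplication by $\rho(t/T)$ acts trivially on $L^1_t L^2_x$ and $L^\infty_t L^2_x$, and on $NF^\pm(\kappa)$ atoms after rewriting $\rho(t/T)$ as a bounded function in the null coordinates $(t_\omega, x_\omega)$. On the $\dot{\mc{X}}^{-\frac{1}{2},1}_\pm$ component one adapts the proof of Lemma \ref{lem - Xsb, homogeneous solns, and time cutoffs} using the scale invariance $\|\rho(t/T)\|_{\dot{B}^{\frac{1}{2}}_{2,1}} = \|\rho\|_{\dot{B}^{\frac{1}{2}}_{2,1}}$. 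The rough-cutoff version $\|\ind_{(-T,T)} F\|_{\mc{N}^{s,\pm}} \lesa \|F\|_{\mc{N}^{s,\pm}}$ follows by approximation and a limiting argument on atoms, together with the frequency-envelope structure of $\mc{N}^{s,\pm}$. The restriction $T \g \lambda^{-1}$ on the $\mc{Y}^\pm$ statement is forced because multiplication by $\rho(t/T)$ produces modulation tails of width $\sim T^{-1}$; these can be reabsorbed into $\sup_d d\|\mf{C}^\pm_d \cdot\|_{L^{\frac{4n}{3n-1}}_t L^2_x}$ only when $T^{-1} \les \lambda$, i.e. once the induced tails lie below the highest meaningful modulation scale at frequency $\lambda$.

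For Part (iii), define $f_{+\infty} = u(0) + \int_0^\infty \mc{U}_\pm(-s) F(s)\,ds$ where $F = (\p_t \pm \sigma \cdot \nabla)u$, and analogously $f_{-\infty}$. The uniform bound $\sup_T \|\rho(t/T) u\|_{F^{s,\pm}} < \infty$ combined with Part (ii) implies $F \in \mc{N}^{s,\pm}$, and hence $f_{\pm\infty}$ is well-defined in $\dot{H}^s$ by Part (i). The Duhamel identity $u(t) - \mc{U}_\pm(t) f_{+\infty} = -\int_t^\infty \mc{U}_\pm(t - s) F(s)\,ds$ together with Part (i) applied to the tail then yields $\|u(t) - \mc{U}_\pm(t) f_{+\infty}\|_{\dot{H}^s} \lesa \|\ind_{[t,\infty)} F\|_{\mc{N}^{s,\pm}}$, which tends to zero as $t \to \infty$ by density of compactly time-supported inputs in $\mc{N}^{s,\pm}$ --- itself a direct consequence of the cutoff bound in Part (ii) applied to atoms of $F$ and dominated convergence on the $\ell^1$ atomic sum. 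The argument for $f_{-\infty}$ is identical.
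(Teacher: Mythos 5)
Your Part (i) takes a genuinely different route from the paper. You propose a duality argument against free solutions $\phi(s) = \mc{U}_\pm(s-t_0)g$, estimating $\langle u(t_0),g\rangle = \int_0^{t_0}\langle F(s),\phi(s)\rangle ds$ atom-by-atom and using the sector duality \eqref{eqn - NF, NF* duality} together with the homogeneous $[NF^\pm]^*(\kappa)$ bound of Corollary \ref{cor - null frame bounds hom case}(i). This is correct: the time truncation $\ind_{[0,t_0]}$ is a disposable $L^\infty$ multiplier in the $L^1_{t_\omega}L^2_{x_\omega}$-based $NF^\pm(\kappa)$ norms, and it does not disturb the spatial Fourier localisation needed to make the sector pairing work, so the Cauchy--Schwarz in $\kappa$ closes. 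The paper instead constructs the solution directly via the null-frame fundamental solution $E^\pm_\omega*F$ and appeals to Corollary \ref{cor - LinftyL2 control of null fund soln} (built on the decomposition of Lemma \ref{lem - null frame decomp into free waves}). Your duality route buys a more elementary $NF$-atom step (it only needs the homogeneous $[NF^\pm]^*$ bound, not the full $E^\pm_\omega$ machinery), at the price of having to justify the truncated pairing carefully; the paper's direct route packages the needed information into the fundamental solution and is reused elsewhere (e.g. in Theorem \ref{thm - F controls V2}).

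Part (ii) is right in outline but glosses over the key technical point. Multiplication of an $NF^\pm_\lambda$ atom by $\rho(t/T)$ is indeed harmless on the $L^1_{t_\omega}L^2_{x_\omega}$ norms, but it destroys the modulation localisation $\big|\tau\pm|\xi|\big|\lesa\alpha^2\lambda$ required by \eqref{eqn - supp prop of NF atoms}, so $\rho(t/T)F$ is no longer an $NF^\pm_\lambda$ atom. The paper handles this by splitting $\rho F = \mathfrak{C}^\pm_{\ll\alpha^2\lambda}(\rho F) + \mathfrak{C}^\pm_{\gtrsim\alpha^2\lambda}(\rho F)$ and treating the far-cone piece via $\dot{\mc{X}}^{-\frac{1}{2},1}_\pm$ and Lemma \ref{lem - L2 bound on NF atom}; your proposal does not mention this. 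Also, the rough cutoff $\ind_{(-T,T)}$ does not follow from ``approximation and a limiting argument'': $\ind_{(-1,1)}\notin\dot{B}^{1/2}_{2,1}$, and the correct observation (which the paper uses) is that $\ind_{(-1,1)}\in\dot{B}^{1/2}_{2,\infty}$ suffices, because Corollary \ref{cor - N invariant under cutoffs} is stated with the $\dot{B}^{1/2}_{2,\infty}$ norm precisely to cover this case.

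Part (iii) has a genuine gap. You define $f_{+\infty}=u(0)+\int_0^\infty\mc{U}_\pm(-s)F(s)\,ds$ and reduce to $\|\ind_{[t,\infty)}F\|_{\mc{N}^{s,\pm}}\to 0$, but neither step is available under the hypothesis. The hypothesis only gives $\sup_T\|\rho(t/T)u\|_{F^{s,\pm}}<\infty$, which yields a uniform bound $\sup_T\|\rho(t/T)F\|_{\mc{N}^{s,\pm}}<\infty$ but not $F\in\mc{N}^{s,\pm}$ (the atomic space $\mc{N}^{s,\pm}$ is not obviously closed under such weak limits, and the atoms of $\rho(t/T)F$ are not inherited from atoms of $F$). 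Even granting $F\in\mc{N}^{s,\pm}$, the convergence $\|\ind_{[t,\infty)}F\|_{\mc{N}^{s,\pm}}\to 0$ is not a consequence of the uniform cutoff bound in Part (ii): uniform boundedness of the maps $F\mapsto\ind_{(-T,T)}F$ does not imply strong convergence $\ind_{(-T,T)}F\to F$ in $\mc{N}^{s,\pm}$, and the atomic decomposition of $F$ need not interact well with sharp time truncations at fixed modulation. The paper sidesteps all of this by transferring the statement to the $V^{s,2}$ space: it shows $|\mc{U}_\pm(-t)u|_{V^{s,2}}\les\sup_T|\rho(t/T)\mc{U}_\pm(-t)u|_{V^{s,2}}\lesa\sup_T\|\rho(t/T)u\|_{F^{s,\pm}}$ by testing against a fixed finite partition and choosing $T$ large, then invokes the fact (Lemma \ref{lem - left and right limits always exist in Vp spaces}) that bounded $V^{s,2}$-variation forces one-sided limits to exist as $t\to\pm\infty$. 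Your proposal would need Theorem \ref{thm - F controls V2} (or an equivalent variation bound) or a convincing density/continuity argument in $\mc{N}^{s,\pm}$, which you have not supplied.
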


\begin{remark}
  Note that the previous theorem implies that we have the bound
        $$ \| \phi \|_{F^{s, \pm}} \les \| \phi(0) \|_{\dot{H}^s} + \| ( \p_t \pm \sigma \cdot \nabla) \phi \|_{L^1_t \dot{H}^s_x}.$$
  In particular, $\| \phi \|_{F^{s, \pm}} < \infty$ for every $\phi \in \s$. A similar comment applies in the $G^{s, \pm}$ case.
\end{remark}

\begin{remark}\label{rem - + ---> - via reflection}
Our eventual aim will be to construct a solution in $G^{\frac{n-1}{2}, \pm}$, although this will require a significant amount of work. To alleviate this somewhat, we note that if $u \in F^\pm_\lambda$, then letting $v(t, x) = u(t, -x)$, a computation shows that\footnote{Essentially this boils down to showing that reflecting a $\mc{N}^\pm_\lambda$ atom in $x$, gives a $\mc{N}^\mp_\lambda$ atom, which is not to difficult to show. }  $v \in F^\mp_\lambda$. Similarly we can check that if $u \in G^\pm_\lambda$ then $v \in G^\mp_\lambda$. On the other hand, if we reflect in both $t$ \emph{and} $x$, i.e. we let $w(t, x) = u(-t, -x)$, then a similar calculation shows that $\| u \|_{G^\pm_\lambda} = \| w \|_{G^\pm_\lambda}$ and $\| u \|_{F^\pm_\lambda} = \| w \|_{F^\pm_\lambda}$ while $\big(\Pi_\pm u\big)(t, x) = \big(\Pi_\mp w \big)(-t, -x)$. Together these observations often allow us to reduce to considering just the $+$ case, rather than both $+$ and $-$ cases.

In a similar vein, we observe that in the $n=2$ case a computation using (\ref{eqn - beta flips sign of linear eqn}) shows that we have $\| \beta u \|_{G^{s, \pm}} \approx \| u \|_{G^{s, \mp}}$. As in the homogeneous case, this will allow us to deduce estimates for $u^\dagger \beta u$ from estimates of the form $u^\dagger v$.
\end{remark}

The norm $F^\pm_\lambda$ is fairly complicated due to its atomic structure. However it can be compared to the more standard $\dot{\mc{X}}^{\frac{1}{2}, q}_{ \pm}$ spaces by the following useful estimate.

\begin{lemma}\label{lem - F controls Xsb infty}
Let $u \in F_\lambda^\pm$. Then
        \begin{equation}\label{eqn - lem F controls Xsb infty} \| u \|_{\dot{\mc{X}}^{\frac{1}{2},\infty}_{\pm}} \lesa \| u \|_{F^\pm_\lambda}. \end{equation}
\begin{proof}
 By a reflection, we may assume that $\pm = +$. The estimate
            $$ \| u \|_{\dot{\mc{X}}^{\frac{1}{2},\infty}_{+}} \approx \| (\p_t + \sigma \cdot \nabla) u \|_{\dot{\mc{X}}^{-\frac{1}{2},\infty}_{+}}$$
 shows that is enough to prove that $ \| F \|_{\dot{\mc{X}}^{-\frac{1}{2},\infty}_{+}} \lesa \| F \|_{\mc{N}^+_\lambda}$. The atomic definition of $\mc{N}^+_\lambda$, implies that we need to consider three cases, $F$ is a $\dot{\mc{X}}^{-\frac{1}{2},\infty}_{+}$ atom, $F$ is an energy atom, and $F$ is a $NF^+_\lambda$ atom. The first case is obvious due to the embedding $\dot{\mc{X}}^{-\frac{1}{2},1}_{+} \subset \dot{\mc{X}}^{-\frac{1}{2},\infty}_{+}$. On the other hand, if $F \in L^1_t L^2_x$, then as $\supp \widetilde{ \mathfrak{C}^+_d F} \subset \big\{ \big||\tau| - |\xi| \big| \approx d\big\}$, we see that for each fixed $d$
    \begin{align*}
      d^{-\frac{1}{2}} \| \mathfrak{C}^+_d F \|_{L^2_{t, x}} \lesa  \big\| \widetilde{F} \big\|_{L^2_{\xi} L^\infty_{\tau}} \lesa \| F \|_{L^1_t L^2_x}.
    \end{align*}
 Taking the sup over $d$ then gives the $F \in L^1_t L^2_x$ case. Finally, if $F$ is a $NF^+_\lambda$ atom, then by Lemma \ref{lem - L2 bound on NF atom} we obtain $\| \mathfrak{C}^+_d F \|_{L^2_{t, x}}\lesa d^{\frac{1}{2}} $ and so we clearly have $\| F \|_{\mc{X}^{-\frac{1}{2}, \infty}_{ +}} \lesa 1$ as required.

\end{proof}
\end{lemma}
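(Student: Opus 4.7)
The key reduction is to dualize the norm via the operator $(\partial_t + \sigma \cdot \nabla)$. Using the identity $\sigma \cdot \nabla = i|\nabla|(\Pi_+ - \Pi_-)$ together with the fact that $\mathfrak{C}^+_d$ localises the $\Pi_+$ component to $|\tau + |\xi|| \approx d$ and the $\Pi_-$ component to $|\tau - |\xi|| \approx d$, the symbol of $(\partial_t + \sigma \cdot \nabla)$ has magnitude $\approx d$ on the support of $\mathfrak{C}^+_d$. Since $\mathfrak{C}^+_d$ commutes with $(\partial_t + \sigma\cdot\nabla)$, this yields
$$ \|u\|_{\dot{\mc{X}}^{\frac{1}{2},\infty}_+} = \sup_d d^{\frac{1}{2}} \|\mathfrak{C}^+_d u\|_{L^2_{t,x}} \approx \sup_d d^{-\frac{1}{2}} \|\mathfrak{C}^+_d (\partial_t + \sigma\cdot\nabla) u\|_{L^2_{t,x}} = \|(\partial_t + \sigma\cdot\nabla) u\|_{\dot{\mc{X}}^{-\frac{1}{2}, \infty}_+}. $$
(Note homogeneous solutions are killed by every $\mathfrak{C}^+_d$, so no information is lost.) The $\pm=-$ case follows by the reflection $v(t,x) = u(t,-x)$ described in Remark \ref{rem - + ---> - via reflection}. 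Thus it suffices to prove
$$ \|F\|_{\dot{\mc{X}}^{-\frac{1}{2},\infty}_+} \lesa \|F\|_{\mc{N}^+_\lambda}. $$

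Next I would use the atomic structure of $\mc{N}^+_\lambda$, reducing to bounding a single atom in each of the three classes. The $\dot{\mc{X}}^{-\frac{1}{2},1}_+$ atoms are immediate since such an atom has Fourier support concentrated at one dyadic distance from the cone, giving $\|F\|_{\dot{\mc{X}}^{-\frac{1}{2},\infty}_+} \les \|F\|_{\dot{\mc{X}}^{-\frac{1}{2},1}_+} \les 1$. For a $NF^+_\lambda$ atom, Lemma \ref{lem - L2 bound on NF atom} yields $\|\mathfrak{C}^+_d F\|_{L^2_{t,x}} \lesa d^{\frac{1}{2}}$ uniformly in $d$, so $d^{-\frac{1}{2}}\|\mathfrak{C}^+_d F\|_{L^2_{t,x}} \lesa 1$.

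The remaining case is the energy atom $F \in L^1_t L^2_x$. Here I would compute in Fourier variables: for fixed $\xi$, the function $\widetilde{F}(\tau,\xi)$ is pointwise bounded by $\|\widehat{F}(\cdot,\xi)\|_{L^1_t}$, and $\widetilde{\mathfrak{C}^+_d F}(\cdot,\xi)$ is supported in a set of $\tau$-measure $\lesa d$. Cauchy--Schwarz in $\tau$ followed by Minkowski in $\xi$ gives
$$ \|\mathfrak{C}^+_d F\|_{L^2_{t,x}} \lesa d^{\frac{1}{2}} \bigl\| \|\widehat{F}(\cdot,\xi)\|_{L^1_t}\bigr\|_{L^2_\xi} \lesa d^{\frac{1}{2}} \|F\|_{L^1_t L^2_x}, $$
which yields $d^{-\frac{1}{2}}\|\mathfrak{C}^+_d F\|_{L^2_{t,x}} \lesa 1$ uniformly in $d$. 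Summing the three atomic estimates by the triangle inequality completes the proof.

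The argument is essentially routine once the dual reformulation is in place; there is no genuine obstacle. The only subtle point worth verifying carefully is that passing to $(\partial_t + \sigma\cdot\nabla)u$ loses nothing in the $\dot{\mc{X}}^{\frac{1}{2},\infty}_+$ semi-norm — which, as noted, is guaranteed because the kernel of this operator consists of homogeneous solutions annihilated by every $\mathfrak{C}^+_d$.
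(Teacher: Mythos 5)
Your proof is correct and follows essentially the same route as the paper: reduce by reflection, pass to the equivalent semi-norm on $(\p_t + \sigma \cdot \nabla)u$, and then bound each of the three types of atom, using Lemma \ref{lem - L2 bound on NF atom} for the $NF^+_\lambda$ case. Your treatment of the energy atom is just an expanded version of the paper's one-line estimate $d^{-\frac{1}{2}} \| \mathfrak{C}^+_d F \|_{L^2_{t,x}} \lesa \| \widetilde{F} \|_{L^2_\xi L^\infty_\tau} \lesa \| F \|_{L^1_t L^2_x}$.
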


\begin{remark}\label{rem - Xsb controls F_lambda}
Note that, if $\supp u \subset \{ |\xi| \approx \lambda\}$ and $u \in L^2_{t, x}(\RR^{1+n})$, then by decomposing $u = \sum_{d \in 2^{\ZZ}} \mathfrak{C}^\pm_d u$ (which is possible as $u \in L^2_{t, x}$), by definition of $\mc{N}^\pm_\lambda$ together with (\ref{eqn - Xsb controls strichartz}) we have
   \begin{equation}\label{eqn - Xsb controls F lambda} \| u \|_{F^\pm_\lambda} \les  \| u \|_{L^\infty_t L^2_x} + \big\| (\p_t \pm \sigma \cdot \nabla) u \big\|_{\mc{N}^\pm_\lambda} \lesa \| u \|_{\dot{\mc{X}}^{\frac{1}{2}, 1}_\pm} + \sum_{d \in 2^{\ZZ}} d^{- \frac{1}{2}} \big\| (\p_t \pm \sigma \cdot \nabla) \mathfrak{C}^\pm_d u \big\|_{L^2_{t, x}} \lesa \| u \|_{\dot{\mc{X}}^{\frac{1}{2}, 1}_\pm}. \end{equation}
In particular, we have the bounds
    $$ \| u \|_{\dot{\mc{X}}^{\frac{1}{2}, \infty}_\pm} \lesa \| u \|_{F^\pm_\lambda} \lesa \| u \|_{\dot{\mc{X}}^{\frac{1}{2}, 1}_\pm},$$
thus $F^\pm_\lambda$ is within a log factor of an $X^{s, b}$ spaces.

\end{remark}

As mentioned previously, if we had access to a $L^2_t L^\infty_x$ Strichartz estimate, then the proof of GWP would follow by an application of H\"older's inequality. However, the $L^2_t L^\infty_x$ Strichartz estimate barely fails in $n=3$, and is far from true in the $n=2$ case. Despite this, provided we are away from the cone, we \emph{can} control the $L^2_t L^\infty_x$ by a simple application of Bernstein together with the previous lemma. More precisely, if $u \in F_\lambda^\pm$, then by Lemma \ref{lem - F controls Xsb infty}
    \begin{equation}\label{eqn - L2 control away from null cone}
        \| \mathfrak{C}^\pm_{\gtrsim \delta} u \|_{L^2_{t, x}} \lesa \sum_{d \gtrsim \delta} \| \mathfrak{C}^\pm_{d} u \|_{L^2_{t, x}} \lesa  \| u \|_{\dot{\mc{X}}^{\frac{1}{2}, \infty}_{ \pm} } \sum_{ d \gtrsim \delta} d^{-\frac{1}{2}} \lesa  \delta^{-\frac{1}{2}} \| u \|_{F^\pm_\lambda}
    \end{equation}
and consequently
    $$ \|\mathfrak{C}^\pm_{\gtrsim \delta} u \|_{L^2_t L^\infty_x} \lesa \lambda^{\frac{n}{2}} \| \mathfrak{C}^\pm_{\gtrsim \delta}u \|_{L^2_{t, x}} \lesa \lambda^{\frac{n}{2}} \delta^{-\frac{1}{2}} \| u \|_{F^\pm_\lambda}.$$
This estimate, as well as the important $L^2_{t, x}$ bound (\ref{eqn - L2 control away from null cone}), is used frequently in the remainder of this article as it essentially allows us to deal with the the region away from the light cone\footnote{This is true in the bilinear case. In the proof of the trilinear estimates, Lemma \ref{lem - F controls Xsb infty} is not enough to deal with the far cone regions and we require the addition decay in time provided by the $\mc{Y}^\pm$ norms.}. The remaining close cone interaction is much more complicated, and requires the the full strength of the norms defined above.

\subsection{Disposable Multipliers}

We use some notation originally due to Tao \cite{Tao2001a}. We say a Fourier multiplier  $\mc{M}$ is \emph{disposable} on a Banach space $X$, if we have
        $$ \| \mc{M} F \|_{X } \lesa \| F \|_{X}.$$
Clearly any Fourier multiplier with bounded symbol is disposable on $L^2_{t, x}$ by Plancheral. More generally we have the following.

\begin{lemma}[Multipliers are disposable]\label{lem - mult are disposable}
 Let $\alpha, \beta \ll 1$ and $\kappa \in \mc{C}_\alpha$, $\bar{\kappa} \in \mc{C}_\beta$.

\begin{enumerate}
    \item Let $\pm_1$ and $\pm_2$ be independent choices of signs. Then $P^{\pm_1, \alpha}_{\lambda, \kappa}$  is given by a convolution with an $L^1_{t, x}(\RR^{1+n})$ kernel. In particular, $P^{\pm_1, \alpha}_{\lambda, \kappa}$ is disposable on $NF^{\pm_2}(\bar{\kappa})$, $PW^{\pm_2}(\bar{\kappa})$, and $[NF^{\pm_2}]^*(\bar{\kappa})$.\\

    \item Assume $\alpha \lesa \beta$ and $\kappa \cap \bar{\kappa} \not = \varnothing$.  Then $P^{\pm, \alpha}_{\lambda, \kappa}\Pi_{\pm}$ is disposable on $NF^+(\bar{\kappa})$, $PW^+(\bar{\kappa})$, and  $[NF^+]^*(\bar{\kappa})$.   Similarly $P^{\mp, \alpha}_{\lambda, \kappa}\Pi_{\pm}$ is disposable on $NF^-(\bar{\kappa})$, $PW^-(\bar{\kappa})$, and $[NF^-]^*(\bar{\kappa})$.\\

    \item The multipliers $C_d$, $C_{\lesa d}$, $C^\pm_d$, $C^\pm_{\lesa d}$ are disposable on $L^q_t L^2_x$ for $1\les q \les \infty$.\\

    \item Let $d \gtrsim \lambda$. Then $P_\lambda C_d$, $P_\lambda C_{\lesa d}$, and $P_\lambda C_{\gtrsim d}$ are disposable on $L^q_t L^r_x$ for any $1\les q, r \les \infty$.
\end{enumerate}
\begin{proof}
\textbf{(i) and (ii):}  We only show that $P^{\pm, \alpha}_{\lambda, \kappa} \Pi_\pm$ is disposable as the remaining case is similar (but easier). So assume that $\alpha \lesa \beta$ and $\kappa \cap \bar{\kappa} \not = \varnothing$. The general idea is to show that the kernel of $P^{\pm, \alpha}_{\lambda, \kappa}\Pi_{\pm}$ belongs to $L^1_{t, x}$, and then apply Holder. There is a slight complication however, as the definition of the null frame spaces use the projections $\Pi_{\pm \omega}$ which do not commute with the $\Pi_{\pm}$. Thus showing that the kernel is in $L^1_{t, x}$ would not suffice and we need to prove a stronger estimate exploiting the null form estimate (\ref{eqn - null structure estimate}).

Let $ \widetilde{\rho}(\tau, \xi) = \Phi( \tfrac{|\xi|}{\lambda}) \Phi_\kappa\big(\mp \tfrac{\xi}{|\xi|}\big) \Phi_0( \tfrac{|\tau \pm |\xi||}{c \alpha^2 \lambda})$ where $c$ is the small constant used in the definition of $A^\pm_{\lambda, \alpha}(\kappa)$, thus $P^{\pm, \alpha}_{\lambda, \kappa} u = \rho* u$. Fix any $\omega \in \sph^{n-1}$. The key is to prove that $\| \Pi_\pm \rho\|_{L^1_{t, x}} \lesa 1$ as well as the stronger estimate
        \begin{equation}\label{eqn - lem mult dispos - key L1 bound for (i)}
                \big\| \big(\Pi_\pm - \Pi_{-\omega}\big) \rho \big\|_{L^1_{t, x}}\lesa \max\{ \theta(\omega, \bar{\kappa}), \beta\}.
            \end{equation}
Since assuming we have (\ref{eqn - lem mult dispos - key L1 bound for (i)}) and using the identity $\Pi_\pm \Pi_\omega = ( \Pi_\pm - \Pi_{-\omega} ) \Pi_\omega$ we deduce that
               $$ \big\| P^{\pm, \alpha}_{\lambda, \kappa} \Pi_{\pm} \Pi_\omega u \big\|_{L^q_{t_\omega} L^r_{x_\omega}} =\big\|  \big[ \big( \Pi_\pm - \Pi_{-\omega}\big) \rho\big] * \big(\Pi_\omega u\big) \big\|_{L^q_{t_\omega} L^r_{x_\omega}} \lesa \max\{ \theta(\omega, \bar{\kappa}), \beta\} \| \Pi_\omega u \|_{L^q_{t_\omega} L^r_{x_\omega}}. $$
Similarly the $L^1_{t, x}$ bound gives
            $$ \big\| P^{\pm, \alpha}_{\lambda, \kappa} \Pi_{\pm} \Pi_{-\omega} u \big\|_{L^q_{t_\omega} L^r_{x_\omega}} = \big\| (\Pi_\pm\rho) * \big(\Pi_{-\omega} u\big) \big\|_{L^q_{t_\omega} L^r_{x_\omega}}  \lesa  \| \Pi_{-\omega} u \|_{L^q_{t_\omega} L^r_{x_\omega}}.$$
Applying these bounds to the relevant atoms, we obtain the boundedness of $P^{\pm, \alpha}_{\lambda, \kappa} \Pi_\pm$ on $NF^+(\bar{\kappa})$, $PW^+(\bar{\kappa})$, and $[NF^+]^*(\bar{\kappa})$.

 We now prove (\ref{eqn - lem mult dispos - key L1 bound for (i)}). Let $x=(x_1, x') \in \RR\times \RR^{n-1}$ and $\xi=(\xi_1, \xi') \in \RR \times \RR^{n-1}$. By rotating the $\xi$ coordinates (and a reflection if needed) we may assume that $\pm = +$ and  $\kappa$ is centered around $(-1, 0, ..., 0)$, thus $\widetilde{\rho}$ is supported in the set $\{ \xi_1 \sim \lambda,\,\,\, |\xi'| \lesa \lambda \alpha\}$. A computation gives
      \begin{align*}  2\big(\Pi_{+} - \Pi_{-\omega} \big) &\rho\big( t, x_1 + t, x'\big)\\
                &= \frac{1}{(2\pi)^{n+1}}\int_{\RR^n}\int_\RR \Big( \omega + \frac{\xi}{|\xi|} \Big)\cdot \sigma\widetilde{\rho}(\tau, \xi) e^{ i x \cdot \xi} e^{ i t (\tau + \xi_1)} d\tau d\xi\\
                &= \frac{1}{(2\pi)^{n+1}}\int_{\RR^n} \int_\RR \Big( \omega + \frac{\xi}{|\xi|} \Big)\cdot \sigma\widetilde{\rho}\big(\tau - \xi_1, \xi\big) e^{ i x \cdot \xi} e^{ i t \tau} d\tau d\xi\\
                &= \frac{(\alpha \lambda)^{n+1}}{(2\pi)^{n+1}} \int_{\RR^n} \int_\RR \Big( \omega + \frac{(\xi_1, \alpha \xi')}{|(\xi_1, \alpha\xi')|} \Big)\cdot \sigma\widetilde{\rho}\big(\alpha^2 \lambda \tau - \lambda \xi_1, \lambda \xi_1, \alpha \lambda \xi'\big) e^{ i \lambda(\alpha^2 t,  x_1, \alpha x') \cdot(\tau,  \xi) }  d\tau d\xi.
      \end{align*}
 If we now rescale the $(t,x)$ variables (which leaves the $L^1_{t, x}$ norm unchanged), it is enough to prove that
        $$ \p_\tau^{N_1} \p_{\xi_1}^{N_2} \nabla^{N_3}_{\xi'} \left( \Big( \omega + \frac{(\xi_1, \alpha \xi')}{|(\xi_1, \alpha\xi')|} \Big)\widetilde{\rho}\big(\alpha^2 \lambda \tau - \lambda \xi_1, \lambda \xi_1, \alpha \lambda \xi'\big) \right) \lesa \max\{ \theta(\omega, \bar{\kappa}), \beta \}$$
where $(\tau, \xi_1, \xi') \in \{ |\tau| \lesa 1, \,\, \xi_1 \approx 1, \, \, |\xi'| \lesa 1 \}$. If we note that $ \tau - \xi_1 + |\xi| = \tau  + \frac{|\xi'|^2}{\xi_1 + |\xi|}$ we can write
    $$ \widetilde{\rho}\big(\alpha^2 \lambda \tau - \lambda \xi_1, \lambda \xi_1, \alpha \lambda \xi'\big) = \Phi\big( |(\xi_1, \alpha \xi')|\big) \Phi_\kappa\Big( - \frac{( \xi, \alpha \xi')}{|(\xi, \alpha \xi')|}\Big) \Phi_0\Big( c^{-1} \tau + \frac{c^{-1} |\xi'|^2}{ \xi_1 + |(\xi_1, \alpha \xi')|}\Big)$$
and thus whenever a derivative hits $\widetilde{\rho}$, by (\ref{eqn - derivative bounds for multipliers}), we at worst pick up a factor of $\alpha \lesa \beta \ll 1$. Thus it remains to show that
            $$ \left| \p^{N_2}_{\xi_1} \nabla_{\xi'}^{N_3} \left( \omega + \frac{(\xi_1, \alpha \xi')}{|(\xi_1, \alpha  \xi')|}\right) \right| \lesa \max\{\theta(\omega, \bar{\kappa}), \beta\}.$$
Suppose $N_2 = N_3 =0$. Let $\eta = (\xi_1, \alpha \xi')$ and $\xi^* \in \kappa \cap \bar{\kappa} (\not =\varnothing)$. Then $- \frac{\eta}{|\eta|} \in \kappa$ and moreover
    $$ \left| \omega + \frac{(\xi_1, \alpha \xi')}{|(\xi_1, \alpha  \xi')|}\right| \lesa \theta(\omega, - \eta) \lesa \theta(\omega, \xi^*) + \theta(- \eta, \xi^*) \lesa \max\{ \theta(\omega, \bar{\kappa}), \beta\}$$
since $\bar{\kappa} \in \mc{C}_\beta$. On the other hand for $N_1, N_2 \not = 0$, we simple note that derivatives of $\xi'$ only add multiples of $\alpha$ (which is acceptable as $\alpha \lesa \beta$), while
            $$ \left| \p_{\xi_1} \left( \omega - \frac{(\xi_1, \alpha \xi')}{|(\xi_1, \alpha  \xi')|}\right)\right|  = \left| \frac{ (\alpha^2 |\xi'|^2, - \alpha \xi_1 |\xi'|)}{|(\xi_1, \alpha \xi')|^3}\right| \lesa \alpha$$
which again is clearly acceptable. Thus we obtain (\ref{eqn - lem mult dispos - key L1 bound for (i)}), and clearly the same argument shows that $\| \Pi_+ \rho \|_{L^1} \lesa 1$ as required.

\textbf{(ii) and (iii):} These are both well known, see for instance \cite[Lemma 3]{Tao2001a}.

\end{proof}
\end{lemma}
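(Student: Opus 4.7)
The unifying strategy is to realize each multiplier as convolution in $(t,x)$ by an $L^1_{t,x}$ kernel, at which point Minkowski's inequality immediately yields boundedness on any mixed-norm Lebesgue space $L^q_{t_\omega}L^r_{x_\omega}$, and hence on atoms of $NF^\pm(\bar\kappa)$, $PW^\pm(\bar\kappa)$, $[NF^\pm]^*(\bar\kappa)$. Once boundedness on atoms is established, the atomic definition of these spaces upgrades this to boundedness on the full space, so the entire lemma reduces to kernel estimates.

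Parts (iii) and (iv) are essentially standard and I would just invoke them: $C_d$, $C^\pm_d$, $C_{\lesa d}$, $C^\pm_{\lesa d}$ are convolution with bump functions on the frequency scale $d$, whose inverse Fourier transforms are Schwartz at scale $d^{-1}$ in $t$ and have $O(1)$ $L^1_t$ mass, hence disposability on $L^q_t L^2_x$ follows by Minkowski in $t$ after Plancherel in $x$. For (iv), once $d \gtrsim \lambda$ the joint cutoff $P_\lambda C_d$ becomes a Schwartz bump on a box of size $\lambda^n \times d$ in $(\xi,\tau)$, and the corresponding kernel is $L^1_{t,x}$ with $O(1)$ norm. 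For (i), I would rotate in $\xi$-space so that $\kappa$ is centered on a coordinate axis and then rescale $\xi_1\mapsto\lambda\xi_1$, $\xi'\mapsto\alpha\lambda\xi'$, $\tau\mapsto\alpha^2\lambda\tau$ (after a shear $\tau\mapsto\tau\mp\xi_1$ that flattens the cone into a hyperplane); in the rescaled variables the symbol becomes a fixed smooth cutoff on a unit box, so its inverse Fourier transform is Schwartz with $L^1$ norm $O(1)$, and $L^1_{t,x}$ is preserved by this anisotropic rescaling.

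Part (ii) is the main obstacle, because the null frame spaces are defined using the angular projections $\Pi_{\pm\omega}$, which do not commute with the characteristic projections $\Pi_\pm$ appearing in the multiplier. A pure $L^1_{t,x}$ kernel bound is not strong enough: the $NF^+(\bar\kappa)$ norm permits a factor $\theta(\omega,\bar\kappa)^{-1}$ in front of the $\Pi_\omega$-component of an atom, and a brute-force bound would produce the useless factor $1$. The plan is to exploit the identity $\Pi_\pm \Pi_\omega = (\Pi_\pm - \Pi_{-\omega})\Pi_\omega$ and then estimate the kernel of $(\Pi_\pm - \Pi_{-\omega})\rho$ (where $\rho$ denotes the kernel of $P^{\pm,\alpha}_{\lambda,\kappa}$) rather than that of $\Pi_\pm \rho$. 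Pointwise on the support of $\widetilde\rho$ one has $\mp\xi/|\xi|\in\kappa$, and since $\kappa$ and $\bar\kappa$ overlap with $\alpha\lesa\beta$, the null structure estimate \eqref{eqn - null structure estimate} yields $|\Pi_\pm - \Pi_{-\omega}|\lesa |\omega+\xi/|\xi||\lesa \max\{\theta(\omega,\bar\kappa),\beta\}$. I would then carry out the same rescaling as in (i) and verify that each $\xi'$- or $\tau$-derivative of the rescaled symbol $(\omega+(\xi_1,\alpha\xi')/|(\xi_1,\alpha\xi')|)\widetilde\rho$ produces at worst an $O(1)$ factor, while $\xi_1$-derivatives of the angular factor contribute only $O(\alpha)\lesa\beta$. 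Integration by parts then gives a Schwartz kernel with total $L^1$ mass bounded by $\max\{\theta(\omega,\bar\kappa),\beta\}$.

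Combining these two kernel estimates gives, for each atom with decomposition $u=\Pi_\omega u+\Pi_{-\omega}u$, bounds
\[
\|P^{\pm,\alpha}_{\lambda,\kappa}\Pi_\pm\Pi_\omega u\|_{L^q_{t_\omega}L^r_{x_\omega}}\lesa \max\{\theta(\omega,\bar\kappa),\beta\}\|\Pi_\omega u\|_{L^q_{t_\omega}L^r_{x_\omega}},\qquad \|P^{\pm,\alpha}_{\lambda,\kappa}\Pi_\pm\Pi_{-\omega} u\|_{L^q_{t_\omega}L^r_{x_\omega}}\lesa \|\Pi_{-\omega} u\|_{L^q_{t_\omega}L^r_{x_\omega}},
\]
which match exactly the weights $\theta(\omega,\bar\kappa)^{-1}$ and $1$ in the $NF^+(\bar\kappa)$ and $[NF^+]^*(\bar\kappa)$ norms, and the weights $\alpha^{-1}$ and $1$ in the $PW^+(\bar\kappa)$ norm. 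The anticipated main difficulty is purely computational: verifying the symbol estimates after the anisotropic rescaling, and keeping track of which factors of $\alpha,\beta,\theta(\omega,\bar\kappa)$ arise from which derivatives, in order to extract precisely the gain $\max\{\theta(\omega,\bar\kappa),\beta\}$ predicted by the null structure.
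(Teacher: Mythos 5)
Your proposal follows essentially the same route as the paper's own proof: the same reduction to $L^1_{t,x}$ kernel estimates via Minkowski on the atoms, the same rotation--shear--anisotropic rescaling of the symbol, the same algebraic identity $\Pi_\pm\Pi_\omega=(\Pi_\pm-\Pi_{-\omega})\Pi_\omega$, and the same pointwise angle estimate via a common point $\xi^*\in\kappa\cap\bar\kappa$ producing the gain $\max\{\theta(\omega,\bar\kappa),\beta\}$. In particular you correctly identify the paper's central observation that a bare $L^1$ kernel bound is insufficient for the null frame spaces and that the null structure estimate (\ref{eqn - null structure estimate}) must be built directly into the kernel estimate for the difference $(\Pi_\pm-\Pi_{-\omega})\rho$.
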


\begin{remark}\label{rem - C_d mult disposable}
  It is clear from the proof that multipliers of the form ${^\natural P}_\lambda R_\kappa^\pm C^\pm_{\ll d}$ (and other similar combinations) also satisfy the properties  $(i)$ and $(ii)$ in the previous lemma provided $d \gtrsim \alpha^2 \lambda$. In particular, if $\supp \widehat{u} \subset \{ |\xi| \approx \lambda\}$, $\kappa \in \mc{C}_\alpha$, and $d \gtrsim \alpha^2 \lambda$, then we can write $C^\pm_{\les d} R^\pm_\kappa u = \rho * R^\pm_\kappa u$ with $\rho \in L^1_{t, x}(\RR^{1+n})$. Thus
        $$ \| C^\pm_{\les d} R^\pm_\kappa u \|_{PW^{\pm'}(\kappa)} \lesa \|  R^\pm_\kappa u \|_{PW^{\pm'}(\kappa)}$$
  for any choice of signs $\pm$, $\pm'$.
\end{remark}

\section{Linear Estimates}\label{sec - linear est}

In this section we introduce the key linear estimate that we require. As similar versions of these estimates are known, at least for the related function spaces used in the waves maps case \cite{Tataru2001, Sterbenz2010, Krieger2012},  we leave the proofs till Sections \ref{sec - proof of null frame bounds} and \ref{sec - proof of stricharz est}.\\

 In the subcritical setting, Strichartz estimates have proven to be a key tool in the local and global well-posedness theory for the Dirac equation, especially in the $n=3$ case, see for instance \cite{Escobedo1997, Machihara2005a}. We would like to show that our iteration norm $F^+_\lambda$ controls the Strichartz type norms $L^q_t L^r_x$. For the $L^1_t L^2_x$ and $\dot{\mc{X}}^{-\frac{1}{2}, 1}_\pm$ components of our norms, we have a transference type principle, and hence, roughly speaking, any estimate satisfied by homogeneous solutions immediately holds general functions in spaces of the form $(\p_t \pm \sigma \cdot \nabla)^{-1} \big(L^1_t L^2_x + \dot{\mc{X}}^{-\frac{1}{2}, 1}_{\pm}\big)$, see for instance Section 4 in \cite{Sterbenz2004}. On the other hand, it is much more difficult to show that null frame component of our norms controls the Strichartz norms. In fact, in the case of the related function spaces used in the wave maps problem, initially only the ``off the line'' Strichartz estimates ($\frac{1}{q} + \frac{n-1}{2r} < \frac{n-1}{4}$ ) were known, see for instance \cite{Tao2001a, Krieger2003a}. However, recently, it was observed by Sterbenz-Tataru \cite{Sterbenz2010a} that the ``on the line'' Strichartz estimates also hold. In the current article, by adapting the argument used in \cite{Sterbenz2010a}, we can show that the space $F^+_\lambda$ also controls the Strichartz type norms.  Note that, in the homogeneous case,  the following estimates are immediate from the classical Strichartz estimates together with the $L^2_x$ orthogonality of the angular projections $P_{\lambda, \kappa}$.

\begin{theorem}[$F^\pm_\lambda$ controls Strichartz]\label{thm - F controls strichartz and angular sum}
Let $2 \les q , r \les \infty$ with $q>2$ and $\frac{1}{q} + \frac{n-1}{2r} \les \frac{n-1}{4}$. Suppose $u \in F^\pm_\lambda$. Then we have the estimate
        $$ \| u \|_{L^q_t L^r_x} \lesa \lambda^{n (\frac{1}{2} - \frac{1}{r}) - \frac{1}{q}} \| u \|_{F^{\pm}_\lambda}.$$
More generally, let $d \in 2^\ZZ$ and suppose that  $\mc{M}$ be Fourier multiplier with matrix valued symbol $m(\xi)$ such that $|m(\xi)|\lesa \delta$ for every $\xi \in \supp \widehat{u}$. Then with $(q, r)$ as above
        $$ \big\| \mc{M} C^{\pm'}_{\les d} u \big\|_{L^q_t L^r_x} \lesa \delta \lambda^{ n(\frac{1}{2} - \frac{1}{r}) - \frac{1}{q}} \| u \|_{F^{\pm}_{\lambda}}$$
where $\pm$ and $\pm'$ are independent choices of signs.
\begin{proof}
  See Subsection \ref{subsec - proof of Strichartz bounds} below.
\end{proof}
\end{theorem}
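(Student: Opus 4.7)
The plan is to exploit the atomic structure of $\mc{N}^\pm_\lambda$ in order to reduce the Strichartz estimate to a bound on each type of atom. Writing, via Duhamel,
\[ u(t) = \mc{U}_\pm(t) u(0) + \int_0^t \mc{U}_\pm(t-s) F(s)\, ds \]
with $F = (\p_t \pm \sigma \cdot \nabla) u \in \mc{N}^\pm_\lambda$ and $\|u(0)\|_{L^2_x} \les \|u\|_{F^\pm_\lambda}$, the homogeneous term is controlled by the classical frequency-localized Strichartz estimate for $\mc{U}_\pm$. An atomic decomposition $F = \sum_j c_j F_j$ with $\sum_j |c_j| \lesa \|F\|_{\mc{N}^\pm_\lambda}$ then reduces matters to a uniform Strichartz bound on the inhomogeneous solution attached to each single atom.

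Two of the three atom types are routine. For an energy atom $F_j$ with $\|F_j\|_{L^1_t L^2_x} \les 1$ and $\supp \widehat{F_j} \subset \{|\xi| \approx \lambda\}$, Minkowski's inequality reduces to the homogeneous Strichartz estimate for the shifted free evolutions $\mc{U}_\pm(t-s) F_j(s)$. For a $\dot{\mc{X}}^{-\frac{1}{2},1}_\pm$ atom, dividing by the symbol $\tau \pm |\xi|$ on the Fourier side places the inhomogeneous solution in $\dot{\mc{X}}^{\frac{1}{2},1}_\pm$, and the inclusion (\ref{eqn - Xsb controls strichartz}) (via the transference principle arising from Lemma \ref{lem - Xsb decomp into free waves}) delivers the required Strichartz bound.

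The main obstacle is the $NF^\pm_\lambda$ atom case, where the argument adapts the approach of Sterbenz-Tataru \cite{Sterbenz2010a} to the Dirac setting. Given such an atom $F = \sum_{\kappa \in \mc{C}_\alpha} F_\kappa$ with each $F_\kappa \in NF^\pm(\kappa)$ satisfying the support property (\ref{eqn - supp prop of NF atoms}), the angular Fourier localization yields orthogonality between the outputs corresponding to distinct caps, via a Littlewood-Paley type angular square function estimate for $L^r_x$, so matters reduce to a single cap and then to a single $NF^\pm(\kappa)$ atom $G$ with associated direction $\omega \not\in 2\kappa$. Working in the null frame $(t_\omega, x_\omega)$, the operator $\p_t \pm \sigma \cdot \nabla$ becomes essentially tractable after projection by $\Pi_{\pm\omega}$: the $\Pi_{\pm\omega} G$ component is already placed in $L^1_{t_\omega} L^2_{x_\omega}$, while the loss of $\theta(\omega,\kappa)^{-1}$ on the $\Pi_{\mp\omega} G$ component is absorbed by the null-form estimate (\ref{eqn - null structure estimate}). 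A Plancherel and change-of-variables argument in the spirit of the proof of Lemma \ref{lem - hom bilinear est}, together with the classical Strichartz estimate for the half-wave evolution in the rotated frame, then produces the single-atom bound. The restriction $q > 2$ corresponds precisely to avoiding the sharp endpoint Strichartz estimate which this scheme cannot recover.

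For the multiplier version, the cutoff $C^{\pm'}_{\les d}$ is disposable on all the relevant function spaces by Lemma \ref{lem - mult are disposable} and Remark \ref{rem - C_d mult disposable}, so the atomic reduction proceeds exactly as before; the factor $\delta$ is tracked through each case by using the uniform bound $|m(\xi)| \lesa \delta$ on the Fourier support of $u$, which is passed through either via Plancherel or, in the null-frame step, via angular orthogonality combined with the size estimate on $m$.
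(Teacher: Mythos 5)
Your proposed argument is genuinely different from the paper's, and unfortunately the key step does not hold up. The paper does \emph{not} attack each type of $\mc{N}^\pm_\lambda$ atom directly; instead it factors the Strichartz estimate through the spaces $V^2$ of functions of bounded $2$-variation. Concretely, the paper proves (a) that $u \in F^\pm_\lambda$ implies $\mc{U}_\pm(-t)u \in V^2$ (Theorem~\ref{thm - F controls V2}, whose proof relies on the energy estimate in Corollary~\ref{cor - energy type est} and is carried out entirely in $L^2_x$), and (b) that $V^2$ controls all non-sharp Strichartz norms via the atomic decomposition of $V^2$ due to Koch--Tataru (Lemma~\ref{lem - V2 decomp}, giving $u = \sum_j v_j$ with $\#\mc{I}_j \lesa 2^{2j}$ and $\|f^{(j)}_k\|_{L^2}\lesa 2^{-j}$; the sum $\sum_j 2^{2j/q - j}$ is precisely where the restriction $q>2$ arises). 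The multiplier $C^{\pm'}_{\les d}$ is then handled by showing it is disposable on $V^2$ via a kernel bound.

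The gap in your proposal is in the $NF^\pm_\lambda$ atom case. You want to pass from a sum $u = \sum_\kappa u_\kappa$ (with disjoint angular Fourier supports) to a square sum $\big(\sum_\kappa \|u_\kappa\|_{L^q_t L^r_x}^2\big)^{1/2}$ by invoking ``a Littlewood--Paley type angular square function estimate for $L^r_x$.'' No such estimate is available for general $r > 2$: a reverse square function bound $\|\sum_\kappa u_\kappa\|_{L^r} \lesa \|(\sum_\kappa|u_\kappa|^2)^{1/2}\|_{L^r}$ for angular caps is (a form of) the square function conjecture, and what \emph{is} known (interpolation between Plancherel at $r=2$ and Cauchy--Schwarz at $r=\infty$, or $\ell^2$ decoupling) incurs $N^{1/2-1/r}$ or $N^{\epsilon}$ losses in the number $N \approx \alpha^{1-n}$ of caps, which is unacceptable on the sharp line $\frac{1}{q}+\frac{n-1}{2r} = \frac{n-1}{4}$. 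This is exactly the obstruction that made only ``off the line'' Strichartz bounds accessible in the older null-frame literature, and which Sterbenz--Tataru circumvented by working in $V^2$ — an $L^2_x$-based norm for which the angular caps \emph{are} orthogonal and for which the Koch--Tataru decomposition supplies the needed $\ell^2_j$ gain in lieu of $\ell^1_d$ modulation summability (recall $F^\pm_\lambda$ only controls $\dot{\mc{X}}^{\frac{1}{2},\infty}_\pm$, not $\dot{\mc{X}}^{\frac{1}{2},1}_\pm$, so a direct modulation decomposition also produces logarithmic losses). The subsequent single-cap step in your sketch is likewise too vague — $L^q_{t_\omega}L^r_{x_\omega}$ estimates in the rotated null frame do not transfer to $L^q_t L^r_x$ when $q \ne r$ — but the angular summation is the fundamental issue: your scheme, as written, cannot recover the on-the-line Strichartz estimates that the theorem asserts.
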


\begin{remark}
 If $\supp \widehat{u}$ is contained in a ball of radius $\mu \les \lambda$ in the annulus of size $\lambda$, then we can replace $\lambda^{ n ( \frac{1}{2} - \frac{1}{r}) - \frac{1}{q}}$ with the smaller $(\tfrac{\mu}{\lambda})^{ n ( \frac{1}{2} - \frac{1}{r}) - \frac{2}{q}} \lambda^{ n ( \frac{1}{2} - \frac{1}{r}) - \frac{1}{q}}$, see Remark \ref{rem - refined strichartz implies better bound for Vpm est} below. This small scale improvement follows from the refined Strichartz estimates of Klainerman-Tataru in \cite{Klainerman1999} and can be very useful in proving bilinear estimates, particularly in the high-high frequency interaction.  In the current article the high-high interaction is not particularly hard to deal with, and so this small scale refinement is not needed.
\end{remark}

The next set of linear estimates we require are bounds involve the null frame type norms $PW^\pm(\kappa)$ and $[NF^\pm]^*(\kappa)$.

\begin{theorem}[Null frame bounds]\label{thm - null frame bounds}  Let $ \alpha \ll 1$,  $\lambda \in 2^\ZZ$, $T>0$, and $ \rho \in C^\infty_0(\RR)$. Suppose $u \in F^\pm_\lambda$.  Then we have the estimates
    \begin{equation}\label{eqn - thm null frame bounds - NF bound} \Bigg(\sum_{\kappa \in \mc{C}_\alpha} \big\| R^\pm_{\kappa, \, \alpha^2 \lambda}  \Pi_+ u \big\|_{[NF^\pm]^*(\kappa)}^2  + \big\|   R^\mp_{\kappa, \, \alpha^2 \lambda}  \Pi_- u \big\|_{[NF^\pm]^*(\kappa)}^2\Bigg)^\frac{1}{2} \lesa \| u \|_{F^\pm_\lambda} \end{equation}
and
    \begin{equation}\label{eqn - thm null frame bounds - PW bound}
        \Bigg(\sum_{\kappa \in \mc{C}_\alpha} \big\| R^\pm_{\kappa, \, \alpha^2 \lambda}  \Pi_+\big[ \rho( \tfrac{t}{T})  u \big] \big\|_{PW^\mp(\kappa)}^2 + \big\| R^\mp_{\kappa, \, \alpha^2 \lambda}  \Pi_-\big[ \rho( \tfrac{t}{T})  u \big] \big\|_{PW^\mp(\kappa)}^2\Bigg)^{\frac{1}{2}} \lesa (\alpha \lambda)^{\frac{n-1}{2}} \| u \|_{F^\pm_\lambda}
    \end{equation}
where the implied constant is independent of $T$.
\begin{proof}
  See Subsection \ref{subsec - proof of null frame bounds} below.
\end{proof}
\end{theorem}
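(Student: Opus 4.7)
The plan is to invoke the energy inequality of Theorem~\ref{thm - energy inequality + invariance under cutoffs} to decompose any $u \in F^\pm_\lambda$ into a homogeneous part $\mc{U}_\pm(t) u(0)$, with $\|u(0)\|_{L^2_x} \lesa \|u\|_{F^\pm_\lambda}$, plus a Duhamel contribution $\int_0^t \mc{U}_\pm(t-s) F(s)\,ds$ driven by some $F \in \mc{N}^\pm_\lambda$. By the reflection symmetry recorded in Remark~\ref{rem - + ---> - via reflection}, it is enough to treat the $+$ case. Since $\mc{N}^+_\lambda$ is atomic, the proof then splits into four sub-cases according to whether we are bounding: (a)~a homogeneous solution $\mc{U}_+(t) f$ with $P_\lambda f = f$; (b)~a $\dot{\mc{X}}^{-\frac{1}{2},1}_+$ atom; (c)~an energy atom in $L^1_t L^2_x$; or (d)~a $NF^+_\lambda$ atom.

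For case~(a), I would expand $\mc{U}_+(t) f$ as an average of traveling waves in the spirit of~\eref{eqn - hom - solution as average of free wave}, writing the $\Pi_\pm$ branches in the form $\int \Pi_\omega H_\omega(t - \omega \cdot x)\, d\sph(\omega)$. The angular projection $R^\pm_{\kappa, \alpha^2\lambda}$ restricts the $\omega$-integration to $\kappa$, while the modulation cutoff $C^\pm_{\ll \alpha^2 \lambda}$ is automatic on the free branch. For any $\omega \not\in 2\kappa$, restricting to a null hyperplane and running the computation~\eref{eqn - hom soln in null coord} yields
\begin{equation*}
\big\|\Pi_\omega R^+_{\kappa, \alpha^2\lambda} \Pi_+ \mc{U}_+(t) f \big\|_{L^\infty_{t_\omega} L^2_{x_\omega}} \lesa \big\|R^+_\kappa \Pi_+ f\big\|_{L^2_x},
\end{equation*}
where the null form estimate~\eref{eqn - null structure estimate} provides a factor $\theta(\omega,\kappa)$ that exactly cancels the Jacobian $J^{-\frac{1}{2}}(\xi) \approx \theta(\omega,\kappa)^{-1}$ from the change of variables $y = \xi + |\xi|\omega$. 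The companion $\Pi_{-\omega}$ bound gains no null structure but instead loses a factor of $\theta(\omega,\kappa)^{-1}$, which is exactly absorbed by the weight in the $[NF^+]^*(\kappa)$ norm. Taking the supremum in $\omega \not\in 2\kappa$ and summing over $\kappa \in \mc{C}_\alpha$ via $L^2_\xi$-orthogonality of the projections $R^\pm_\kappa$ produces~\eref{eqn - thm null frame bounds - NF bound} in this case. The PW bound~\eref{eqn - thm null frame bounds - PW bound} is obtained by a parallel computation: after the time cutoff $\rho(t/T)$ one applies Bernstein in the $(n-1)$ transverse frequency directions, each of width $\approx \alpha\lambda$, to trade $L^\infty_{x_\omega}$ for $L^2_{x_\omega}$, and then bounds $L^2_{t_\omega} L^2_{x_\omega}$ uniformly in $T$ via a maximal-function estimate along characteristics; this produces the $(\alpha\lambda)^{(n-1)/2}$ prefactor.

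Cases~(b) and~(c) are standard reductions: (b) follows from the transference principle Lemma~\ref{lem - Xsb decomp into free waves}, which writes each $\dot{\mc{X}}^{-\frac{1}{2},1}_+$ atom as a superposition of modulated free solutions covered by case~(a), while (c) follows from Minkowski's inequality applied under the Duhamel integral. The genuinely new case is~(d), where $F = \sum_{\kappa' \in \mc{C}_\beta} F_{\kappa'}$ is a $NF^+_\lambda$ atom at some scale $\beta$, and we must control the Duhamel image simultaneously in $[NF^+]^*(\kappa)$ for every $\kappa \in \mc{C}_\alpha$. The approach is to dualize via~\eref{eqn - NF, NF* duality}, reducing the $[NF^+]^*(\kappa)$ estimate to an $L^2_{t,x}$ pairing between $F_{\kappa'}$ and a homogeneous solution placed in $PW^-(\kappa)$; the square-function over $\kappa \in \mc{C}_\alpha$ is then closed using the orthogonality principle in rotated null frames cited inside the proof of Lemma~\ref{lem - PW NF L2 duality}. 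The main obstacle will be coordinating the two independent angular scales $\alpha$ and $\beta$---particularly when $\alpha \approx \beta$ or the caps $\kappa, \kappa'$ are only marginally separated---in which regimes one must either absorb $F_{\kappa'}$ directly into an $NF^+(\kappa)$ atom at the larger scale, or invoke the disposability properties of the combined angular and modulation multipliers given by Lemma~\ref{lem - mult are disposable} and Remark~\ref{rem - C_d mult disposable} to reshuffle the cutoffs without loss.
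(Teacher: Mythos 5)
Your outline tracks the paper's high-level structure quite closely: reduce to $\pm=+$ by reflection, split the forcing into a homogeneous piece plus atoms of $\mc{N}^+_\lambda$, and handle the homogeneous case via the traveling-wave average and the change of variables that produced~\eref{eqn - hom soln in null coord}. Cases (b) and (c) are essentially as in the paper: the $\dot{\mc{X}}^{-\frac{1}{2},1}_+$ atoms are reduced to free solutions via Lemma~\ref{lem - Xsb decomp into free waves}, and the energy atoms are handled by Minkowski under the Duhamel integral. One small imprecision in case~(a): the $(\alpha\lambda)^{\frac{n-1}{2}}$ gain in the $PW^\mp(\kappa)$ bound is not a maximal-function estimate along characteristics; it comes from Bernstein in the full frequency box in $\xi_\omega$ of dimensions $(\alpha\lambda)^{n-1}\times d$ combined with the modulation decomposition $\sum_d C^\pm_d$, and the $T$-uniformity is supplied by the scale invariance of the $\dot B^{1/2}_{2,1}$ norm in Lemma~\ref{lem - Xsb, homogeneous solns, and time cutoffs} feeding into Proposition~\ref{prop - Xsb controls PW}.

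The genuine gap is in your treatment of case (d). The duality~\eref{eqn - NF, NF* duality} lets you \emph{bound bilinear pairings} by $\|\cdot\|_{NF^+}\|\cdot\|_{[NF^+]^*}$, but it does not let you compute $\|u\|_{[NF^+]^*(\kappa)}$ for $u$ the Duhamel image of an $NF^+_\lambda$ atom by pairing against $NF^+(\kappa)$ test functions. Unwinding such a pairing by the (adjoint) Duhamel formula would convert a test function $v \in NF^+(\kappa)$ into a retarded wave potential, and bounding the resulting pairing against $F_{\kappa'}$ would require precisely the $[NF^+]^*$ control you are trying to establish — the argument is circular. Moreover Lemma~\ref{lem - PW NF L2 duality} requires one of the two factors to be a free $L^2$ function, not an $NF^+(\kappa')$ atom, so it does not directly close this loop. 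The missing ingredient is the explicit fundamental solution $E^\pm_\omega * F$ for the Dirac equation written in null coordinates~\eref{eqn - inhomogeneous Dirac in null coordinates}, and its decomposition (Lemma~\ref{lem - null frame decomp into free waves}) into an integral of genuine free waves plus an ``elliptic'' correction $G = -\tfrac{1}{\omega\cdot\nabla_{x_\omega}}\Pi_{\mp\omega}F$ controlled in $\dot{\mc{X}}^{\frac{1}{2},1}_+$. This is precisely what makes the $NF^+_\lambda$ case tractable: the free-wave part is then handled by the homogeneous estimates of Corollary~\ref{cor - null frame bounds hom case} and Proposition~\ref{prop - Xsb controls PW}, while $G$ falls back to the $X^{s,b}$ machinery. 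Your concern about the interplay of the two angular scales $\alpha$ and $\beta$ is correct, but the resolution is also not by duality: the $[NF^+]^*$ side aggregates by the square-function orthogonality in (i) of Corollary~\ref{cor - orthog in null frame}, while the $PW^-$ side uses a triangle-inequality/H\"older trade, exploiting that the cap-localized $PW$ estimate carries a $(\min\{\alpha,\beta\}\lambda)^{\frac{n-1}{2}}$ factor that is strong enough to pay for the loss of orthogonality when $\beta \ll \alpha$.
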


\begin{remark}
  Theorem \ref{thm - null frame bounds} is in some sense a null form estimate, as it depends on certain cancelations involving the projections $\Pi_\pm$. In particular, if we tried to replace the $PW^\mp(\kappa)$ norm with $PW^\pm(\kappa)$, then we would only obtain (\ref{eqn - thm null frame bounds - PW bound}) with the factor $(\alpha \lambda)^{ \frac{n-1}{2}}$ replaced with the much larger $\alpha^{\frac{n-3}{2}} \lambda^{\frac{n-1}{2}}$.
\end{remark}

\begin{remark}
  We may replace the multiplies $R^\pm_{\kappa, \alpha^2 \lambda}$ in Theorem \ref{thm - null frame bounds} with $R^\pm_\kappa C^\pm_{\lesa \alpha^2 \lambda}$. In particular, it is not necessary that $|\tau \pm |\xi| | \ll \alpha^2 \lambda$, it is enough to be localised to the larger region $|\tau \pm |\xi| | \lesa \alpha^2 \lambda$. This follows by noting that we can always reduce the later condition to the former by using the $\dot{\mc{X}}^{\frac{1}{2}, \infty}_\pm$ spaces. See for instance the proof of Corollary \ref{cor - F controls dual of N} below.
\end{remark}

The final result in this section is a duality type estimate that helps to reduce the number of bilinear estimates we need to prove.

\begin{corollary}[$F^\pm_\lambda$ controls dual of $\mc{N}^\pm_\lambda$]\label{cor - F controls dual of N}
Let $\lambda, \mu \in 2^{\ZZ}$. Assume $u \in F^\pm_\lambda$ and $v \in \mc{N}^\pm_\mu$. Then
        $$ \Big| \int_{\RR^{n+1}} u^\dagger v dx dt \Big| \lesa \| u \|_{F^\pm_{\mu}} \| v \|_{\mc{N}^\pm_{\lambda}}  .$$
\begin{proof}
After a reflection, we may assume that $\pm = +$. The atomic definition of $\mc{N}^+_\lambda$ implies that it suffices to consider the case where $v$ is an atom. If $v$ is an energy or $\dot{\mc{X}}^{-\frac{1}{2}, 1}_+$ atom, then the estimate follows easily by duality together with the estimate
    $$ \|  u \|_{L^\infty_t L^2_x} + \| u \|_{\dot{\mc{X}}^{\frac{1}{2}, \infty}_{ +}}  \lesa \| u \|_{F^+_{\mu}}$$
which follows from Lemma \ref{lem - F controls Xsb infty}. Thus it only remains to consider the case where $v$ is a $NF^+_{\lambda}$ atom. By definition, there exists $\alpha>0$ such that we have a decomposition $ v = \sum_\kappa F_\kappa$ with $\supp \widetilde{\Pi_\pm F}_\kappa \subset A^\pm_{\lambda, \alpha}(\kappa)$ and
        $$ \sum_{\kappa \in \mc{C}_\alpha} \| F_{\kappa} \|_{NF^+(\kappa)}^2 \les 1.$$
Note that if $\kappa \in \mc{C}_\alpha$ and $\kappa' \in \mc{C}_{\frac{\alpha}{4}}$ with $\kappa \cap \kappa' \not = \varnothing$, then for every $\omega \in \sph^{n-1}$,  $\omega \not \in 2\kappa$ implies $\omega \not \in 2\kappa'$ and consequently $\| u \|_{[NF^+]^*(\kappa)} \les \| u \|_{[NF^+]^*(\kappa')}$. Hence an application of the duality estimate (\ref{eqn - NF, NF* duality}) gives
    \begin{align*} \Big|\int u^\dagger v \big|
            &\les \sum_{\pm} \sum_{ \substack{ \kappa \in \mc{C}_\alpha, \kappa' \in \mc{C}_{\frac{\alpha}{4}} \\ \kappa \cap \kappa' \not = \varnothing}} \Big|\int \big(R^\pm_{\kappa'} C^\pm_{\les \alpha^2 \lambda} \Pi_\pm u\big)^\dagger F_\kappa dx dt\Big| \\
            &\les  \sum_{ \substack{ \kappa \in \mc{C}_\alpha, \kappa' \in \mc{C}_{\frac{\alpha}{4}} \\ \kappa \cap \kappa' \not = \varnothing}}  \|R^\pm_{\kappa'} \Pi_\pm u \|_{[NF^+]^*(\kappa)}
                    \| F_{\kappa} \|_{NF^+(\kappa)} \\
            &\les  \bigg( \sum_{\kappa' \in \mc{C}_{\frac{\alpha}{4}}}\| R^\pm_{\kappa'} C^\pm_{\les \alpha^2 \lambda} \Pi_\pm  u \|_{[NF^+]^*(\kappa')}^2 \bigg)^\frac{1}{2}.
    \end{align*}
We now decompose $R^\pm_{\kappa'} C^\pm_{\les \alpha^2 \lambda} \Pi_\pm  u = R^\pm_{\kappa', \frac{\alpha^2}{16} \lambda} \Pi_\pm u + \sum_{ d \approx \alpha^2 \lambda} R^\pm_\kappa C^\pm_d \Pi_\pm u$. The first term we can directly estimate by using Theorem \ref{thm - null frame bounds}. For the second term, we can not directly apply Theorem \ref{thm - null frame bounds}, as the support is not sufficiently close to the light cone. Instead, we use Lemma \ref{lem - Xsb decomp into free waves} to decompose into an average of free waves, and note that as $ \| \mc{U}_+(t) f \|_{F^+_\lambda} \lesa \| f \|_{L^2_x}$ (for $f$ with Fourier support in $|\xi| \approx \lambda$), we can apply Theorem \ref{thm - null frame bounds} to deduce that
   \begin{align*}  \sum_{d \approx \alpha^2 \lambda} \bigg( \sum_{\kappa' \in \mc{C}_{\frac{\alpha}{4}}}\| R^\pm_{\kappa'} C^\pm_{d} \Pi_\pm  u \|_{[NF^+]^*(\kappa')}^2 \bigg)^\frac{1}{2} &\lesa  \int_{|\tau| \approx \alpha^2 \lambda}  \bigg( \sum_{\kappa' \in \mc{C}_{\frac{\alpha}{4}}}\| R^\pm_{\kappa'} \Pi_\pm  \mc{U}_+(t)[f_\tau] \|_{[NF^+]^*(\kappa')}^2 \bigg)^\frac{1}{2}\, d\tau\\
   &\lesa \int_{|\tau| \approx \alpha^2 \lambda} \| f_\tau \|_{L^2} d\tau \lesa \| u \|_{\dot{\mc{X}}^{\frac{1}{2}, \infty}_+} \lesa \| u \|_{F^+_\lambda}
   \end{align*}
where we used Lemma \ref{lem - F controls Xsb infty} to estimate the $\dot{X}^{\frac{1}{2}, \infty}_+$ norm. Thus result follows.
\end{proof}
\end{corollary}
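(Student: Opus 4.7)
The plan is to exploit the atomic structure of $\mc{N}^\pm_\lambda$ and reduce the bilinear duality to estimates on each type of atom separately. After a reflection (as in Remark \ref{rem - + ---> - via reflection}), I may assume $\pm = +$. Since $v = \sum_j c_j v_j$ with $(c_j) \in \ell^1$ and each $v_j$ one of the three types of atoms, by the abstract $\mc{A}(E)$ reduction it suffices to bound $|\int u^\dagger v \, dx\, dt|$ by an absolute constant $\lesa \|u\|_{F^+_\mu}$ when $v$ is a single atom.

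The two ``easy'' atoms are handled via Lemma \ref{lem - F controls Xsb infty}. If $v$ is an energy atom with $\|v\|_{L^1_t L^2_x} \les 1$, then $L^1_t L^2_x$--$L^\infty_t L^2_x$ duality gives $|\int u^\dagger v| \les \|u\|_{L^\infty_t L^2_x} \lesa \|u\|_{F^+_\mu}$. If $v$ is an $\dot{\mc{X}}^{-\frac{1}{2},1}_+$ atom at modulation $d$, then $\|v\|_{L^2_{t,x}} \les d^{1/2}$ and by Plancherel $|\int u^\dagger v| \les d^{-1/2} \|\mathfrak{C}^+_{\sim d} u\|_{L^2_{t,x}} \cdot d^{1/2} \lesa \|u\|_{\dot{\mc{X}}^{\frac{1}{2},\infty}_+} \lesa \|u\|_{F^+_\mu}$.

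The real content of the proof is when $v = \sum_{\kappa \in \mc{C}_\alpha} F_\kappa$ is a $NF^+_\lambda$ atom with the support properties (\ref{eqn - supp prop of NF atoms}) and $(\sum_\kappa \|F_\kappa\|_{NF^+(\kappa)}^2)^{1/2} \les 1$. I would insert the frequency projections dictated by the support of $F_\kappa$ onto $u$, split $\Pi_\pm u$ into its two $\pm$-pieces, apply the $[NF^+]^*(\kappa)$--$NF^+(\kappa)$ duality (\ref{eqn - NF, NF* duality}) cap-by-cap, and close with Cauchy--Schwarz over $\kappa \in \mc{C}_\alpha$. This reduces the corollary to a square-function bound of the form
$$ \Big( \sum_{\kappa \in \mc{C}_\alpha} \|R^\pm_\kappa C^\pm_{\les c\alpha^2 \lambda} \Pi_\pm u\|_{[NF^+]^*(\kappa)}^2 \Big)^{1/2} \lesa \|u\|_{F^+_\mu}. $$

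The main obstacle is that Theorem \ref{thm - null frame bounds} only directly controls the piece $R^\pm_{\kappa, \alpha^2\lambda} \Pi_\pm u$ with Fourier support at distance $\ll \alpha^2 \lambda$ from the cone, whereas the duality above produces contributions with modulation up to $\approx c\alpha^2\lambda$. To handle the transitional shell $d \approx \alpha^2 \lambda$, I would split $C^\pm_{\les c\alpha^2 \lambda} = C^\pm_{\ll \alpha^2\lambda/16} + \sum_{d \approx \alpha^2 \lambda} C^\pm_d$; Theorem \ref{thm - null frame bounds} handles the first piece directly. For each $d \approx \alpha^2\lambda$ in the finite sum, I would invoke Lemma \ref{lem - Xsb decomp into free waves} to write $C^\pm_d \Pi_\pm u$ as an average $\int_{|\tau| \approx d} e^{it\tau} \mc{U}_+(t)[f_\tau]\, d\tau$ of free Dirac waves with $\|f_\tau\|_{L^2} \lesa \|\widetilde{u}(\tau,\cdot)\|_{L^2_\xi}$, apply Theorem \ref{thm - null frame bounds} to each free wave (using the trivial bound $\|\mc{U}_+(t)f\|_{F^+_\lambda} \lesa \|f\|_{L^2_x}$ in the frequency-localized setting), and integrate in $\tau$. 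Minkowski in $\tau$ combined with Cauchy--Schwarz recovers exactly the $\dot{\mc{X}}^{\frac{1}{2},\infty}_+$ norm of $u$, which by Lemma \ref{lem - F controls Xsb infty} is bounded by $\|u\|_{F^+_\mu}$, completing the estimate.
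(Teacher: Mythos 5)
Your proposal is correct and follows essentially the same route as the paper's proof: the atomic reduction, the $L^1_tL^2_x$--$L^\infty_tL^2_x$ and $\dot{\mc{X}}^{-\frac{1}{2},1}$--$\dot{\mc{X}}^{\frac{1}{2},\infty}$ dualities for the easy atoms, the $NF^+(\kappa)$--$[NF^+]^*(\kappa)$ duality plus Cauchy--Schwarz in the caps for the null-frame atom, and the free-wave decomposition from Lemma \ref{lem - Xsb decomp into free waves} to bridge the transitional modulation shell $d\approx\alpha^2\lambda$ where Theorem \ref{thm - null frame bounds} does not directly apply. The one small technical refinement in the paper that your write-up glosses over is the passage to finer caps $\kappa'\in\mc{C}_{\alpha/4}$ covering each $\kappa\in\mc{C}_\alpha$ (exploiting $\omega\not\in 2\kappa\Rightarrow\omega\not\in 2\kappa'$, hence $\|\cdot\|_{[NF^+]^*(\kappa)}\les\|\cdot\|_{[NF^+]^*(\kappa')}$), which is what allows the angular multipliers inserted into $u$ to genuinely reproduce $F_\kappa$ and still reduce to the square sum controlled by Theorem \ref{thm - null frame bounds}.
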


\section{Bilinear Estimates}\label{sec - bilinear est}

The key estimate we prove in this section is the following bilinear null form estimate.

\begin{theorem}[Bilinear estimate in $\mc{N}^\pm_\lambda$ - close cone case]\label{thm - bilinear null frame est close cone}
 Let $\delta \lesa \min\{ \lambda, N_1\} $. Assume $ v \in F^\mp_{N_1}$ has compact support in time. Suppose $F$ is scalar valued with $\supp \widetilde{F} \subset \big\{ |\xi| \lesa \lambda, \,\, \big| |\tau| -|\xi| \big| \lesa \delta\big\}$. Then
             \begin{equation}\label{eqn - thm bilinear null frame est close clone - est in statement of thm}\big\| S^\pm_{N_0, \ll \delta} \big(  F  \mathfrak{C}^\mp_{\ll \delta} v\big) \big\|_{\mc{N}^\pm_{N_0}} \lesa \big( \delta \min\{ \lambda, N_1\} \big)^\frac{n-1}{4} \| F \|_{L^2_{t, x}} \| v \|_{F^\mp_{N_1}}.\end{equation}
\end{theorem}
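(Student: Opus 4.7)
Since the output $S^+_{N_0,\ll\delta}(F\,\mathfrak{C}^-_{\ll\delta}v)$ lies within distance $\ll\delta$ of the light cone, the $\dot{\mc{X}}^{-1/2,1}_+$ and $L^1_tL^2_x$ components of $\mc{N}^+_{N_0}$ are too weak to accommodate it, so I plan to decompose it into $NF^+_{N_0}$ atoms. Set the angular scale $\alpha:=C(\delta/\min\{N_0,N_1\})^{1/2}$ with $C$ a large constant, and assume $\alpha\ll 1$ (the complementary regime $\alpha\gtrsim 1$ is handled directly via the Strichartz bounds of Theorem \ref{thm - F controls strichartz and angular sum}, since there is no effective angular decomposition). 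Then decompose
\[
S^+_{N_0,\ll\delta}(F\,\mathfrak{C}^-_{\ll\delta}v)=\sum_{\bar{\kappa}\in\mc{C}_\alpha}G_{\bar{\kappa}},\qquad \mathfrak{C}^-_{\ll\delta}v=\sum_{\kappa\in\mc{C}_\alpha}v_\kappa,
\]
with $G_{\bar{\kappa}}$ Fourier-supported in ${^\natural\!A^{\pm}_{\alpha,N_0}(\bar{\kappa})}$. The goal is the square-function bound $\bigl(\sum_{\bar{\kappa}}\|G_{\bar{\kappa}}\|^2_{NF^+(\bar{\kappa})}\bigr)^{1/2}\lesa (\delta\min\{\lambda,N_1\})^{(n-1)/4}\|F\|_{L^2_{t,x}}\|v\|_{F^-_{N_1}}$, which combined with the atomic definition of $\mc{N}^+_{N_0}$ yields the theorem.

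For fixed $\bar{\kappa}$, analyzing the support relation $\xi_0=\xi_1+\xi_2$ together with the cone conditions on $\tau_0,\tau_2$ and the constraint $\bigl||\tau_1|-|\xi_1|\bigr|\lesa\delta$ forces $\xi_0/|\xi_0|\approx\pm\xi_2/|\xi_2|$ up to discrepancy $\lesa\alpha$, so only $O(1)$ caps $\kappa$ contribute per $\bar{\kappa}$. For each such pair I will pick a direction $\omega\notin 2\bar{\kappa}$ (required by the $NF^+(\bar{\kappa})$ atom structure), pass into the null frame $(t_\omega,x_\omega)$, and apply H\"older's inequality
\[
\|\Pi_{\pm\omega}G_{\bar{\kappa}}\|_{L^1_{t_\omega}L^2_{x_\omega}}\lesa \|F\|_{L^2_{t,x}}\cdot\|(\text{symbol factor})\,v_\kappa\|_{L^2_{t_\omega}L^\infty_{x_\omega}},
\]
extracting the Dirac null-structure factor $|\Pi_{\xi_0/|\xi_0|}\Pi_{\pm\xi_2/|\xi_2|}|\lesa\theta(\xi_0/|\xi_0|,\mp\xi_2/|\xi_2|)\lesa\alpha$ coming from the $\Pi_\pm$ multipliers in the bilinear operator. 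The $L^2_{t_\omega}L^\infty_{x_\omega}$ control of $v_\kappa$ is supplied by Theorem \ref{thm - null frame bounds}, giving $\|v_\kappa\|_{PW^+(\kappa)}\lesa (\alpha N_1)^{(n-1)/2}\|v\|_{F^-_{N_1}}$ with $\ell^2$-square-summability in $\kappa$ -- this is where the compact time-support hypothesis on $v$ is used to dispose of the cutoff $\rho(t/T)$.

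Assembling, the per-pair bound has shape $\alpha\cdot\|F\|_{L^2_{t,x}}\|v_\kappa\|_{PW^+(\kappa)}$; squaring and summing (exploiting the $O(1)$ pairing $\bar{\kappa}\leftrightarrow\kappa$ and the square-sum bound for the $v_\kappa$) produces overall constant $\alpha\cdot(\alpha N_1)^{(n-1)/2}$. A short case check over the possible frequency orderings -- noting, for instance, that the antiparallel interaction forces $\max\{N_0,N_1\}\lesa\lambda$ and hence $\min\{\lambda,N_1\}\sim N_1$, while in the high--low regime the extra $\alpha$ gain more than compensates for $\alpha N_1\gg (\delta\lambda)^{1/2}$ -- shows that this is always bounded by the claimed $(\delta\min\{\lambda,N_1\})^{(n-1)/4}$.

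The main obstacle is that the standard null-frame duality of Lemma \ref{lem - PW NF L2 duality} does \emph{not} apply here, since the relevant caps $\bar{\kappa}$ and $\kappa$ overlap rather than being well-separated. The Dirac null structure furnishes exactly the compensating $\alpha$ factor precisely in the regime of overlapping caps, and making this rigorous requires carefully splitting the $\Pi_\pm$ Fourier multipliers around the cap centers and using the orthogonality $\Pi_{\omega(\bar{\kappa})}\Pi_{-\omega(\bar{\kappa})}=0$ to kill the leading contribution, with the remaining ``error'' multipliers treated via Lemma \ref{lem - mult are disposable}. Without the Dirac projections (i.e.\ for a generic scalar null form) no bilinear estimate of this strength at the critical scaling would be available.
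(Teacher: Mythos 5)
Your plan contains a genuine gap in the central step, and the geometry you describe is in fact the opposite of what happens in the critical regime. You claim that the caps $\kappa$ and $\bar{\kappa}$ overlap and that Lemma \ref{lem - PW NF L2 duality} therefore cannot be applied, so you propose to substitute the Dirac null-structure factor $|\Pi_\omega\Pi_{\omega'}|\lesa\theta(\omega,-\omega')$ for the missing separation. This does not work, because the obstruction to the $PW$--$NF$ duality when caps overlap is \emph{structural}, not a matter of insufficient decay: an $NF^\pm(\bar{\kappa})$ atom requires a direction $\omega\notin 2\bar{\kappa}$, while the $PW^\pm(\kappa)$ control of $v_\kappa$ requires $\omega\in 2\kappa$; if the caps overlap there is simply no admissible $\omega$, and no amount of angular gain from $\Pi_{\omega(\bar{\kappa})}\Pi_{-\omega(\bar{\kappa})}=0$ creates one. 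More importantly, the situation you are worried about is not the one that actually forces you into the null frame spaces.

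The paper's proof splits the distance-to-cone of the three factors into three regimes, and it uses $NF^\pm_{N_0}$ atoms \emph{only} in the regime (called $A_{III}$ there) where the output and $v$ are at modulation $\ll d$ and $F$ is at modulation $\approx d$. Precisely there, the identity $(\ref{eqn - thm bilinear null frame est - key angle est})$ gives not merely an upper bound but also a \emph{lower} bound on the angle, namely $\theta(\xi+\xi',\pm\xi')\approx\sqrt{d\max\{d,|\xi|\}/(N_0N_1)}$ with matching constants from both sides — the lower bound is forced by $\big||\xi+\xi'|\mp|\xi'|+\sgn(\tau)|\xi|\big|\approx d$ once the output and $v$ are far closer to the cone than $F$ is. Consequently the caps for the output and $v$ \emph{are} separated by $\g 5\alpha$, Lemma \ref{lem - PW NF L2 duality} applies as stated, and your proposed $O(1)$-overlap pairing with an extracted $\alpha$ factor is chasing a problem that does not arise. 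The cases in which the caps genuinely do overlap are $A_I$ (output at modulation $\approx d$) and $A_{II}$ ($v$ at modulation $\approx d$); in those cases the paper does not try to build $NF^\pm_{N_0}$ atoms at all, but instead places the product in $\dot{\mc{X}}^{-\frac{1}{2},1}_\pm$ or $L^1_tL^2_x$ respectively, using Bernstein and the Dirac angle gain directly in $L^2_x$. Your single-mechanism scheme misses this trichotomy, and without it the decomposition into $NF^\pm_{N_0}$ atoms is not available in all the remaining regimes.

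There is also a scale mismatch: you fix one angular scale $\alpha=C(\delta/\min\{N_0,N_1\})^{1/2}$ for the whole estimate. But the correct angular width is dictated by the modulation $d$ of $F$ (and, when $N_0\approx N_1$, by the dyadic frequency $\lambda'$ of $F$), and $d$ ranges over a whole dyadic family $d\lesa\delta$. Choosing the wrong scale leaves you with either caps that are too coarse (losing orthogonality) or too fine (so the output is no longer supported at modulation $\approx\alpha^2N_0$, breaking the $NF$-atom definition). The paper resolves this with a nested dyadic sum over $d$, and an additional dyadic decomposition $F=P_{\ll d}F+\sum_{\lambda'}P_{\lambda'}F$ in the high-high case, together with intermediate modulation ranges treated via $\dot{\mc{X}}^{-\frac{1}{2},1}_\pm$ or $L^1_tL^2_x$. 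None of this is reflected in your plan, so even the regime you have correctly identified (where $NF$ atoms are appropriate) would not close as written.
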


\begin{remark}
  We should emphasis that the implied constant in (\ref{eqn - thm bilinear null frame est close clone - est in statement of thm}) is \emph{independent} of $v$. In particular, although the Theorem \ref{thm - bilinear null frame est close cone} requires $v$ to have compact support in time, the implied constant \emph{does not} depend on the size of the support. This is due to fact that the only place the compact support assumption is needed, is to control the $PW^\pm(\kappa)$ type norms. By Theorem \ref{thm - null frame bounds}, this is possible provided we can write $ v = \rho( \tfrac{t}{T}) v$ for some $\rho \in C^\infty_0(\RR)$, with the implied constant being independent of $T$ and $v$, and consequently, independent of the size of the support. A similar comment applies to the bilinear estimates appearing in Corollary \ref{cor - bilinear null frame est far cone} and Corollary \ref{cor - bilinear L2 estimates} below.
\end{remark}

Theorem \ref{thm - bilinear null frame est close cone} contains the main multi-linear estimate contained in this article. In fact all other bilinear and trilinear estimates essentially follow by using Lemma \ref{lem - F controls Xsb infty} and (\ref{eqn - L2 control away from null cone}) to control the region away from the cone, and Corollary \ref{cor - F controls dual of N} to deduce bilinear estimates in $L^2_{t, x}$ by duality.

\begin{proof}[Proof of Theorem \ref{thm - bilinear null frame est close cone}]
After a reflection in $x$, we may assume that  $\pm = +$. Note that as $v \in F^-_{N_0}$, we have $\supp \widehat{v} \subset\{ |\xi| \approx N_0\}$. Thus the lefthand side of (\ref{eqn - thm bilinear null frame est close clone - est in statement of thm}) vanishes unless $\max\{ \lambda, N_0, N_1\} \approx \text{med}\{ \lambda, N_0, N_1\}$, where $\text{med}\{ a, b, c\}$ is the median of $a, b, c \in \RR$,  we make use of this simple observation later. Let $\mu = \min\{ \lambda, N_0, N_1\}$. We claim that it is enough to consider the case $\delta \les \mu$. To prove the claim, note that if $\delta \g \mu$, then using Lemma \ref{lem - F controls Xsb infty} we have the estimates
    \begin{equation} \label{eqn - thm bilinear null frame est - far cone case v}
 \big\| P_{N_0} \big(  F \mathfrak{C}^-_{ \gtrsim \mu} v \big) \big\|_{L^1_t L^2_x} \lesa  \mu^{\frac{n}{2}} \| F \|_{L^2_{t, x}} \| \mathfrak{C}^-_{ \gtrsim \mu} v \|_{L^2_{t, x}} \lesa \mu^{\frac{n-1}{2}} \| F \|_{L^2_{t, x}} \| v \|_{F^-_{N_1}}
    \end{equation}
and
        \begin{equation}\label{eqn - thm bilinear null frame est - far cone case output}
                \big\| S^+_{N_0, \gtrsim \mu} \big( F v \big) \big\|_{\dot{\mc{X}}^{-\frac{1}{2}, 1}_+} \les \mu^{-\frac{1}{2}} \big\| P_{N_0} (F v)\big\|_{L^2_{t, x}} \lesa \mu^{ \frac{n-1}{2}} \| F \|_{L^2_{t, x}} \| v \|_{L^\infty_t L^2_x} \lesa \mu^{ \frac{n-1}{2}} \| F \|_{L^2_{t, x}} \| v \|_{F^-_{N_1}}
         \end{equation}
together with the obvious bounds (\ref{eqn - N controlled by L1L2}) and (\ref{eqn - N controlled by X}), reduce the problem to estimating
$S^+_{N_0, \ll \mu} ( F  \mathfrak{C}^-_{\ll \mu} v)$, this is almost the case $\delta = \mu$, but we need to restrict the support of $F$ further. To this end, we observe the identity\footnote{ This follows by noting that if $(\tau+ \tau', \xi + \xi'), (\tau', \xi') \in \big\{ \big| |\tau| - |\xi| \big| \ll \mu\}$ and $|\xi + \xi'| \approx N_0$, $|\xi'| \approx N_1$
then the inequality
        $$\big| |\tau| - |\xi| \big| \les \big| |\tau + \tau'| - |\xi + \xi'| \big| + \big| |\tau'| - |\xi'| \big| + 2\min\{ |\xi + \xi'|, |\xi'| \}$$
shows that $\big| |\tau| - |\xi| \big| \lesa \mu + \min\{ |\xi + \xi'|, |\xi'|\}$ which gives the claimed identity. This method of deducing close cone information on $F$, from close cone information on the output $Fv$ and $v$, occurs frequently in what follows. }
    $$  S^+_{N_0, \ll \mu} ( F  \mathfrak{C}^-_{\ll \mu} v) = S^+_{N_0, \ll \mu} ( C_{\lesa a} F  \mathfrak{C}^-_{\ll \mu} v)$$
where $a = \mu + \min\{ N_0, N_1\}$. Then as we already have $\supp \widetilde{F} \subset \big\{ \big| |\tau| - |\xi| \big| \lesa \delta \,\}$ with $\delta \lesa \lambda$, we deduce that $\supp \widetilde{ C_{\lesa a} F } \subset \big\{ \big| |\tau| - |\xi| \big| \lesa \mu \,\}$. Thus we can reduce the case $\delta \g \mu$ to $\delta \les \mu$, hence it is enough to consider the case $\delta \les \mu$ as claimed.\\

  It remains to consider the case $\delta \les \mu$. To this end we decompose $S^+_{N_0, \ll \delta}$ into terms of the form $C^\pm_{\ll \delta} P_{N_0} \Pi_\pm$ and note that, after a reflection (in $t$ and $x$), it is enough to prove
            $$ \big\| C^+_{\ll \delta}P_{N_0} \Pi_+ \big( F C^\pm_{\ll \delta} v^\mp \big)\big\|_{\mc{N}^+_{N_0}} \lesa \big( \delta \min\{ \lambda, N_1\}\big)^{\frac{n-1}{4}} \| F \|_{L^2_{t, x}} \| v \|_{F^-_{N_1}}$$
 where we let $v^\mp =  \Pi_\mp v$.  We now decompose the distance to the cone into three main interactions
     \begin{align*} C^+_{\ll \delta} \big[  \big( F  C^\pm_{\ll \delta} v^\mp \big]
   &= \sum_{d \lesa \delta} C^+_d \big[   C^+_{\lesa d} F C^\pm_{ \lesa d} v^\mp \big] +  \sum_{d\lesa \delta} C^+_{\lesa d}\big[ C^+_{\lesa d} F C^\pm_{ d} v^\mp \big]   +  \sum_{ d \lesa \delta} C^+_{\ll d}\big[   C^+_{ d} F  C^\pm_{ \ll d} v^\mp \big]\\
   &= A_I + A_{II} + A_{III}.\end{align*}
  Roughly the strategy  is to use the $\dot{\mc{X}}^{\frac{1}{2}, 1}_\pm$ type estimates whenever the output $Fv$ or $v$ is away from the light cone (the interactions $A_I$ and $A_{II}$), and for the more delicate interaction, $A_{III}$, apply the null frame bounds in Theorem \ref{thm - null frame bounds}.\\

  \textbf{Case 1: $A_I$.} The main idea is to use the close cone condition to limit the possible angular interactions. The key tool to accomplish this will be the elementary angle estimate
        \begin{equation}\label{eqn - thm bilinear null frame est - key angle est}  \theta(\xi + \xi', \pm \xi')^2 \approx \frac{ \big| |\xi + \xi'| \mp |\xi'| - |\xi| \big|  \times \big| |\xi + \xi'| \mp |\xi'| + |\xi| \big|}{|\xi' + \xi| |\xi'| }.\end{equation}
This estimate is used as follows. Suppose that
            $$ (\tau, \xi) \in \supp \widetilde{ C_{\lesa d} F }, \qquad (\tau', \xi') \in \supp \widetilde{C^\pm_{\lesa d} v^\mp}, \qquad (\tau + \tau', \xi + \xi') \in \big\{ |\xi| \approx N_0, \,\, \big|\tau + |\xi| \big| \approx d\}.$$
  Then using the assumption $\delta \lesa \lambda$ we have
    $$ \big| |\xi + \xi'| \mp |\xi'| - \sgn(\tau) |\xi| \big| \les \big| \tau + \tau' + |\xi + \xi'| \big| + \big| \tau' \pm |\xi'| \big| + |\tau| + |\xi| \lesa \lambda$$
  and
    $$\big||\xi + \xi'|  \mp |\xi'|  + \sgn(\tau) |\xi| \big| \lesa \big| \tau + \tau' + |\xi + \xi'| \big| + \big| \tau' \pm |\xi'| \big| + \big||\tau| - |\xi|\big| \lesa d.$$
  Therefore (\ref{eqn - thm bilinear null frame est - key angle est}) shows that $\theta(\xi + \xi', \pm\xi') \lesa \sqrt{ \frac{ \lambda d }{N_0 N_1 }}$. This suggests that we should localise $v$ and the product $Fv$ to caps of radius $\sqrt{ \frac{ \lambda d }{N_0 N_1 }}$, as if the Fourier support of $v$ was contained in a cap of radius $\sqrt{ \frac{ \lambda d }{N_0 N_1 }}$, then the Fourier support of $Fv$ must be contained in a similar cap. More precisely, letting $\alpha = \sqrt{\frac{\lambda d}{N_0 N_1}}$, the angle estimate implies the decomposition
    $$ P_{N_0} C^+_{ d}\big[ C_{\lesa d} F C^\pm_{ \lesa d} v^\mp \big] = \sum_{\substack{\kappa, \bar{\kappa} \in \mc{C}_\alpha \\ \theta(\kappa, \bar{\kappa}) \lesa \alpha}} P^+_{N_0, \bar{\kappa}} C^+_{d}\big[ C_{\lesa d} F \,R^\pm_{\kappa} C^\pm_{ \lesa d} v^\mp \big]. $$
  Consequently, by orthogonality in $L^2_x$, together with an application of Bernstein's inequality, and the null structure estimate $|\Pi_{\omega} \Pi_{\omega'} |\lesa \theta(\omega, -\omega')$ we obtain
  \begin{align*} \big\|  P_{N_0} C^+_{ d}\Pi_+\big[ C_{\lesa d} F(t) C^\pm_{ \lesa d} v^\mp(t) \big] \big\|_{L^2_x} &\lesa  \bigg( \sum_{\substack{\kappa, \bar{\kappa} \in \mc{C}_\alpha \\ \theta(\kappa, \bar{\kappa}) \lesa \alpha}}  \Big\| \int_{\RR^n} |C_{\lesa d} \widehat{F}(t, \xi - \eta)| \,\Big| \Pi_{\frac{\xi}{|\xi|}} \Pi_{\mp\frac{\eta}{|\eta}}\widehat{R^\pm_{\kappa} C^\pm_{\lesa d} v}(t, \eta) \Big| d\eta \Big\|_{L^2_{\xi}[A^+_{N_0}(\bar{\kappa})]}^2 \bigg)^\frac{1}{2}\\
                    &\lesa \alpha  \big( \alpha \min\{ N_0, N_1\}\big)^\frac{n-1}{2} \big(\min\{ N_0, N_1\}\big)^{\frac{1}{2}}\| C_{\lesa d} F(t) \|_{L^2_x} \Big( \sum_{\kappa \in \mc{C}_\alpha}  \big\| R^\pm_\kappa C^\pm_{\lesa d} v(t) \big\|_{L^2_x}^2 \Big)^\frac{1}{2} \\
                    &\lesa d^{ \frac{n+1}{4}} \big( \min\{ \lambda, N_0, N_1 \} \big)^\frac{n-1}{4} \| C_{\lesa d} F(t) \|_{L^2_x} \| C^\pm_{\lesa d} v(t) \|_{L^2_x}.
    \end{align*}
  Thus taking the $L^2_t$ norm of both sides, we that $P_{N_0} C^+_{ d}\Pi_+\big[ C_{\lesa d} F C^\pm_{ \lesa d} v^\mp \big] $ is a multiple of a $\dot{\mc{X}}^{-\frac{1}{2}, 1}_+$ atom, in other words, the $A_I$ term is a sum of $\dot{\mc{X}}^{-\frac{1}{2}, 1}_+$ atoms. Therefore, the atomic definition of $\mc{N}^+_{N_0}$ gives
    \begin{align*}
      \Big\| \sum_{d \lesa \delta}  C^+_d P_{N_0} \Pi_+\big[   C_{\lesa d} F C^\pm_{ \lesa d} v^\mp \big] \Big\|_{\mc{N}^+_{N_0}}
            &\les \sum_{d \lesa \delta} d^{-\frac{1}{2}} \Big\| C^+_d P_{N_0} \Pi_+\big[   C_{\lesa d} F C^\pm_{ \lesa d} v^\mp_{N_1} \big] \Big\|_{L^2_{t, x}}\\
            &\lesa  \big( \min\{ N_0, N_1 \} \big)^\frac{n-1}{4}  \| F \|_{L^2_{t, x}} \sum_{d \lesa \delta}  d^{ \frac{n+1}{4}- \frac{1}{2}} \| C^\pm_{\lesa d} v \|_{L^\infty_t L^2_x}\\
      &\lesa \big( \delta \min\{ N_0, N_1\} \big)^{\frac{n-1}{4}} \| F \|_{L^2_{t, x}} \| v \|_{F^-_{N_1}}
    \end{align*}
where we used an application of (iii) in Lemma \ref{lem - mult are disposable} to dispose of the $C^\pm_{\les d}$ multiplier, and the fact that $\frac{n-1}{4} > 0$ to control the sum over $d$.\\

\textbf{ Case 2: $A_{II}$.} We follow a similar argument to that used to control $A_I$. Let $\alpha = \sqrt{\frac{\lambda d}{N_0 N_1}}$. A moments thought shows that, as in the $A_I$ case, we have the angle estimate $\theta(\xi+ \xi', \pm \xi') \lesa \alpha$. Consequently we have the decomposition
    $$ P_{N_0} C^+_{ \lesa d}\big[ C_{\lesa d} F C^\pm_{  d} v^\mp \big] = \sum_{\substack{\kappa, \bar{\kappa} \in \mc{C}_\alpha \\ \theta(\kappa, \bar{\kappa}) \lesa \alpha}} P^+_{N_0, \bar{\kappa}} C^+_{ \lesa d}\big[ C_{\lesa d} F \,R^\pm_\kappa C^\pm_{d} v^\mp \big]. $$
Moreover, for any caps $\kappa, \bar{\kappa} \in \mc{C}_\alpha$ with $\theta(\kappa, \bar{\kappa}) \lesa \alpha$ we have by Bernstein and the null form estimate
    \begin{align*} \big\| P^+_{N_0, \bar{\kappa}}\Pi_+\big[ F \,R^\pm_\kappa v^\mp \big]\big\|_{L^2_x} &\lesa \Big\| \int_{\RR^n} | \widehat{F}(\xi - \eta)| \big| \Pi_{\frac{\xi}{|\xi|} }\Pi_{\mp\frac{\eta}{|\eta|}} \widehat{R^\pm_{\kappa} v}(\eta) \big| d\eta \Big\|_{L^2_\xi[ A^+_{N_0}(\bar{\kappa})]} \\
            &\lesa \alpha \big(\alpha \min\{ N_0, N_1\}\big)^{\frac{n-1}{2}} \big(\min\{ N_0, N_1\} \big)^\frac{1}{2} \| F \|_{L^2_x} \big\| R^\pm_\kappa v \big\|_{L^2_x} \\
            &\lesa d^{\frac{n+1}{4}} \big( \min\{N_0, N_1 \} \big)^\frac{n-1}{4} \| F \|_{L^2_x} \big\| R^\pm_\kappa v \big\|_{L^2_x}
    \end{align*}
Therefore, by the $L^2_x$ orthogonality of the  projections $R^\pm_\kappa$ we have
    \begin{align*}
      \Big\| P_{N_0} C^+_{ \lesa d}\Pi_+\big[ C_{\lesa d} F \, C^\pm_{  d} v^\mp \big] \Big\|_{\mc{N}^+_{N_0}} &\les  \big\| P_{N_0} C^+_{ \lesa d}\Pi_+\big[ C_{\lesa d} F C^\pm_{  d} v^\mp \big] \big\|_{L^1_t L^2_x} \\
      &\lesa  \Big\| \Big(\sum_{ \substack{\kappa, \bar{\kappa} \in \mc{C}_\alpha\\ \theta(\kappa, \bar{\kappa}) \lesa \alpha}} \big\| P^+_{N_0, \bar{\kappa}} \Pi_+ \big[ C_{\lesa d} F  C^\pm_{  d} R^\pm_\kappa v^\mp \big]\big\|_{L^2_x}^2 \Big)^\frac{1}{2} \Big\|_{L^1_t} \\
      &\lesa d^{\frac{n+1}{4}} \big( \min\{ N_0, N_1 \} \big)^\frac{n-1}{4} \Big\| \| C_{\lesa d } F \|_{L^2_x} \Big(\sum_{ \kappa \in \mc{C}_\alpha} \big\|   C^\pm_{  d}  R^\pm_\kappa v^\mp \big\|_{L^2_x}^2 \Big)^\frac{1}{2} \Big\|_{L^1_t} \\
      &\lesa d^{\frac{n+1}{4}} \big( \min\{ N_0, N_1 \} \big)^\frac{n-1}{4}  \| F \|_{L^2_{t, x}}    \big\|   C^\pm_d v^\mp  \big\|_{L^2_{t, x}} \\
      &\lesa d^{\frac{n+1}{4}- \frac{1}{2} } \big( \min\{  N_0, N_1 \} \big)^\frac{n-1}{4} \| F \|_{L^2_{t, x}} \| v \|_{F^-_{N_1}}
    \end{align*}
where we made use of the $L^2_{t, x}$ bound away from the cone (\ref{eqn - L2 control away from null cone}). If we now apply the triangle inequality and sum up in $d\lesa \delta$, we obtain the required inequality. \\

\textbf{Case 3: $A_{III}$.} The remaining case $A_{III}$ is more difficult as it includes interactions where both the output and $v$ can be concentrated very close to the cone (so it is hard to use $X^{s, b}$ type norms), while $F$ can be (relatively) far from the light cone. Since $F$ is just an arbitrary $L^2$ function, it is not easy to use the fact that $F$ lies away from the light cone, in particular we have no weights of the form $|\tau + |\xi| |$ to exploit (which is what makes $X^{s, b}$ type norms so useful away from the light cone). The key observation is that it is only possible for $F$ and $v$ to produce interactions close to the light cone, if the spatial Fourier supports are at an angle of  $\sqrt{ \frac{\lambda d}{N_0 N_1}}$. The difference to the previous cases is that we will have a bound on the angle from above \emph{and} below. This angular separation allows us to make use of the null frame type spaces, in particular it means we can use the simple bilinear estimate in Lemma \ref{lem - PW NF L2 duality}.

Another way to view this, is that we can no longer put the output $Fv$, or the function $v$, into the $L^2_{t, x}$ based $\dot{\mc{X}}^{-\frac{1}{2}, 1}$ type spaces. Thus we are essentially forced to put $Fv$ in either $L^1_t L^2_x$ or the null frame version $L^1_{t_\omega} L^2_{x_\omega}$. Clearly we can not put $Fv \in L^1_t L^2_x$ as this would require a $L^2_t L^\infty_x$ bound on $v$ which is out of reach. The only remaining option is to put $Fv$ into the null frame type norms $L^1_{t_\omega} L^2_{x_\omega}$, and then $v \in L^2_{t_\omega} L^\infty_{x_\omega}$. This is possible, but requires that we spend powers of $N_1$. Thus in the case where $N_1$ is very large, we need to ensure that $v$ is localised to small caps to get the right constants. \\

We start by making the following observation. Suppose that
     \begin{equation}\label{eqn - thm bilinear null frame est - A_III supp assumps} (\tau, \xi) \in \supp \widetilde{ C_{ d} F }, \qquad (\tau', \xi') \in \supp \widetilde{C^\pm_{\ll d} v^\mp}, \qquad (\tau + \tau', \xi + \xi') \in \big\{ |\xi| \approx N_0, \,\, \big|\tau + |\xi| \big| \ll d\}.\end{equation}
Then since $|\tau| + |\xi| \approx \max\{ \big| |\tau| - |\xi| \big|, |\xi| \} \approx \max\{ d, |\xi| \}$ we have
    $$ \big| |\xi + \xi'| \mp |\xi'| - \sgn(\tau) |\xi| \big| = \big| (\tau + \tau' + |\xi + \xi'|) - ( \tau' \pm |\xi'|) + \sgn(\tau) ( |\tau| + |\xi|) \big| \approx \max\{ d, |\xi|\}.$$
Moreover, an application of (\ref{eqn - thm bilinear null frame est - key angle est}),  together with the observation that $\big| |\xi + \xi'| \mp |\xi'| + \sgn(\tau)|\xi| \big| \approx d$ gives
    \begin{equation}\label{eqn - thm bilinear null frame est - key angle est A_III} C \sqrt{\frac{d \max\{ d, |\xi|\} }{N_0 N_1}} \les \theta(\xi + \xi', \pm\xi') \lesa \sqrt{\frac{d \max\{ d, |\xi| \} }{N_0 N_1}}.\end{equation}
Note that we have a bound on the angle from above and below. This estimate suggests that we should decompose the output $Fv$, and $v$, into caps of size $\sqrt{\frac{|\xi| d}{N_0 N_1}}$. On the other hand, if we are at frequency $\mu$, and modulation $\ll d$, then geometrically, the natural size of the caps should be $\sqrt{\frac{d}{\mu}}$ as this matches the correct Fourier support properties used in the definition of the null frame atoms. The difficulty is that these natural sizes may not always match up, and particularly in the case $N_0 \approx N_1$, an additional decomposition is required. Consequently, unlike in the cases $A_{II}$ and $A_{III}$, we cannot consider all the various frequency interactions simultaneously. Thus we separate the argument into the cases  $N_0 \gg N_1$, $N_0 \ll N_1$, and $N_0 \approx N_1$. \\

\textbf{Case 3a: $A_{III}$ and $N_0 \gg N_1$.} Note that if we have $(\tau, \xi)$ and $(\tau', \xi')$ as in (\ref{eqn - thm bilinear null frame est - A_III supp assumps}), then $|\xi| \approx N_0$. Hence the angle estimate (\ref{eqn - thm bilinear null frame est - key angle est A_III}) implies that
         $$C \sqrt{\frac{d }{N_1}} \les \theta(\xi + \xi', \xi') \lesa \sqrt{\frac{d}{N_1}}.$$
Let $\alpha = \frac{C}{100}\sqrt{\frac{d}{N_1}}$ and $\beta =  \frac{C}{100} \sqrt{\frac{d}{N_0}}$, note that $\alpha \g \beta$.  By decomposing $v$ into caps of radius $\alpha$, and the output $Fv$ into caps of size $\beta$,  we have the identity
    $$ P_{N_0} C^+_{\ll d}\big[   C_{ d}  F  C^\pm_{ \ll d} v^\mp \big] =  \sum_{\bar{\kappa} \in \mc{C}_\beta} \sum_{\substack{ \kappa \in \mc{C}_\alpha \\ 5 \alpha \les \theta(\kappa, \bar{\kappa} ) \lesa \alpha }} P^{+, \beta}_{N_0, \bar{\kappa}} \big[ C_d F \, R^\pm_{\kappa, \, \alpha^2 N_1} v^\mp \big].$$
Since $\beta\les \alpha$, the angle condition implies that fixing a cap $\bar{\kappa} \in \mc{C}_\beta$ essentially fixes the cap $\kappa \in \mc{C}_\alpha$.   Therefore, by the orthogonality estimate in Lemma \ref{lem - PW NF L2 duality} and the $PW^\pm(\kappa)$ estimate in Theorem \ref{thm - null frame bounds} (note that $v$ has compact support in time), we have
\begin{align*}
      \bigg( \sum_{\bar{\kappa} \in \mc{C}_\beta} \bigg\| \sum_{\substack{ \kappa \in \mc{C}_\alpha \\ 5 \alpha \les \theta(\kappa, \bar{\kappa} ) \lesa \alpha }} P^{+, \beta}_{N_0, \bar{\kappa}}\Pi_+ &\big[ C_d F \, R^\pm_{\kappa, \, \alpha^2 \lambda} v^\mp \big] \bigg\|_{NF^+(\bar{\kappa})}^2 \bigg)^\frac{1}{2}\\
                                        &\lesa \Big( \sum_{\kappa \in \mc{C}_\alpha} \sum_{\substack{ \bar{\kappa} \in \mc{C}_\beta \\ 5 \alpha \les \theta(\kappa, \bar{\kappa} ) \lesa \alpha }} \big\| P^{+, \beta}_{N_0, \bar{\kappa}}\Pi_+ \big[ C^+_d F \,  R^\pm_{\kappa, \, \alpha^2N_1} v^\mp \big] \big\|_{NF^+(\bar{\kappa})}^2 \Big)^\frac{1}{2}\\
                                        &\lesa \| C_d F \|_{L^2_{t, x}} \Big( \sum_{\kappa \in \mc{C}_\alpha}  \big\| R^\pm_{\kappa, \, \alpha^2 N_1}    v^\mp \big\|_{PW^+(\kappa)}^2 \Big)^\frac{1}{2}\\
                                        &\lesa (\alpha N_1)^{\frac{n-1}{2}} \| C_d F \|_{L^2_{t, x}} \| v \|_{F^-_{N_1}}
                                        \lesa  ( d N_1 )^{\frac{n-1}{4}} \| C_d F \|_{L^2_{t, x}} \| v \|_{F^-_{N_1}}
    \end{align*}
Consequently $P_{N_0} C^+_{\ll d}\big[   C_{ d}  F  C^\pm_{ \ll d} v^\mp \big]$ is a multiple of a $NF^+_{N_0}$ atom, and therefore
$$ \Big\| \sum_{d \lesa \delta} P_{N_0} C^+_{\ll d}\Pi_+\big[   C_{ d}  F  C^\pm_{ \ll d} v^\mp \big] \Big\|_{\mc{N}^+_{N_0}}\lesa \| F \|_{L^2_{t, x}} \| v \|_{F^-_{N_1}} \sum_{d \lesa \delta} ( d N_1 )^{\frac{n-1}{4}} \lesa ( \delta N_1 )^{\frac{n-1}{4}}\| F \|_{L^2_{t, x}} \| v \|_{F^-_{N_1}}  $$
which is acceptable as $N_1 = \min\{ \lambda, N_0, N_1\}$. \\

\textbf{Case 3b: $A_{III}$ and $N_0 \ll N_1$.} We start by observing that if we have $(\tau, \xi)$, $(\tau', \xi')$ as in (\ref{eqn - thm bilinear null frame est - A_III supp assumps}), then $|\xi| \approx N_1$. Consequently, from (\ref{eqn - thm bilinear null frame est - key angle est A_III}) we deduce that
        $$C \sqrt{\frac{d }{N_0}} \les \theta(\xi + \xi', \xi') \lesa \sqrt{\frac{d}{N_0}}.$$
If we let $\alpha = \frac{C}{100} \sqrt{\frac{d}{N_1}}$ and $\beta = \frac{C}{100} \sqrt{\frac{d}{N_0}}$, then the angle estimate implies the decomposition
         $$ P_{N_0} C^+_{\ll d}\big[   C_{ d} F  C^\pm_{ \ll d} v^\mp_{N_1} \big] =  \sum_{\bar{\kappa} \in \mc{C}_\beta} \sum_{\substack{ \kappa \in \mc{C}_\alpha \\ 4 \beta \les \theta(\kappa, \bar{\kappa} ) \lesa \beta }} P^{+, \beta}_{N_0, \bar{\kappa}} \big[ C_d F \, R^{\pm}_{\kappa, \, \alpha^2 N_1} v^\mp \big].$$
The key difference to the previous case, is that since we now have $\alpha \les  \beta$, if we fix a cap $\bar{\kappa} \in \mc{C}_\beta$, the condition $\theta(\kappa, \bar{\kappa}) \approx \beta$ no longer fixes the cap $\kappa \in \mc{C}_\alpha$. Thus to regain an $\ell^2$ sum over the caps $\kappa \in \mc{C}_\alpha$, we need to exploit an additional orthogonality property. The key point is the identity
        $$ \theta\big(\sgn(\tau)\xi, \mp \xi'\big)^2 \approx  \frac{ \big| |\xi + \xi'| + \sgn(\tau)|\xi| \mp |\xi'| \big| \times \big| |\xi + \xi'| - \sgn(\tau)|\xi| \pm |\xi'| \big|}{|\xi| |\xi'|}.$$
As in the derivation of (\ref{eqn - thm bilinear null frame est - key angle est A_III}), for frequencies localised as in (\ref{eqn - thm bilinear null frame est - A_III supp assumps}), since $N_0 \ll N_1 \approx |\xi|$ we have the crude estimate $ \big| |\xi + \xi'| - \sgn(\tau)|\xi| \pm |\xi'| \big| \lesa N_1$
and hence
    $$ \theta\big(\sgn(\tau)\xi, \mp \xi'\big) \lesa \sqrt{\frac{d}{N_1}} \approx \alpha.$$
Therefore, if we also decompose $F$ into caps of radius $\alpha$,  we can refine our decomposition and obtain
    $$ P_{N_0} C^+_{\ll d}\big[   C_{ d} F  C^\pm_{ \ll d} v^\mp \big] = \sum_{\substack{ \kappa, \kappa' \in \mc{C}_\alpha\\ \theta(\kappa, \kappa') \lesa \alpha}}  \sum_{\substack{ \bar{\kappa} \in \mc{C}_\beta \\ 4 \beta \les \theta(\kappa, \bar{\kappa} ) \lesa \beta }} P^{+, \beta}_{N_0, \bar{\kappa}} \big[ C_d R_{\kappa'} F \, R^\pm_{\kappa, \, \alpha^2 N_1} v^\mp \big]$$
which gives the required orthogonality in the $\kappa$ sum. If we now apply Lemma  \ref{lem - PW NF L2 duality} and Theorem \ref{thm - null frame bounds} we deduce that
\begin{align*}
  \bigg( \sum_{\bar{\kappa} \in \mc{C}_\beta} \big\| P^{+, \beta}_{N_0, \bar{\kappa}} \Pi_+ \big[   C_{ d} F  C^\pm_{ \ll d} v^\mp \big] \big\|_{NF^+(\bar{\kappa})}^2 \bigg)^\frac{1}{2}
        &\les \sum_{\substack{ \kappa, \kappa' \in \mc{C}_\alpha\\ \theta(\kappa, \kappa') \lesa \alpha}} \bigg(  \sum_{\substack{ \bar{\kappa} \in \mc{C}_\beta \\ 4 \beta \les \theta(\kappa, \bar{\kappa} ) \lesa \beta }} \big\| P^{+, \beta}_{N_0, \bar{\kappa}} \big[ C_d R_{\kappa'} F \, R^\pm_{\kappa, \, \alpha^2 N_1} v^\mp \big] \big\|_{NF^+(\bar{\kappa})}^2 \bigg)^\frac{1}{2}\\
        &\lesa \sum_{\substack{ \kappa, \kappa' \in \mc{C}_\alpha\\ \theta(\kappa, \kappa') \lesa \alpha}} \| R_{\kappa'} F \|_{L^2_{t, x}} \big\| R^\pm_{\kappa, \, \alpha^2 N_1} v^\mp \big\|_{PW^+(\kappa)} \\
        &\lesa (\alpha N_1)^{\frac{n-1}{2}} \| F \|_{L^2_{t, x}} \| v \|_{F^-_{N_0}}.
\end{align*}
Thus, as in the previous case, $P_{N_0} C^+_{\ll d}\big[   C_{ d} F  C^\pm_{ \ll d} v^\mp \big]$ is a multiple of a $NF^+_{N_0}$ atom, and consequently by summing up over $d \lesa \delta$ we obtain  a constant of the size $( \delta N_1)^{\frac{n-1}{4}} \approx (\delta \min\{ \lambda, N_1 \} )^{\frac{n-1}{4}}$ as required. \\

\textbf{Case 3c: $A_{III}$ and $N_0 \approx N_1$}. Unlike the previous cases, we no longer have an estimate on $|\xi|$ from below. Thus to exploit the angle estimate (\ref{eqn - thm bilinear null frame est - key angle est A_III}), we need to dyadically decompose $F$ further into $F = P_{\ll d} F + \sum_{d \lesa \lambda' \lesa \lambda} P_{\lambda'} F$. After an application of the triangle inequality, we reduce to estimating
        \begin{equation}\label{eqn - thm bilinear null frame est - AIII case N_0 = N_1 F decomp} \sum_{d \lesa \delta} \big\| P_{N_0} C^+_{\ll d}\Pi_+ \big( C_d P_{\ll d} F C^\pm_{\ll d} v^\mp \big) \big\|_{\mc{N}^+_{N_0}} + \sum_{d \lesa \delta} \sum_{d \lesa \lambda' \lesa \lambda} \big\| P_{N_0} C^+_{\ll d}\Pi_+ \big( C_d P_{\lambda'} F C^\pm_{\ll d} v^\mp\big) \big\|_{\mc{N}^+_{N_0}}. \end{equation}
We start with the first term.  Assume that
 \begin{equation}\label{eqn - thm bilinear null frame est - AIII case supp assump} (\tau + \tau', \xi + \xi') \in  \big\{ |\xi| \approx N_0, \,\, \big| \tau + |\xi| \big| \ll d \}, \qquad (\tau, \xi ) \in \big\{ \big| |\tau| - |\xi| \big| \approx d \big\}, \qquad (\tau', \xi') \in \supp \widetilde{ C^\pm_{\ll d} v }\end{equation}
and $|\xi| \ll d$. Clearly, we have $|\tau| \approx d$. Moreover, the close cone condition on $(\tau + \tau', \xi + \xi')$ and $(\tau', \xi')$ implies that
    $$ d \approx |\tau| \approx \big| \tau - \big( \tau + \tau'  + |\xi + \xi'| \big)  + \big( \tau' \pm |\xi'| \big) \big| = \big| |\xi + \xi'| \mp |\xi'| \big|.$$
However, as $|\xi| \ll d$, this is only possible if $\pm = -$ and $d \approx N_1$.  In particular,  the first term in (\ref{eqn - thm bilinear null frame est - AIII case N_0 = N_1 F decomp}) is only nonzero if we have $\pm = -$ and $ d \approx N_1$. Consequently, from (\ref{eqn - thm bilinear null frame est - key angle est A_III}) we have $\theta(\xi + \xi', -\xi') \approx 1$ and so letting $\alpha = \frac{1}{100}$ we deduce the  identity
    $$ \sum_{d \lesa \delta} C^+_{\ll d} P_{N_0} \big( C_d P_{\ll d} F C^-_{\ll d} v^+\big) = \sum_{d \approx N_1} \sum_{\substack{ \kappa, \bar{\kappa} \in \mc{C}_\alpha \\ \alpha \ll \theta(\kappa, \bar{\kappa}) \approx \alpha}} P^{+, \alpha}_{N_0, \bar{\kappa}} \big( C_d P_{\ll d} F\,  R^-_{\kappa, \, \alpha^2 N_1} v^+\big).$$
Therefore
\begin{align*}
  \sum_{d \approx N_1} \big\| P_{N_0} C^+_{\ll d} \Pi_+ \big( C_d P_{\ll d} F C^\pm_{\ll d} v^\mp\big) \big\|_{\mc{N}^+_{N_0}} &\les \sum_{d \approx N_1} \Big( \sum_{\substack{ \kappa, \bar{\kappa} \in \mc{C}_\alpha \\ \alpha \ll \theta(\kappa, \bar{\kappa}) \approx \alpha}} \big\|  P^{+, \alpha}_{N_0, \bar{\kappa}} \big( C_d P_{\ll d} F  R^-_{\kappa, \, \alpha^2 N_1} v^+\big)\big\|_{NF^+(\bar{\kappa})}^2 \Big)^\frac{1}{2} \\
  &\lesa \| F \|_{L^2_{t, x}} \Big( \sum_{\kappa \in \mc{C}_\alpha} \| R^-_{\kappa, \, \alpha^2 N_1}  v^+ \|_{PW^+(\kappa)}^2 \Big)^\frac{1}{2} \\
  &\lesa \big(N_1\big)^\frac{n-1}{2} \| F \|_{L^2_{t, x}} \| v \|_{F^-_{N_1}} \approx \big( \delta \min\{ \lambda, N_1\} \big)^{\frac{n-1}{4}} \| F \|_{L^2_{t, x}} \| v \|_{F^-_{N_1}}
\end{align*}
where we used the fact that $N_1 \approx d \lesa \delta$ and the assumption $d \lesa \delta \lesa \lambda \lesa N_1$,  together with Lemma \ref{lem - PW NF L2 duality} and Theorem \ref{thm - null frame bounds}.

It remains to consider the more interesting second term in (\ref{eqn - thm bilinear null frame est - AIII case N_0 = N_1 F decomp}). Suppose that $(\tau + \tau', \xi + \xi')$ and $(\tau', \xi')$ are as in (\ref{eqn - thm bilinear null frame est - AIII case supp assump}). If $|\xi| \approx \lambda' \gtrsim d$, then the angle estimate (\ref{eqn - thm bilinear null frame est - key angle est A_III}) implies that
    \begin{equation}\label{eqn - thm bilinear null frame est - key angle est AIII with lambda'}
            C \sqrt{\frac{d \lambda' }{N_0 N_1}} \les \theta(\xi + \xi', \pm \xi') \lesa \sqrt{\frac{d \lambda' }{N_0 N_1}}
    \end{equation}
Let $\alpha = \frac{C}{100} \sqrt{\frac{ d \lambda' }{N_0 N_1}}$. The estimate on the angle (\ref{eqn - thm bilinear null frame est - key angle est AIII with lambda'}) implies that we should be decomposing the output $Fv$ and $v$ into caps of radius $\alpha$. However, as $\alpha^2 N_0 \approx \frac{\lambda'}{N_1} d$,  if we want to decompose $Fv$ into $NF^+_{N_0}$ atoms, the output $Fv$ should be at a distance $\frac{\lambda'}{N_1} d $ from the cone. Thus we need to first deal with the case where the distance to the cone is in the region $ \frac{\lambda'}{N_1} d \lesa \bullet \ll d$. To this end, by exploiting the null structure as in Case 1 above and using the decomposition
             $$ P_{N_0} C^+_{\frac{\lambda'}{N_1} d \lesa \bullet \ll  d}\big[   C_{ d}P_{\lambda'} F  C^\pm_{ \ll d} v^\mp\big] =  \sum_{\substack{ \kappa, \bar{\kappa} \in \mc{C}_\alpha \\ \theta(\kappa, \bar{\kappa} ) \approx \alpha }} P^+_{N_0, \bar{\kappa}} C^+_{\frac{\lambda'}{N_1} d \lesa \bullet \ll  d} \big[ C_d P_{\lambda'} F \,  C^\pm_{\ll d} R^\pm_{\kappa} v^\mp \big]$$
(which follows from (\ref{eqn - thm bilinear null frame est - key angle est AIII with lambda'}) ) we obtain  by an application of Corollary \ref{lem - mult are disposable}
    \begin{align*}
        \big\| P_{N_0} C^+_{\frac{\lambda'}{N_1} d \lesa \bullet \ll  d} \Pi_+ \big[   C_{ d} P_{\lambda'}F  C^\pm_{ \ll d} v^\mp \big] \big\|_{\mc{N}^+_{N_0}} &\lesa \Big( \frac{ d \lambda'}{N_1} \Big)^{-\frac{1}{2}} \Big\| \sum_{\substack{ \kappa, \bar{\kappa} \in \mc{C}_\alpha \\ \theta(\kappa, \bar{\kappa} ) \approx \alpha }} P^+_{N_0, \bar{\kappa}}\Pi_+ \big[ C_d P_{\lambda'}F \, C^\pm_{\ll d} R^\pm_{\kappa} v^\mp \big] \Big\|_{L^2_{t, x}}\\
        &\lesa \Big( \frac{ d \lambda'}{N_1}\Big)^{-\frac{1}{2}} \alpha \| F \|_{L^2_{t, x}}\Big\| \Big( \sum_{\kappa \in \mc{C}_\alpha} \| \widehat{ C^\pm_{\ll d} R^{\pm}_{\kappa} v} \|_{L^1_{\xi}}^2 \Big)^\frac{1}{2} \Big\|_{L^\infty_t} \\
        &\lesa \Big( \frac{ d \lambda'}{N_1}\Big)^{-\frac{1}{2}} \alpha \big( \alpha N_1 \big)^{\frac{n-1}{2}} (N_1)^{\frac{1}{2}} \| F \|_{L^2_{t, x}} \| C^\pm_{\ll d} v \|_{L^\infty_t L^2_x} \\
        &\lesa (\lambda' d)^{\frac{n-1}{4}} \| F \|_{L^2_{t,x }} \| v \|_{F^-_{N_1}}.
     \end{align*}
Hence summing up in $d$ and $\lambda'$ gives the required estimate. Similarly, if we fix $v$ to have modulation in $\frac{\lambda'}{N_0} d \lesa \bullet \ll d$, then by following a similar argument to that used in Case 2  we obtain
    \begin{align*}
        \big\| P_{N_0} C^+_{\ll  d}\Pi_+\big[   C_{ d} P_{\lambda'} F  C^\pm_{\frac{\lambda'}{N_0} d \lesa \bullet \ll d} v^\mp_{N_1} \big] \big\|_{\mc{N}^+_{N_0}}
                &\lesa  \Big\| \sum_{\substack{ \kappa, \bar{\kappa} \in \mc{C}_\alpha \\ \theta(\kappa, \bar{\kappa} ) \approx \alpha }} P^+_{N_0, \bar{\kappa}} \Pi_+ \big[ C_d P_{\lambda'} F \, P^{\pm}_{N_1, \kappa} C^\pm_{\frac{\lambda'}{N_0} d \lesa \bullet \ll d} v^\mp \big] \Big\|_{L^1_t L^2_{x}}\\
                &\lesa \alpha \big( \alpha N_1 \big)^\frac{n-1}{2} (N_1)^{\frac{1}{2}} \| F \|_{L^2} \| C^\pm_{\frac{\lambda'}{N_0} d \lesa \bullet \lesa d} v^\pm \|_{L^2_{t, x}} \\
                &\lesa ( \lambda' d )^{\frac{n-1}{4}} \| F \|_{L^2_{t, x}} \| v \|_{F^-_{N_1}}
     \end{align*}
which again is acceptable.

Thus it remains to deal with the term $C^+_{\ll  \frac{\lambda'}{N_1} d}\big[   C_{ d} P_{\lambda'} F  C^\pm_{\ll \frac{\lambda'}{N_0} d } v^\mp \big]$. By exploiting the angle estimate (\ref{eqn - thm bilinear null frame est - key angle est AIII with lambda'}), we have the decomposition
    $$P_{N_0}  C^+_{\ll  \frac{\lambda'}{N_1} d}\big[   C_{ d} P_{\lambda'} F  C^\pm_{\ll d } v^\mp \big] = \sum_{\substack{\kappa, \bar{\kappa} \in \mc{C}_\alpha \\ 5 \alpha \les \theta(\kappa, \bar{\kappa} ) \lesa \alpha }}  P^{+, \alpha}_{N_0, \bar{\kappa}} \big[ C_d P_{\lambda'} F \,R^\pm_{\kappa, \alpha^2 N_1}  v^\mp \big].$$
Therefore, as $P_{N_0}  C^+_{\ll  \frac{\lambda'}{N_1} d}\big[   C_{ d} P_{\lambda'} F  C^\pm_{\ll d } v^\mp \big]$ is now a multiple of a $NF^+_{N_0}$ atom, by again using Lemma \ref{lem - PW NF L2 duality} together with Theorem \ref{thm - null frame bounds} we obtain
    \begin{align*}
      \big\| C^+_{\ll  \frac{\lambda'}{N_1} d}\Pi_+ \big[   C_{ d} P_{\lambda'} F  C^\pm_{\ll d } v^\mp \big]\big\|_{\mc{N}^+_{N_0}} &\lesa \Big( \sum_{\substack{\kappa, \bar{\kappa} \in \mc{C}_\alpha \\ 5 \alpha \les \theta(\kappa, \bar{\kappa} ) \lesa \alpha }}  \big\| P^{+, \alpha}_{N_0, \bar{\kappa}}\Pi_+  \big[ C_d P_{\lambda'} F \,R^\pm_{\kappa, \alpha^2 N_1} v^\mp \big]\big\|_{NF^+(\bar{\kappa})}^2 \Big)^\frac{1}{2} \\
      &\lesa \| F \|_{L^2_{t, x}} \Big( \sum_{\kappa \in \mc{C}_\alpha} \| R^\pm_{\kappa, \alpha^2 N_1} v^\mp \big\|_{PW^+(\kappa)}^2 \Big)^\frac{1}{2} \\
      &\lesa (\alpha N_1 )^{\frac{n-1}{2}} \| F \|_{L^2_{t, x}} \| v \|_{F^-_{N_1}} \approx (\lambda' d)^{\frac{n-1}{4}}  \| F \|_{L^2_{t, x}} \| v \|_{F^-_{N_1}}
    \end{align*}
and hence by summing up in $d$ and $\lambda'$ we obtain the required estimate.
\end{proof}

\begin{remark}
  It is possible to improve the factors on the righthand side, for instance we can replace $\min\{ \lambda, N_1\}$ with $\min\{ \lambda, N_0, N_1\}$ by a minor additional argument. Other improvements are also possible, particularly in the high-high case $N_0 \approx N_1$. However as we have no need for any further refinements here, we leave this as an exercise to the interested reader.
\end{remark}

We now present a number of useful bilinear estimates that follow from Theorem \ref{thm - bilinear null frame est close cone}. The first is a ``far cone'' version of Theorem \ref{thm - bilinear null frame est close cone}.

\begin{corollary}[Bilinear estimates in $\mc{N}^\pm_N$ - Far cone case]\label{cor - bilinear null frame est far cone} Let $ v \in F^\mp_{N_1}$ have compact support in time.
\begin{enumerate}
 \item Let $1\les a \les 2$ and $\delta \gtrsim N_1$. Assume $F \in L^a_t L^2_x$ is scalar valued. Then
   $$\big\| P_{N_0} \big(F v\big) \big\|_{\mc{N}^\pm_{N_0}} + \big\| S^\pm_{N_0, \lesa \delta}\big( F \mathfrak{C}^\mp_{\lesa \delta} v\big) \big\|_{\mc{N}^\pm_{N_0}} \lesa \big( N_1\big)^{\frac{n-1}{2} + \frac{1}{a} - \frac{1}{2} } \| F \|_{L^a_t L^2_x} \| v \|_{F^\mp_{N_1}}. $$

 \item Let $1 \les a <2$, $ b \g 2$, and $\frac{1}{a} + \frac{n-1}{2b} \g 1$.  Assume $\lambda \ll N_1$ and $F \in L^2_{t, x} $ is scalar valued with $\supp \widehat{F} \subset \{ |\xi| \lesa \lambda\}$. Then
    $$ \big\| P_{N_0}\big(F v\big) \big\|_{\mc{N}^\pm_{N_0}} \lesa \Big(\lambda^{\frac{n-1}{2}} \| F \|_{L^2_{t, x}} + \big(N_1\big)^{\frac{n}{b} + \frac{1}{a} -1} \| C_{\gtrsim N_1} F \|_{L^a_t L^b_x} \Big) \| v \|_{F^\mp_{N_1}}.$$

\end{enumerate}
\begin{proof}
 \textbf{(i):} As usual, we may assume $\pm = +$. By interpolation, it is enough to considering the cases $a=1$ and $a=2$. The former case is simply an application of Sobolev embedding together with (\ref{eqn - N controlled by L1L2}). On the other hand, the $a=2$ case can be reduce to Theorem \ref{thm - bilinear null frame est close cone} by using $\dot{\mc{X}}^{b, q}_\pm$ spaces to deal with the region away from the cone. More precisely, if we let $\lambda \approx \max\{ N_0, N_1\}$, then from the estimates (\ref{eqn - thm bilinear null frame est - far cone case v}) and (\ref{eqn - thm bilinear null frame est - far cone case output}),  we reduce to estimating $ S^+_{N_0, \ll N_1} ( F \mathfrak{C}^-_{\ll N_1} v)$. Noting the identity\footnote{As in the proof of the $\delta \g \min\{ N_0, \lambda, N_2\}$ case in Theorem \ref{thm - bilinear null frame est close cone}, the identity follows from the inequality
    $$ \big| |\tau| - |\xi| \big| \les \big| |\tau + \tau'| - |\xi + \xi'| \big| + \big| |\tau'| - |\xi'| \big| + 2 \min\{ |\xi + \xi'|, |\xi'|\}. $$}
        $$S^+_{N_0, \ll N_1}( F \mathfrak{C}^-_{\ll N_1} v) = S^+_{N_0, \ll N_1} ( P_{\lesa \lambda} C_{\lesa N_1} F \mathfrak{C}^-_{\ll N_1} v)$$
the required estimate now follows from Theorem \ref{thm - bilinear null frame est close cone}.\\

 \textbf{(ii):} As in the proof of (i), we can use (\ref{eqn - thm bilinear null frame est - far cone case v}) and (\ref{eqn - thm bilinear null frame est - far cone case output}), to reduce to estimating the close cone term $S^+_{N_0, \ll \lambda}( F \mathfrak{C}^-_{\ll\lambda} v )$. We would like to now deduce that $F$ must also be $\lambda$ from the cone, and thus simply apply Theorem \ref{thm - bilinear null frame est close cone}. This is true for when we have the output $Fv$ and $v$ localised close to a $\pm$ cone, but can fail when we $Fv$ is near a $\pm$ cone, and $v$ is close to a $\mp$ cone. However in the latter case, we have the redeeming feature that $F$ is in fact $N_1 \gg \lambda$ away from the cone, which gives the second term in (i). The details are as follows.

 Write
        $$S^+_{N_0, \ll \lambda}( F \mathfrak{C}^-_{\ll\lambda} v ) = S^+_{N_0, \ll \lambda}( C_{\lesa \lambda}F \,\mathfrak{C}^-_{\ll\lambda} v ) + S^+_{N_0, \ll \lambda}( C_{ \gg \lambda} F\, \mathfrak{C}^-_{\ll\lambda} v ).$$
 For the first term we can simply use Theorem \ref{thm - bilinear null frame est close cone} with $\delta = \lambda$. On the other hand, for the second term, we observe  that if
 $$ (\tau, \xi) \in \supp \widetilde{ C_{ \gg \lambda} F}, \qquad (\tau', \xi') \in \supp \widetilde{\mathfrak{C}^-_{\ll\lambda} v}, \qquad (\tau + \tau', \xi + \xi')\in \{ |\xi| \approx N_0, \,\,  \big| |\tau| - |\xi| \big| \ll \lambda\}, $$
then we have
    \begin{align} \big| |\tau| - |\xi| \big| &\approx \big| \big( \tau - \sgn(\tau)|\xi|\big) + \big( \tau' - \sgn(\tau') |\xi'| \big) - \big( \tau + \tau' - \sgn(\tau+ \tau') |\xi + \xi'| \big) \big| \notag\\
            &=\big| \sgn(\tau + \tau') |\xi + \xi'| - \sgn(\tau') |\xi'| - \sgn(\tau) |\xi| \big| \label{eqn - cor bilinear null frame far cone - imply F far cone}\end{align}
 Therefore, as $N_1 \gg \lambda$, and the left hand side of (\ref{eqn - cor bilinear null frame far cone - imply F far cone}) is $\gg \lambda$, we must have $ \big| |\tau| - |\xi| \big| \approx |\xi'| $ and consequently we have the identity
    $$ S^+_{N_0, \ll \lambda} \big( C_{\gg \lambda} F \,\mathfrak{C}^-_{\ll \lambda} v \big) = S^+_{N_0, \ll \lambda} \big( C_{\gtrsim N_1} F \,\mathfrak{C}^-_{\ll \lambda} v  \big) .$$
 Note that the conditions on $(a, b)$ imply that the pair $(\frac{a}{a-1}, \frac{2b}{b - 2} )$ is Strichartz admissible. Thus, by an application of Holder's inequality together with Theorem \ref{thm - F controls strichartz and angular sum}, we deduce that
    \begin{align*}
      \big\| S^+_{N_0, \ll \lambda} \big( C_{\gtrsim N_1} F \,\mathfrak{C}^-_{\ll \lambda} v  \big) \big\|_{\mc{N}^+_{N_0}} &\les \big\| S^+_{N_0, \ll \lambda} \big( C_{\gtrsim N_1} F \,\mathfrak{C}^-_{\ll \lambda}  v \big) \big\|_{L^1_t L^2_x} \\
      &\lesa \| C_{\gtrsim N_1} F \|_{L^a_t L^b_x} \| \mathfrak{C}^-_{\ll \lambda}  v \|_{L^{\frac{a}{a-1}}_t L^{\frac{2b}{b-2}}_x} \\
      &\lesa  \big( N_1 \big)^{ n(\frac{1}{2} - \frac{b-2}{2b}) - \frac{a-1}{a}}\| C_{\gtrsim N_1} F \|_{L^a_t L^b_x} \| v \|_{F^-_{N_1}}\\
       &= \big( N_1 \big)^{ \frac{n}{b} + \frac{1}{a} - 1}\| C_{\gtrsim N_1} F \|_{L^a_t L^b_x} \| v \|_{F^-_{N_1}}
    \end{align*}
as required. \\

\end{proof}
\end{corollary}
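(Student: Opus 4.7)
My plan is to reduce both parts to Theorem \ref{thm - bilinear null frame est close cone} by first disposing of any regions where either the output, the function $v$, or (for part (ii)) the function $F$ is far from the appropriate light cone. The far-cone regions will be handled by $L^1_t L^2_x$ estimates together with $\dot{\mc{X}}^{b,q}_\pm$ and Strichartz-type bounds, while the close-cone contribution will be fed directly into the close-cone bilinear estimate.

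For part (i), by interpolation it is enough to treat $a=1$ and $a=2$. The endpoint $a=1$ should be essentially trivial: since $v$ has spatial frequency $\approx N_1$, Bernstein and H\"older give
$$\| P_{N_0}(Fv)\|_{L^1_t L^2_x} \lesa N_1^{n/2} \| F\|_{L^1_t L^2_x} \| v\|_{L^\infty_t L^2_x},$$
which matches the claimed exponent $\frac{n-1}{2}+\frac{1}{a}-\frac{1}{2}=\frac{n}{2}$ and embeds into $\mc{N}^\pm_{N_0}$ via \eqref{eqn - N controlled by L1L2}. For $a=2$, I would adapt the opening reduction of the proof of Theorem \ref{thm - bilinear null frame est close cone}: handle the contribution where $v$ is far from its cone via H\"older $L^2_{t,x}\cdot L^2_{t,x}$ (using Lemma \ref{lem - F controls Xsb infty} to bound $\|\mf{C}^\mp_{\gtrsim N_1} v\|_{L^2_{t,x}}$), and handle the contribution where the output is far from its cone via the $\dot{\mc{X}}^{-\frac{1}{2},1}_\pm$ norm together with $\|v\|_{L^\infty_t L^2_x}$. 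After these two reductions the Fourier support of $F$ must itself lie within distance $\lesa N_1$ of the cone (by the standard triangle-inequality observation $\bigl||\tau|-|\xi|\bigr| \les \bigl||\tau+\tau'|-|\xi+\xi'|\bigr|+\bigl||\tau'|-|\xi'|\bigr| + 2\min\{|\xi+\xi'|,|\xi'|\}$), so Theorem \ref{thm - bilinear null frame est close cone} applies with $\delta=N_1$ and yields the desired factor $N_1^{(n-1)/2}$.

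For part (ii), the same two preliminary reductions show that it suffices to estimate $S^\pm_{N_0,\ll\lambda}\bigl(F\,\mf{C}^\mp_{\ll\lambda} v\bigr)$. I would then split $F = C_{\lesa \lambda}F + C_{\gg\lambda}F$. On the first piece Theorem \ref{thm - bilinear null frame est close cone} applied with $\delta=\lambda$ and $\min\{\lambda,N_1\}=\lambda$ produces $(\lambda\cdot\lambda)^{(n-1)/4} = \lambda^{(n-1)/2}$, matching the first term on the right-hand side. On the second piece, the identity
$$\bigl||\tau|-|\xi|\bigr| \approx \bigl|\sgn(\tau+\tau')|\xi+\xi'| - \sgn(\tau')|\xi'| - \sgn(\tau)|\xi|\bigr|$$
combined with $N_1\gg\lambda$ forces the modulation of $F$ to be $\approx N_1$, so in fact $C_{\gg\lambda}F = C_{\gtrsim N_1}F$. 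The exponent condition $\frac{1}{a}+\frac{n-1}{2b}\g 1$ is exactly what makes $\bigl(\frac{a}{a-1},\frac{2b}{b-2}\bigr)$ a Strichartz admissible pair, so I place the output in $L^1_tL^2_x$ via H\"older between $L^a_tL^b_x$ and $L^{a/(a-1)}_tL^{2b/(b-2)}_x$ and invoke Theorem \ref{thm - F controls strichartz and angular sum} to bound the $v$-factor by $N_1^{\frac{n}{b}+\frac{1}{a}-1}\|v\|_{F^\mp_{N_1}}$.

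The main obstacle is keeping careful track of which modulation pieces really do feed into Theorem \ref{thm - bilinear null frame est close cone} with the right $\delta$ and which need to be peeled off by hand; the second-term bound in (ii) is the only place where genuinely new analysis appears, and its justification rests entirely on the modulation-identity argument above plus a clean Strichartz estimate. Apart from that, the proof is really just a careful application of the close-cone theorem combined with far-cone bookkeeping.
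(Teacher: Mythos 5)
Your proposal follows essentially the same route as the paper: interpolation between $a=1,2$ with Bernstein/Sobolev for $a=1$, the far-cone peel-off via \eqref{eqn - thm bilinear null frame est - far cone case v} and \eqref{eqn - thm bilinear null frame est - far cone case output} together with the modulation triangle inequality to localise $F$, Theorem \ref{thm - bilinear null frame est close cone} for the resulting close-cone piece, the modulation identity forcing $C_{\gg\lambda}F=C_{\gtrsim N_1}F$ in part (ii), and the Strichartz admissibility of $\bigl(\tfrac{a}{a-1},\tfrac{2b}{b-2}\bigr)$ for the remaining H\"older step. The only loose phrasing is ``H\"older $L^2_{t,x}\cdot L^2_{t,x}$'' for the far-cone $v$ contribution, where one really needs $L^2_t L^2_x\cdot L^2_t L^\infty_x$ plus Bernstein to land in $L^1_tL^2_x$, but that is exactly what \eqref{eqn - thm bilinear null frame est - far cone case v} delivers.
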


By duality, Theorem \ref{thm - bilinear null frame est close cone} also implies the following bilinear estimates in $L^2_{t, x}$.

\begin{corollary}[$L^2$ Bilinear Estimates]\label{cor - bilinear L2 estimates} Let $u \in F^\pm_{N_1}$ and $v \in F^\mp_{N_2}$ have compact support in time.
\begin{enumerate}
   \item Let $\delta \les \min\{ N_1, N_2\}$. Then
            $$ \big\| C_{\lesa \delta} \big[ ( \mathfrak{C}^\pm_{\ll \delta} u)^\dagger \mathfrak{C}^\mp_{\ll \delta} v \big] \big\|_{L^2_{t, x}} \lesa \big(\delta \min\{ N_1, N_2\} \big)^{\frac{n-1}{4}} \| u \|_{F^\pm_{N_1}} \| v \|_{F^\mp_{N_2}}.$$

    \item Let $a, b \g 2$ and $\delta \g \min\{ N_1, N_2\}$. Then
            \begin{align*}  \big\| u^\dagger v\big\|_{L^a_t L^b_x} &+ \big\| ( \mathfrak{C}^\pm_{\lesa \delta} u)^\dagger \mathfrak{C}^\mp_{\lesa \delta} v \big\|_{L^a_t L^b_x} \\
            &\lesa \big( \min\{ N_1, N_2\}\big)^{\frac{n}{2} - \frac{1}{a}(\frac{1}{2} + \frac{1}{b})} \big( \max\{ N_1, N_2\}\big)^{ (n - \frac{1}{a})(\frac{1}{2} - \frac{1}{b})} \| u \|_{F^\pm_{N_1}} \| v \|_{F^\mp_{N_2}}.\end{align*}
\end{enumerate}

 \begin{proof}
 \textbf{(i):} After a reflection, it is enough to consider the case $N_1 \g N_2$ and $\pm = +$. Let $\lambda = 4 N_1$ and note that $ u^\dagger v = P_{\les \lambda}( u^\dagger v)$. An application of the the duality estimate in Corollary \ref{cor - F controls dual of N} together with Theorem \ref{thm - bilinear null frame est close cone} gives
    \begin{align*}
      \big\| C_{\lesa \delta} \big[ ( \mathfrak{C}^+_{\ll \delta} u)^\dagger \mathfrak{C}^-_{\ll \delta} v \big]  \big\|_{L^2_{t, x}}
            &= \sup_{\| F \|_{L^2_{t, x}} \les 1} \bigg| \int_{\RR^{n+1}} C_{\lesa \delta} P_{\les \lambda} F \,\big[ ( \mathfrak{C}^+_{\ll \delta} u)^\dagger \mathfrak{C}^-_{\ll \delta} v \big]  dt dx \bigg| \\
            &\les \sum_{N_0 \approx N_1} \sup_{\| F \|_{L^2_{t, x}} \les 1} \bigg| \int_{\RR^{n+1}} u^\dagger  S^+_{N_0, \ll \delta} \big[C_{\lesa \delta} P_{\les \lambda} F  \, \mathfrak{C}^-_{\ll \delta} v \big] dt dx \bigg|\\
            &\lesa \| u \|_{F^+_{N_1}} \sum_{N_0 \approx N_1} \sup_{\| F \|_{L^2_{t, x}} \les 1} \big\| S^+_{N_0, \ll \delta} \big[C_{\lesa \delta} P_{\les \lambda} F  \mathfrak{C}^-_{\ll \delta} v \big] \big\|_{\mc{N}^+_{N_0}} \\
            &\lesa \big( \delta \min\{N_1,  N_2\} \big)^{\frac{n-1}{4}} \| u \|_{F^+_{N_1}} \| v \|_{F^-_{N_2}}
    \end{align*}
 as required.

  \textbf{(ii):} The cases $(a, b) = (\infty, \infty), (\infty, 2), (2, \infty)$ follow by Sobolev embedding and the $L^4_t L^\infty_x$ Strichartz estimate in Theorem \ref{thm - F controls strichartz and angular sum}. Thus by interpolation we reduce to the case $(a, b) = (2, 2)$. Without loss of generality, we may assume $N_1 \g N_2$.  To deal with the far cone case, we use (\ref{eqn - L2 control away from null cone}) to obtain the inequalities
    $$ \big\| \big( \mathfrak{C}^+_{\gtrsim N_2} u \big)^\dagger v  \big\|_{L^2_{t, x}} \lesa \| \mathfrak{C}^+_{\gtrsim N_2} u \|_{L^2_{t, x}} \| v \|_{L^\infty_{t, x}} \lesa \big( N_2 \big)^{\frac{n-1}{2}} \| u \|_{F^+_{N_1}} \| v \|_{F^-_{N_2}}$$
  and
    $$ \big\| u^\dagger \mathfrak{C}^-_{\gtrsim N_2} v \big\|_{L^2_{t, x}} \les \| u \|_{L^\infty_t L^2_x} \| \mathfrak{C}^-_{\gtrsim N_2} v \|_{L^2_t L^\infty_x} \lesa \big( N_2 \big)^{\frac{n-1}{2}} \| u \|_{F^+_{N_1}} \| v \|_{F^-_{N_2}}.$$
  Therefore we reduce to estimating $(\mathfrak{C}^+_{\ll N_2}u )^\dagger \mathfrak{C}^-_{\ll N_2}v = C_{\lesa N_2} \big[(\mathfrak{C}^+_{\ll N_2}u )^\dagger \mathfrak{C}^-_{\ll N_2}v\big]$ in $L^2_{t, x}$, but this follows from $(i)$ by taking $\delta = N_2$.

 \end{proof}
\end{corollary}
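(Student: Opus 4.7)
The plan is to reduce part (i) to Theorem \ref{thm - bilinear null frame est close cone} by duality, and to obtain part (ii) by interpolating endpoint Strichartz estimates with part (i) applied at $\delta \approx \min\{N_1, N_2\}$.

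For part (i), after the usual reflection in $x$ (which swaps $F^+_\lambda \leftrightarrow F^-_\lambda$) and relabeling, it suffices to treat $\pm = +$ and $N_1 \geq N_2$. Observe that the spatial Fourier support of $u^\dagger v$ lies in $\{|\xi| \lesssim N_1\}$, so for any scalar $F \in L^2_{t,x}$ with $\|F\|_{L^2} \leq 1$ we may freely insert $P_{\lesssim N_1}$ in front of $F$ when pairing against $u^\dagger v$. Using the self-adjointness of $\mathfrak{C}^\pm_{\ll\delta}$ and $C_{\lesssim \delta}$ together with the frequency-localisation of $u$ at scale $N_1$, I would rewrite the $L^2$ duality pairing as
\begin{equation*}
\int F \, C_{\lesssim \delta}\big[(\mathfrak{C}^+_{\ll\delta} u)^\dagger \mathfrak{C}^-_{\ll\delta} v\big] \, dt\, dx
= \sum_{N_0 \approx N_1} \int u^\dagger \, S^+_{N_0, \ll\delta}\big[C_{\lesssim \delta} P_{\lesssim N_1} F \cdot \mathfrak{C}^-_{\ll\delta} v\big]\, dt\, dx.
\end{equation*}
Then Corollary \ref{cor - F controls dual of N} (duality between $F^+_{N_1}$ and $\mc{N}^+_{N_0}$) controls this by $\|u\|_{F^+_{N_1}}$ times the $\mc{N}^+_{N_0}$ norm of the inner bracket, to which Theorem \ref{thm - bilinear null frame est close cone} applies with scalar input $C_{\lesssim\delta} P_{\lesssim N_1} F$ (which is in $L^2_{t,x}$ with norm $\leq 1$), producing exactly the factor $(\delta \min\{N_1, N_2\})^{(n-1)/4}$.

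For part (ii), I would first dispose of the three Banach endpoints: $(a,b) = (\infty,\infty)$ follows from Bernstein plus the energy bound $\|u\|_{L^\infty_t L^2_x} \leq \|u\|_{F^\pm_{N_1}}$; $(a,b) = (\infty,2)$ is Cauchy--Schwarz plus energy; and $(a,b) = (2,\infty)$ uses the Strichartz bound $L^4_t L^\infty_x$ from Theorem \ref{thm - F controls strichartz and angular sum} together with Cauchy--Schwarz in $t$. Complex interpolation in $(a,b)$ then reduces the whole estimate to the single case $(a,b) = (2,2)$. WLOG $N_1 \geq N_2$; write
\begin{equation*}
u^\dagger v = (\mathfrak{C}^+_{\gtrsim N_2} u)^\dagger v + (\mathfrak{C}^+_{\ll N_2} u)^\dagger \mathfrak{C}^-_{\gtrsim N_2} v + (\mathfrak{C}^+_{\ll N_2} u)^\dagger \mathfrak{C}^-_{\ll N_2} v.
\end{equation*}
For the first two terms, put one factor in $L^2_{t,x}$ via the away-from-cone bound \eqref{eqn - L2 control away from null cone} (which gives a gain of $N_2^{-1/2}$) and the other in $L^\infty_{t,x}$ via Bernstein plus energy (cost $N_2^{n/2}$ from the low-frequency factor); the product of the two yields the stated power $N_2^{(n-1)/2}$. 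The final term is $C_{\lesssim N_2}$-localised automatically, so it is exactly part (i) applied at $\delta = N_2$.

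The step that requires the most care is the reduction in part (i): one must verify that the operators $C_{\lesssim\delta}$, $\mathfrak{C}^\pm_{\ll\delta}$, and the frequency cut-off can be transferred onto $F$ without altering the support hypotheses needed by Theorem \ref{thm - bilinear null frame est close cone}. The crucial observation is that once one factor and the output are within $\delta$ of their respective cones, the third factor is automatically forced into $\{||\tau|-|\xi|| \lesssim \delta\}$ by the additivity of Fourier supports and the elementary identity $||\tau|-|\xi|| \leq ||\tau_1|\pm|\xi_1|| + ||\tau_2|\pm|\xi_2|| + 2\min\{|\xi_1|,|\xi_2|\}$ appearing earlier in the paper, so inserting $C_{\lesssim\delta}$ on $F$ is harmless. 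Everything else is bookkeeping.
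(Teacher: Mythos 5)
Your proposal follows essentially the same route as the paper for both parts: duality via Corollary \ref{cor - F controls dual of N} together with Theorem \ref{thm - bilinear null frame est close cone} for (i), then endpoint Strichartz plus interpolation and a far-cone decomposition for (ii). One small imprecision in (ii): for the term $(\mathfrak{C}^+_{\ll N_2}u)^\dagger\mathfrak{C}^-_{\gtrsim N_2}v$ your recipe of ``one factor in $L^2_{t,x}$, the other in $L^\infty_{t,x}$'' cannot be run literally, since the far-from-cone factor and the low-frequency factor are the \emph{same} factor $v$; instead one must use mixed norms, putting $u\in L^\infty_t L^2_x$ by energy and $\mathfrak{C}^-_{\gtrsim N_2}v\in L^2_t L^\infty_x$ (Bernstein in $x$ combined with the away-from-cone gain), which is what the paper does and is consistent with your stated power count.
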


\section{Cubic Estimates}\label{sec - cubic est}

We now come to main trilinear estimate we require. In this case, although the bilinear estimates only required $u \in F^\pm_N$, we are forced to make use of the stronger $G^\pm_N$ spaces. Essentially this is due to the fact that away from the light cone, we require the additional integrability in $t$ given by the $\mc{Y}^\pm$ norm. \\

 Let $N_{min}$, $N_{med}$, and $N_{max}$ denote, respectively, the minimum, the median, and the maximum, of the set $\{ N_1, N_2, N_3\}$. Our aim is to prove the following.

\begin{theorem}\label{thm - trilinear estimate}
Let $T>0$ and assume that $\pm$ and $\pm'$ are independent choices of signs. There exists $\epsilon>0$ such that if $u_1 \in G^{\pm'}_{N_1}$, $u_2 \in G^{\mp'}_{N_2}$, $u_3 \in G^\mp_{N_3}$ then
    $$ \big\| \ind_{(-T, T)}(t) P_{N_0} \big[\big(u_1^\dagger u_2\big) u_3 \big]\big\|_{\mc{N}^\pm_{N_0}} \lesa \big(N_{min} N_{med}\big)^\frac{n-1}{2} \Big(\frac{N_{min}}{N_{med}}\Big)^\epsilon \| u_1 \|_{G^{\pm'}_{N_1}} \| u_2\|_{G^{\mp'}_{N_2}} \| u_3 \|_{G^\mp_{N_3}}$$
where the implied constant is independent of $T$.
\end{theorem}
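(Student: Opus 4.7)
The plan is to treat $F = u_1^\dagger u_2$ as a scalar null form (the opposing signs $\pm'$ and $\mp'$ are exactly the hypothesis of Corollary~\ref{cor - bilinear L2 estimates}) and apply the bilinear $\mathcal{N}^\pm$ estimates of Theorem~\ref{thm - bilinear null frame est close cone} and Corollary~\ref{cor - bilinear null frame est far cone} to the remaining product $F\cdot u_3$, where $u_3$ has the sign $\mp$ required by those results. By Fourier support the output vanishes unless $N_{max}\approx N_{med}$, and the two essential cases are (A) $N_{max}\approx N_{med}\approx N_0$ with $N_{min}\in\{N_1,N_2,N_3\}$ small, and (B) $N_0\approx N_{min}\ll N_{max}\approx N_{med}$. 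I would dyadically decompose $F=\sum_\mu P_\mu F$ in spatial frequency and $F=\sum_d \mathfrak{C}_d F$ in distance to the cone, matching each block to one of the three atom types (energy, $\dot{\mathcal{X}}^{-1/2,1}_\pm$, or $NF^\pm_{N_0}$) in the atomic definition of $\mathcal{N}^\pm_{N_0}$.

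In the pure close-cone regime, Corollary~\ref{cor - bilinear L2 estimates}(i) controls the scalar $F$ via
$$\big\|C_{\lesssim\delta}\big[(\mathfrak{C}^{\pm'}_{\ll\delta} u_1)^\dagger \mathfrak{C}^{\mp'}_{\ll\delta} u_2\big]\big\|_{L^2_{t,x}} \lesssim (\delta\min\{N_1,N_2\})^{(n-1)/4}\|u_1\|_{F^{\pm'}_{N_1}}\|u_2\|_{F^{\mp'}_{N_2}},$$
and feeding this into Theorem~\ref{thm - bilinear null frame est close cone} with spatial scale $\lambda\approx\mu$ and cone scale $\delta$ produces a bound of order $(\delta\min\{\mu,N_3\})^{(n-1)/4}(\delta\min\{N_1,N_2\})^{(n-1)/4}$ per modulation block. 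The dyadic summation in $\delta$ up to $\min\{N_{min},N_{med}\}$, together with the telescoping in $\mu$, recovers the leading factor $(N_{min}N_{med})^{(n-1)/2}$, and because the combined $\delta$-exponent strictly exceeds what the target requires, the geometric series supplies a small surplus power, which I expect to furnish the $(N_{min}/N_{med})^\epsilon$ gain.

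The far-cone contributions, where one of the three functions lies at modulation $\gtrsim N_{min}$, are handled by combining Corollary~\ref{cor - bilinear null frame est far cone}(ii) with the Strichartz bounds of Theorem~\ref{thm - F controls strichartz and angular sum} and the $\mathcal{Y}^\pm$-component of the $G^\pm_N$-norm. The weighted $L^{4n/(3n-1)}_t L^2_x$-control provided by $\mathcal{Y}^\pm$ matches exactly the exponent pair $(a,b)$ appearing in Corollary~\ref{cor - bilinear null frame est far cone}(ii) (with the admissibility condition $\tfrac{1}{a}+\tfrac{n-1}{2b}\g 1$ saturated at the chosen endpoint), and the weight-$d$ factor combined with the $d^{-1}$ summability away from the cone again yields an $(N_{min}/N_{med})^\epsilon$ factor. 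It is precisely here that the reinforcement from $F^\pm_N$ to $G^\pm_N$ is needed: without $\mathcal{Y}^\pm$ one would only obtain a logarithmic loss in the relevant dyadic sum.

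The hardest case will be the high-high-to-low cascade $N_1\approx N_2\gg N_3\approx N_0$ with \emph{all three} inputs close to their respective cones, since the output then has to be placed in $NF^\pm_{N_0}$ (neither the energy nor $\dot{\mathcal{X}}^{-1/2,1}_\pm$ atom suffices), and the angular decomposition is forced by the geometric identity (\ref{eqn - thm bilinear null frame est - key angle est}) coupling cone-distance and transverse angle. One must juggle two independent angular parameters, one dictated by the $u_1^\dagger u_2$-bilinear interaction and one by the subsequent product with $u_3$, ensuring that the caps are compatibly sized so that Lemma~\ref{lem - PW NF L2 duality} and Theorem~\ref{thm - null frame bounds} can be chained without orthogonality losses. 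Extracting the $\epsilon$-gain in this borderline geometry, where a naive bookkeeping closes at most logarithmically, will be the central technical difficulty.
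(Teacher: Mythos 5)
Your overall scaffolding (rewrite as $F\cdot u_3$ with $F=u_1^\dagger u_2$, split into close-cone and far-cone regions, apply Theorem~\ref{thm - bilinear null frame est close cone} and Corollary~\ref{cor - bilinear L2 estimates}(i) in the close-cone regime, and use the $\mc{Y}^\pm$ norms to rescue the far-cone regime) is the same as the paper's, but there is a genuine quantitative gap in how you generate the factor $(N_{min}/N_{med})^\epsilon$, and it is fatal in the far-cone region.

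You set the modulation threshold at $\min\{N_{min},N_{med}\}=N_{min}$ and claim that the geometric summation in $\delta$ supplies the surplus. That is not where the gain comes from, and with that threshold the far-cone terms actually \emph{lose}. Take the estimate corresponding to $(\ref{eqn - thm trilinear est - output far cone})$, where $u_3$ (at the highest frequency) is at modulation $\gtrsim\delta_0$: putting the two low-frequency inputs in $L^4_t L^\infty_x$ and $\mathfrak{C}^-_{\gtrsim \delta_0} u_3$ in $L^2_{t,x}$ gives a prefactor $(N_{min}N_{med})^{\frac{n-1}{2}+\frac14}\,\delta_0^{-\frac12}$. With $\delta_0=N_{min}$ this equals $(N_{min}N_{med})^{\frac{n-1}{2}}(N_{med}/N_{min})^{\frac14}$, a growing factor that cannot be absorbed into any $(N_{min}/N_{med})^\epsilon$. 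Your argument as stated does not close. The actual mechanism in the paper is a strategic choice of threshold \emph{strictly between} $\sqrt{N_{min}N_{med}}$ and $N_{med}$, namely $\delta=(N_{min})^{\frac12-a}(N_{med})^{\frac12+a}$ with $\frac{n+1}{4n}<a<\frac12$; this simultaneously makes $(N_{min}N_{med})^{1/4}\delta^{-1/2}$ a decaying power of $N_{min}/N_{med}$ (far-cone gain) and keeps $(\delta/N_{med})^{(n-1)/4}$ a decaying power of $N_{min}/N_{med}$ (close-cone gain). The constraint $a>\frac{n+1}{4n}$ is precisely what is needed for the $\mc{Y}^\pm$-based far-cone estimates such as $(\ref{eqn - thm trilinear est - case 1 bilinear N3 from cone})$, so the lower bound on $a$ is the exact reason the $\mc{Y}^\pm$ reinforcement of $F^\pm_N$ is necessary; without identifying this tension there is no principled way to pick the threshold.

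A secondary inaccuracy: you assert the Lebesgue pair of $\mc{Y}^\pm$ ``matches exactly'' the $(a,b)$ in Corollary~\ref{cor - bilinear null frame est far cone}(ii). It does not. The paper's $(\ref{eqn - thm trilinear est - case 1 bilinear N3 from cone})$ first H\"olders $\mathfrak{C}^\pm_{\gtrsim N_3}u_j\in L^{\frac{4n}{3n-1}}_t L^2_x$ (from $\mc{Y}^\pm$) against a separate $L^{\frac{8n}{5-n}}_t L^\infty_x$ Strichartz bound to land in $L^{\frac{8n}{5n+3}}_t L^{\frac{4n}{3}}_x$, and only this combined output pair is admissible for Corollary~\ref{cor - bilinear null frame est far cone}(ii). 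Finally, the double angular decomposition you flag as the ``hardest case'' is already buried inside the bilinear Theorem~\ref{thm - bilinear null frame est close cone} (cf.\ the subcases $A_{III}$ there); at the level of the trilinear estimate the novel difficulty is exactly the threshold balancing that your proposal omits.
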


To deal with the close cone region, the following lemma will prove crucial.

\begin{lemma}\label{lem - trilinear supp identity}
  Let $\delta \g N_{min}$ and $\lambda = \min\{ \max\{ N_1, N_2\}, \max\{ N_0, N_1\} \}$. Assume $\supp \widetilde{u}_j \subset \big\{ \big| |\tau| - |\xi| \big| \les \delta, \,\, |\xi| \approx N_j \big\}$. Then
    $$ C_{\les \delta} P_{N_0} \big[ (u_1^\dagger u_2) u_3 \big]  = C_{\les \delta} P_{N_0} \Big( C_{\lesa \delta} P_{\lesa \lambda} \big[ u_1^\dagger u_2 \big] u_3 \Big).$$
  \begin{proof}
    The inequality
        \begin{equation}\label{eqn - lem trilinear supp ident - key ineq} \big| |\tau + \tau'| - |\xi + \xi'|\big| \les \big| |\tau| - |\xi| \big| + \big| |\tau'| - |\xi'| \big| + 2 \min\{ |\xi|, |\xi'| \}\end{equation}
    implies that if  $(\tau, \xi), (\tau', \xi') \in \big\{ \big| |\tau| - |\xi| \big| \les \delta \big\}$ and $\delta \g \min\{ N_1, N_2\}$, then $\big| |\tau + \tau'| - |\xi + \xi'| \big| \lesa \delta$. Consequently we have the identity $u^\dagger_1 u_2 = C_{\lesa \delta} P_{\lesa\max\{ N_1, N_1\}}\big[u^\dagger_1 u_2\big]$. On the other hand, if $\delta \g \min\{ N_0, N_1\}$, can again use  (\ref{eqn - lem trilinear supp ident - key ineq}) to deduce the identity $C_{\les \delta} P_{N_0} \big( F v_3 \big) = C_{\les \delta} P_{N_0} \big( C_{\lesa \delta} P_{\lesa \max\{ N_0, N_1\} }F v_3 \big)$. Thus lemma follows.
  \end{proof}
\end{lemma}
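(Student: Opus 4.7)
The lemma is a Fourier-support identity with no quantitative content; the proof is an elementary convolution computation built on the standard modulation triangle inequality
\begin{equation}\label{eqn - triangle ineq for modulation plan}
\bigl| |\tau_1+\tau_2| - |\xi_1+\xi_2| \bigr| \;\les\; \bigl||\tau_1|-|\xi_1|\bigr| + \bigl||\tau_2|-|\xi_2|\bigr| + 2\min\{|\xi_1|,|\xi_2|\}.
\end{equation}
The entire task is to verify that the outer operator $C_{\les\delta}P_{N_0}$ annihilates all the Fourier content that is removed when one replaces $u_1^\dagger u_2$ by $C_{\lesa\delta} P_{\lesa\lambda}[u_1^\dagger u_2]$.

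First I would justify inserting $C_{\lesa\delta}$ on the inner pair. The spacetime Fourier transform of $u_1^\dagger u_2$ is, up to the reflection $(\tau,\xi)\mapsto(-\tau,-\xi)$ induced by $\dagger$, the convolution $\widetilde{u_1}\ast\widetilde{u_2}$, so its support is parametrized by sums $(\tau_1+\tau_2,\xi_1+\xi_2)$ with $(\tau_j,\xi_j)\in\supp\widetilde{u_j}$. Both inputs satisfy $\bigl||\tau_j|-|\xi_j|\bigr|\les\delta$, and the standing hypothesis $\delta\g N_{min}\g\min\{N_1,N_2\}$ is exactly what is needed to absorb the $\min\{|\xi_1|,|\xi_2|\}$ term in (\ref{eqn - triangle ineq for modulation plan}). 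Hence $u_1^\dagger u_2$ is automatically modulation $\lesa\delta$, and the smooth projection $C_{\lesa\delta}$ acts as the identity on it.

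Next I would justify inserting $P_{\lesa\lambda}$ on the same pair. Two independent upper bounds on the spatial frequency $\xi_1+\xi_2$ of $u_1^\dagger u_2$ are available: (a) a direct triangle inequality on the inputs gives $|\xi_1+\xi_2|\les N_1+N_2\lesa\max\{N_1,N_2\}$; (b) once one multiplies by $u_3$ and applies the outer cutoff $P_{N_0}$, the constraint $\bigl|(\xi_1+\xi_2)+\xi_3\bigr|\approx N_0$ together with $|\xi_3|\approx N_3$ forces $|\xi_1+\xi_2|\lesa\max\{N_0,N_3\}$ by the reverse triangle inequality. The minimum of these two bounds is exactly $\lambda$, so $P_{\lesa\lambda}$ also acts trivially on the inner product \emph{inside} the outer cutoffs. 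Combining the two reductions yields the claimed identity.

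The only obstacle here is bookkeeping rather than analysis. The conjugation by $\dagger$ reflects the spacetime Fourier support of $u_1$ through the origin, which preserves both $\bigl||\tau|-|\xi|\bigr|$ and $|\xi|$ and therefore causes no change in any of the estimates; and in applying (\ref{eqn - triangle ineq for modulation plan}) one must take the minimum over $\{|\xi_1|,|\xi_2|\}$ in such a way that the hypothesis $\delta\g N_{min}$ can legitimately absorb it. Once these conventions are tracked the identity follows at once from the general principle that a smooth Fourier projector equals the identity on any distribution whose Fourier transform is already supported in the region it localises to.
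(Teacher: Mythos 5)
There is a genuine gap in the modulation step. You write $\delta\g N_{min}\g\min\{N_1,N_2\}$, but the second inequality is backwards: since $N_{min}=\min\{N_1,N_2,N_3\}$ is a minimum over a \emph{larger} set, $N_{min}\les\min\{N_1,N_2\}$, with strict inequality whenever $N_3$ is the strict minimum. So the hypothesis $\delta\g N_{min}$ does \emph{not} ensure $\delta\g\min\{N_1,N_2\}$, which is what your forward application of the modulation triangle inequality to the pair $(u_1,u_2)$ requires. In the regime $N_3=N_{min}\ll\min\{N_1,N_2\}$ (this is exactly Case 3 of the paper's trilinear estimate, where the lemma is invoked), $u_1^\dagger u_2$ can carry modulation up to $\sim\min\{N_1,N_2\}\gg\delta$, so $C_{\lesa\delta}$ does \emph{not} act as the identity on it in isolation and your argument breaks down.

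What rescues the identity in that regime, and what the paper's proof invokes as a second sub-case, is the \emph{same} triangle inequality applied to the other factoring: regard $\widetilde{u_1^\dagger u_2}$ at $(\tau',\xi')$ as the difference of the output at $(\tau,\xi)$ (which the outer cutoff forces to satisfy $|\xi|\approx N_0$, $\big||\tau|-|\xi|\big|\les\delta$) and $\widetilde{u_3}$ at $(\tau-\tau',\xi-\xi')$ (which satisfies $|\xi-\xi'|\approx N_3$, $\big||\tau-\tau'|-|\xi-\xi'|\big|\les\delta$). Then the same inequality gives $\big||\tau'|-|\xi'|\big|\lesa\delta+\min\{N_0,N_3\}$, and $\min\{N_0,N_3\}\les N_3=N_{min}\les\delta$ closes this case. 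You actually did notice that the spatial frequency admits two independent upper bounds (your items (a) and (b)) — the missing ingredient is that the modulation constraint likewise admits two independent bounds, and the hypothesis $\delta\g N_{min}$ only guarantees that the better of the two is $\lesa\delta$. As a secondary remark, the $\lambda$ you derive is $\min\{\max\{N_1,N_2\},\max\{N_0,N_3\}\}$ whereas the statement has $\max\{N_0,N_1\}$ in the second slot; your version is the one the argument naturally yields, and the paper's appears to be a typo ($N_1$ for $N_3$, which recurs in the proof's ``$\delta\g\min\{N_0,N_1\}$''), so that discrepancy is not a gap on your side.
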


\begin{proof}[Proof of Theorem \ref{thm - trilinear estimate}]
After a reflection, we may assume $\pm = +$. We also fix $\pm'=+$, as the $\pm'=-$ argument is identical. Let $\rho \in C^\infty_0(\RR)$ with $\rho = 1$ on $[-1, 1]$ and fix $T^* \gg \max\{ T, N_{max}\}$. An application of (ii) in Theorem \ref{thm - energy inequality + invariance under cutoffs}, together with the identity
    $$ \ind_{(-T, T)}(t) P_{N_0} \big[\big(u_1^\dagger u_2\big) u_3 \big] =  \ind_{(-T, T)}(t) P_{N_0} \big[ \big( (\rho( \tfrac{t}{T^*})u_1)^\dagger (\rho( \tfrac{t}{T^*})u_2)\big) \rho( \tfrac{t}{T^*})u_3 \big]$$
 shows that it is enough to prove
   $$ \big\| P_{N_0} \big[\big(u_1^\dagger u_2\big) u_3 \big]\big\|_{\mc{N}^+_{N_0}} \lesa \big(N_{min} N_{med}\big)^\frac{n-1}{2} \Big(\frac{N_{min}}{N_{med}}\Big)^\epsilon \| u_1 \|_{G^+_{N_1}} \| u_2\|_{G^-_{N_2}} \| u_3 \|_{G^-_{N_3}}$$
under the additional assumption that each $u_j$ has compact support in time (thus, in particular, we can make use of the bilinear estimates in the previous section). Let $\frac{n+1}{4n}<a<\frac{1}{2}$ and take
        $$ \delta = \big( N_{min} \big)^{\frac{1}{2} - a} \big( N_{med} \big)^{\frac{1}{2} + a}.$$
The strategy is roughly to decompose into regions $\delta$ away from the light cone, and regions within $\delta$ of the light cone. In the close cone region, we can essentially just apply the bilinear estimate Theorem \ref{thm - bilinear null frame est close cone}. On the other hand, in the region away from the light cone, the argument is more involved and we need to exploit the bilinear estimates in Corollaries \ref{cor - bilinear L2 estimates} and \ref{cor - bilinear null frame est far cone}, together with the additional integrability in $t$ given by the $\mc{Y}^\pm$ norms. \\

We break the proof into three main cases, $N_3 = N_{max} \gg N_{med}$, $N_3 \approx N_{med}$, and $N_3 \approx N_{min}$. \\

\textbf{Case 1: $N_3 = N_{max} \gg N_{med}$.} We begin by decomposing
   \begin{equation}\label{eqn - thm trilinear est - case 1 initial decomp} (u_1^\dagger u_2) u_3 = C_{\lesa \delta}(u_1^\dagger u_2) u_3 + C_{\gg \delta}(u_1^\dagger u_2) u_3.\end{equation}
Note that by the $L^4_t L^\infty_x$ Strichartz estimate in Theorem \ref{thm - F controls strichartz and angular sum} we have the inequalities
    \begin{align} \big\| P_{N_0} \big( \big[ u_1^\dagger u_2 \big] \mathfrak{C}^-_{\gtrsim \delta} u_3 \big) \big\|_{\mc{N}^+_{N_0}} &\lesa \| u_1 \|_{L^4_t L^\infty_x} \| u_2 \|_{L^4_t L^\infty_x} \| \mathfrak{C}^-_{\gtrsim \delta} u_3 \|_{L^2_{t, x}}\notag \\
     &\lesa \big( N_{min} N_{med} \big)^{\frac{n-1}{2} + \frac{1}{4}} \delta^{-\frac{1}{2}} \| u_1 \|_{F^+_{N_1}} \| u_2 \|_{F^-_{N_2}} \| u_3 \|_{F^-_{N_3}} \label{eqn - thm trilinear est - output far cone}
     \end{align}
and
    \begin{align} \big\| \mathfrak{C}^+_{\gtrsim \delta}  P_{N_0} \big(\big[ u_1^\dagger u_2 \big]  u_3 \big) \big\|_{\mc{N}^+_{N_0}} &\lesa \delta^{-\frac{1}{2}} \| \big[ u_1^\dagger u_2\big]  u_3 \|_{L^2_{t, x}}\notag\\
                &\lesa \delta^{-\frac{1}{2}} \| u_1 \|_{L^4_t L^\infty_x} \| u_2 \|_{L^4_t L^\infty_x} \| u_3 \|_{L^\infty_t L^2_x}\notag\\
                 &\lesa \big( N_{min} N_{med} \big)^{\frac{n-1}{2} + \frac{1}{4}} \delta^{-\frac{1}{2}} \| u_1 \|_{F^+_{N_1}} \| u_2 \|_{F^-_{N_2}} \| u_3 \|_{F^-_{N_3}}. \label{eqn - thm trilinear est - max frequency far cone}
    \end{align}
Since $ \big( N_{min} N_{med} \big)^\frac{1}{4} \delta^{-\frac{1}{2}} = \Big( \frac{N_{min}}{N_{med}}\Big)^\frac{a}{2}$ both estimates are acceptable. Observe that neither (\ref{eqn - thm trilinear est - output far cone}) nor (\ref{eqn - thm trilinear est - max frequency far cone}) made any use of the structure of the product. Thus we can always control the case where the output, $(u_1^\dagger u_2) u_3$, or the $N_{max}$ term are $\delta$ from the cone, by putting the low frequency terms in $L^4_t L^\infty_x$. Moreover, as the estimates (\ref{eqn - thm trilinear est - output far cone}) and (\ref{eqn - thm trilinear est - max frequency far cone}) only made use of $L^2_x$ based spaces, together with Theorem \ref{thm - F controls strichartz and angular sum}, we can freely add $\mathfrak{C}^\pm_\delta$ multipliers to the left hand side without affecting the validity of (\ref{eqn - thm trilinear est - output far cone}) and (\ref{eqn - thm trilinear est - max frequency far cone}). These observations are also used in the case $N_3 \approx N_{med}$ and $N_3 \approx N_{min}$.

The estimate for the first term in (\ref{eqn - thm trilinear est - case 1 initial decomp}) is now straightforward. By (\ref{eqn - thm trilinear est - output far cone}) and (\ref{eqn - thm trilinear est - max frequency far cone}), we reduce to considering the term $\mathfrak{C}^+_{\ll \delta} \big( C_{\lesa \delta}\big[ u_1^\dagger u_2\big] \mathfrak{C}^-_{\ll \delta} u_3 \big)$. An application  of  Theorem \ref{thm - bilinear null frame est close cone} and Corollary \ref{cor - bilinear L2 estimates} then gives
    \begin{align*}
     \big\| \mathfrak{C}^+_{\ll\delta}  P_{N_0} \big(C_{\lesa \delta} \big[ u_1^\dagger u_2 \big]  \mathfrak{C}^-_{\ll \delta} u_3 \big) \big\|_{\mc{N}^+_{N_0}}
                    &\lesa \big( \delta N_{med} \big)^{\frac{n-1}{4}} \| u^\dagger_1 u_2 \|_{L^2_{t, x}}  \| u_3 \|_{F^-_{N_3}} \\
                    &\lesa \big( N_{min} \big)^\frac{n-1}{2} \big( \delta N_{med} \big)^{\frac{n-1}{4}} \| u_1\|_{F^+_{N_1}} \| u_2 \|_{F^-_{N_2}}  \| u_3 \|_{F^-_{N_3}}
    \end{align*}
which is acceptable since $\big( \tfrac{\delta}{N_{med}} \big)^{\frac{n-1}{4}} = \big( \tfrac{N_{min}}{N_{med}}\big)^{ ( \frac{1}{2} - a) \frac{n-1}{4}}$
and $\frac{n+1}{4n} < a < \frac{1}{2}$.

On the other hand, to deal with the second term in (\ref{eqn - thm trilinear est - case 1 initial decomp}), assume for the moment that we have the inequalities
\begin{equation}\label{eqn - thm trilinear est - case 1 bilinear delta from cone}
     \big\| C_{\gg \delta} ( u^\dagger_1 u_2) \big\|_{L^2_{t, x}} \lesa \big( N_{min} \big)^\frac{n}{2} \delta^{-\frac{1}{2}} \| u_1 \|_{F^+_{N_1}} \| u_2 \|_{F^-_{N_2}}
   \end{equation}
and
 \begin{equation}\label{eqn - thm trilinear est - case 1 bilinear N3 from cone} \big\| C_{\gtrsim N_3 }( u_1^\dagger u_2) \big\|_{L^\frac{8n}{5n+3}_t L^\frac{4n}{3}_x} \lesa \big( N_{min} N_{med} \big)^\frac{n-1}{2} \Big( \frac{N_{min} }{N_{med}} \Big)^{\frac{1}{4n}} \big(N_3 \big)^{- \frac{3(n+1)}{8n}} \| u_1 \|_{G^+_{N_1}} \| u_2 \|_{G^-_{N_2}}. \end{equation}
Then by (ii) in Corollary \ref{cor - bilinear null frame est far cone} with $(a, b) = (\frac{8n}{5n+3},\frac{4n}{3})$ (note that this pair is admissible),  we have
    \begin{align*}
        \big\| P_{N_0} \big[ C_{\gg \delta} ( u^\dagger_1 u_2) u_3 \big]\big\|_{\mc{N}^+_{N_0}} &\lesa  \Big[\big(N_{med}\big)^{\frac{n-1}{2}} \| C_{\gg \delta} ( u^\dagger_1 u_2) \|_{L^2_{t, x}} + \big(N_3\big)^{\frac{3(n +1)}{8n}} \| C_{\gtrsim N_3} ( u^\dagger_1 u_2) \|_{L^\frac{8n}{5n+3}_t L^\frac{4n}{3}_x} \Big]\| u_3 \|_{F^-_{N_3}}\\
        &\lesa  \Big[ \big( N_{min}\big)^\frac{n}{2} \big(N_{med}\big)^{\frac{n-1}{2}} \delta^{-\frac{1}{2}}  + \big( N_{min} N_{med} \big)^\frac{n-1}{2} \Big( \frac{N_{min} }{N_{med}} \Big)^{\frac{1}{4n}}  \Big] \| u_1 \|_{G^+_{N_1}} \| u_2 \|_{G^-_{N_2}}  \| u_3 \|_{F^-_{N_3}} \\
        &\lesa \big( N_{min} N_{med}\big)^{\frac{n-1}{4}} \Big( \frac{N_{min}}{N_{med}}\Big)^{\min\{ \frac{1}{4n}, \frac{a}{2}\} }  \| u_1 \|_{G^+_{N_1}} \| u_2 \|_{G^-_{N_2}} \| u_3 \|_{F^-_{N_3}}.
    \end{align*}
Thus, to complete the proof of the case $N_3 \gg N_{med}$, it only remains to deduce the inequalities (\ref{eqn - thm trilinear est - case 1 bilinear delta from cone}) and (\ref{eqn - thm trilinear est - case 1 bilinear N3 from cone}). We start with the more difficult (\ref{eqn - thm trilinear est - case 1 bilinear N3 from cone}). To this end, note that the inequality  (\ref{eqn - lem trilinear supp ident - key ineq}) and the assumption $N_3 \gg N_{med}$ implies the decomposition
    $$ C_{\gtrsim N_3 } \big[ u_1^\dagger u_2\big] = C_{\gtrsim N_3 } \big[ \big( \mathfrak{C}^+_{\gtrsim N_3} u_1 \big)^\dagger u_2\big] + C_{\gtrsim N_3 } \big[ \big( \mathfrak{C}^-_{\ll N_3} u_1 \big)^\dagger \mathfrak{C}^-_{\gtrsim N_3} u_2\big].$$
Essentially the point is that if the output of $u^\dagger_1 u_2$ is far from the cone, then it is not possible for both $u_1$ and $u_2$ to have Fourier support close the cone. If we now apply Lemma \ref{lem - mult are disposable} to dispose of the $C_{\gtrsim N_3}$ multiplier, followed by  the $L^\frac{8n}{5-n}_t L^\infty_x$ Strichartz estimate (note that $\frac{8n}{5-n} \g 4$), by the definition of the $\mc{Y}^\pm$ norm we deduce that
\begin{align*}
      \big\| C_{\gtrsim N_3 } \big[ \big( \mathfrak{C}^+_{\gtrsim N_3} u_1 \big)^\dagger u_2\big] \big\|_{L^\frac{8n}{5n+3}_t L^\frac{4n}{3}_x}
                &\lesa \big( N_1 \big)^{\frac{n}{2} - \frac{3}{4}} \big\|  \mathfrak{C}^+_{ \gtrsim N_3} u_1 \big\|_{L^\frac{4n}{3n-1}_t L^2_x} \| u_2 \|_{L^\frac{8n}{5-n}_t L^\infty_x} \\
                &\lesa \big( N_1 \big)^{\frac{n}{2} - \frac{3}{4}} \big( N_2 \big)^{ \frac{n}{2} - \frac{ 5-n}{8n} } \big( N_3 \big)^{-1} \big\|  u_1 \big\|_{\mc{Y}^+} \| u_2\|_{F^-_{N_2}} \\
                &\lesa \big( N_1 \big)^{\frac{n-1}{2}  + \frac{1}{4n}} \big( N_2 \big)^{ \frac{n-1}{2} + \frac{5(n-1)}{8n}} \big( N_3 \big)^{-1} \| u_1 \|_{G^+_{N_1}} \| v_2 \|_{F^-_{N_2}}
    \end{align*}
which is acceptable since, using the fact that $N_3 \g N_1, N_2$,
    $$ \big( N_1 \big)^{\frac{n-1}{2}  + \frac{1}{4n}} \big( N_2 \big)^{ \frac{n-1}{2} + \frac{5(n-1)}{8n}} \big( N_3 \big)^{-1}\les ( N_{min} N_{med} \big)^{\frac{n-1}{2}} \big( N_3\big)^{ - \frac{3(n+1)}{8n}} \Big( \frac{N_{min}}{N_{med}}\Big)^{\frac{1}{4n}}.$$
A similar argument handles the $C_{\gtrsim N_3 } \big[ \big( \mathfrak{C}^+_{\ll N_3} u_1 \big)^\dagger \mathfrak{C}^-_{\gtrsim N_3} u_2\big]$ term, we just put $\mathfrak{C}^+_{\ll N_3} u_1 \in L^\frac{8n}{5-n}_t L^\infty_x$ and $\mathfrak{C}^-_{\gtrsim N_3} u_2 \in \mc{Y}^-$. Thus we obtain (\ref{eqn - thm trilinear est - case 1 bilinear N3 from cone}). The proof of (\ref{eqn - thm trilinear est - case 1 bilinear delta from cone}) is similar, we just note that again the inequality (\ref{eqn - lem trilinear supp ident - key ineq}) implies that
    $$ C_{\gg \delta } \big[ u_1^\dagger u_2\big] = C_{\gg \delta } \big[ \big( \mathfrak{C}^+_{\gtrsim \delta} u_1 \big)^\dagger u_2\big] + C_{\gg \delta } \big[ \big( \mathfrak{C}^-_{\ll \delta} u_1 \big)^\dagger \mathfrak{C}^-_{\gtrsim \delta} u_2\big].$$
Hence putting the far cone terms in $L^2_{t, x}$ we obtain
    \begin{align*}
      \big\| C_{\gg \delta } \big[ u_1^\dagger u_2\big] \big\|_{L^2_{t, x}} &\lesa \big( N_{min} \big)^{\frac{n}{2}} \Big( \| \mathfrak{C}^+_{\gtrsim \delta} u_1 \|_{L^2_{t, x}} \| u_2 \|_{L^\infty_t L^2_x} + \| u_1 \|_{L^\infty_t L^2_x} \| \mathfrak{C}^-_{\gtrsim \delta} u_2 \|_{L^2_{t, x}} \Big) \\
      &\lesa \big( N_{min} \big)^{\frac{n}{2}} \delta^{-\frac{1}{2}} \| u_1 \|_{F^+_{N_1}} \| u_2 \|_{F^-_{N_2}}.
    \end{align*}
Therefore we obtain (\ref{eqn - thm trilinear est - case 1 bilinear delta from cone}) and so the case $N_3 = N_{max} \gg N_{med}$ follows. \\

\textbf{Case 2: $N_3 \approx N_{med}$.} Note that $N_3 \approx N_{med}$ implies that $N_{min} \approx \min\{ N_1, N_2\}$. If $u_1$ is $\delta$ away from the cone, then by $(i)$ in Corollary \ref{cor - bilinear null frame est far cone} we have
    \begin{align*}
      \big\| P_{N_0} \big( \big[ \big( \mathfrak{C}^+_{ \gtrsim \delta}  u_1\big)^\dagger u_2 \big] u_3 \big) \big\|_{\mc{N}^+_{N_0}} &\lesa \big( N_{med} \big)^\frac{n-1}{2} \big\| \big( \mathfrak{C}^+_{ \gtrsim \delta} u_1\big)^\dagger u_2 \big\|_{L^2_{t, x}} \| u_3 \|_{F^-_{N_3}} \\
                    &\lesa \big( N_{min}\big)^{\frac{n}{2}} \big(N_{med}\big)^\frac{n-1}{2} \big\| \mathfrak{C}^+_{ \gtrsim \delta} u_1 \big\|_{L^2_{t, x}} \| u_2 \|_{L^\infty_t L^2_x} \| u_3 \|_{F^-_{N_3}} \\
                    &\lesa \big( N_{min} N_{med} \big)^{\frac{n-1}{2} } \Big( \frac{N_{min}}{\delta}\Big)^\frac{1}{2} \| u_1 \|_{F^+_{N_1}} \| u_2 \|_{F^-_{N_2}} \| u_3 \|_{F^-_{N_3}}
    \end{align*}
which is acceptable as $\big( \tfrac{N_{min}}{\delta}\big)^\frac{1}{2} = \big( \frac{N_{min}}{N_{med}}\big)^{\frac{1}{4} + \frac{a}{2}}$. A similar argument handles the case where $u_2$ is $\delta$ away from the cone.

On the other hand, when $u_3$ is $\delta$ away from the cone, the argument is more involved as we need to make use of the $\mc{Y}^\pm$ norms to gain the correct factors $N_{min}$, $N_{med}$. An application of Holder together with $(ii)$ in  Corollary \ref{cor - bilinear L2 estimates} gives
    \begin{align*}
      \big\| P_{N_0} \big(   \big[ \big( \mathfrak{C}^+_{ \ll \delta} u_1 \big)^\dagger  \mathfrak{C}^-_{ \ll \delta} u_2 \big] \mathfrak{C}^-_{ \gtrsim \delta} u_3 \big)\big\|_{\mc{N}^+_{N_0}}&\les \big\| P_{N_0} \big(   \big[ \big( \mathfrak{C}^+_{ \ll \delta} u_1 \big)^\dagger  \mathfrak{C}^-_{ \ll \delta} u_2 \big] \mathfrak{C}^-_{ \gtrsim \delta} u_3 \big)\big\|_{L^1_t L^2_x} \\
      &\lesa \big\| \big( \mathfrak{C}^+_{ \ll \delta} u_1 \big)^\dagger  \mathfrak{C}^-_{ \ll \delta} u_2 \big\|_{L_t^\frac{4n}{n+1} L^2_x} \| \mathfrak{C}^-_{ \gtrsim  \delta} u_3 \|_{L^\frac{4n}{3n-1}_t L^\infty_x} \\
      &\lesa \big( N_{min} \big)^{\frac{n}{2} - \frac{n+1}{4n} } \big( N_{med} \big)^{\frac{n}{2}} \delta^{-1} \| u_1 \|_{F^+_{N_1}} \| u_2 \|_{F^-_{N_2}} \| u_3 \|_{\mc{Y}^-} \\
      &\lesa \big( N_{min} \big)^{\frac{n}{2} - \frac{n+1}{4n} } \big( N_{med} \big)^{\frac{n}{2} + \frac{n+1}{4n}  } \delta^{-1} \| u_1 \|_{F^+_{N_1}} \| u_2 \|_{F^-_{N_2}} \| u_3 \|_{G^-_{N_3}}
    \end{align*}
which is acceptable since
        $$ \big( N_{min} \big)^{\frac{n}{2} - \frac{n+1}{4n} } \big( N_{med} \big)^{\frac{n}{2} + \frac{n+1}{4n}} \delta^{-1} = \big( N_{min} N_{med} \big)^{\frac{n-1}{2}} \Big( \frac{N_{min}}{N_{med}}\Big)^{a - \frac{n+1}{4n}}$$
and $\frac{n+1}{4n} < a < \frac{1}{2}$. The final far cone case is when the output is $\delta$ from the cone. However here we can simply argue as in (\ref{eqn - thm trilinear est - output far cone}) but put the low frequency terms in $L^4_tL^\infty_x$, and the $N_{max}$ term in $L^\infty_t L^2_x$.

It remains to deal with the close cone case $S^+_{N_0, \ll \delta}\big(\big[ \big( \mathfrak{C}^+_{ \ll \delta} u_1 \big)^\dagger  \mathfrak{C}^-_{ \ll \delta} u_2 \big] \mathfrak{C}^-_{ \ll \delta} u_3 \big)$. To this end, as in the $N_3 \approx N_{max}$ case, we apply Lemma \ref{lem - trilinear supp identity}, Theorem \ref{thm - bilinear null frame est close cone}, and Corollary \ref{cor - bilinear L2 estimates} to obtain
    \begin{align*}
      \big\| S^+_{N_0, \ll \delta}\big(\big[ \big( \mathfrak{C}^+_{ \ll \delta} u_1 \big)^\dagger  \mathfrak{C}^-_{ \ll \delta} u_2 \big] \mathfrak{C}^-_{ \ll \delta} u_3 \big) \big\|_{\mc{N}^+_{N_0}} &= \big\| S^+_{N_0, \ll \delta}\big(C_{\lesa \delta}\big[  \big( \mathfrak{C}^+_{ \ll \delta} u_1 \big)^\dagger  \mathfrak{C}^-_{ \ll \delta} u_2 \big] \mathfrak{C}^-_{ \ll \delta} u_3 \big) \big\|_{\mc{N}^+_{N_0}}\\
                &\lesa \big( \delta N_{med} \big)^{\frac{n-1}{4}} \big\|  C_{\lesa \delta}\big[  \big( \mathfrak{C}^+_{ \ll \delta} u_1 \big)^\dagger  \mathfrak{C}^-_{ \ll \delta} u_2 \big] \big\|_{L^2_{t, x}} \| u_3 \|_{F^-_{N_3}} \\
                &\lesa \big( N_{min}\big)^{\frac{n-1}{2} }\big( \delta N_{med} \big)^{\frac{n-1}{4}} \| u_1 \|_{F^+_{N_1}} \| u_2 \|_{F^-_{N_2}} \| u_3 \|_{F^-_{N_3}}.
    \end{align*}
which is acceptable as $\big( \tfrac{\delta}{N_{med}}\big)^{\frac{n-1}{4}} = \big( \tfrac{N_{min}}{N_{med}} \big)^{\frac{n-1}{4}( \frac{1}{2} - a)}$. Therefore we obtain the case $N_3 \approx N_{med}$.\\

\textbf{Case 3: $N_3 \approx N_{min}$.} Without loss of generality, we assume that $N_1 \g N_2$, thus $N_1 \approx N_{max}$. The argument to control the case $N_3 \approx N_{min}$ is very similar to the previous case, essentially the only difference is that we need to reverse the order in which we estimate the far cone case to avoid having to estimate the multiplier $\mathfrak{C}^\pm_{ \ll\delta}$ in $F^\pm_N$ with $\delta \ll N$. As before, we start by dealing with the far cone case.  An application of Corollary \ref{cor - bilinear L2 estimates} gives the bound
    \begin{align*} \big\| P_{N_0} \big(\big[ u_1^\dagger u_2 \big] \mathfrak{C}^-_{\gtrsim \delta} u_3\big) \big\|_{\mc{N}^+_{N_0}}\les \big\| \big[ u_1^\dagger u_2 \big] \mathfrak{C}^-_{\gtrsim \delta} u_3 \big\|_{L^1_t L^2_x} &\lesa \big( N_{min} \big)^{\frac{n}{2}} \big\|  u_1^\dagger u_2 \big\|_{L^2_{t, x}} \big\| \mathfrak{C}^-_{ \gtrsim \delta} u_3 \big\|_{L^2_{t, x}} \\
        &\lesa \big( N_{med} \big)^{\frac{n-1}{2}} \big( N_{min} \big)^\frac{n}{2} \delta^{-\frac{1}{2}} \| u_1 \|_{F^+_{N_1}} \| u_2 \|_{F^-_{N_2}} \| u_3 \|_{F^-_{N_3}}
    \end{align*}
which is as before is acceptable. Together with (\ref{eqn - thm trilinear est - output far cone}), (but with the minor difference that we put the low frequency terms $u_2$ and $u_3$ in $L^4_t L^\infty_x$), we may assume that the output is within $\delta$ of the cone. Similarly, when $u_1$ is $\delta$ away from the cone, we follow (\ref{eqn - thm trilinear est - max frequency far cone})  and put $u_2, u_3 \in L^4_t L^\infty_x$ by using Theorem \ref{thm - F controls strichartz and angular sum}. Finally, if $u_2$ is $\delta$ away from the cone, then we use (i) in Corollary \ref{cor - bilinear null frame est far cone} together with the $\mc{Y}^\pm$ norm to deduce that
    \begin{align*}
       \big\| S^+_{N_0, \ll \delta} \big( \big[ \big( \mathfrak{C}^+_{\ll \delta} u_1 \big)^\dagger  \mathfrak{C}^-_{\gtrsim \delta} u_2 \big] \mathfrak{C}^-_{\ll \delta} u_3\big) \big\|_{\mc{N}^+_{N_0}}
            &\lesa \big( N_{min} \big)^{\frac{n}{2} - \frac{n+1}{4n}} \big\|  \big( \mathfrak{C}^+_{\ll \delta} u_1 \big)^\dagger  \mathfrak{C}^-_{\gtrsim \delta} u_2 \big\|_{L^\frac{4n}{3n-1}_t L^2_x} \| u_3 \|_{F^-_{N_3}} \\
            &\lesa  \big( N_{min} \big)^{\frac{n}{2} - \frac{n+1}{4n}} \big( N_{med} \big)^\frac{n}{2} \big\| \mathfrak{C}^+_{\ll \delta} u_1 \big\|_{L^\infty_t L^2_x} \big\|   \mathfrak{C}^-_{\gtrsim \delta} u_2 \big\|_{L^\frac{4n}{3n-1}_t L^2_x} \| u_3 \|_{F^-_{N_3}}\\
            &\lesa  \big( N_{min} \big)^{\frac{n}{2} - \frac{n+1}{4n}} \big( N_{med} \big)^{\frac{n}{2} } \delta^{-1} \| u_1 \|_{F^+_{N_1}} \| u_2 \|_{\mc{Y}^-} \| u_3 \|_{F^-_{N_3}} \\
            &\les \big( N_{min} N_{med} \big)^{\frac{n-1}{2}} \Big( \frac{N_{min}}{N_{med}}\Big)^{ a - \frac{n+1}{4n}} \| u_1 \|_{F^+_{N_1}} \| u_2 \|_{G^-_{N_2}} \| u_3 \|_{F^-_{N_3}}
    \end{align*}
which again is acceptable.

The final case is the close cone term $S^+_{N_0, \ll \delta} \big( \big[ \big( \mathfrak{C}^+_{\ll \delta} u_1 \big)^\dagger  \mathfrak{C}^-_{\ll \delta} u_2 \big] \mathfrak{C}^-_{\ll \delta} u_3\big) $. As previously, by applying Lemma \ref{lem - trilinear supp identity} and Corollaries \ref{cor - bilinear null frame est far cone} and \ref{cor - bilinear L2 estimates},  we obtain
    \begin{align*}
      \big\| S^+_{N_0, \ll \delta} \big( \big[ \big( \mathfrak{C}^+_{\ll \delta} u_1 \big)^\dagger  \mathfrak{C}^-_{\ll \delta} u_2 \big] \mathfrak{C}^-_{\ll \delta} u_3\big) \big\|_{\mc{N}^+_{N_0}} &= \big\| S^+_{N_0, \ll \delta} \big( C_{\lesa \delta} \big[ \big( \mathfrak{C}^+_{\ll \delta} u_1 \big)^\dagger  \mathfrak{C}^-_{\ll \delta} u_2 \big] \mathfrak{C}^-_{\ll \delta} u_3\big) \big\|_{\mc{N}^+_{N_0}}\\
                &\lesa \big( N_{min} \big)^{\frac{n-1}{2}} \big\|  C_{\lesa \delta} \big[ \big( \mathfrak{C}^+_{\ll \delta} u_1 \big)^\dagger  \mathfrak{C}^-_{\ll \delta} u_2 \big] \big\|_{L^2_{t, x}} \| u_3 \|_{F^-_{N_3}} \\
                &\lesa \big( N_{min}\big)^{\frac{n-1}{2} }\big( \delta N_{med} \big)^{\frac{n-1}{4}} \| u_1 \|_{F^+_{N_1}} \| u_2 \|_{F^-_{N_2}} \| u_3 \|_{F^-_{N_3}}
    \end{align*}
which is acceptable. Therefore we obtain the case $N_3 \approx N_{min}$ and hence theorem follows.
\end{proof}

To control the $\mc{Y}^\pm$ component of the $G^\pm_\lambda$ norm, we use the following.

\begin{theorem}\label{thm - control of Y norm}
Let $T>0$ and $\pm$,  $\pm'$ be independent choices of signs. There exists $\epsilon>0$ such that for $u_1 \in F^{\pm'}_{N_1}$, $u_2 \in F^{\mp'}_{N_2}$, and $u_3 \in F^\mp_{N_3}$ we have
    $$ \big\|\ind_{(-T, T)}(t) (u_1^\dagger u_2) u_3 \big\|_{L^\frac{4n}{3n-1}_t L^2_x} \lesa \big( N_{min} N_{med} \big)^{\frac{n-1}{2}} \Big( \frac{N_{min}}{N_{med}} \Big)^\epsilon \big( N_{max} \big)^{\frac{n+1}{4n}} \| u_1 \|_{F^{\pm'}_{N_1}} \| u_2 \|_{F^{\mp'}_{N_2}} \| u_3 \|_{F^\mp_{N_3}}$$
where the implied constant is independent of $T$.
\begin{proof}
As in the proof of Theorem \ref{thm - trilinear estimate}, we only consider the case $\pm = \pm' = +$ as the remaining cases are essentially identical. The required estimate follows  by an application of the Strichartz estimates in Theorem \ref{thm - F controls strichartz and angular sum}, together with the bilinear estimates in Corollary \ref{cor - bilinear L2 estimates}. More precisely, if $N_3 \approx N_{max}$, then as $(q, r) = (\frac{4n}{n-1}, \frac{2n}{n-1})$ is Strichartz admissible, by an application of Holder, followed by Theorem \ref{thm - F controls strichartz and angular sum} and $(ii)$ in Corollary \ref{cor - bilinear L2 estimates} with $(a, b) = (2, 2n)$, we deduce that
   \begin{align*} \| (u_1^\dagger u_2) u_3 \|_{L^\frac{4n}{3n-1}_t L^2_x} &\les \| u^\dagger_{N_1} u_2 \|_{L^2_t L^{2n}_x} \| u_3 \|_{L^{\frac{4n}{n-1}}_t L^{\frac{2n}{n-1}}_x}\\
    &\lesa \big( \min\{N_1, N_2\} \big)^{\frac{n-1}{2} + \frac{n-1}{4n}} \big( \max\{N_1, N_2\} \big)^{\frac{n-1}{2}  - \frac{n-1}{4}} \big( N_3 \big)^{ \frac{n+1}{4n}} \| u \|_{F^+_{N_1}} \| v \|_{F^-_{N_2}} \| v \|_{F^-_{N_3}} \\
    &\lesa \big( N_{min} N_{med} \big)^{\frac{n-1}{2}}  \Big( \frac{ N_{min}}{ N_{med}} \Big)^{\frac{n-1}{4n}} \big( N_{max} \big)^{\frac{n+1}{4n}}\| u \|_{F^+_{N_1}} \| v \|_{F^-_{N_2}} \| v \|_{F^-_{N_3}}
   \end{align*}
as required. On the other hand, if $N_3 \ll N_{max}$,  then we put $u_3 \in L^4_t L^\infty_x$ and again apply $(ii)$ in Corollary \ref{cor - bilinear L2 estimates} with $(a, b) = (\frac{4n}{2n-1}, 2)$ to obtain
    \begin{align*}
        \| (u_1^\dagger u_2) u_3 \|_{L^\frac{4n}{3n-1}_t L^2_x} &\les \| u^\dagger_{N_1} u_2 \|_{L^{\frac{4n}{2n-1}}_t L^2_x} \| u_3 \|_{L^4_t L^\infty_x} \\
         &\lesa \big( \min\{ N_1, N_2\} \big)^{\frac{n-1}{2} + \frac{1}{4n} } \big( N_3 \big)^{\frac{n-1}{2} + \frac{1}{4}} \| u \|_{F^+_{N_1} } \| v \|_{F^-_{N_2}} \| v \|_{F^-_{N_3}} \\
         &\les \big( N_{min} N_{med} \big)^{\frac{n-1}{2}} \Big( \frac{N_{min}}{N_{med}} \Big)^{\frac{1}{4n}} \big( N_{max} \big)^{\frac{n+1}{4}}  \| u \|_{F^+_{N_1} } \| v \|_{F^-_{N_2}} \| v \|_{F^-_{N_3}}.
    \end{align*}
\end{proof}
\end{theorem}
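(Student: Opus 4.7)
The plan is to reduce to the case $\pm=\pm'=+$ by the reflection trick from Remark \ref{rem - + ---> - via reflection}, then split the estimate by H\"older's inequality into a bilinear $L^a_t L^b_x$ norm on $u_1^\dagger u_2$ times a Strichartz norm on $u_3$, choosing the exponents to match $L^{4n/(3n-1)}_t L^2_x$ and to hit an admissible pair for $u_3$. The time cutoff $\ind_{(-T,T)}(t)$ plays no role beyond allowing us to invoke the bilinear estimates of Corollary \ref{cor - bilinear L2 estimates}, whose constants are independent of $T$, so I will omit it from the notation.

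I expect the argument to split naturally along the location of $N_3$ relative to $N_{max}$. If $N_3 \approx N_{max}$, then I would pair the bilinear norm $L^2_t L^{2n}_x$ on $u_1^\dagger u_2$ (estimated via Corollary \ref{cor - bilinear L2 estimates}(ii) with $(a,b)=(2,2n)$) against $\| u_3 \|_{L^{4n/(n-1)}_t L^{2n/(n-1)}_x}$ (which is a sharp admissible pair controlled by Theorem \ref{thm - F controls strichartz and angular sum}). One checks that $\tfrac{1}{2}+\tfrac{n-1}{4n} = \tfrac{3n-1}{4n}$ and $\tfrac{1}{2n}+\tfrac{n-1}{2n}=\tfrac{1}{2}$, so H\"older closes. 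Tallying the scaling factors gives $(\min\{N_1,N_2\})^{(n-1)/2 + (n-1)/(4n)}(\max\{N_1,N_2\})^{(n-1)/4}\,N_3^{(n+1)/(4n)}$, which rearranges into the desired product $(N_{min}N_{med})^{(n-1)/2}(N_{min}/N_{med})^{(n-1)/(4n)}(N_{max})^{(n+1)/(4n)}$, giving $\epsilon = (n-1)/(4n)>0$.

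For the remaining case $N_3 \ll N_{max}$, so $\min\{N_1,N_2\}=N_{min}$ and $\max\{N_1,N_2\}=N_{max}=N_{med}$, I would instead use H\"older with $u_3 \in L^4_t L^\infty_x$ (an admissible pair for both $n=2$ and $n=3$) and $u_1^\dagger u_2 \in L^{4n/(2n-1)}_t L^2_x$. The latter falls into Corollary \ref{cor - bilinear L2 estimates}(ii) with $(a,b)=(4n/(2n-1),2)$ (note $4n/(2n-1) \ge 2$), producing the factor $N_{min}^{(n-1)/2 + 1/(4n)}\,N_{med}^{(n-1)/2 - ((n-1)/2 - 1/(4n))\cdot\mathrm{small}}$; a direct check shows the exponents line up as $N_{min}^{(n-1)/2+1/(4n)} N_3^{(n-1)/2 + 1/4}$, and since $N_3 \le N_{max}$, one absorbs this into the target bound with an extra gain $(N_3/N_{max})^{\text{small positive}}$, giving again a positive $\epsilon$.

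I do not anticipate a serious obstacle: the only thing to watch is the bookkeeping of exponents and verifying that each pair $(a,b)$ invoked is actually admissible in the sense required by Corollary \ref{cor - bilinear L2 estimates}(ii) (i.e.\ $a,b\ge 2$) and by Theorem \ref{thm - F controls strichartz and angular sum} (i.e.\ wave-admissible with $q>2$, not the endpoint $(2,\infty)$). In particular, the pair $(4n/(n-1), 2n/(n-1))$ satisfies $\tfrac{1}{q}+\tfrac{n-1}{2r}=\tfrac{n-1}{4}$ and $q=4n/(n-1)>2$ for $n=2,3$, so it is legitimate; and $(4,\infty)$ is admissible for $n=2,3$ as used in Case B. No null-frame information or $\mathcal{Y}^\pm$ components are needed, which explains why the hypothesis here is only $F^\pm_{N_j}$ rather than the full $G^\pm_{N_j}$.
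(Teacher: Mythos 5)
Your overall strategy coincides with the paper's: a H\"older split into a bilinear $L^a_t L^b_x$ factor on $u_1^\dagger u_2$ (from Corollary \ref{cor - bilinear L2 estimates}(ii)) and a Strichartz norm on $u_3$ (from Theorem \ref{thm - F controls strichartz and angular sum}), with the same dichotomy on whether $N_3 \approx N_{max}$, and you correctly observe that no $\mathcal{Y}^\pm$ or null-frame information is needed.

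However, your Case~2 has a genuine gap. When $N_3 \ll N_{max}$, you assert $\min\{N_1,N_2\}=N_{min}$ and $\max\{N_1,N_2\}=N_{max}=N_{med}$; both identifications are wrong in general. In the subcase $N_3=N_{min}$ we have $\{N_1,N_2\}=\{N_{med},N_{max}\}$, hence $\min\{N_1,N_2\}=N_{med}$. Corollary \ref{cor - bilinear L2 estimates}(ii) with $(a,b)=(\tfrac{4n}{2n-1},2)$ then gives the factor $N_{med}^{\frac{n-1}{2}+\frac{1}{4n}}\,N_{min}^{\frac{2n-1}{4}}$, which is strictly larger than the $N_{min}^{\frac{n-1}{2}+\frac{1}{4n}}\,N_3^{\frac{2n-1}{4}}$ you wrote; the bound you claim is not what the cited lemma provides. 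The repair is to keep $\min\{N_1,N_2\}$ generic and verify the two subcases $N_3\in\{N_{min},N_{med}\}$ separately: in each one the exponents of $N_{min}$, $N_{med}$, $N_{max}$ on both sides sum to the same total, the exponent of $N_{max}$ on the target is exactly $\tfrac{n+1}{4n}$, and the ratio against the target is $\les 1$ upon using $N_{min},N_{med}\les N_{max}$, giving $\epsilon=\tfrac{1}{4n}$. (A minor arithmetic slip in Case~1: Corollary \ref{cor - bilinear L2 estimates}(ii) with $(a,b)=(2,2n)$ gives $\max\{N_1,N_2\}^{\frac{n-1}{2}-\frac{n-1}{4n}}$, not $\max\{N_1,N_2\}^{\frac{n-1}{4}}$; your final rearranged bound is nonetheless correct.)
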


Combining the previous results, we deduce the following corollary.

\begin{corollary}\label{cor - cubic est}
  Let $s \g \frac{n-1}{2}$, $T>0$ and suppose $\pm$ and $\pm'$ are independent choices of signs. Assume $u_1 \in G^{\frac{n-1}{2}, \pm'}$, $u_2\in G^{\frac{n-1}{2}, \mp'}$, $u_3 \in G^{\frac{n-1}{2}, \mp}$. If we let
       $$ \Gamma=\| u_1 \|_{G^{\frac{n-1}{2}, \pm'}} +  \| u_2 \|_{G^{\frac{n-1}{2}, \mp'}} + \| u_3 \|_{G^{\frac{n-1}{2}, \mp}}$$
  then
    \begin{align*}
         \big\| \ind_{(-T, T)}(t) \big( u_1^\dagger u_2\big) u_3 \big\|_{(\mc{N} \cap \mc{Y})^{s, \pm}} \lesa \big( \| u_1 \|_{G^{s, \pm'}} +  \| u_2 \|_{G^{s, \mp'}} + \| u_3 \|_{G^{s, \mp}}\big) \Gamma^2
   \end{align*}
  where the implied constant is independent of $T$.
\begin{proof}
 As previously, we may assume that $\pm' = \pm = +$.  After dyadically decomposing $u_j$, an application of Theorems \ref{thm - trilinear estimate} and \ref{thm - control of Y norm} gives
    \begin{align*}\lambda^{\frac{n+1}{4n}}& \big\| \ind_{(-T, T)}(t) P_\lambda \big[\big( u_1^\dagger u_2\big) u_3 \big]\big\|_{\mc{N}^+_\lambda} + \big\| \ind_{(-T, T)}(t) P_\lambda \big[\big( u_1^\dagger u_2\big) u_3 \big]\big\|_{L^{\frac{4n}{3n-1}}_t L^2_x}\\
     &\lesa \sum_{ N_{max} \approx \max\{ \lambda, N_{med}\}} \big( N_{min} N_{med} \big)^\frac{n-1}{2} \Big( \frac{N_{min}}{N_{med}}\Big)^\epsilon (N_{max})^{\frac{n+1}{4n}}  \big\| P_{N_1} u_1 \big\|_{G^+_{N_1}} \big\| P_{N_2} u_2 \big\|_{G^-_{N_2}}  \big\| P_{N_3} u_3 \big\|_{G^-_{N_3}}.
    \end{align*}
  The required estimate now follows by summing up over $\lambda$, and exploiting the $\big( \tfrac{N_{min}}{N_{med}}\big)^\epsilon$ factor by using the inequality
        $$ \sum_{\substack{\lambda_1, \lambda_2 \in 2^\ZZ \\ \lambda_1 \les \lambda_2}} \big( \tfrac{\lambda_1}{\lambda_2}\big)^\epsilon a_{\lambda_1} b_{\lambda_2}  \lesa \bigg( \sum_{\lambda_1} (a_{\lambda_1})^2 \bigg)^\frac{1}{2} \bigg( \sum_{\lambda_2} (b_{\lambda_2})^2 \bigg)^\frac{1}{2}.$$
 In more detail, as we have now exploited all the structural properties of the product, we may assume that $N_1 \g N_2 \g N_3$. We consider separately the cases $N_1 \gg N_2$, and $N_1 \approx N_2$. In the former case, by summing up in $\lambda$ we obtain
 \begin{align*}
   \bigg(\sum_\lambda \lambda^{2 s} \Big( \big\| &\ind_{(-T, T)}(t) P_\lambda\big[\big( u_1^\dagger u_2\big) u_3 \big]\big\|_{\mc{N}^+_\lambda} + \lambda^{ - \frac{n+1}{4n}} \big\| \ind_{(-T, T)}(t) P_\lambda \big[\big( u_1^\dagger u_2\big) u_3 \big]\big\|_{L^{\frac{4n}{3n-1}}_t L^2_x}\Big)^2 \bigg)^\frac{1}{2} \\
   &\lesa \bigg( \sum_{\lambda} \bigg( \sum_{N_1 \approx \lambda, N_2 \g N_3} \big( N_{2} N_{3} \big)^\frac{n-1}{2} \Big( \frac{N_{3}}{N_{2}}\Big)^\epsilon (N_{1})^s  \big\| P_{N_1} u_1 \big\|_{G^+_{N_1}} \big\| P_{N_2} u_2 \big\|_{G^-_{N_2}}  \big\| P_{N_3} u_3 \big\|_{G^-_{N_3}}\bigg)^2 \bigg)^\frac{1}{2} \\
     &\lesa \sum_{N_2\g N_3}  \big( N_{2} N_{3} \big)^\frac{n-1}{2} \Big( \frac{N_{3}}{N_{2}}\Big)^\epsilon \big\| P_{N_2} u_2 \big\|_{G^-_{N_2}}  \big\| P_{N_3} u_3 \big\|_{G^-_{N_3}} \bigg( \sum_{N_1} (N_1)^{2s} \big\| P_{N_1} u_1 \big\|_{G^+_{N_1}}^2 \bigg)^\frac{1}{2} \\
     &\lesa \| u_1 \|_{G^{s, +}} \| u_2 \|_{G^{\frac{n-1}{2}, -} } \| u_3 \|_{G^{\frac{n-1}{2},-}}.
 \end{align*}
On the other hand, if $N_1 \approx N_2$, then as $s - \frac{n+1}{4n} > 0$, we deduce that
  \begin{align*}
   \bigg(\sum_\lambda \lambda^{2 s} \Big( \big\|& \ind_{(-T, T)}(t) P_\lambda\big[\big( u_1^\dagger u_2\big) u_3 \big]\big\|_{\mc{N}^+_\lambda} + \lambda^{ - \frac{n+1}{4n}} \big\| \ind_{(-T, T)}(t) P_\lambda \big[\big( u_1^\dagger u_2\big) u_3 \big]\big\|_{L^{\frac{4n}{3n-1}}_t L^2_x}\Big)^2 \bigg)^\frac{1}{2} \\
        &\lesa \sum_{ N_1 \approx N_2 \g N_3} \big( N_2 N_3 \big)^\frac{n-1}{2} \Big( \frac{N_3}{N_2}\Big)^\epsilon (N_1)^{\frac{n+1}{4n}}  \big\| P_{N_1} u_1 \big\|_{G^+_{N_1}} \big\| P_{N_2} u_2 \big\|_{G^-_{N_2}}  \big\| P_{N_3} u_3 \big\|_{G^-_{N_3}} \bigg( \sum_{\lambda \les N_1} \lambda^{2(s - \frac{n+1}{4})} \bigg)^\frac{1}{2} \\
        &\lesa \sum_{N_1 \approx N_2} \big( N_1\big)^s \big( N_2 \big)^\frac{n-1}{2} \big\| P_{N_1} u_1 \big\|_{G^+_{N_1}} \big\| P_{N_2} u_2 \big\|_{G^-_{N_2}} \sum_{N_3 \les N_2} \Big( \frac{N_3}{N_2} \Big)^\epsilon (N_3)^\frac{n-1}{2} \| P_{N_3} u_3 \|_{G^-_{N_3}} \\
        &\lesa \| u_1 \|_{G^{s, +}} \| u_2 \|_{G^{\frac{n-1}{2}, -} } \| u_3 \|_{G^{\frac{n-1}{2},-}}.
  \end{align*}
Therefore result follows.
\end{proof}
\end{corollary}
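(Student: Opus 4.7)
The plan is to deduce Corollary \ref{cor - cubic est} from the frequency-localized trilinear estimates in Theorems \ref{thm - trilinear estimate} and \ref{thm - control of Y norm} by a standard Littlewood-Paley decomposition and summation. First I would write $u_j = \sum_{N_j \in 2^\ZZ} P_{N_j} u_j$, and observe that because the spatial Fourier support of $u_1^\dagger u_2$ lies in $|\xi| \lesa \max\{N_1, N_2\}$, the output $P_\lambda[(u_1^\dagger u_2) u_3]$ receives nonzero contributions only from triples with $N_{max} \approx \max\{\lambda, N_{med}\}$. Combining the two theorems gives the schematic dyadic bound
\begin{equation*}
\lambda^{\frac{n+1}{4n}} \big\| P_\lambda[(u_1^\dagger u_2) u_3] \big\|_{\mc{N}^\pm_\lambda} + \big\| P_\lambda[(u_1^\dagger u_2) u_3] \big\|_{L^{\frac{4n}{3n-1}}_t L^2_x} \lesa (N_{min} N_{med})^{\frac{n-1}{2}} \Big(\tfrac{N_{min}}{N_{med}}\Big)^{\!\epsilon} (N_{max})^{\frac{n+1}{4n}} \prod_j \|P_{N_j} u_j\|_{G^\pm_{N_j}},
\end{equation*}
which already has the correct scaling to fit the $(\mc{N}\cap\mc{Y})^{s,\pm}$ norm after multiplying by $\lambda^s$.

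The next step is to sum these dyadic pieces, weighted by $\lambda^{2s}$, in $\ell^2_\lambda$. By symmetry I would reduce to $N_1 \g N_2 \g N_3$ and separate the low-high case $N_1 \gg N_2$ from the high-high case $N_1 \approx N_2$. In the first case the output frequency is pinned to $\lambda \approx N_1$, so the full weight $\lambda^s$ can be absorbed by $\|P_{N_1} u_1\|_{G^+_{N_1}}$, yielding $\|u_1\|_{G^{s,\pm'}}$ upon taking $\ell^2$ in $N_1$. The remaining double sum in $N_2 \g N_3$ is a Schur-type estimate
\begin{equation*}
\sum_{N_3 \les N_2} (N_2 N_3)^{\frac{n-1}{2}} \Big(\tfrac{N_3}{N_2}\Big)^{\!\epsilon} a_{N_2} b_{N_3} \lesa \Big(\sum_{N_2} N_2^{n-1} a_{N_2}^2 \Big)^{\!1/2} \Big(\sum_{N_3} N_3^{n-1} b_{N_3}^2 \Big)^{\!1/2},
\end{equation*}
which produces $\|u_2\|_{G^{\frac{n-1}{2},\mp'}}\|u_3\|_{G^{\frac{n-1}{2},\mp}}$. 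In the high-high case $N_1\approx N_2$, the output can range over $\lambda \lesa N_1$, and we pick up an additional geometric sum $\sum_{\lambda \les N_1}\lambda^{2s - \frac{n+1}{2n}}$, which is finite precisely because $s \g \tfrac{n-1}{2} > \tfrac{n+1}{4n}$ for $n = 2, 3$. This yields an extra factor $N_1^{s - \frac{n+1}{4n}}$ that combines with the remaining weight on $P_{N_1}u_1$.

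The main subtlety I expect is the bookkeeping needed to ensure that the right-hand side ends up with the asymmetric form $(\|u_1\|_{G^{s,\pm'}} + \|u_2\|_{G^{s,\mp'}} + \|u_3\|_{G^{s,\mp}})\Gamma^2$ rather than the fully symmetric $\Gamma^3$. The key point is that the $\epsilon$-gain $(N_{min}/N_{med})^\epsilon$ supplied by Theorems \ref{thm - trilinear estimate} and \ref{thm - control of Y norm} is exactly what allows a Besov-type $\ell^1$ summation at the critical exponent $\tfrac{n-1}{2}$ on two of the three factors while retaining the $\ell^2$ sum at level $s$ on the third; absent this gain, one would be forced into the strictly smaller space $\dot{B}^{\frac{n-1}{2}}_{2,1}$, as the introduction already remarks. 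Running through the three subcases according to which of $N_1, N_2, N_3$ is largest, and applying the Schur-type lemma in each, then closes the estimate.
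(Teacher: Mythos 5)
Your proposal follows the paper's proof essentially step for step: dyadically decompose, apply Theorems \ref{thm - trilinear estimate} and \ref{thm - control of Y norm}, reduce to $N_1 \g N_2 \g N_3$, split into $N_1 \gg N_2$ and $N_1 \approx N_2$, and use the Schur-type inequality from the $\epsilon$-gain plus the convergent geometric sum in $\lambda$ (you correctly write the exponent as $2s - \tfrac{n+1}{2n}$, fixing a small typo in the paper's $\lambda^{2(s-\frac{n+1}{4})}$). This matches the paper's argument, and your explanation of why the $\epsilon$-gain is needed to avoid falling back to $\dot B^{\frac{n-1}{2}}_{2,1}$ is also the paper's own remark.
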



\section{Proof of Global Well-posedness}\label{sec - proof of GWP}

The proof of global existence and scattering follows from a more or less standard argument from the energy type inequality in Theorem \ref{thm - energy inequality + invariance under cutoffs}, together with the crucial trilinear estimate in Corollary \ref{cor - cubic est}. We begin by considering the smooth case.

\begin{theorem}\label{thm - gwp for smooth data}
Let $n=2, 3$, $m=0$, and $s\g \frac{n-1}{2}$. Let $\rho \in C^\infty_0(\RR)$.  There exists $\epsilon>0$ such that if $f, g \in C^\infty_0(\RR^n)$ with
        \begin{equation}\label{eqn - thm gwp smooth data - smallness assump} \| f \|_{\dot{H}^{\frac{n-1}{2}}\cap \dot{H}^s} + \| g \|_{\dot{H}^\frac{n-1}{2}\cap \dot{H}^s} < \epsilon \end{equation}
then we have a global solution $(u, v) \in C^\infty( \RR^{1+n})$ to (\ref{eqn - general form of eqn}) such that $(u, v)(0) = (f, g)$ and
        $$ \| u \|_{L^\infty_t \dot{H}^s} + \| v \|_{L^\infty_t \dot{H}^s} \lesa \| f \|_{\dot{H}^s} + \| g \|_{\dot{H}^s}.$$
Moreover, if $f', g' \in C^\infty_0(\RR^n)$ also satisfies (\ref{eqn - thm gwp smooth data - smallness assump}) and $(u', v')$ denotes the corresponding solution to (\ref{eqn - general form of eqn}) with $(u', v')(0) = (f', g')$, then we have the Lipschitz bound
        \begin{align*} \sup_{T>0} \Big(\big\| \rho(\tfrac{t}{T}) \big(  u - u'\big)  \big\|_{F^{s, +}} + \big\| \rho(\tfrac{t}{T}) \big( v - v'\big) \big\|_{F^{s, -}} \Big) \lesa  \| f - f'\|_{\dot{H}^\frac{n-1}{2} \cap \dot{H}^s} + \| g - g' \|_{\dot{H}^\frac{n-1}{2} \cap \dot{H}^s}. \end{align*}
\begin{proof}
 Let $(f, g) \in C^\infty_0(\RR^n)$ satisfy (\ref{eqn - thm gwp smooth data - smallness assump}).  A standard fixed point argument in\footnote{Alternatively we could appeal to work of Pecher \cite{Pecher2013a, Pecher2015} in the $n=2$ case, and Escobedo-Vega \cite{Escobedo1997} in the $n=3$ case, although these results do not \emph{directly} deal with the system (\ref{eqn - general form of eqn}).} $L^\infty_t H^N$ with $N >\frac{n}{2}$ shows that there exists $T^*>0$ and a smooth solution $(u, v) \in C\big( (-T^*, T^*), H^N (\RR^n) \big)$ with $(u, v)(0) = (f, g)$. Let $T < T^*$ and define $(u_T, v_T)$ as the solution to
         \begin{equation*} \begin{split}
      (\p_t + \sigma \cdot \nabla) u_T &=  \ind_{(-T, T)} \big( B_1(u, v) v + B_2(u, v) \beta u \big)  \\
      (\p_t - \sigma \cdot \nabla) v_T &=  \ind_{(-T, T)} \big( B_3(u, v) u + B_4(u, v) \beta v  \big)
    \end{split}
    \end{equation*}
with $\big( u_T(0), v_T(0) \big) = (f, g)$. Note that $(u_T, v_T)$ is the extension of $(u, v)$ from $(-T, T)\times \RR^n$ to $\RR^{1+n}$ by a linear solution, in particular, we have $(u_T, v_T) = (u, v)$ on $(-T, T) \times \RR^n$. Define
        $$a_s(T) = \| u_T \|_{G^{s, +}} + \| v_T \|_{G^{s, -}}.$$
The bound
    $$ \| F \|_{(\mc{N} \cap \mc{Y})^{s, \pm}} \lesa \big\| F \big\|_{L^1_t \dot{H}^s_x} + \big\| F \big\|_{L^\frac{4n}{3n-1}_t \dot{H}^{s - \frac{n+1}{4n}}_x}$$
together with the equation for $(u_T, v_T)$, implies that for $T, T' \les T^*$
        $$ \big| a_s(T) - a_s(T') \big| \lesa_{u, v, T^*} |T - T'|^{\frac{3n-1}{4n}}.$$
In particular, $a_s(T)$ is a continuous function of $T$. Moreover, an application of Corollary \ref{cor - cubic est} gives
        \begin{equation} \label{eqn - thm gwp for smooth data - a cubic est} a_s(T) \les \| f \|_{\dot{H}^s} + \| g \|_{\dot{H}^s} +  C \big( a_\frac{n-1}{2}(T) \big)^2 a_{s}(T). \end{equation}
Thus as we clearly have $a_s(0) \les \|f \|_{\dot{H}^s} + \| g \|_{\dot{H}^s}$, a continuity argument shows that provided $\epsilon>0$ is sufficiently small (independent of $T$ and $T^*$) we have for every $T< T^*$
        \begin{equation}\label{eqn - thm gwp for smooth data - a est} a_s(T) \les 2 \|f \|_{\dot{H}^s} + \| g \|_{\dot{H}^s}.\end{equation}
Hence we have the bound
        $$ \| u \|_{L^\infty_t \dot{H}^s( (-T^*, T^*) \times \RR^n)} + \| v\|_{L^\infty_t \dot{H}^s( (-T^*, T^*) \times \RR^n)} \les \sup_{T < T^*} a_s(T) \les 2 \big( \| f \|_{\dot{H}^s} + \| g \|_{\dot{H}^s}\big).$$
If we apply this with $s>\frac{n}{2}$, then the classical local existence theory shows that the solution $(u, v)$ exists globally in time, i.e we may take $T^* = \infty$.

It only remains to show the Lipschitz bound. To this end, let $f', g' \in C^\infty_0(\RR^n)$ satisfy (\ref{eqn - thm gwp smooth data - smallness assump}) and let $(u', v')$ denote the corresponding solution. Another application of the cubic estimate in Corollary \ref{cor - cubic est} together with the bound (\ref{eqn - thm gwp for smooth data - a est}) shows that for any $T<\infty$
    $$ \| u_T - u'_T \|_{G^{\frac{n-1}{2}, +}} + \| v_T - v'_T \|_{G^{\frac{n-1}{2}, -}} \les \| f - f'\|_{\dot{H}^\frac{n-1}{2}} + \| g - g' \|_{\dot{H}^{\frac{n-1}{2}}} +  C \epsilon^2 \Big( \| u_T - u'_T \|_{G^{\frac{n-1}{2}, +}} + \| v_T - v'_T \|_{G^{\frac{n-1}{2}, -}}\Big). $$
Hence as $\epsilon>0$ is small, for any $T>0$ we obtain the Lipschitz  bound
    $$ \| u_T - u'_T \|_{G^{\frac{n-1}{2}, +}} + \| v_T - v'_T \|_{G^{\frac{n-1}{2}, -}} \les 2 \Big( \| f - f'\|_{\dot{H}^\frac{n-1}{2}} + \| g - g' \|_{\dot{H}^{\frac{n-1}{2}}} \Big). $$
Similarly, for higher regularities $s> \frac{n-1}{2}$, we can use a similar argument to show that
    \begin{align*}
      \| u_T - u'_T \|_{G^{s, +}} &+ \| v_T - v'_T \|_{G^{s, -}}\\
       &\lesa \| f  - f'\|_{\dot{H}^s} + \| g - g'\|_{\dot{H}^s} + C \epsilon^2 \Big( \| u_T - u'_T \|_{G^{s, +}} + \| v_T - v'_T \|_{G^{s, -}} \\
       & \qquad \qquad \qquad \qquad \qquad \qquad \qquad\qquad \qquad+ \| u_T - u'_T \|_{G^{\frac{n-1}{2}, +}} + \| v_T - v'_T \|_{G^{\frac{n-1}{2}, -}}\Big) \\
      &\lesa \| f  - f'\|_{\dot{H}^\frac{n-1}{2} \cap \dot{H}^s} + \| g - g'\|_{\dot{H}^\frac{n-1}{2} \cap \dot{H}^s} + C \epsilon^2 \Big( \| u_T - u'_T \|_{G^{s, +}} + \| v_T - v'_T \|_{G^{s, -}}\Big)
    \end{align*}
and hence
    \begin{equation}\label{eqn - thm gwp for uv - lipschitz est with Gs}
    \| u_T - u'_T \|_{G^{s, +}} + \| v_T - v'_T \|_{G^{s, -}} \les 2 \Big( \| f - f'\|_{\dot{H}^\frac{n-1}{2} \cap \dot{H}^s} + \| g - g' \|_{\dot{H}^\frac{n-1}{2} \cap \dot{H}^s} \Big).\end{equation}
Let $\rho \in C^\infty_0(\RR)$ and note that provided we choose $\delta$ sufficiently large,
        $$ \rho(\tfrac{t}{T}) (u, v) = \rho(\tfrac{t}{T}) (u_{\delta T}, v_{\delta T}). $$
Therefore by (ii) in Theorem \ref{thm - energy inequality + invariance under cutoffs} we have
   \begin{align*} \big\| \rho(\tfrac{t}{T}) \big( u - u'\big)  \big\|_{F^{s, +}} &+ \big\| \rho(\tfrac{t}{T}) \big( v - v'\big) \big\|_{F^{s, -}}\\
    &= \big\| \rho(\tfrac{t}{T}) \big( u_{\delta T} - u'_{\delta T} \big) \big\|_{F^{s, +}} + \big\| \rho(\tfrac{t}{T}) \big( v_{\delta T} - v'_{\delta T} \big) \big\|_{F^{s, -}} \\
            &\lesa \big\|  u_{\delta T} - u'_{\delta T} \big\|_{F^{s, +}} + \big\|  v_{\delta T} - v'_{\delta T} \big\|_{F^{s, -}}\\
            &\lesa \big\|  u_{\delta T} - u'_{\delta T} \big\|_{G^{s, +}} + \big\|  v_{\delta T} - v'_{\delta T} \big\|_{G^{s, -}}.
   \end{align*}
Thus the required Lipschitz bound follows from (\ref{eqn - thm gwp for uv - lipschitz est with Gs}) and noting that all constants are independent of $T>0$.
\end{proof}
\end{theorem}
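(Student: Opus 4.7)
The plan is to use a standard iteration-plus-bootstrap strategy, with the trilinear estimate from Corollary~\ref{cor - cubic est} providing the nonlinear control and the energy inequality from Theorem~\ref{thm - energy inequality + invariance under cutoffs} handling the linear part. First, since the data are smooth and compactly supported, classical local well-posedness in $H^N$ for $N>\tfrac{n}{2}$ gives a smooth solution $(u,v)\in C((-T^*,T^*),H^N)$ on some maximal interval. The game is then to show that the critical norm $\|(u,v)\|_{G^{(n-1)/2,\pm}}$ restricted to $(-T,T)$ remains bounded by a fixed multiple of $\epsilon$ for every $T<T^*$; once this is established, higher-regularity bounds follow by the same mechanism and the usual continuation criterion forces $T^*=\infty$.

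The technical device is to freeze the time interval by defining $(u_T,v_T)$ as the solution to the modified system obtained by multiplying the cubic nonlinearity by $\ind_{(-T,T)}$, with the same data at $t=0$. By construction $(u_T,v_T)$ agrees with $(u,v)$ on $(-T,T)$ and is defined on all of $\RR^{1+n}$, so its global spacetime norm makes sense. Set
\[
a_s(T) = \|u_T\|_{G^{s,+}} + \|v_T\|_{G^{s,-}}.
\]
Combining the energy inequality with Corollary~\ref{cor - cubic est} applied to the truncated nonlinearity (uniformly in $T$) yields
\[
a_s(T) \les \|f\|_{\dot{H}^s} + \|g\|_{\dot{H}^s} + C\,a_{(n-1)/2}(T)^2\, a_s(T).
\]
Used at the critical regularity $s=\tfrac{n-1}{2}$ this gives $a_{(n-1)/2}(T) \les C\epsilon + C\epsilon^2 a_{(n-1)/2}(T)$ as soon as we know $a_{(n-1)/2}$ depends continuously on $T$ and is small at $T=0$. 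A continuity bootstrap then promotes this to a uniform bound $a_{(n-1)/2}(T) \les 2C\epsilon$ for all $T<T^*$. Feeding this back at regularity $s$ closes the $\dot{H}^s$ estimate.

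For the Lipschitz bound, one differences two solutions $(u,v),(u',v')$. The trilinear form $T(\cdot,\cdot,\cdot)$ hidden in the nonlinearity satisfies $T(u_1,u_2,u_3)-T(u'_1,u'_2,u'_3)$ equals a sum of three terms, each with a difference $u_j-u'_j$ in one slot and entries of size $\les\epsilon$ in the other two. Corollary~\ref{cor - cubic est} then gives
\[
\|(u_T-u'_T,v_T-v'_T)\|_{G^{s,+}\times G^{s,-}} \les \|(f-f',g-g')\|_{\dot{H}^{(n-1)/2}\cap\dot{H}^s} + C\epsilon^2\,\|(u_T-u'_T,v_T-v'_T)\|_{G^{s,+}\times G^{s,-}},
\]
so that smallness absorbs the right-hand term. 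Passing from $G^{s,\pm}$ to the cutoff norm $\|\rho(t/T)\,\cdot\,\|_{F^{s,\pm}}$ uses part~(ii) of Theorem~\ref{thm - energy inequality + invariance under cutoffs} together with the observation that for $\delta$ large enough, $\rho(t/T)(u-u')=\rho(t/T)(u_{\delta T}-u'_{\delta T})$, after which the already-proved $G^{s,\pm}$ Lipschitz bound applies directly. The constants are clearly independent of $T$.

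The main obstacle I anticipate is verifying the continuity of $T\mapsto a_s(T)$ required to run the bootstrap. Unlike the $L^\infty_t H^N$ norm of the local classical solution, the quantity $a_s(T)$ is a global-in-time spacetime norm of an object whose \emph{equation} changes with $T$, so continuity is not evident from smoothness of $(u,v)$ alone. The remedy should be the crude bound
\[
\|F\|_{(\mc{N}\cap\mc{Y})^{s,\pm}} \les \|F\|_{L^1_t \dot{H}^s_x} + \|F\|_{L^{4n/(3n-1)}_t \dot{H}^{s-(n+1)/(4n)}_x},
\]
applied to the difference of nonlinearities $\ind_{(-T,T)}-\ind_{(-T',T')}$ acting on the (fixed, smooth) classical solution: this gives Hölder continuity of $a_s$ in $T$ with exponent $(3n-1)/(4n)$, the prefactor depending qualitatively on $(u,v)$ and $T^*$ but not entering the smallness threshold for $\epsilon$. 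Once continuity is in hand the remainder is bookkeeping, and the argument closes.
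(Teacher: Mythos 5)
Your proposal is correct and follows essentially the same strategy as the paper: truncating the nonlinearity to define $(u_T,v_T)$, establishing H\"older continuity of $a_s(T)$ via the crude $L^1_t\dot H^s + L^{4n/(3n-1)}_t\dot H^{s-(n+1)/(4n)}$ bound, running the continuity bootstrap off Corollary~\ref{cor - cubic est}, differencing solutions for the Lipschitz estimate, and passing to the cutoff $F^{s,\pm}$ norm via $\rho(t/T)(u-u') = \rho(t/T)(u_{\delta T}-u'_{\delta T})$ and part~(ii) of Theorem~\ref{thm - energy inequality + invariance under cutoffs}. The obstacle you flagged (continuity of $T\mapsto a_s(T)$) is exactly the one the paper addresses, and your remedy is the paper's.
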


The proof of Theorem \ref{thm - main thm with u, v} is now straightforward.

\begin{proof}[Proof of Theorem \ref{thm - main thm with u, v}]
Let $s \g \frac{n-1}{2}$ with $f, g \in \dot{H}^\frac{n-1}{2} \cap \dot{H}^s$ and
        $$ \| f \|_{\dot{H}^\frac{n-1}{2}} + \| g \|_{\dot{H}^\frac{n-1}{2}} < \epsilon$$
where $\epsilon$ is the constant in Theorem \ref{thm - gwp for smooth data}. By rescaling, we may assume that (\ref{eqn - thm gwp smooth data - smallness assump}) holds (if $s= \frac{n-1}{2}$ this is already true, if $s>\frac{n-1}{2}$ then the $\dot{H}^s$ norm is subcritical so we can rescale it to be small without changing the size of the data in $\dot{H}^\frac{n-1}{2}$). Choose a sequence $f_j, g_j \in C^\infty_0(\RR^n)$ satisfying (\ref{eqn - thm gwp smooth data - smallness assump}) such that $(f_j, g_j) \rightarrow (f, g)$ in $ \dot{H}^\frac{n-1}{2}\cap \dot{H}^s$ and let $(u_j, v_j)$ denote the corresponding solution given by Theorem \ref{thm - gwp for smooth data}. Let $\rho \in C^\infty_0(\RR)$ with $\rho = 1 $ on $[-1, 1]$. Then as
        $$ \| \phi \|_{L^\infty_t \dot{H}^b}\les \sup_{ T>0}  \big\| \rho( \tfrac{t}{T}) \phi \big\|_{L^\infty_t H^b_x} \les \sup_T \big\| \rho(\tfrac{t}{T}) \phi \big\|_{F^{b, \pm}}$$
the Lipschitz bound in Theorem \ref{thm - gwp for smooth data} shows that $(u_j, v_j)$ is a Cauchy sequence in $L^\infty_t \dot{H}^\frac{n-1}{2} \cap L^\infty_t \dot{H}^s$ and hence converges to a solution $(u, v) \in C( \RR, \dot{H}^\frac{n-1}{2} \cap \dot{H}^s)$. Moreover, for every $T>0$,  $\rho(\tfrac{t}{T}) (u_j, v_j)$ forms a Cauchy sequence in $F^{b, +} \times F^{b, -}$ for $b= \frac{n-1}{2}, s$. Consequently we must have  $\rho(\tfrac{t}{T}) (u, v) \in F^{b, +} \times F^{b, -}$ with
        $$ \sup_{T > 0} \Big( \big\| \rho(\tfrac{t}{T}) u \big\|_{F^{b, -}} +  \big\| \rho( \tfrac{t}{T}) v \big\|_{F^{b, -}} \Big) \lesa \| f \|_{\dot{H}^b} + \| g \|_{\dot{H}^b}.$$
Therefore, by (iii) in Theorem \ref{thm - energy inequality + invariance under cutoffs}, we see that $(u, v)$ scatters to a homogeneous solution in $\dot{H}^\frac{n-1}{2} \cap \dot{H}^s$ as required. Thus Theorem \ref{thm - gwp for smooth data} follows.
\end{proof}

\begin{remark}\label{rem on positive mass}
  If we have positive mass $m>0$, then we can prove local existence up to times $T \ll m^{-1}$ essentially by just treating the mass term as an additional perturbation. To see this, note that by rescaling, we may assume that $m=1$. Then instead of (\ref{eqn - thm gwp for smooth data - a cubic est}) we would have
    $$ a_s(T) \les \| f \|_{\dot{H}^s} + \| g \|_{\dot{H}^s} + C \Big(  T \big(\| u \|_{L^\infty_t \dot{H}^s} + \| v \|_{L^\infty_t \dot{H}^s}\big) + T^{\frac{3n-1}{4n}} \big( \| u \|_{L^\infty_t \dot{H}^{s - \frac{n+1}{4n}}} + \| v \|_{L^\infty_t \dot{H}^{s - \frac{n+1}{4n}}} \big) +  \epsilon^2 a_s(T) \Big) .$$
  If we now note that
     $$ \| \phi \|_{\dot{H}^{s - \frac{n+1}{4n}}} \lesa \| \phi \|_{L^2_x} + \| \phi \|_{\dot{H}^s} $$
  and use the fact that the charge (i.e. the $L^2_x$ norm) is conserved\footnote{Strictly speaking, the charge is not necessarily conserved for the general system as written in (\ref{eqn - general form of eqn}). However, charge \emph{is} conserved for the original system (\ref{eqn - NLD eqn}). Thus for the versions of (\ref{eqn - general form of eqn}) we are interested in, the charge is certainly conserved. }, then provided $T \ll 1$ we obtain
    $$ a_s(T) \les 2 \big( \| f \|_{\dot{H}^s} + \| g \|_{\dot{H}^s} \big) + \| f \|_{L^2_x} + \| g \|_{L^2_x}.$$
  We can follow a similar minor modification of the remainder of Theorem \ref{thm - gwp for smooth data} to deduce an equivalent result with the restriction $T \ll 1$, which after undoing the scaling, corresponds to $T \ll m^{-1}$.
\end{remark}



\section{Null Frame Bounds}\label{sec - proof of null frame bounds}

The proof of the null frame bounds is based on a transference type argument to reduce to the linear case. For the $L^1_t L^2_x$ and $\dot{\mc{X}}^{ - \frac{1}{2}, 1}$ components of our iteration space, this is not so difficult. On the other hand the $NF^+_\lambda$ case is more challenging and requires some theory on how the Dirac equation behaves in null coordinates. In particular we rely on a version of the Duhamel formula for the Dirac equation in null coordinates. The results in this section are based on related arguments in the work of Tataru \cite{Tataru2001} and  Tao \cite{Tao2001a}.

\subsection{Preliminary Results on Null Frames}\label{subsec - prelim results on null frames}

We start with a number of results on the geometry of null frames. These results are more or less implicit in \cite{Tataru2001, Tao2001a}, but the readers convenience, we include the statements and proofs here. \\

For a set $A \subset \RR^{n+1}$ we define $\mc{P}r_\omega ( A) = A + (1, \omega) \RR$ to be the projection along the null direction $(1, \omega)$. Note that the projected sets $\mc{P}r_\omega (A)$ depend only on the $\xi_\omega$ coordinate. More precisely, since $\ind_{\mc{P}r_\omega(A)}(\tau, \xi) = \ind_{\mc{P}r_\omega(A)}(0, \xi - \tau \omega)$ we have
        \begin{equation}
          \label{eqn - ind functions and null projections} \big( \ind_{\mc{P}r_\omega(A)}\big)^*(\tau_\omega, \xi_\omega) = \ind_{\mc{P}r_\omega(A)}(0, \xi_\omega).
        \end{equation}
Moreover, we have the following.

\begin{lemma}[Geometric Properties of Null Projections]\label{lem - geom prop}
\leavevmode
\begin{enumerate}
\item Let $\alpha, \beta \ll 1$, $\kappa \in \mc{C}_\alpha$, $\bar{\kappa} \in \mc{C}_\beta$, and\footnote{An important point here is that we can take \emph{any} $\bar{\kappa} \in \mc{C}_\beta$, there is no condition needed on $\omega$ with respect to $\bar{\kappa}$.} $\omega \not \in 2 \kappa$. Then
         $$ \{ |\tau| = |\xi| \} \bigcap \mc{P}r_\omega\big[ {^\natural A}_{\alpha, \lambda}(\kappa) \cap {^\natural A}_{\beta, \lambda}(\bar{\kappa})\big] \,\subset\, \Big[ {^\natural A}_\lambda(\tfrac{3}{2} \kappa)\cap {^\natural A}_\lambda(\tfrac{3}{2} \bar{\kappa})\Big].$$
\item Let $\alpha \ll 1$ and $ \omega \in \sph^{n-1}$.  Let $J \subset \mc{C}_\alpha$ be a collection of caps with $\omega \not \in 2 \kappa$ for every $\kappa \in J$. Then  the sets $\Omega_\alpha(\kappa)=\mc{P}r_\omega \big[{^\natural A}_{\alpha, \lambda}(\kappa)\big]$ have finite overlap in the sense that
        $$ \sum_{\kappa \in J} \ind_{\Omega_\alpha(\kappa)}(t, x) \lesa 1.$$
\end{enumerate}

\begin{proof}\textbf{(i):} Let $(\tau, \xi ) \in \{|\tau| = |\xi| \} \cap \mc{P}r_\omega\big[{^\natural A}_{\alpha, \lambda}(\kappa)\cap {^\natural A}_{\beta, \lambda}(\bar{\kappa})\big]$, in other words there exists $a \in \RR$ such that
    $$(\tau, \xi) - (a, a \omega) \in {^\natural A}_{\alpha, \lambda}(\kappa)\cap{^\natural A}_{\beta, \lambda}(\bar{\kappa}), \qquad |\tau| = |\xi|.$$
Let $\xi^\bot$ denote the component of $\xi$ orthogonal to $\omega$. Then since
    $$|\xi^\bot|^2 + (\tau - \xi \cdot \omega)^2 = |\xi|^2 + \tau^2 - 2 \tau \xi \cdot \omega = 2\sgn(\tau) |\xi| ( \tau - \xi \cdot \omega)$$
together with the fact that $|\xi^\bot| = |(\xi - a \omega)^\bot|  \approx \lambda \theta(\omega, \kappa) \theta(\omega, -\kappa)$ and the estimate
    \begin{equation}\label{eqn - lem geom prop - phi est} \tau  - \xi \cdot \omega = \tau - a - \sgn(\tau - a) |\xi - a \omega| + \sgn(\tau - a)|\xi - a \omega| - \omega \cdot (\xi - a \omega) \approx \lambda \theta(\omega, \kappa)^2\end{equation}
we obtain $|\xi| \approx \lambda$.

It remains to show that $\sgn(\tau) \frac{\xi}{|\xi|} \in \tfrac{3}{2}\kappa \cap \tfrac{3}{2} \bar{\kappa}$. To this end,  by noting that $(\tau - a)^2 - |\xi - a\omega|^2  = - 2 a (\tau - \xi\cdot \omega)$ we get
         $$|a| = \frac{ \big| |\tau - a|^2 - |\xi - a \omega|^2 \big|}{ 2|\tau - \xi \cdot \omega|} \approx \frac{\big| |\tau - a | - |\xi - a \omega| \big|}{\theta(\omega, \kappa)^2} $$
Consequently, using the assumption $\tau = \sgn(\tau) |\xi|$, we have
    \begin{align}
       \left| \sgn(\tau) \frac{\xi}{|\xi|} - \sgn(\tau - a \omega) \frac{\xi - a \omega}{|\xi - a \omega}\right|^2 &=  2 -2 \frac{\sgn(\tau) \xi}{|\xi|} \cdot \frac{\sgn(\tau - a \omega) (\xi-a \omega)}{|\xi - a \omega|} \notag\\
      &= \frac{2}{\tau} \big( \tau - a  - \sgn(\tau - a \omega) |\xi - a \omega| \big) + \frac{2a}{\tau} \left( 1 - \omega \cdot \frac{\xi - a \omega}{|\xi - a \omega|}\right) \notag\\
      &\lesa \frac{ \big| |\tau - a| - |\xi - a \omega| \big|}{\lambda} \les \frac{1}{100} (\min\{ \alpha, \beta\})^2 \label{eqn - lem geom prop - difference in angle xi and xi a}
    \end{align}
(provided we choose the close cone constant in the definition of ${^\natural A}_{\alpha, \lambda}(\kappa)$ to be sufficiently small). Let $\omega'$ denote the centre of $\kappa$. Since $\kappa \in \mc{C}_\alpha$, we have $\theta\big( \sgn(\tau-a) (\xi- a \omega), \omega'\big) \les \frac{101}{100}\alpha$. Thus from (\ref{eqn - lem geom prop - difference in angle xi and xi a}) we deduce that

    \begin{align*}
        \Big| \sgn(\tau) \frac{\xi}{|\xi|} - \omega'\Big| &\les \Big| \sgn(\tau) \frac{\xi}{|\xi|} - \sgn(\tau - a \omega) \frac{\xi - a \omega}{|\xi - a \omega}\Big| + \Big| \sgn(\tau - a \omega) \frac{\xi - a \omega}{|\xi - a \omega|}- \omega'\Big| \\
        &\les \frac{1}{10}\min\{ \alpha, \beta\} + \frac{101}{100} \alpha\les \frac{6}{5} \alpha.
    \end{align*}
  Therefore, using (\ref{eqn - sharp angle est}) we obtain
    $$ \theta\big( \sgn(\tau) \xi, \omega'\big) \les \frac{50}{49}  \Big( \frac{6}{5} \alpha\Big) = \frac{60}{49} \alpha < \frac{3}{2} \alpha$$
  and so $ \sgn(\tau) \xi  \in \frac{3}{2} \kappa$.  Similarly, since $(\tau, \xi) - a(1, \omega) \in {^\natural A}_\lambda(\bar{\kappa})$, then using $\bar{\omega}$ to denote the centre of $\bar{\kappa}$, we get
    \begin{align*} \theta\big(\sgn(\tau) \xi, \bar{\omega}\big) &\les \frac{50}{49} \Big| \sgn(\tau) \frac{\xi}{|\xi|} - \sgn(\tau - a \omega) \frac{\xi - a \omega}{|\xi - a \omega}\Big| + \frac{50}{49} \Big| \sgn(\tau - a \omega) \frac{\xi - a \omega}{|\xi - a \omega|}- \bar{\omega}\Big|\\
      &\les \frac{50}{49} \Big( \frac{1}{10} \min\{\alpha, \beta\} + \frac{101}{100} \beta\Big) \les \frac{3}{2} \beta
    \end{align*}
  as required.

\textbf{(ii):} It is enough to show that if $\kappa_1, \kappa_2 \in J$ with $\Omega_\alpha(\kappa_1) \cap \Omega_\alpha(\kappa_2) \not = \varnothing$, then $\theta(\kappa_1, \kappa_2) \lesa \alpha$. As if this holds, then the result follows by using the bounded overlap of the collection $\mc{C}_\alpha$. So suppose $(\tau, \xi ) \in \Omega_\alpha(\kappa_1) \cap \Omega_\alpha(\kappa_2)$. Note that for every $a \in \RR$
        $$(\tau, \xi ) \in \Omega_\alpha(\kappa_1) \cap \Omega_\alpha(\kappa_2) \qquad \Longleftrightarrow \qquad (\tau, \xi ) + a(1, \omega) \in \Omega_\alpha(\kappa_1) \cap \Omega_\alpha(\kappa_2)$$
   In particular, since $\tau - \omega \cdot \xi \not = 0$ by\footnote{Note that the derivation of (\ref{eqn - lem geom prop - phi est}) did not make use of the assumption $|\tau| = |\xi|$.} (\ref{eqn - lem geom prop - phi est}), we may take $a=\frac{ \tau^2 - |\xi|^2}{2 (\tau - \omega \cdot \xi)}$ and consequently
            $$ (\tau, \xi) + a(1, \omega) \in \Omega_\alpha(\kappa_1) \cap \Omega_\alpha(\kappa_2) \cap \{|\tau| = |\xi|\}.$$
   Therefore, by the first half of the lemma, we must have ${^\natural A}_\lambda(\tfrac{3}{2}\kappa_1) \cap {^\natural A}_{\lambda}(\tfrac{3}{2}\kappa_2) \not = \varnothing$, which by the finite overlap of the collection $\mc{C}_\alpha$ implies that $\theta(\kappa_1, \kappa_2) \lesa \alpha$ as required.

\end{proof}
\end{lemma}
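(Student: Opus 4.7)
The proof of (i) reduces to a careful bookkeeping exercise once we establish a sharp lower bound on $|\tau - \omega \cdot \xi|$. First I would pick $(\tau, \xi)$ in the left-hand intersection, so there is some $a \in \RR$ with $(\tau - a, \xi - a\omega) \in {^\natural A}_{\alpha, \lambda}(\kappa) \cap {^\natural A}_{\beta, \lambda}(\bar{\kappa})$. The algebraic identity $(\tau - a)^2 - |\xi - a\omega|^2 = (\tau^2 - |\xi|^2) - 2a(\tau - \omega \cdot \xi)$, combined with the defining relation $|\tau| = |\xi|$, yields $|a| \approx \big| |\tau - a| - |\xi - a\omega| \big|/|\tau - \omega \cdot \xi|$, so everything hinges on bounding the denominator from below.

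To get this lower bound I would rewrite $\tau - \omega \cdot \xi = (\tau - a) - \omega\cdot(\xi - a\omega)$. Setting $\hat\eta = (\xi - a\omega)/|\xi - a\omega|$, the close-cone condition gives $\tau - a = \sgn(\tau - a)|\xi - a\omega| + O(\alpha^2\lambda)$, so $\tau - \omega \cdot \xi = |\xi - a\omega|\bigl(\sgn(\tau - a) - \omega\cdot\hat\eta\bigr) + O(\alpha^2\lambda)$. The membership $\sgn(\tau - a)\hat\eta \in \tfrac{101}{100}\kappa$ together with the hypothesis $\omega \not\in 2\kappa$ forces the angle $\theta(\omega, \sgn(\tau - a)\hat\eta) \approx \theta(\omega, \kappa) \gtrsim \alpha$, so $|\sgn(\tau - a) - \omega\cdot\hat\eta| \approx \theta(\omega, \kappa)^2$, which (if the close-cone constant $c$ is chosen small enough) dominates the error. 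Hence $|\tau - \omega\cdot \xi| \approx \lambda\theta(\omega, \kappa)^2$, so $|a| \lesa \alpha^2 \lambda /\theta(\omega, \kappa)^2 \lesa \lambda$. From this I would deduce $|\xi| \approx \lambda$ by noting that the component of $\xi$ perpendicular to $\omega$ coincides with that of $\xi - a\omega$, which has magnitude $\approx \lambda\theta(\omega, \kappa)$. Expanding $\bigl|\sgn(\tau)\xi/|\xi| - \sgn(\tau - a)\hat\eta\bigr|^2$ using $|\tau| = |\xi|$, the bound boils down to $\bigl|\, |\tau - a| - |\xi - a\omega|\, \bigr|/\lambda \ll \min\{\alpha,\beta\}^2$, which by the close-cone tolerance holds with a large margin. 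This places $\sgn(\tau) \xi / |\xi|$ within $\tfrac{1}{2}\min\{\alpha,\beta\}$ of both $\kappa$ and $\bar{\kappa}$, and hence in $\tfrac{3}{2}\kappa \cap \tfrac{3}{2}\bar{\kappa}$.

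Part (ii) then follows from (i) by a translation argument. Given $(\tau,\xi) \in \Omega_\alpha(\kappa_1) \cap \Omega_\alpha(\kappa_2)$, the sets $\Omega_\alpha(\kappa_j)$ are invariant under translations by $(1,\omega)\RR$, and the same computation as above ensures $\tau - \omega\cdot\xi \ne 0$ for any such translate. Choosing the unique $a$ which solves the quadratic $(\tau + a)^2 = |\xi + a\omega|^2$ puts the translated point on the light cone without leaving $\Omega_\alpha(\kappa_1) \cap \Omega_\alpha(\kappa_2)$. Applying (i) produces a direction in $\tfrac{3}{2}\kappa_1 \cap \tfrac{3}{2}\kappa_2$, which forces $\theta(\kappa_1, \kappa_2) \lesa \alpha$. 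Since $\mc{C}_\alpha$ is finitely overlapping at scale $\alpha$, only $O(1)$ caps $\kappa \in J$ can satisfy this bound for fixed $\kappa_1$, which is exactly the uniform $\ell^\infty$ bound on the sum of indicators.

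The main obstacle is the numerical bookkeeping in (i): tracking the small close-cone constant $c$ in ${^\natural A}_{\alpha,\lambda}(\kappa)$ and the enlargement factor $\tfrac{101}{100}$ carefully enough that the final angular deviation sits comfortably below $\tfrac{3}{2}\alpha$ (and $\tfrac{3}{2}\beta$), while simultaneously handling both signs of $\tau - a$ uniformly and being alert to the fact that $\theta(\omega, \kappa)$ may be anywhere between $\alpha$ and $O(1)$. Once these constants are tuned, everything else is algebra driven by the Lorentzian identity $\tau^2 - |\xi|^2 = 2\tau_\omega \xi^1_\omega - |\xi^\bot_\omega|^2$ recorded in Section~\ref{sec - notation}.
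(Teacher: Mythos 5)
Your proposal follows essentially the same route as the paper: translate the point onto the light cone, establish $|\tau - \omega\cdot\xi| \approx \lambda\theta(\omega,\kappa)^2$ via the decomposition $\tau - \omega\cdot\xi = (\tau-a) - \omega\cdot(\xi - a\omega)$, estimate $|a|$ from the Lorentzian difference of squares, bound the angular deviation between $\sgn(\tau)\xi$ and $\sgn(\tau - a)(\xi-a\omega)$ by $\ll\min\{\alpha,\beta\}$, and conclude by the triangle inequality on angles. Part (ii) is handled by the same translation-to-the-cone trick as the paper.

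There is one step where your stated reasoning has a gap: you assert that $|\xi|\approx\lambda$ follows from the observation that $\xi^\bot = (\xi - a\omega)^\bot$ has magnitude $\approx \lambda\theta(\omega,\kappa)$. That observation alone does not control $|\xi|$, since $\theta(\omega,\kappa)$ can be as small as $O(\alpha)$ and the component of $\xi$ parallel to $\omega$ is, a priori, unconstrained. The paper closes this by invoking the identity $|\xi^\bot|^2 + (\tau - \xi\cdot\omega)^2 = 2\sgn(\tau)|\xi|(\tau - \xi\cdot\omega)$, which together with the estimates $|\xi^\bot|\approx\lambda\theta(\omega,\kappa)\theta(\omega,-\kappa)$ and $\tau - \xi\cdot\omega\approx\lambda\theta(\omega,\kappa)^2$ yields $|\xi|\approx\lambda(\theta(\omega,\kappa)^2 + \theta(\omega,-\kappa)^2)\approx\lambda$. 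You could alternatively salvage your version by noting that your bound $|a|\lesssim c\alpha^2\lambda/\theta(\omega,\kappa)^2\lesssim c\lambda$ (with $c$ the small close-cone constant and $\theta(\omega,\kappa)\gtrsim\alpha$) can be made $\ll\lambda$, and then deduce $|\xi|\approx\lambda$ from $|\xi - a\omega|\approx\lambda$ by the triangle inequality; but as written, your sentence appeals to the wrong quantity. Also note, as a minor point, that your formula $|a|\approx\bigl||\tau-a|-|\xi-a\omega|\bigr|/|\tau-\omega\cdot\xi|$ omits the factor $|\tau-a|+|\xi-a\omega|\approx\lambda$ in the numerator; the correct simplification is $|a|\approx\bigl||\tau-a|-|\xi-a\omega|\bigr|/\theta(\omega,\kappa)^2$, which is what the paper records.
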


The previous geometric lemma implies the following important orthogonality properties.

\begin{corollary}[Orthogonality in Null frames]\label{cor - orthog in null frame} Let $\alpha, \beta \ll 1$ and $\bar{\kappa} \in \mc{C}_\beta$.
    \begin{enumerate}
    \item Assume $\supp \widetilde{u}_{\kappa} \subset {^\natural A}_{\alpha, \lambda}(\kappa)$ for $\kappa \in \mc{C}_\alpha$. Then
            $$ \Big\| \sum_{\substack{\kappa \in \mc{C}_\alpha \\ {^\natural\kappa} \cap \bar{\kappa} \not = \varnothing}} u_{\kappa} \Big\|_{[NF^\pm]^*(\bar{\kappa})} \lesa\Bigg( \sum_{\substack{\kappa \in \mc{C}_\alpha \\ {^\natural\kappa} \cap \bar{\kappa} \not = \varnothing}} \| u_{\kappa} \|_{[NF^\pm]^*(\bar{\kappa})}^2\Bigg)^\frac{1}{2}$$

    \item Let $F \in NF^\pm(\bar{\kappa})$ with $\supp \widehat{F} \subset \{ |\xi| \approx \lambda\}$. Then
            $$  \Bigg(\sum_{\substack{\kappa \in \mc{C}_\alpha \\ \kappa \cap \bar{\kappa} \not = \varnothing}} \big\|  R^\pm_{\kappa, \alpha^2 \lambda} \Pi_{+} F \big\|_{NF^\pm(\bar{\kappa})}^2 + \big\|   R^\mp_{\kappa, \alpha^2 \lambda} \Pi_{-} F\big\|_{NF^\pm(\bar{\kappa})}^2 \Bigg)^\frac{1}{2} \lesa \| F \|_{NF^\pm(\bar{\kappa})}.$$

    \item Let  $\omega \in \sph^{n-1}$. Let $J \subset \mc{C}_\alpha$ be a collection of caps with $\omega \not \in 2 \kappa$  and $\theta(\omega, \kappa) \gtrsim \delta$ for every $\kappa \in J$. Then
            $$ \Bigg(\sum_{\kappa \in J} \big\|   P^{\pm, \alpha}_{\lambda, \kappa} \Pi_{+} F \big\|_{NF^\pm(\kappa)}^2 + \big\|   P^{\pm, \alpha}_{\lambda, \kappa} \Pi_{-} F\big\|_{NF^\pm(\kappa)}^2 \Bigg)^\frac{1}{2} \lesa \|\Pi_{\pm \omega} F \|_{L^1_{t_\omega} L^2_{x_\omega}} + \delta^{-1} \| \Pi_{\mp \omega} F \|_{L^1_{t_\omega} L^2_{x_\omega}}.$$

  \end{enumerate}
\begin{proof}\textbf{(i):} If $\alpha \gtrsim \beta$ the sum only contain $\mc{O}(1)$ terms and thus the inequality follows by the triangle inequality. It remains to consider the case $\alpha \ll \beta$. Let $\omega \not \in 2 \bar{\kappa}$. Then for every $\kappa \in \mc{C}_\alpha$ with  $\kappa \cap \bar{\kappa} \not = \varnothing$ we have $\omega \not \in 2\kappa$. Note that the $\xi_\omega$ support of $u_\kappa$ lies in the set $\Omega_\alpha(\kappa)=\mc{P}r_\omega \big[{^\natural A}_{\alpha, \lambda}(\kappa)\big]$. By Lemma \ref{lem - geom prop} these sets are essentially disjoint and thus we deduce that
    $$ \bigg\| \sum_{\substack{\kappa \in \mc{C}_\alpha \\ \kappa \cap \bar{\kappa} \not = \varnothing}} \widehat{u}^*(t_\omega, \xi_\omega) \bigg\|_{L^2_{\xi_\omega}}^2 \lesa \sum_{\substack{\kappa \in \mc{C}_\alpha \\ \kappa \cap \bar{\kappa} \not = \varnothing}}
                        \| \widehat{u}^*(t_\omega, \xi_\omega) \|_{L^2_{\xi_\omega}}^2.$$
By taking $L^\infty_{t_\omega}$ norms of both sides, inserting the relevant $\Pi_\omega$ projections, and then taking the sup over $\omega \not \in 2 \bar{\kappa}$ we obtain (i).

\textbf{(ii):} It is enough to consider the case $\pm = +$. As in the proof of (i), if $\alpha \gtrsim \beta$ then the sum only contains $\mc{O}(1)$ terms and so the required inequality follows by (i) in Lemma \ref{lem - mult are disposable}. Thus we may assume that $\alpha \ll \beta$. Let $\omega \not \in 2 \bar{\kappa}$. Note that this implies that $\omega \not \in 2 \kappa$ for every $\kappa \in \mc{C}_\alpha$ with $\kappa \cap \bar{\kappa} \not = \varnothing$. As in Lemma \ref{lem - geom prop}, we let $\Omega_{\alpha}(\kappa) = \mc{P}r_\omega \big[ A_{\alpha, \lambda}(\kappa)]$. Define $\widetilde{P^\omega_{\Omega(\kappa)} G} = \ind_{\Omega_\alpha(\kappa)}(\xi_\omega) \widetilde{G}$, clearly $R^\pm_{\kappa, \alpha^2 \lambda} F  = P^\omega_{\Omega_\alpha(\kappa)} R^\pm_{\kappa, \alpha^2 \lambda} F   $. Thus an application of Lemma \ref{lem - mult are disposable} to dispose of the $R^\pm_{\kappa, \alpha^2 \lambda}$ multiplier, followed by an application of Plancheral  gives
  \begin{align*}
 \sum_{\substack{\kappa \in \mc{C}_\alpha \\ \kappa \cap \bar{\kappa} \not = \varnothing}}
     \big\|  R^\pm_{\kappa, \alpha^2 \lambda} \Pi_\pm  F \big\|_{NF^+(\bar{\kappa})}^2
     &\lesa \sum_{\substack{\kappa \in \mc{C}_\alpha \\ \kappa \cap \bar{\kappa} \not = \varnothing}}
     \big\|  P^\omega_{\Omega(\kappa)} F \big\|_{NF^+(\bar{\kappa})}^2 \\
        &\lesa \sum_{\substack{\kappa \in \mc{C}_\alpha \\ \kappa \cap \bar{\kappa} \not = \varnothing}} \| P^\omega_{\Omega(\kappa)}\Pi_\omega  F \|_{L^1_{t_\omega} L^2_{x_\omega}}^2 + \theta(\omega, \bar{\kappa})^{-2}  \|  P^\omega_{\Omega(\kappa)}\Pi_{-\omega}  F\|_{L^1_{t_\omega} L^2_{x_\omega}}^2.
  \end{align*}
We now observe that, by the finite overlap of the sets $\Omega_\alpha(\kappa)$ in Lemma \ref{lem - geom prop} together with the identity (\ref{eqn - ind functions and null projections}), we have
    $$\sum_{\substack{\kappa \in \mc{C}_\alpha \\ \kappa \cap \bar{\kappa} \not = \varnothing}} \| P^\omega_{\Omega(\kappa)} \Pi_{\pm \omega} F \|_{L^1_{t_\omega} L^2_{x_\omega}}^2\les
        \bigg\|\Big( \int_{\RR^n} \sum_{\substack{\kappa \in \mc{C}_\alpha \\ \kappa \cap \bar{\kappa} \not = \varnothing}} \big| \ind_{\Omega_\alpha(\kappa)}(0, \xi_\omega) \widehat{\Pi_{\pm \omega}F}^*(t_\omega, \xi_\omega)\big|^2 d\xi_\omega \Big)^\frac{1}{2}\bigg\|_{L^1_{t_\omega}}^2 \lesa \| \Pi_{\pm \omega}F \|_{L^1_{t_\omega} L^2_{x_\omega}}^2.$$
Applying these inequalities to $NF^+(\bar{\kappa})$ atoms, we obtain (ii).

\textbf{(iii):} We follow a similar argument to (ii). The properties of the collection $J$ imply that the sets $\Omega_\alpha(\kappa)$ finitely overlap for $\kappa \in J$. Hence, after an application of Lemma \ref{lem - mult are disposable} to dispose of the multipliers $P^{\pm, \alpha}_{\lambda, \kappa}$, we obtain
    \begin{align*}
       \sum_{\kappa \in J} \big\|  P^{\pm, \alpha}_{\lambda, \kappa} \Pi_{+} F \big\|_{NF^\pm(\kappa)}^2 + \big\|  P^{\mp, \alpha}_{\lambda, \kappa} \Pi_{-} F\big\|_{NF^\pm(\kappa)}^2
            &\lesa  \sum_{\kappa \in J} \big\|  P^\omega_{\Omega(\kappa)} F \big\|_{NF^\pm(\kappa)}^2 \\
            &\lesa \sum_{\kappa \in J} \big\|  P^\omega_{\Omega(\kappa)} \Pi_{\pm \omega} F \big\|_{L^1_{t_\omega} L^2_{x_\omega}}^2 + \theta(\omega, \kappa)^{-2} \big\|  P^\omega_{\Omega(\kappa)} \Pi_{\mp \omega} F \big\|_{L^1_{t_\omega} L^2_{x_\omega}}^2\\
            &\lesa \big\| \Pi_{\pm \omega} F \big\|_{L^1_{t_\omega} L^2_{x_\omega}}^2 + \delta^{-2} \big\| \Pi_{\mp \omega} F \big\|_{L^1_{t_\omega} L^2_{x_\omega}}^2
    \end{align*}
as required.
\end{proof}
\end{corollary}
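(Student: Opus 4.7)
The plan is to deduce all three parts by the same mechanism: project everything along a null direction $\vartheta = \tfrac{1}{\sqrt 2}(1,\omega)$, apply Plancherel in the transverse variable $\xi_\omega$, and use the finite-overlap property (ii) of Lemma \ref{lem - geom prop}. The key structural observation is that whenever $\omega \not\in 2\kappa$, any function with $(\tau,\xi)$-Fourier support in ${^\natural}A_{\alpha,\lambda}(\kappa)$ has its $\xi_\omega$-support contained in $\Omega_\alpha(\kappa) = \mc{P}r_\omega\bigl[{^\natural}A_{\alpha,\lambda}(\kappa)\bigr]$, and the sets $\Omega_\alpha(\kappa)$ boundedly overlap in $\xi_\omega$ as $\kappa$ ranges over any admissible collection of caps.

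For part (i), I would split on whether $\alpha \gtrsim \beta$ (in which case only $\mathcal{O}(1)$ caps contribute and the triangle inequality suffices) or $\alpha \ll \beta$. In the latter regime, fixing $\omega \not\in 2\bar{\kappa}$, the hypothesis ${^\natural}\kappa \cap \bar{\kappa} \not= \varnothing$ combined with $\alpha \ll \beta$ forces $\omega \not\in 2\kappa$ for all relevant $\kappa$. Since the constant matrix projections $\Pi_{\pm\omega}$ commute with the pure $\xi_\omega$-cutoffs $\ind_{\Omega_\alpha(\kappa)}$, Plancherel in $\xi_\omega$ at each fixed $t_\omega$ yields
\[
  \Bigl\| \sum_\kappa \Pi_{\pm\omega} u_\kappa \Bigr\|_{L^2_{x_\omega}}^2 \lesa \sum_\kappa \|\Pi_{\pm\omega} u_\kappa\|_{L^2_{x_\omega}}^2.
\]
Taking $L^\infty_{t_\omega}$, weighting the $\Pi_{\mp\omega}$ contribution by $\theta(\omega,\bar{\kappa})$, and then taking the supremum over admissible $\omega$ recovers the $[NF^\pm]^*(\bar{\kappa})$ norm on both sides.

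Parts (ii) and (iii) are close analogues applied at the level of atoms. For (ii), I would reduce by atomic decomposition to the case where $F$ is a single $NF^\pm(\bar{\kappa})$ atom controlled through some $\omega \not\in 2\bar{\kappa}$, dispose of $R^\pm_{\kappa,\alpha^2\lambda}$ using the $L^1_{t,x}$ kernel bound of Lemma \ref{lem - mult are disposable}(ii) (legitimate after inserting the $P_\lambda$ cutoff permitted by $\supp \widehat F \subset \{|\xi|\approx\lambda\}$), and then replace it with the sharp $\xi_\omega$-cutoff $P^\omega_{\Omega(\kappa)}$ via identity (\ref{eqn - ind functions and null projections}). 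Finite overlap plus Plancherel in $\xi_\omega$ then yields the $\ell^2$ bound in $L^1_{t_\omega} L^2_{x_\omega}$ for both the $\Pi_{\pm\omega}$ and $\Pi_{\mp\omega}$ components, and the weight $\theta(\omega,\bar{\kappa})^{-1}$ is absorbed into the atomic bound. For (iii), the direction $\omega$ and separation $\theta(\omega,\kappa) \gtrsim \delta$ are prescribed, so no nesting is required and the atomic weight $\theta(\omega,\kappa)^{-1}$ is directly estimated by $\delta^{-1}$.

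The main delicate point will be to insert $\Pi_{\pm\omega}$ \emph{inside} the sharp $\xi_\omega$-cutoffs before invoking orthogonality, so that Plancherel is applied to scalar $L^2_{x_\omega}$ data and no cross terms spoil the estimate in the vectorial direction. A secondary concern is verifying that disposing of $R^\pm_{\kappa,\alpha^2\lambda}$ on the null-frame spaces does not cost the angular gain; this rests on the null-structure bound $|\Pi_\omega \Pi_{\omega'}| \lesa \theta(\omega,-\omega')$ built into the proof of Lemma \ref{lem - mult are disposable}(ii). Once these are handled, all three parts reduce cleanly to the geometric finite-overlap statement already in hand.
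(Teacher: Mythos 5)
Your plan matches the paper's proof in all essentials: the split on $\alpha\gtrsim\beta$ versus $\alpha\ll\beta$, the use of the null projection $\mc{P}r_\omega$ to place the $\xi_\omega$-supports of the relevant functions in the finitely overlapping sets $\Omega_\alpha(\kappa)$ from Lemma~\ref{lem - geom prop}, Plancherel in $\xi_\omega$ at fixed $t_\omega$, atomic reduction in parts (ii) and (iii), and disposing of $R^\pm_{\kappa,\alpha^2\lambda}$ (resp.\ $P^{\pm,\alpha}_{\lambda,\kappa}$) via Lemma~\ref{lem - mult are disposable} before replacing them by the sharp $\xi_\omega$-cutoffs $P^\omega_{\Omega(\kappa)}$ through identity~\eqref{eqn - ind functions and null projections}. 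This is the same argument; no gaps.
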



\subsection{The Dirac Equation in Null coordinates} We want to write the equation
       \begin{equation}\label{eqn - inhomogeneous dirac equation} ( \p_t \pm \sigma \cdot \nabla)  u  = F \end{equation}
in null coordinates $(t_\omega, x_\omega)$. A computation shows that
    \begin{align*}
      \sqrt{2} \p_{t_\omega} \Pi_{\pm\omega} u  \pm (\sigma \cdot \nabla^\bot_{x_\omega}) \Pi_{\mp\omega} u   &= \Pi_{\pm\omega} F\\
       - (\omega \cdot \nabla_{x_\omega}) \Pi_{\mp\omega} u  \pm (\sigma \cdot \nabla^\bot_{x_\omega} )\Pi_{\pm\omega} u &= \Pi_{\mp\omega} F.
    \end{align*}
Rearranging, and assuming that we can divide by $\omega \cdot \nabla_{x_\omega}$ (i.e. we assume that $\widetilde{u}$ and $\widetilde{F}$ are supported away from the null plane $ \omega \cdot \xi_\omega =0$ $\Leftrightarrow$ $ \tau = \omega \cdot \xi$), we see that $u$ satisfies
    \begin{equation}\label{eqn - inhomogeneous Dirac in null coordinates} \begin{split}
     \Big(\sqrt{2} \p_{t_\omega}  + \frac{ |\nabla^\bot_{x_\omega}|^2}{ \omega \cdot \nabla_{x_\omega}} \Big)\Pi_{\pm \omega} u &=   \Pi_{\pm\omega} F \pm \frac{ \sigma \cdot \nabla^\bot_{x_\omega}}{\omega \cdot \nabla_{x_\omega}} \Pi_{\mp\omega} F \\
      \Pi_{\mp\omega} u &=  \pm \frac{\sigma \cdot \nabla^\bot_{x_\omega}}{\omega \cdot \nabla_{x_\omega}} \Pi_{\pm\omega} u - \frac{1}{\omega \cdot \nabla_{x_\omega}} \Pi_{\mp\omega} F.
      \end{split}
      \end{equation}
The equation (\ref{eqn - inhomogeneous Dirac in null coordinates}) is interesting as it shows that, in null coordinates $(t_\omega, x_\omega)$, the $\Pi_{\mp \omega} u$ component of $u$ solves what is essentially an \emph{elliptic} equation. This observation is the motivation for building the projections $\Pi_{\pm \omega}$ into the definition of the null frame spaces $PW^\pm(\kappa)$ and $NF^\pm(\kappa)$, as it allows us to isolate the ``elliptic'' and dispersive components of the evolution.

The equation (\ref{eqn - inhomogeneous Dirac in null coordinates}) also shows that we can write the forward fundamental solution in null coordinates as
    $$ \big(E^\pm_{\omega} * F\big)^*(t_\omega, x_\omega) = \pm \frac{\sigma \cdot \nabla_{x_\omega}}{\sqrt{2} \omega \cdot \nabla_{x_\omega}}\int_{-\infty}^{t_\omega} e^{ - (t_\omega - a) \frac{ |\nabla^\bot_{x_\omega}|^2}{\sqrt{2} \omega \cdot \nabla_{x_\omega}}} \Big( \Pi_{\pm \omega} \pm \frac{\sigma\cdot \nabla^\bot_{x_\omega}}{\omega \cdot \nabla_{x_\omega}}\Pi_{\mp \omega} \Big) F^*(a)\, d a - \frac{1}{\omega \cdot \nabla_{x_\omega}} \Pi_{\mp \omega} F. $$
It is easy to check that $ u = E^\pm_\omega * F$ gives us a solution to (\ref{eqn - inhomogeneous dirac equation}), and moreover, that
        $$\Big(\frac{\sigma \cdot \nabla_{x_\omega}}{\sqrt{2} \omega \cdot \nabla_{x_\omega}} e^{ - t_\omega \frac{ |\nabla^\bot_{x_\omega}|^2}{\sqrt{2} \omega \cdot \nabla_{x_\omega}}} f\Big)^*(x_\omega) = \int_{\RR^n} \,\frac{\sigma \cdot \xi_\omega}{ - 2 \xi^1_\omega} \,e^{  i t_\omega \frac{ |\xi^\bot_\omega|^2}{2 \xi^1_\omega}}\, \widehat{f}^*(\xi_\omega) \,  e^{ i x_\omega \cdot \xi_\omega} \,d\xi_\omega$$
is a solution to the equation $(\p_t + \sigma \cdot \nabla) u= 0$ (lets assume that $f$ has support away from $\xi^1_\omega = 0$ for simplicity). The fundamental solution operator $E^\pm_\omega$ plays a crucial role in the proof of Theorem \ref{thm - null frame bounds} as it gives a suitable substitute for the missing transference type principle. In other words, using the following lemma, we are able to essentially deduce the required null frame bounds from their homogeneous counterparts.

\begin{lemma}[Decomposition of $E^\pm_\omega*F$ into free waves, cf. {\cite[Proposition 3.4]{Tataru2001}}]\label{lem - null frame decomp into free waves}
Let $\alpha,  \beta \ll 1$ and $\kappa \in \mc{C}_\alpha$, $\bar{\kappa} \in \mc{C}_\beta$. Fix $\omega \not \in 2 \kappa$. Assume $\supp \widetilde{F} \subset {^\natural A}_{\alpha, \lambda}(\kappa) \cap {^\natural A}_{\beta, \lambda}(\bar{\kappa})$. Then we can write
        $$ E^\pm_{\omega} * F = \int_{-\infty}^{t_\omega} \psi_{a}\, da + \Pi_{\mp \omega} G$$
  where $\psi_{a}$ satisfies $(\p_t \pm \sigma \cdot \nabla) \psi_{a} = 0$ and
        $$\supp   \widehat{\Pi_\pm\psi}_{a} \subset {^\natural A}^\pm_\lambda(\tfrac{3}{2} \kappa) \cap {^\natural A}^\pm_{\lambda}(\tfrac{3}{2} \bar{\kappa}), \qquad \supp   \widehat{\Pi_\mp\psi}_{a} \subset {^\natural A}^\mp_\lambda(\tfrac{3}{2} \kappa) \cap {^\natural A}^\mp_{\lambda}(\tfrac{3}{2} \bar{\kappa}),$$
  and $G \in L^2_{t, x}$ with $\supp \widetilde{G} = \supp \widetilde{F}$. Moreover we have the bound
    \begin{equation} \label{eqn - lem fund soln decomp - bound on decomp}\int_{\RR} \| \psi_{a} \|_{L^\infty_t L^2_x} d a + \| G \|_{\dot{\mc{X}}^{\frac{1}{2}, 1}_\pm}
                \lesa \| \Pi_{\pm \omega} F \|_{L^1_{t_\omega} L^2_{x_\omega}} + \theta(\omega, \kappa)^{-1} \| \Pi_{\mp \omega} F \|_{L^1_{t_\omega} L^2_{x_\omega}}.\end{equation}
  \begin{proof}
    If we let
            $$ (\psi_{a})^*(t_\omega, x_\omega) = \pm \frac{\sigma \cdot \nabla_{x_\omega}}{\sqrt{2} \omega \cdot \nabla_{x_\omega}} e^{ - (t_\omega - a) \frac{ |\nabla^\bot_{x_\omega}|^2}{\sqrt{2} \omega \cdot \nabla_{x_\omega}}} \Big( \Pi_{\pm \omega} \pm \frac{\sigma\cdot \nabla^\bot_{x_\omega}}{\omega \cdot \nabla_{x_\omega}}\Pi_{\mp \omega} \Big) F^*(a)$$
    and $ G = - \frac{1}{\omega \cdot \nabla_{x_\omega}} \Pi_{\mp \omega}F$, then by definition of $E^\pm_\omega$, we have a decomposition
                $$ E^\pm_\omega * F = \int_{-\infty}^{t_\omega} \psi_{a} \,d a + G. $$
    A calculation using (\ref{eqn - inhomogeneous Dirac in null coordinates}) shows that $(\p_t \pm \sigma \cdot \nabla) \psi_{a}=0$ and thus $\widetilde{\psi}_a$ is supported on the light cone. On the other had, if $(\tau, \xi) \in \supp \widetilde{\psi}_a$, then
        $$\xi - \tau \omega \in \bigcup_{a \in \RR} \supp \widehat{\psi}^*(a) \subset \bigcup_{a \in \RR} \supp \widehat{F}^*(a)$$
    and so $(\tau, \omega) \in \mathcal{P}r_\omega\big( \supp \widetilde{F} \big)$. Consequently the claim on the support of $\psi_a$ follows from Lemma \ref{lem - geom prop}. Thus it only remains to prove the bound (\ref{eqn - lem fund soln decomp - bound on decomp}). To this end, note that
        \begin{align*}
         \big[ e^{ - t_\omega \frac{|\nabla_{x_\omega}^\bot|}{\sqrt{2}\omega \cdot \nabla_{x_\omega}}} f^*\big](t, x) &= \int_{\RR^n} \widehat{f}^*(\xi_\omega) e^{ - i(t + x \cdot \omega) \frac{|\xi^\bot_\omega|^2}{ 2\omega \cdot \xi_\omega}} e^{ i ( x - \frac{1}{2}( t + x \cdot \omega) \omega) \cdot \xi_\omega} d \xi_\omega \\
                                    &= \int_{\RR^n} \widehat{f}^*(\xi_\omega) e^{ - it \frac{|\xi_\omega|^2}{ 2\omega \cdot \xi_\omega}} e^{ i x \cdot( \xi_\omega - \frac{|\xi_\omega|^2}{2\omega \cdot \xi_\omega}\omega) } d \xi_\omega \\
                                    &= \int_{\RR^n} \Big[\widehat{f}^*(\xi_\omega) e^{ - it \frac{|\xi_\omega|^2}{ 2\omega \cdot \xi_\omega}} J^{-1}(\xi_\omega) \Big](y) e^{ i x \cdot y} dy
        \end{align*}
    where $dy = J(\xi_\omega) d\xi_\omega$ and\footnote{One way to see this is to note that we are only changing $\xi_\omega$ in the $\omega$ direction, thus if we let $a=\omega \cdot \xi_\omega$ and $b=|\xi_\omega|$, then we are effectively computing the Jacobian for the change of variables $a'= a - \frac{a^2 + b^2}{2a}$ which is $\frac{1}{2} + \frac{b^2}{2a^2} = \frac{a^2 + b^2}{2a^2} =\frac{1}{2} \big( \frac{|\xi_\omega|}{\omega \cdot \xi_\omega}\big)^2$. } the Jacobian  is given by $ J(\xi_\omega) = \frac{1}{2} \big(\frac{|\xi_\omega|}{\omega \cdot \xi_\omega}\big)^2$. Thus by an application of Plancheral we get
        \begin{align*}
          \Big\| \Big[e^{ - t_\omega \frac{|\nabla_{x_\omega}^\bot|}{\sqrt{2}\omega \cdot \nabla_{x_\omega}}} f^*\Big](t, x) \Big\|_{L^2_x}
          &=\Big\| \Big[\widehat{f}^*(\xi_\omega) e^{ - it \frac{|\xi_\omega|^2}{ 2\omega \cdot \xi_\omega}} J^{-1}(\xi_\omega) \Big](y) \Big\|_{L^2_y} \\
          &= 2 \Big\| \frac{(\omega \cdot \xi_\omega)}{|\xi_\omega|} f^*(\xi_\omega) \Big\|_{L^2_{\xi_\omega}}.
        \end{align*}
If we now observe that $\supp \widehat{F}^*(a) \subset \big\{ |\xi_\omega \cdot \omega| \approx \theta(\omega, \kappa)^2 \lambda,\,\, |\xi^\bot_\omega| \lesa \theta(\omega, \kappa) \lambda \, \big\}$ we obtain
         \begin{align*} \| \psi_{a} \|_{L^\infty_t L^2_x} &\approx \Big\| \frac{\omega \cdot \xi_\omega}{|\xi_\omega|} \frac{\sigma \cdot \xi_\omega}{\omega \cdot \xi_\omega}    \Big( \Pi_{\pm \omega} \pm \frac{\sigma\cdot \xi^\bot_\omega}{\omega \cdot \xi_{\omega}}\Pi_{\mp \omega} \Big) \widehat{F^*}(a, \xi_\omega) \Big\|_{L^2_{\xi_\omega}} \\
                &\lesa \| \Pi_{\pm \omega} F^*(a) \|_{L^2_{x_\omega}} + \theta(\omega, \kappa)^{-1} \| \Pi_{\mp \omega} F^*(a) \|_{L^2_{\xi_\omega}}.\end{align*}
    Integrating over $a$ then controls the $\psi_{a}$ component. To estimate $G$,  we write $G= \frac{1}{\omega \cdot \nabla_{x_\omega}} \Pi_{-\omega} \Pi_+  F + \frac{1}{\omega \cdot \nabla_{x_\omega}} \Pi_{-\omega} \Pi_- F$. Note that for  $(\tau, \xi) \in \supp \widetilde{\Pi_\pm F } \subset {^\natural A}^\pm_{\lambda, \alpha}(\kappa)$, we have $|\omega \cdot \xi_\omega| \approx |\xi^1_\omega| \approx \lambda \theta(\omega, \kappa)^2$ as well as the null form estimate $\Pi_{\mp \frac{\xi}{|\xi|}} \Pi_{-\omega} \lesa \theta(\omega, \kappa)$. Therefore, using the bound (\ref{eqn - lem L2 bound on NF atom - L2 bounded by L1L2 in null coord}) and the fact that $\alpha\lesa \theta(\omega, \kappa)$, we have
    \begin{align*}
      \Big\| \tfrac{1}{\omega \cdot \nabla_{x_\omega}} \Pi_{-\omega} \Pi_\pm F \Big\|_{\dot{\mc{X}}^{\frac{1}{2}, 1}_+} &\lesa \frac{1}{\lambda \theta(\omega, \kappa)^2} \bigg( \sum_{d \lesa \alpha^2 \lambda} d^\frac{1}{2} \big\| C^\pm_d \Pi_\pm \Pi_{-\omega} \Pi_\pm F \big\|_{L^2_{t, x}} + \sum_{d \lesa \lambda} d^\frac{1}{2} \big\| C^\mp_d \Pi_{\mp} \Pi_{-\omega} \Pi_{\pm} F \big\|_{L^2_{t, x}}\bigg) \\
      &\lesa \frac{\lambda^\frac{1}{2} \alpha + \lambda^\frac{1}{2} \theta(\omega, \kappa)}{\lambda \theta(\omega, \kappa)^2} \| \Pi_\pm F \|_{L^2_x} \lesa \| \Pi_\omega F \|_{L^1_{t_\omega} L^2_{x_\omega}} + \theta(\omega, \kappa)^{-1} \| \Pi_{-\omega} F \|_{L^1_{t_\omega} L^2_{x_\omega}}
    \end{align*}
    as required.
  \end{proof}

\end{lemma}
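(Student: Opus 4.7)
The starting point is the null-coordinate formulation of the Dirac equation derived immediately before the lemma. Rewriting $(\p_t \pm \sigma \cdot \nabla) u = F$ in $(t_\omega, x_\omega)$ variables and using the projection identities for $\Pi_{\pm\omega}$, one sees that $\Pi_{\pm\omega} u$ satisfies a Schrödinger-type equation in $t_\omega$ with symbol $|\xi^\bot_\omega|^2/(2\xi^1_\omega)$, whereas $\Pi_{\mp\omega} u$ is determined algebraically, i.e.\ by an elliptic solve in the $\omega$-direction. The operator $E^\pm_\omega$ is then the forward Duhamel formula for this Schrödinger equation, composed with that elliptic solve; the plan is to take this formula as the definition of the decomposition. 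Concretely, set
$$(\psi_a)^*(t_\omega, x_\omega) = \pm \tfrac{\sigma \cdot \nabla_{x_\omega}}{\sqrt{2}\, \omega \cdot \nabla_{x_\omega}}\, e^{-(t_\omega - a)\, |\nabla^\bot_{x_\omega}|^2/(\sqrt{2}\, \omega \cdot \nabla_{x_\omega})} \Big( \Pi_{\pm\omega} \pm \tfrac{\sigma\cdot \nabla^\bot_{x_\omega}}{\omega\cdot\nabla_{x_\omega}}\Pi_{\mp\omega} \Big) F^*(a),$$
and $G = -(\omega \cdot \nabla_{x_\omega})^{-1}\Pi_{\mp\omega} F$. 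Undoing the derivation of the null-coordinate system shows $(\p_t \pm \sigma \cdot \nabla)\psi_a = 0$ for each $a$, and that $E^\pm_\omega * F = \int_{-\infty}^{t_\omega}\! \psi_a\, da + \Pi_{\mp\omega} G$.

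The Fourier-support claims are essentially geometric. Since $\widetilde{\psi_a}$ lives on the light cone and its $\xi_\omega$-support is the same as that of $\widehat{F^*}(a,\cdot)$, we have $\supp\widetilde{\psi_a} \subset \{|\tau|=|\xi|\} \cap \mc{P}r_\omega(\supp \widetilde{F})$. Part~(i) of Lemma~\ref{lem - geom prop} then gives exactly the stated support inclusion in $\tfrac{3}{2}\kappa \cap \tfrac{3}{2}\bar{\kappa}$, and the split into $\Pi_\pm$ and $\Pi_\mp$ components is dictated by the side of the cone on which $\widetilde{\psi_a}$ sits.

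The quantitative bound on $\int_\RR \|\psi_a\|_{L^\infty_t L^2_x}\, da$ is a Plancherel computation. Changing variables from $\xi_\omega$ to $y = \xi_\omega - (|\xi_\omega|^2/(2\omega \cdot \xi_\omega))\,\omega$ (the null projection onto the cone) with Jacobian $J(\xi_\omega) = \tfrac{1}{2}(|\xi_\omega|/|\omega \cdot \xi_\omega|)^2$ converts the propagator into the multiplier $|\omega\cdot\xi_\omega|/|\xi_\omega|$. After this cancellation the $\Pi_{\pm\omega}$ symbol $\sigma\cdot\xi_\omega/(\omega\cdot\xi_\omega)$ contributes $O(1)$, whereas the $\Pi_{\mp\omega}$ symbol contains the extra factor $|\xi^\bot_\omega|/|\omega\cdot\xi_\omega| \approx \theta(\omega,\kappa)^{-1}$, giving the two constants stated. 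Integrating $\|\psi_a\|_{L^\infty_t L^2_x}$ over $a \in \RR$ turns the supremum in $t$ into an $L^1_{t_\omega}$ integral of $F^*$, yielding the required $L^1_{t_\omega} L^2_{x_\omega}$ norms.

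The elliptic piece is where the main difficulty lies, since $G = -(\omega\cdot\nabla_{x_\omega})^{-1}\Pi_{\mp\omega} F$ is not itself concentrated near the light cone. I would split $G$ into its $\Pi_\pm$ components and estimate each $\dot{\mc{X}}^{\frac{1}{2},1}_\pm$ norm dyadically in the distance $d$ to the cone. For $(\tau,\xi) \in \supp \widetilde{F}$ we have $|\omega\cdot\xi_\omega| \approx \lambda\,\theta(\omega,\kappa)^2$; the delicate case is $\mathfrak{C}_d^\pm\Pi_{\pm}\Pi_{-\omega}\Pi_\pm F$ with $d$ as large as $\lambda$, where the naive pointwise bound for $(\omega\cdot\nabla_{x_\omega})^{-1}$ loses too much. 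Here the null structure identity $|\Pi_{\pm \xi/|\xi|}\Pi_{-\omega}|\lesa \theta(\omega,\kappa)$ gives back exactly the missing angle, and combining it with Lemma~\ref{lem - L2 bound on NF atom} to pass from $\|F\|_{L^1_{t_\omega}L^2_{x_\omega}}$ to $\|C^\pm_d G\|_{L^2_{t,x}}$ produces constants $\lambda^{1/2}\alpha$ or $\lambda^{1/2}\theta(\omega,\kappa)$ which, divided by $\lambda\theta(\omega,\kappa)^2$, match $\theta(\omega,\kappa)^{-1}$ (using $\alpha \lesa \theta(\omega,\kappa)$). This matching of powers is the main technical obstacle; everything else is bookkeeping.
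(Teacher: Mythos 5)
Your proposal is correct and follows essentially the same argument as the paper: same explicit formulas for $\psi_a$ and $G$, same appeal to Lemma~\ref{lem - geom prop} for the support properties, same Plancherel/Jacobian computation for the $L^\infty_t L^2_x$ bound, and the same dyadic-in-$d$ splitting combined with the null structure estimate and (\ref{eqn - lem L2 bound on NF atom - L2 bounded by L1L2 in null coord}) for the elliptic piece $G$. The only slip is notational, in the $G$ estimate: the large-$d$ contribution is $C^\mp_d \Pi_\mp \Pi_{-\omega}\Pi_\pm F$ (not $\mathfrak{C}^\pm_d\Pi_\pm$), and it is the estimate $|\Pi_{\mp\xi/|\xi|}\Pi_{-\omega}|\lesa\theta(\omega,\kappa)$ that supplies the angular gain, since $\mp\xi/|\xi|\in\kappa$ on the relevant support.
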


\begin{remark}
  The bound for the ``elliptic'' term $G = \tfrac{1}{\omega \cdot \nabla_{x_\omega}} \Pi_{\mp \omega} F$ can be improved somewhat. For instance, by a similar argument, we could replace $\| G \|_{\dot{\mc{X}}^{\frac{1}{2}, 1}_\pm}$ with the larger $\lambda^{\frac{1}{2}} \frac{ \theta(\omega, \kappa)^2}{\min\{ \alpha, \beta\}} \| \Pi_{\mp \omega} G \|_{L^2_{t, x}}$. However, the use of the $\dot{\mc{X}}^{\frac{1}{2}, 1}_\pm$ norm is technically convenient, and slightly simpler to state.
\end{remark}

As an application of the previous result, we obtain control the solution $E^\pm_\omega * F$ in $L^\infty_t L^2_x$.

\begin{corollary}[$L^\infty_t L^2_x$ Control of Fundamental solution in Null Frames]\label{cor - LinftyL2 control of null fund soln}
  Let $\alpha \ll 1$ and $\kappa \in \mc{C}_\alpha$. Assume $\supp \widetilde{F} \subset {^\natural A}_{\alpha, \lambda}(\kappa) $ and $\omega \not \in 2 \kappa$. Then
        $$ \| E^\pm_{\omega} * F \|_{L^\infty_t L^2_x} \lesa \big\| \Pi_{\pm \omega} F \big\|_{L^1_{t_\omega} L^2_{x_\omega}} + \theta(\omega, \kappa)^{-1} \big\| \Pi_{\mp \omega} F \big\|_{L^1_{t_\omega} L^2_{x_\omega}}.$$
\begin{proof}
  By Lemma \ref{lem - null frame decomp into free waves}, we can write
        $$ E^\pm_\omega * F = \int_{-\infty}^{t_\omega} \psi_{a} da + G$$
  where $\psi_{a}$, $G$ are as in the statement of the Lemma. Thus by an application of Minkowsi's inequality and (\ref{eqn - Xsb controls strichartz}) we have
    \begin{align*}
      \big\| E^\pm_\omega *F \big\|_{L^\infty_t L^2_x} \les \int_\RR \| \psi_{a} \|_{L^\infty_t L^2_x} da + \| G \|_{L^\infty_t L^2_x}
                \lesa \int_\RR \| \psi_{a} \|_{L^\infty_t L^2_x} da + \| G \|_{\dot{\mc{X}}^{\frac{1}{2}, 1}_\pm}.
    \end{align*}
  Hence result follows by Lemma \ref{lem - null frame decomp into free waves}.
\end{proof}
\end{corollary}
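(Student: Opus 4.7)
My plan is to deduce this corollary directly from Lemma \ref{lem - null frame decomp into free waves}, since that lemma has already done the real work. The idea is that the lemma writes $E^\pm_\omega * F$ as a superposition of free solutions $\psi_a$ (the ``dispersive'' part arising from the propagator $e^{-(t_\omega - a)|\nabla^\bot_{x_\omega}|^2 / (\sqrt 2 \omega \cdot \nabla_{x_\omega})}$) plus an elliptic remainder $G = -(\omega \cdot \nabla_{x_\omega})^{-1} \Pi_{\mp \omega} F$. Both pieces are controlled in $L^\infty_t L^2_x$ by the right-hand side of the desired inequality via (\ref{eqn - lem fund soln decomp - bound on decomp}).

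Concretely, I would first invoke Lemma \ref{lem - null frame decomp into free waves} to obtain the decomposition
$$ E^\pm_\omega * F \;=\; \int_{-\infty}^{t_\omega} \psi_a \, da \;+\; G,$$
noting that the cutoff $\int_{-\infty}^{t_\omega}$ only depends on the $t_\omega$ variable and in particular does not destroy the $L^\infty_t L^2_x$ structure once we apply Minkowski. Then Minkowski in the $a$-parameter (integrated in the $(t, x)$-frame after recalling that $\psi_a$ is a free wave so $\|\psi_a(t,\cdot)\|_{L^2_x}$ is independent of $t$) gives
$$ \Big\| \int_{-\infty}^{t_\omega} \psi_a \, da \Big\|_{L^\infty_t L^2_x} \;\les\; \int_\RR \|\psi_a\|_{L^\infty_t L^2_x} \, da.$$

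For the remainder $G$, I would apply the Strichartz bound (\ref{eqn - Xsb controls strichartz}) with $(q,r) = (\infty, 2)$ (for which the admissibility condition $\frac{1}{q} + \frac{n-1}{2r} \les \frac{n-1}{4}$ is trivially satisfied and no scaling factor appears since the exponent $n(\tfrac12 - \tfrac1r) - \tfrac1q$ vanishes), yielding
$$ \|G\|_{L^\infty_t L^2_x} \;\lesa\; \|G\|_{\dot{\mc{X}}^{\frac{1}{2},1}_\pm}.$$
Combining these two bounds with the master estimate (\ref{eqn - lem fund soln decomp - bound on decomp}) from Lemma \ref{lem - null frame decomp into free waves} immediately produces the desired inequality.

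There is no genuine obstacle here beyond being careful that the finite-speed cutoff $\int_{-\infty}^{t_\omega}$ is compatible with the $L^\infty_t L^2_x$ norm; since $t_\omega = \tfrac{1}{\sqrt 2}(t + \omega \cdot x)$ depends on $x$, one should verify this by fixing $t$ and observing that the resulting $x$-integral of $|\psi_a(t,x)|^2$ over the half-space $\{t_\omega(t,x) > a\}$ is still bounded by $\|\psi_a(t,\cdot)\|_{L^2_x}^2 = \|\psi_a\|_{L^\infty_t L^2_x}^2$ (since $\psi_a$ is a free wave), so Minkowski applies cleanly. All other ingredients are either stated or immediate.
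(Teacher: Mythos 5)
Your proposal is correct and follows essentially the same route as the paper: invoke Lemma \ref{lem - null frame decomp into free waves} for the decomposition into free waves plus the elliptic piece $G$, use Minkowski for the $\psi_a$ integral, apply (\ref{eqn - Xsb controls strichartz}) with $(q,r)=(\infty,2)$ to bound $\|G\|_{L^\infty_t L^2_x}$ by the $\dot{\mc{X}}^{1/2,1}_\pm$ norm, and conclude with (\ref{eqn - lem fund soln decomp - bound on decomp}). The extra remark on why the $t_\omega$-dependent cutoff is compatible with Minkowski (indicator bounded by $1$, then free-wave energy conservation) is a welcome clarification of a step the paper treats as immediate, but it is the same argument.
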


We also have the following crucial energy type inequality.

\begin{corollary}\label{cor - energy type est}
Suppose $F \in \mc{N}^\pm_\lambda$ and  $(\p_t \pm \sigma \cdot \nabla) u = F$. Then
        $$ \|  u \|_{L^\infty_t L^2_x} \les \|  u(0) \|_{L^2_x}  +  C \| F \|_{\mc{N}^\pm_\lambda}$$
for some constant $C$ (independent of $u$ and $F$).
\begin{proof}
It is enough to consider the case $\pm = +$.  By writing $ u =  u - \mc{U}_+(t)\big[u(0)\big] + \mc{U}_+(t)\big[u(0)\big]$ and using the homogeneous energy estimate, we reduce to the case $u(0)=0$. By definition of $\mc{N}^+_\lambda$, we reduce to considering the case where $F$ is an $L^1_t L^2_x$ atom, a $\dot{\mc{X}}^{-\frac{1}{2}, 1}_+$ atom, or a $NF^+_\lambda$ atom.

The case $F \in L^1_t L^2_x$ is immediate by the standard energy inequality. On the other hand, if $F$ is an $\dot{\mc{X}}^{-\frac{1}{2}, 1}_+$ atom, then we write
        $$   \Pi_\pm u =  (\p_t \pm i |\nabla| )^{-1} \Pi_\pm F - e^{\mp i t |\nabla|} \big[ (\p_t \pm i |\nabla| )^{-1} \Pi_\pm F(0)\big].$$
  Then
    $$  \big\|  \Pi_\pm u \big\|_{L^\infty_t L^2_x} \lesa  \big\| ( \p_t \pm i |\nabla|)^{-1}  \Pi_\pm F \big\|_{L^\infty_t L^2_x} \\
                                                            \lesa d^{\frac{1}{2}} \big\| ( \p_t \pm i |\nabla|)^{-1}  \Pi_\pm F \big\|_{L^2_{t, x}} \approx d^{-\frac{1}{2}} \| F \|_{L^2_{t, x}} \\
                                                            \lesa 1
    $$
  as required.

Finally, if $F$ is a $NF^+_\lambda$ atom, then there exists a decomposition $F = \sum_{\kappa \in \mc{C}_\alpha} F_\kappa$ with $ \supp \Pi_\pm F_{\kappa} \subset A^\pm_{\lambda, \alpha}(\kappa)$. Let $\omega \not \in 2 \kappa$. Then by Corollary \ref{cor - LinftyL2 control of null fund soln} we obtain
    \begin{align*}
      \Big\| \int_0^t \mc{U}_+(t-s) F_\kappa(s) ds \Big\|_{L^\infty_t L^2_x} &= \big\| E^+_\omega * F_\kappa - \mc{U}_+(t)\big[ E^+_\omega *F_\kappa(0)\big] \big\|_{L^\infty_t L^2_x} \\
      &\lesa \| E^+_\omega * F_\kappa \|_{L^\infty_t L^2_x} \\
      &\lesa \| \Pi_\omega F_\kappa \|_{L^1_{t_\omega}  L^2_{x_\omega}} + \theta(\omega, \kappa)^{-1} \| \Pi_{-\omega} F_\kappa \|_{L^1_{t_\omega}  L^2_{x_\omega}}.
    \end{align*}
If we apply this estimate to $NF^+(\kappa)$ atoms (and using an application of Lemma \ref{lem - mult are disposable} to retain the support properties) we deduce that
    $$ \Big\| \int_0^t \mc{U}_+(t-s) F_\kappa(s) ds \Big\|_{L^\infty_t L^2_x} \lesa \|F_{\kappa} \|_{NF^+(\kappa)}.$$
Thus, as the $F_\kappa$ are essentially orthogonal in $L^2_x$, we have
$$ \| u \|_{L^\infty_t L^2_x} \les \Big( \sum_{\kappa \in \mc{C}_\alpha} \Big\| \int_0^t \mc{U}_+(t-s) F_\kappa(s) ds \Big\|_{L^\infty_t L^2_x}^2 \Big)^\frac{1}{2} \lesa \Big( \sum_{\kappa \in \mc{C}_\alpha} \| F_\kappa \|_{NF^+(\kappa)}^2 \Big)^\frac{1}{2}$$
as required.
\end{proof}
\end{corollary}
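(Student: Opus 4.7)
The plan is to reduce everything to the $+$ case by the reflection symmetry noted in Remark \ref{rem - + ---> - via reflection}, and to the zero-data case by subtracting the homogeneous evolution: writing $u = (u - \mc{U}_+(t)[u(0)]) + \mc{U}_+(t)[u(0)]$, the second term satisfies the homogeneous energy identity $\|\mc{U}_+(t)[u(0)]\|_{L^\infty_t L^2_x} = \|u(0)\|_{L^2_x}$, and the first term solves the same inhomogeneous equation with zero data. So I need to show $\|u\|_{L^\infty_t L^2_x} \lesa \|F\|_{\mc{N}^+_\lambda}$ when $u(0)=0$. By the atomic definition of $\mc{N}^+_\lambda$, the standard reduction  (\ref{eqn - gen Tf = sum atoms})-(\ref{eqn - gen boundedness on atoms}) reduces me to verifying the bound uniformly for each of the three types of atoms.

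For the energy atom ($F \in L^1_t L^2_x$ with the appropriate frequency support), this is immediate from Duhamel and Minkowski, since $\mc{U}_+(t-s)$ is an $L^2_x$ isometry. For an $\dot{\mc{X}}^{-\frac{1}{2},1}_+$ atom localized at modulation $\approx d$, I decompose via the half-wave projections and invert $(\p_t \pm i|\nabla|)$ on the Fourier side: the operator norm of this inverse on the modulation-$d$ shell is $\approx d^{-1}$, so Bernstein (in the time variable) gives
$$\big\|(\p_t \pm i |\nabla|)^{-1} \Pi_\pm F\big\|_{L^\infty_t L^2_x} \lesa d^{\frac{1}{2}} \big\|(\p_t \pm i|\nabla|)^{-1}\Pi_\pm F\big\|_{L^2_{t,x}} \approx d^{-\frac{1}{2}} \| F\|_{L^2_{t,x}} \lesa 1$$
by the atomic normalization. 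Subtracting the homogeneous solution matching the initial data only costs a comparable constant, since $\|(\p_t \pm i|\nabla|)^{-1}\Pi_\pm F(0)\|_{L^2_x}$ obeys the same bound.

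The main obstacle is the $NF^+_\lambda$ atom case: here $F = \sum_{\kappa \in \mc{C}_\alpha} F_\kappa$ with $\supp \widetilde{\Pi_\pm F_\kappa} \subset A^\pm_{\alpha,\lambda}(\kappa)$ and $\sum_\kappa \|F_\kappa\|_{NF^+(\kappa)}^2 \les 1$. For each $\kappa$ I pick some $\omega \not\in 2\kappa$ and use the forward fundamental solution $E^+_\omega$ built from the null-frame Duhamel representation (\ref{eqn - inhomogeneous Dirac in null coordinates}). By construction $u_\kappa = E^+_\omega * F_\kappa - \mc{U}_+(t)[E^+_\omega * F_\kappa(0)]$ solves $(\p_t + \sigma\cdot\nabla) u_\kappa = F_\kappa$ with zero data, and Corollary \ref{cor - LinftyL2 control of null fund soln} bounds $E^+_\omega * F_\kappa$ in $L^\infty_t L^2_x$ in terms of the $L^1_{t_\omega} L^2_{x_\omega}$ norms of $\Pi_{\pm\omega} F_\kappa$. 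After decomposing $F_\kappa$ into $NF^+(\kappa)$ atoms (and using Lemma \ref{lem - mult are disposable} to preserve the Fourier support so that the fundamental-solution bound keeps applying), this yields $\|u_\kappa\|_{L^\infty_t L^2_x} \lesa \|F_\kappa\|_{NF^+(\kappa)}$.

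The final ingredient is square-function orthogonality: since the sets $A^\pm_{\alpha,\lambda}(\kappa)$ are essentially disjoint as $\kappa$ varies over $\mc{C}_\alpha$, the pieces $F_\kappa(t)$ are almost orthogonal in $L^2_x$ at each fixed time, and so are the solutions $u_\kappa$ they generate (their $L^2_x$-inner products can be controlled frequency-by-frequency using the same disjointness property, which is preserved by the $L^2$-bounded evolution). This gives
$$\|u\|_{L^\infty_t L^2_x} \lesa \Big(\sum_{\kappa \in \mc{C}_\alpha} \|u_\kappa\|_{L^\infty_t L^2_x}^2\Big)^{\frac{1}{2}} \lesa \Big(\sum_{\kappa \in \mc{C}_\alpha} \|F_\kappa\|_{NF^+(\kappa)}^2\Big)^{\frac{1}{2}} \les 1,$$
closing the estimate on $NF^+_\lambda$ atoms and hence on all of $\mc{N}^+_\lambda$. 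The hardest technical point is the orthogonality of the $u_\kappa$: I expect this to follow from the sectorial Fourier supports being preserved by the linear evolution, but care is needed because $E^+_\omega$ mixes $\omega$-dependent regions.
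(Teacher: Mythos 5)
Your proof follows the paper's argument faithfully through all three atom cases, with the same reductions and the same use of Corollary~\ref{cor - LinftyL2 control of null fund soln}. The concern you flag at the very end is unfounded: the square-function orthogonality is applied to the Duhamel solutions $u_\kappa = \int_0^t \mc{U}_+(t-s) F_\kappa(s)\, ds$, whose spatial Fourier supports coincide with those of $F_\kappa$ because $\mc{U}_+$ is a Fourier multiplier, so $E^+_\omega$ enters only as an auxiliary tool to \emph{bound} each fixed $u_\kappa$ (and in any case $E^+_\omega$ preserves the space-time Fourier support of its input, as recorded in Lemma~\ref{lem - null frame decomp into free waves}), so there is no mixing to worry about.
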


\subsection{Null Frame Bounds - The Homogeneous Case}\label{subsec - null frame bounds hom case}

In this section we prove a number of preliminary bounds that are used in the proof of Theorem \ref{thm - null frame bounds}. We start by proving the following preliminary estimate.

\begin{proposition}[$\dot{\mc{X}}^{\frac{1}{2}, 1}_\pm$ controls $PW^-(\kappa)$]\label{prop - Xsb controls PW}
Let $\beta \les \alpha$, $\kappa \in \mc{C}_\alpha$, $\bar{ \kappa } \in \mc{C}_\beta$, and $2\kappa \cap 2\bar{\kappa} \not = \varnothing$. For every $s \in \RR$, let $b_s \in L^\infty_{t, x}$ be a scalar valued function, and let $\psi_s \in L^2_{t, x}$ with the support conditions
     $$\supp  \widehat{\Pi_+\psi}_s \subset A^\pm_\lambda( 2 \bar{\kappa}), \qquad \supp \widehat{\Pi_- \psi}_s \subset A^\mp_\lambda(2 \bar{\kappa}) . $$
Then
        $$\bigg\| \int_{\RR} b_s(t, x) \psi_s(t, x) \, ds\, \bigg\|_{PW^\mp(\kappa)} \lesa (\beta \lambda)^{\frac{n-1}{2}} \int_{\RR} \big\| b_s \big\|_{L^\infty_{t, x}} \big\| \psi_s \big\|_{\dot{\mc{X}}^{\frac{1}{2}, 1}_\pm} \, ds .$$
\begin{proof}
  We only prove the case $\pm = +$, the remain case follows by a reflection in $x$. The assumption $\psi_s \in L^2_{t, x}$ implies that $\psi_s = \sum_d C^\pm_d \psi_s$ and so after an application of Holder is is enough to prove
        $$\bigg\| \int_{\RR} b_s(t, x) \Pi_\pm C^\pm_d \psi_s(t, x) \, ds\, \bigg\|_{PW^-(\kappa)} \lesa (\beta \lambda)^{\frac{n-1}{2}} d^\frac{1}{2} \int_{\RR} \big\| b_s \big\|_{L^\infty_{t, x}} \big\| \Pi_\pm C^\pm_d \psi_s \big\|_{L^2_{t, x}} \, ds .$$
  If $d \gtrsim \beta^2 \lambda$, then the support assumptions on $\psi_s$ together with (\ref{eqn - estimate on dual coordinates no supp restrict}) imply that for every $\omega \in 2\bar{\kappa}$, $|\xi^\bot_\omega| \lesa \lambda \beta$ and $|\xi^1_\omega| \lesa d$. Hence, by taking $\omega \in 2 \bar{\kappa} \cap 2 \kappa$ and using the null form estimate $|\Pi_{-\omega} \Pi_{\frac{\xi}{|\xi|}}| \lesa \theta(\omega, \xi)$,  the definition of the norm $\| \cdot \|_{PW(\kappa)}$ gives
    \begin{align*}
      \bigg\| \int_{\RR} b_s(t, x) &\Pi_\pm C^\pm_d \psi_s(t, x) \, ds\, \bigg\|_{PW^-(\kappa)}  \\
      &\les \bigg\| \int_{\RR} b_s(t, x) \Pi_{-\omega}\Pi_\pm C^\pm_d \psi_s(t, x) \, ds\, \bigg\|_{L^2_{t_\omega} L^\infty_{x_\omega}} + \alpha^{-1} \bigg\| \int_{\RR} b_s(t, x) \Pi_{\omega}\Pi_\pm C^\pm_d \psi_s(t, x) \, ds\, \bigg\|_{L^2_{t_\omega} L^\infty_{x_\omega}} \\
      &\lesa \int_\RR \| b_s \|_{L^\infty_{t, x}} \Big( \big\| \Pi_\pm C^\pm_d \psi_s \big\|_{L^2_{t_\omega} L^\infty_{x_\omega}} + \alpha^{-1} \big\| \Pi_{- \omega} \Pi_\pm C^\pm_d \psi_s \big\|_{L^2_{t_\omega} L^\infty_{x_\omega}} \Big) \, ds \\
      &\lesa (\beta \lambda)^{\frac{n-1}{2}} d^\frac{1}{2} \int_\RR \| b_s \|_{L^\infty_{t, x}}  \big\| \big( 1 + \tfrac{\theta(\omega, \mp \xi)}{\alpha}\big)  \widetilde{\Pi_\pm C^\pm_d \psi_s} \big\|_{L^2_{\tau, \xi}}  \, ds \\
      &\lesa (\beta \lambda)^{\frac{n-1}{2}} d^\frac{1}{2} \int_\RR \| b_s \|_{L^\infty_{t, x}}  \big\| \Pi_\pm C^\pm_d \psi_s \big\|_{L^2_{t, x}}  \, ds
    \end{align*}
as required. On the other hand, if $d \ll \beta^2 \lambda$, then we decompose $ \Pi_\pm \psi_s = \sum_{ \substack{ \kappa' \in \mc{C}_{\beta'} \\ \kappa' \cap 2\bar{\kappa} \not = \varnothing}} \Pi_\pm R^\pm_{ \kappa'} \psi_s$ where $\beta' =  \sqrt{\frac{\lambda}{d}}$. Now as $d \gtrsim (\beta')^2 \lambda$ we can repeat the previous argument (with $\beta$ and $\bar{\kappa}$ replaced with $\beta'$ and $\kappa'$) together with the orthogonality of the projections $R^\pm_{ \kappa'}$ in $L^2_{t, x}$ to obtain
    \begin{align*}
      \bigg\| \int_{\RR} b_s(t, x) \Pi_\pm C^\pm_d \psi_s(t, x) \, ds\, \bigg\|_{PW^-(\kappa)}  &\les \sum_{ \substack{ \kappa' \in \mc{C}_{\beta'} \\ \kappa' \cap 2\bar{\kappa} \not = \varnothing}} \bigg\| \int_{\RR} b_s(t, x) \Pi_\pm R^\pm_{ \kappa'} C^\pm_d \psi_s(t, x) \, ds\, \bigg\|_{PW^-(\kappa)} \\
      &\lesa ( \beta' \lambda)^{\frac{n-1}{2}} d^\frac{1}{2} \sum_{ \substack{ \kappa' \in \mc{C}_{\beta'} \\ \kappa' \cap 2\bar{\kappa} \not = \varnothing}} \int_\RR \| b_s \|_{L^\infty_{t, x}}  \big\| \Pi_\pm R^\pm_{ \kappa'} C^\pm_d \psi_s \big\|_{L^2_{t, x}}  \, ds\\
      &\lesa ( \beta' \lambda)^{\frac{n-1}{2}} d^\frac{1}{2} \bigg( \sum_{ \substack{ \kappa' \in \mc{C}_{\beta'} \\ \kappa' \cap 2\bar{\kappa} \not = \varnothing}}\bigg)^\frac{1}{2}  \int_\RR \| b_s \|_{L^\infty_{t, x}}  \big\| \Pi_\pm C^\pm_d \psi_s \big\|_{L^2_{t, x}}  \, ds\\
      &\lesa ( \beta \lambda)^\frac{n-1}{2} d^\frac{1}{2} \int_\RR \| b_s \|_{L^\infty_{t, x}}  \big\| \Pi_\pm C^\pm_d \psi_s \big\|_{L^2_{t, x}}  \, ds
    \end{align*}
where we used the fact that the number of small caps $\kappa' \in \mc{C}_{\beta'}$ required to cover the larger cap $\bar{\kappa} \in \mc{C}_\beta$ is bounded above by $\big(\frac{\beta}{\beta'}\big)^{n-1}$.
\end{proof}
\end{proposition}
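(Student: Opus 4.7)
After a reflection $x \mapsto -x$ (which swaps the $+$ and $-$ spaces) we may assume $\pm = +$. Since $\psi_s \in L^2_{t,x}$, the decomposition $\psi_s = \sum_{d \in 2^\ZZ} \mathfrak{C}^+_d \psi_s$ converges in $L^2$; combined with the triangle inequality in $s$ and the atomic definition of $PW^-(\kappa)$, the estimate reduces to showing, for each fixed $s$ and each dyadic $d$,
$$\big\| b_s \, \Pi_\pm C^\pm_d \psi_s \big\|_{PW^-(\kappa)} \lesa (\beta \lambda)^{\frac{n-1}{2}} d^{\frac{1}{2}} \| b_s \|_{L^\infty_{t,x}} \big\| \Pi_\pm C^\pm_d \psi_s \big\|_{L^2_{t,x}},$$
since summing in $d$ against the $\ell^1$-weighted norm reproduces $\| \psi_s \|_{\dot{\mc{X}}^{1/2,1}_\pm}$.

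The main idea is to select $\omega \in 2\kappa \cap 2\bar{\kappa}$, which is nonempty by hypothesis. Bounding the $PW^-(\kappa)$ norm by testing against this $\omega$, I must control $\| b_s \Pi_{\mp\omega} \Pi_\pm C^\pm_d \psi_s \|_{L^2_{t_\omega} L^\infty_{x_\omega}}$ and the potentially dangerous term $\alpha^{-1} \| b_s \Pi_{\pm\omega} \Pi_\pm C^\pm_d \psi_s \|_{L^2_{t_\omega} L^\infty_{x_\omega}}$. The latter is where the key cancellation occurs: for $\xi \in \supp \widehat{\Pi_\pm C^\pm_d \psi_s} \subset A^\pm_\lambda(2\bar{\kappa})$ we have $\sgn(\tau)\xi/|\xi| \in 2\bar{\kappa}$, so the null form estimate $|\Pi_{\pm\omega} \Pi_{\frac{\xi}{|\xi|}}| \lesa \theta(\omega, \mp \xi) \lesa \beta$ gives a factor of $\beta$, which absorbs $\alpha^{-1}$ since $\beta \les \alpha$. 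Pulling $b_s$ out in $L^\infty_{t,x}$ is immediate.

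For the case $d \gtrsim \beta^2 \lambda$: the support condition $\Pi_\pm C^\pm_d \psi_s \subset A^\pm_\lambda(2\bar{\kappa}) \cap \{ |\tau \pm |\xi|| \approx d\}$ together with the estimates (\ref{eqn - estimate on dual coordinates no supp restrict}) shows that, for this $\omega$, the $\xi_\omega$ variable ranges in a set of size $|\xi^\bot_\omega| \lesa \beta \lambda$, $|\xi^1_\omega| \lesa d$, i.e.\ volume $(\beta\lambda)^{n-1} d$. A Bernstein type inequality (converting $L^\infty_{x_\omega}$ into $L^2_{x_\omega}$, and  $L^2_{t_\omega}$ into $L^2_{t_\omega}$ via Plancherel) then gives the factor $(\beta\lambda)^{(n-1)/2} d^{1/2}$, finishing this case.

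For $d \ll \beta^2 \lambda$: here $d$ is too small relative to the cap size $\beta$ to feed directly into Bernstein at scale $\beta$. The remedy is a secondary angular decomposition at the natural scale $\beta' = \sqrt{d/\lambda} \ll \beta$: write $\Pi_\pm C^\pm_d \psi_s = \sum_{\kappa' \in \mc{C}_{\beta'}, \, \kappa' \cap 2\bar{\kappa}\neq \varnothing} R^\pm_{\kappa'} \Pi_\pm C^\pm_d \psi_s$. Each summand satisfies $d \gtrsim (\beta')^2 \lambda$, so the Case 1 argument applies with $\beta$ replaced by $\beta'$. The number of caps $\kappa'$ fitting inside $\bar{\kappa}$ is $\lesa (\beta/\beta')^{n-1}$, and the pieces $R^\pm_{\kappa'} \Pi_\pm C^\pm_d \psi_s$ are orthogonal in $L^2_{t,x}$, so Cauchy--Schwarz in the $\kappa'$ sum converts the per-piece constant $(\beta'\lambda)^{(n-1)/2} d^{1/2}$ into $(\beta/\beta')^{(n-1)/2} (\beta'\lambda)^{(n-1)/2} d^{1/2} = (\beta\lambda)^{(n-1)/2} d^{1/2}$, exactly as desired. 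The main obstacle is precisely this matching of scales $d, \beta^2 \lambda, (\beta')^2 \lambda$ while exploiting the null form gain $\beta$ (coming from $\omega \in 2\bar\kappa$) to overcome the $\alpha^{-1}$ penalty in the $PW^-(\kappa)$ atom norm; once the correct angular parameter $\beta' = \sqrt{d/\lambda}$ is introduced, everything reduces to Bernstein and $L^2$ orthogonality.
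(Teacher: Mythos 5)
Your proposal is correct and follows essentially the same route as the paper: reduce via the $\dot{\mathcal{X}}^{1/2,1}_\pm$ decomposition to a fixed-$d$ estimate, choose $\omega \in 2\kappa \cap 2\bar\kappa$ so the null form estimate cancels the $\alpha^{-1}$ penalty, apply Bernstein on the Fourier support in $\xi_\omega$ (volume $\approx (\beta\lambda)^{n-1}d$) when $d \gtrsim \beta^2\lambda$, and perform the secondary angular decomposition at scale $\beta' = \sqrt{d/\lambda}$ with $\ell^2$ cap orthogonality when $d \ll \beta^2\lambda$. You have correctly identified $\beta' = \sqrt{d/\lambda}$ (the paper's $\sqrt{\lambda/d}$ is a typo, since otherwise $d \gtrsim (\beta')^2\lambda$ fails); the only minor slip is writing the dangerous $PW^-(\kappa)$ component as $\Pi_{\pm\omega}$ rather than $\Pi_\omega$, but this does not affect the argument as the null form gain $\lesa \beta$ holds for both the $\Pi_+$ and $\Pi_-$ pieces.
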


We can now prove the homogeneous case of Theorem \ref{thm - null frame bounds}.

\begin{corollary}[Null frame bounds - homogeneous case]\label{cor - null frame bounds hom case}
\leavevmode
\begin{enumerate}
  \item Let $\alpha \ll 1$ and $f \in L^2_x$ with
       $\supp \widehat{\Pi_+ f} \subset {A}^\pm_\lambda( \tfrac{3}{2} \kappa)$ and $ \supp \widehat{\Pi_- f } \subset{A}^\mp_\lambda(
       \tfrac{3}{2} \kappa)$.
  Then

            \begin{equation}\label{eqn - cor null frame bounds lin case - NF bound} \big\| \mc{U}_\pm(t) f    \|_{[NF^\pm]^*(\kappa)} \lesa \| f  \|_{L^2_x}.\end{equation}

  \item  Let $\beta \les \alpha \ll 1$,  $\kappa \in \mc{C}_\alpha$, and $\bar{\kappa} \in \mc{C}_\beta$ with $2\kappa \cap 2\bar{\kappa} \not = \varnothing$. Let $\rho \in C^\infty_0(\RR)$ and $T>0$. Assume $f \in  L^2_x$ with
    $\supp \widehat{\Pi_+ f} \subset {A}^\pm_\lambda( 2 \bar{\kappa})$ and $\supp \widehat{\Pi_- f }  \subset {A}^\mp_\lambda( 2 \bar{\kappa})$. Then
               \begin{equation}\label{eqn - cor null frame bounds lin case - PW bound}\big\| \rho(\tfrac{t}{T}) \mc{U}_\pm(t) f \big\|_{PW^\mp(\kappa)} \lesa (\beta \lambda)^{\frac{n-1}{2}} \| f \|_{L^2_x} \end{equation}
    with constant independent of $T$.
\end{enumerate}
\begin{proof} We start by proving $(i)$. By a reflection in the $x$ variable, we may assume that $\pm = +$.  The estimate (\ref{eqn - hom soln in null coord}) gives
    $$ \| e^{\mp i t |\nabla|} \Pi_{\pm} f \|_{L^\infty_{t_\omega} L^2_{x_\omega}}  = \| e^{\mp i (t - x \cdot \omega) |\nabla|} \Pi_{\pm}f(x) \|_{L^\infty_{t} L^2_{x}} \approx \big\| \theta(\omega, \mp \xi)^{-1} \widehat{\Pi_{\pm}f} \big\|_{L^2_{\xi}} \lesa \theta(\omega, \kappa)^{-1} \| \Pi_{\pm}f \|_{L^2_x}.$$
Note that for $\omega \not \in 2\kappa$ and $\xi \in \supp \widehat{\Pi_\pm f}$, we have the null form estimate $|\Pi_\omega \Pi_{\pm \frac{\xi}{|\xi|}} | \lesa \theta(\omega, \mp \xi)\approx \theta(\omega, \kappa)$. Therefore, by decomposing $\mc{U}_+(t) = e^{ - i t |\nabla|} \Pi_+ + e^{ i t |\nabla|} \Pi_-$,  we deduce that
    \begin{align*}
      \big\| \Pi_\omega \mc{U}_+(t) f  \big\|_{L^\infty_{t_\omega} L^2_{x_\omega}} + \theta(\omega, \kappa) \big\| \Pi_{-\omega} \mc{U}_+(t) f \big\|_{L^\infty_{t_\omega} L^2_{x_\omega}} &\lesa \sum_\pm \big\| e^{\mp i t|\nabla|} \Pi_\omega \Pi_\pm f\big\|_{L^\infty_{t_\omega} L^2_{x_\omega}} + \theta(\omega, \kappa) \big\| e^{ \mp i t|\nabla|} \Pi_{-\omega} \Pi_\pm f \big\|_{L^\infty_{t_\omega} L^2_{x_\omega}} \\
                &\lesa \sum_\pm \theta(\omega, \kappa)^{-1} \big\| \Pi_\omega \widehat{\Pi_\pm f} \big\|_{L^2_{\xi}} +  \big\| \Pi_{-\omega} \widehat{\Pi_\pm f} \big\|_{L^2_{\xi}} \\
                &\lesa \| f \|_{L^2_x}.
    \end{align*}
Taking the sup over $\omega \not \in 2\kappa$ then gives (\ref{eqn - cor null frame bounds lin case - NF bound}).

On the other hand, (ii) follows directly from Proposition \ref{prop - Xsb controls PW} (with $\psi_s(t, x) = \rho(\frac{t}{T}) \mc{U}_\pm(t)f$ and $b_s(t, x) = \ind_{[0, 1]}(s)$) and Lemma \ref{lem - Xsb, homogeneous solns, and time cutoffs}.
\end{proof}
\end{corollary}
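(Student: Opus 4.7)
The plan is to treat the two parts separately, with part (ii) reducing almost immediately to tools already established in the paper.

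For part (i), after a reflection in $x$ I may take $\pm=+$. Fixing $\omega\notin 2\kappa$, it suffices to bound $\|\Pi_\omega \mc{U}_+(t)f\|_{L^\infty_{t_\omega}L^2_{x_\omega}}$ and $\theta(\omega,\kappa)\|\Pi_{-\omega}\mc{U}_+(t)f\|_{L^\infty_{t_\omega}L^2_{x_\omega}}$ by $\|f\|_{L^2_x}$. Writing $\mc{U}_+(t)f = e^{-it|\nabla|}\Pi_+ f + e^{it|\nabla|}\Pi_- f$, the core computation is the null-frame change of variables already carried out in the proof of Lemma \ref{lem - hom bilinear est}: parameterising the null hyperplane by $(-\omega\cdot x,x)$ and substituting $y=\xi\pm|\xi|\omega$, whose Jacobian is $J(\xi)\approx \theta(\omega,\mp\xi)^2$, Plancherel gives
$$\|e^{\mp i(t-x\cdot\omega)|\nabla|}\Pi_\pm f\|_{L^\infty_t L^2_x}\approx \|\theta(\omega,\mp\xi)^{-1}\widehat{\Pi_\pm f}\|_{L^2_\xi}.$$
Because $\supp\widehat{\Pi_\pm f}\subset A^\pm_\lambda(\tfrac{3}{2}\kappa)$ and $\omega\notin 2\kappa$, I have $\theta(\omega,\mp\xi)\approx \theta(\omega,\kappa)$ on the relevant support. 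For the $\Pi_{-\omega}$ component this yields $\theta(\omega,\kappa)^{-1}\|f\|_{L^2_x}$, which is precisely cancelled by the explicit $\theta(\omega,\kappa)$ weight in the definition of $[NF^+]^*(\kappa)$. For the $\Pi_\omega$ component I insert the null-form cancellation $|\Pi_\omega \Pi_{\pm\xi/|\xi|}|\lesa \theta(\omega,\mp\xi)\approx \theta(\omega,\kappa)$ from (\ref{eqn - null structure estimate}) \emph{inside} the $L^2_\xi$ integral; this exactly kills the bad $\theta(\omega,\kappa)^{-1}$ factor. Summing the two sign choices of $\mc{U}_+$ and taking the supremum over $\omega\notin 2\kappa$ completes (i).

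For part (ii), the natural strategy is simply to apply Proposition \ref{prop - Xsb controls PW} with $\psi_s(t,x):=\rho(t/T)\mc{U}_\pm(t)f$ and $b_s(t,x):=\ind_{[0,1]}(s)$. The frequency support requirements of Proposition \ref{prop - Xsb controls PW} are inherited from those on $\Pi_\pm f$, since $\Pi_\pm$ commutes with $\mc{U}_\pm(t)$ and the time cutoff acts only in $t$. Combined with Lemma \ref{lem - Xsb, homogeneous solns, and time cutoffs}, which delivers the uniform-in-$T$ bound $\|\rho(t/T)\mc{U}_\pm(t)f\|_{\dot{\mc{X}}^{\frac{1}{2},1}_\pm}\lesa \|f\|_{L^2_x}$, the proposition outputs exactly the bound $(\beta\lambda)^{(n-1)/2}\|f\|_{L^2_x}$ with constant independent of $T$.

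The only delicate point is the bookkeeping in (i): the Jacobian singularity $\theta(\omega,\mp\xi)^{-2}$ produced by the change of variables together with the null-form cancellation from the matrix product $\Pi_{\pm\omega}\Pi_{\pm\xi/|\xi|}$ must balance precisely. Verifying that the weight $\theta(\omega,\kappa)$ appearing in front of the $\Pi_{-\omega}$ piece of the $[NF^+]^*(\kappa)$ norm, together with the single power of $\theta(\omega,\mp\xi)$ gained from (\ref{eqn - null structure estimate}) in the $\Pi_\omega$ piece, is exactly what is needed to absorb the Jacobian loss is the main conceptual step. Everything else reduces to Plancherel and the support hypothesis on $\widehat{\Pi_\pm f}$.
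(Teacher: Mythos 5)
Your proof is essentially identical to the paper's: for (i) you reflect to $\pm=+$, pass to null coordinates via the Jacobian computation in Lemma \ref{lem - hom bilinear est} (equation \eqref{eqn - hom soln in null coord}), note $\theta(\omega,\mp\xi)\approx\theta(\omega,\kappa)$ on the support, and use the null form estimate \eqref{eqn - null structure estimate} for the $\Pi_\omega$ piece while the explicit $\theta(\omega,\kappa)$ weight in $[NF^+]^*(\kappa)$ handles the $\Pi_{-\omega}$ piece; for (ii) you invoke Proposition \ref{prop - Xsb controls PW} with $\psi_s=\rho(t/T)\mc{U}_\pm(t)f$, $b_s=\ind_{[0,1]}(s)$, and Lemma \ref{lem - Xsb, homogeneous solns, and time cutoffs}, exactly as in the paper. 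No gaps; the approach and the bookkeeping match the paper's proof step for step.
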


Finally, we need to be able to commute the projections $C^\pm_{\ll \alpha^2 \lambda}$ and the time cutoff $\rho(\frac{t}{T})$.

\begin{lemma}\label{lem - PW est, cutoff and Proj commute}
  Let $\rho \in C^\infty_0(\RR)$ and $T>0$. If $u \in F^\pm_\lambda$ then
         \begin{equation}\label{eqn - lem PW est cutoff and Proj commute - main est}\Big( \sum_{\kappa \in \mc{C}_\alpha} \big\| R^\pm_{\kappa, \alpha^2 \lambda} \Pi_\pm \big( \rho(\tfrac{t}{T}) u \big) \big\|_{PW^-(\kappa)}^2 \Big)^\frac{1}{2} \lesa  \Big( \sum_{\kappa \in \mc{C}_\alpha} \big\| \rho(\tfrac{t}{T}) R^\pm_{\kappa, \alpha^2 \lambda}  \Pi_\pm u  \big\|_{PW^-(\kappa)}^2 \Big)^\frac{1}{2} + (\alpha \lambda)^{\frac{n-1}{2}} \|u \|_{F^+_\lambda}  \end{equation}
  with constant independent of $T$.
\begin{proof}
 The idea is to decompose $u = C^\pm_{ \ll \alpha^2 \lambda} u + C^\pm_{\gtrsim \alpha^2 \lambda} u $ into a component close to the cone, and a component far from the cone. For the close cone term, after an application of Lemma \ref{lem - mult are disposable} to dispose of the outer multiplier, we deduce that
    \begin{align*}
      \Big( \sum_{\kappa \in \mc{C}_\alpha} \big\|  R^\pm_{\kappa, \alpha^2 \lambda} \Pi_\pm \big( \rho(\tfrac{t}{T}) C^\pm_{\ll \alpha^2 \lambda} u \big) \big\|_{PW^-(\kappa)}^2 \Big)^\frac{1}{2} &= \Big( \sum_{\kappa \in \mc{C}_\alpha} \big\| C^\pm_{\ll \alpha^2 \lambda} \big( \rho(\tfrac{t}{T}) \Pi_\pm R^\pm_{\kappa, \alpha^2 \lambda}  u \big) \big\|_{PW^-(\kappa)}^2 \Big)^\frac{1}{2}\\
      &\lesa \Big( \sum_{\kappa \in \mc{C}_\alpha} \big\| \rho(\tfrac{t}{T})  R^\pm_{\kappa, \alpha^2 \lambda}  \Pi_\pm u  \big\|_{PW^-(\kappa)}^2 \Big)^\frac{1}{2}
    \end{align*}
 which gives the first term on the righthand side of (\ref{eqn - lem PW est cutoff and Proj commute - main est}). On the other hand, for the far cone term, an application of the $X^{s, b}$ estimate in Proposition \ref{prop - Xsb controls PW} together with the $L^2_x$ orthogonality of the $R^\pm_{\kappa, \alpha^2 \lambda} $ multipliers gives
    \begin{align*}
      \Big( \sum_{\kappa \in \mc{C}_\alpha} \big\| R^\pm_{\kappa, \alpha^2 \lambda}  \Pi_\pm \big( \rho(\tfrac{t}{T}) C^\pm_{\gtrsim \alpha^2 \lambda} u \big) \big\|_{PW^-(\kappa)}^2 \Big)^\frac{1}{2} &\lesa (\alpha \lambda)^{\frac{n-1}{2}} \Big( \sum_{\kappa \in \mc{C}_\alpha} \big\|R^\pm_{\kappa, \alpha^2 \lambda}  \Pi_\pm \big( \rho(\tfrac{t}{T}) C^\pm_{\gtrsim \alpha^2 \lambda} u \big) \big\|_{\dot{\mc{X}}^{\frac{1}{2}, 1}_+}^2 \Big)^\frac{1}{2} \\
      &\lesa (\alpha \lambda)^{\frac{n-1}{2}} \big\|  \Pi_\pm C^\pm_{\ll \alpha^2 \lambda} \big( \rho(\tfrac{t}{T}) C^\pm_{\gtrsim \alpha^2 \lambda}  u \big) \big\|_{\dot{\mc{X}}^{\frac{1}{2}, 1}_+} \\
      &\lesa (\alpha \lambda)^{\frac{n-1}{2}} ( \alpha^2 \lambda)^\frac{1}{2} \big\|  \rho(\tfrac{t}{T})\Pi_\pm C^\pm_{\gtrsim \alpha^2 \lambda} u \big\|_{L^2_{t, x}} \\
      &\lesa (\alpha \lambda)^{\frac{n-1}{2}} \| \rho \|_{L^\infty}  ( \alpha^2 \lambda)^\frac{1}{2}  \|  C^\pm_{\gtrsim \alpha^2 \lambda}  \Pi_\pm u \|_{L^2_{t, x}} \\
      &\lesa (\alpha \lambda)^\frac{n-1}{2} \| u \|_{F_\lambda^+}
    \end{align*}
where the last inequality followed from (\ref{eqn - L2 control away from null cone}).

\end{proof}
\end{lemma}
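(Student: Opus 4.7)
The obstruction is that the sharp modulation cutoff $C^\pm_{\ll \alpha^2 \lambda}$ (hidden inside $R^\pm_{\kappa,\alpha^2\lambda}$) does not commute with multiplication by $\rho(t/T)$, since convolving with $\widehat{\rho}(\tfrac{\tau}{T}\cdot T)$ spreads the temporal frequency support. The plan is to split $u$ according to whether it already sits close to or far from the characteristic cone, so that in the close-cone regime the time cutoff becomes harmless, and in the far-cone regime we can pay the modulation weight.

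More precisely, write $u = C^\pm_{\ll \alpha^2\lambda}\,u \;+\; C^\pm_{\gtrsim \alpha^2\lambda}\,u$, and split the lefthand side of \eqref{eqn - lem PW est cutoff and Proj commute - main est} accordingly. For the first (close cone) piece, the key observation is that, applied to functions already localised to $|\tau \pm |\xi||\ll \alpha^2\lambda$, the outer multiplier $R^\pm_{\kappa,\alpha^2\lambda}$ reduces essentially to $R^\pm_\kappa$ composed with a modulation cutoff at a scale the function already satisfies; by Lemma \ref{lem - mult are disposable} (and Remark \ref{rem - C_d mult disposable}) these are given by convolution with an $L^1_{t,x}$ kernel, hence disposable on $PW^-(\kappa)$. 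This lets us pull the cutoff $\rho(t/T)$ past $R^\pm_{\kappa,\alpha^2\lambda}\,\Pi_\pm$ up to an $L^1_{t,x}$ convolution, and recover exactly the first term on the right of \eqref{eqn - lem PW est cutoff and Proj commute - main est}.

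For the far-cone piece $C^\pm_{\gtrsim \alpha^2\lambda}\,u$, I plan to estimate crudely in $\dot{\mc{X}}^{1/2,1}_\pm$: by Proposition \ref{prop - Xsb controls PW} applied to each angular sector (which yields the gain $(\beta\lambda)^{(n-1)/2}$ with $\beta = \alpha$), combined with the $L^2_x$ orthogonality of the $R^\pm_\kappa$ and the fact that $\rho(t/T)$ is bounded, the angular square-function is controlled by $(\alpha\lambda)^{(n-1)/2}$ times
$$\bigl\|\rho(t/T)\,C^\pm_{\gtrsim\alpha^2\lambda}\Pi_\pm u\bigr\|_{\dot{\mc{X}}^{1/2,1}_\pm}.$$
Dispose of $\rho(t/T)$ using $\|\rho\|_{L^\infty}$ after noting that $C^\pm_{\ll\alpha^2\lambda}\,\rho(t/T)\,C^\pm_{\gtrsim\alpha^2\lambda}$ now acts on a function whose modulation is $\approx \alpha^2\lambda$, so the $\dot{\mc{X}}^{1/2,1}_\pm$ norm collapses to $(\alpha^2\lambda)^{1/2}\|C^\pm_{\gtrsim\alpha^2\lambda}\Pi_\pm u\|_{L^2_{t,x}}$. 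Finally, the $L^2_{t,x}$ bound away from the cone \eqref{eqn - L2 control away from null cone} gives $(\alpha^2\lambda)^{1/2}\cdot (\alpha^2\lambda)^{-1/2}\|u\|_{F^\pm_\lambda} = \|u\|_{F^\pm_\lambda}$, producing the error $(\alpha\lambda)^{(n-1)/2}\|u\|_{F^\pm_\lambda}$.

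The main technical point — and where I expect the cleanest exposition to matter — is justifying, on the close-cone piece, that one can freely interchange $R^\pm_{\kappa,\alpha^2\lambda}$ with $\rho(t/T)$ without an additional loss: this rests on the $L^1_{t,x}$-kernel property of the relevant frequency multipliers from Lemma \ref{lem - mult are disposable} and Remark \ref{rem - C_d mult disposable}, which is precisely what makes these multipliers disposable on the null-frame norms.
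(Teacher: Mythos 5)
Your proof proposal matches the paper's argument almost line-for-line: the same decomposition $u=C^\pm_{\ll\alpha^2\lambda}u + C^\pm_{\gtrsim\alpha^2\lambda}u$, the same use of disposable multipliers (Lemma~\ref{lem - mult are disposable} and Remark~\ref{rem - C_d mult disposable}) to commute $\rho(t/T)$ past $R^\pm_{\kappa,\alpha^2\lambda}\Pi_\pm$ on the close-cone piece, and the same chain Proposition~\ref{prop - Xsb controls PW} $\to$ $L^2_x$-orthogonality of the angular sectors $\to$ collapse of $\dot{\mc{X}}^{1/2,1}_\pm$ to $(\alpha^2\lambda)^{1/2}L^2_{t,x}$ via the outer modulation cutoff $\to$ the far-cone $L^2$ bound~\eqref{eqn - L2 control away from null cone} on the far-cone piece. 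One small remark on exposition: when you display $(\alpha\lambda)^{(n-1)/2}\|\rho(t/T)C^\pm_{\gtrsim\alpha^2\lambda}\Pi_\pm u\|_{\dot{\mc{X}}^{1/2,1}_\pm}$ as the intermediate bound, the outer $C^\pm_{\ll\alpha^2\lambda}$ projection should be kept inside the $\dot{\mc{X}}^{1/2,1}$ norm at that stage — without it the $\ell^1$ sum over modulations is not manifestly controlled by a single $(\alpha^2\lambda)^{1/2}$ factor. Your subsequent sentence shows you are aware of this (you invoke the composition $C^\pm_{\ll\alpha^2\lambda}\,\rho\,C^\pm_{\gtrsim\alpha^2\lambda}$), so this is just a matter of making the display consistent with the text.
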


\subsection{Proof of Theorem \ref{thm - null frame bounds}}\label{subsec - proof of null frame bounds}

We now turn to the proof of Theorem \ref{thm - null frame bounds}.

  \begin{proof}[Proof of Theorem \ref{thm - null frame bounds}]  The proof proceeds by essentially reducing the problem to the homogeneous case, at which point we may apply Corollary \ref{cor - null frame bounds hom case}. This type of argument is fairly straightforward if we are in $L^1_t L^2_x$ or $X^{s, b}$, but is more involved in the null frame case as we need to use the Duhamel formula in null coordinates, together with the decomposition into free waves contained in Lemma \ref{lem - null frame decomp into free waves}.\\

  We begin by noting that after a reflection in the $x$ variable, we may assume $\pm = +$. An application of  Corollary \ref{cor - orthog in null frame} gives the orthogonality bound\footnote{This follows by decomposing $F$ into atoms. For energy and $\dot{\mc{X}}^{-\frac{1}{2}, 1}_+$ atoms, we can just use the orthogonality in $L^2_x$ of $R^\pm_{\kappa}$. For $NF^+_\lambda$ atoms we just use (ii) in Corollary \ref{cor - orthog in null frame}.}
            $$ \Big( \sum_{\kappa \in \mc{C}_\alpha} \|  R^\pm_{\kappa, \alpha^2 \lambda} \Pi_\pm F \|_{\mc{N}^+_\lambda}^2 \Big)^\frac{1}{2} \lesa \| F \|_{\mc{N}^+_\lambda}.$$
  Consequently, after an application of the homogeneous case Corollary \ref{cor - null frame bounds hom case} (together with Lemma \ref{lem - PW est, cutoff and Proj commute} in the $PW^-(\kappa)$ case) it suffices to prove
  $$ \| u \|_{[NF^+]^*(\kappa)} \lesa  \big\|  (\p_t + \sigma \cdot \nabla) R^\pm_{\kappa, \alpha^2 \lambda} \Pi_\pm u  \big\|_{\mc{N}^+_\lambda}$$
and
    $$
        \big\| \rho(\tfrac{t}{T}) u \big\|_{PW^-(\kappa)} \lesa (\alpha \lambda)^{\frac{n-1}{2}} \big\| (\p_t + \sigma \cdot \nabla) R^\pm_{\kappa, \alpha^2 \lambda} \Pi_\pm u \big\|_{\mc{N}^+_\lambda}$$
If we now decompose $(\p_t + \sigma \cdot \nabla) R^\pm_{\kappa, \alpha^2 \lambda} \Pi_\pm u$ into atoms, we reduce to showing that
            \begin{equation}\label{eqn - thm null frame bounds - main inequality} \| u \|_{[NF^+]^*(\kappa)}  + (\alpha \lambda)^{- \frac{n-1}{2}} \big\| \rho(\tfrac{t}{T}) u \big\|_{PW^-(\kappa)}  \lesa 1 \end{equation}
  where $u$ is the solution to $(\p_t + \sigma \cdot \nabla) u= F$ with $u(0) = 0$,
and  $F = \Pi_\pm F$ is a $\mc{N}^+_\lambda$ atom. Note that we may assume  $ \supp \widetilde{\Pi_\pm F} \subset {^\natural A}^\pm_{\lambda, \alpha}(\kappa)$. We now separately consider the three possible cases;  $F$ is an energy atom, $F$ is a $\dot{\mc{X}}^{-\frac{1}{2}, 1}_+$ atom, or $F$ is a $NF^+_\lambda$ atom. \\

  \textbf{Case 1: $F$ is a energy atom.} If we write $u$ using the Duhamel formula we have
                $$ u(t, x) = \int_0^t \mc{U}_+(t-s) F(s) ds .$$
  Note that for each fixed $s$, the $\xi$ support of $\mc{U}_+(t-s)F(s)$ is contained in the set ${^\natural A}^\pm_{\lambda}(\kappa)$.  Therefore, as $\| \cdot \|_{[NF^+]^*(\kappa)}$ satisfies Minkowski's inequality, we have by Corollary \ref{cor - null frame bounds hom case} (for a fixed cap)
  \begin{align*}  \big\| u \big\|_{[NF^+]^*(\kappa)}
            &\lesa \int_\RR \big\| \ind_{[0, T]}(s) \mc{U}_+(t-s) F(s) \big]  \big\|_{[NF^+]^*(\kappa)} ds \\
            &\lesa \| F \|_{L^1_t L^2_x } \les 1.
  \end{align*}
  Similarly, an application of Proposition \ref{prop - Xsb controls PW} followed by Lemma \ref{lem - Xsb, homogeneous solns, and time cutoffs} gives
  \begin{align*}
    \big\| \rho(\tfrac{t}{T})  u \big\|_{PW^-(\kappa)} &=  \bigg\| \int_{\RR} \ind_{[0,T]}(s) \, \rho(\tfrac{t}{T}) \mc{U}_+(t-s) F(s) ds \bigg\|_{PW^-(\kappa)} \\
    &\lesa (\alpha \lambda)^{\frac{n-1}{2}} \int_\RR \big\| \rho(\tfrac{t}{T})  \mc{U}_+(t-s) F(s) \big\|_{\dot{\mc{X}}^{\frac{1}{2}, 1}_+} \, ds \\
    &\lesa (\alpha \lambda)^{\frac{n-1}{2}} \| F \|_{L^1_t L^2_x} \les 1
  \end{align*}
 Therefore (\ref{eqn - thm null frame bounds - main inequality}) follows in the case where $F$ is a $L^1_t L^2_x$ atom. \\

  \textbf{Case 2: $F$ is a $\mc{X}^{-\frac{1}{2}, 1}_+$ atom.}  Assume $F$ is a $\dot{\mc{X}}^{-\frac{1}{2}, 1}_+$ atom, thus
    $$\supp \widetilde{F} = \supp \widetilde{\Pi_\pm F} \subset \big\{ \big| \tau \pm |\xi| \big| \approx d \, \big\} \cap {^\natural A}^\pm_{\lambda, \alpha}(\kappa)$$
  and $ \|  F \|_{L^2_{t, x}} \les d^{\frac{1}{2}}$. Note that an application of Lemma \ref{lem - Xsb decomp into free waves} together with Corollary \ref{cor - null frame bounds hom case} shows that, provided $v \in L^2_{t, x}$ with $\supp \widehat{\Pi_\pm v} \subset {^\natural A}^\pm_{\lambda}( \tfrac{3}{2} \kappa)$, we have
    \begin{equation}\label{eqn - thm null frame bounds - Xsb controls NF*}
        \| v \|_{[NF^+]^*(\kappa)} \lesa \| v \|_{\dot{\mc{X}}^{\frac{1}{2}, 1}_+}.
    \end{equation}
 Therefore, by writing
    $$u = \Pi_\pm u = ( \p_t \pm i |\nabla|)^{-1} \Pi_\pm F - \mc{U}_+(t) \big[ ( \p_t \pm i |\nabla|)^{-1} \Pi_\pm F(0)\big] $$
 we have by Corollary \ref{cor - null frame bounds hom case} and (\ref{eqn - thm null frame bounds - Xsb controls NF*})
 \begin{align*}
   \| u \|_{[NF^+]^*(\kappa)} &\les \big\| ( \p_t \pm i |\nabla|)^{-1} \Pi_\pm F\big\|_{[NF^+]^*(\kappa)}  + \big\|  \mc{U}_+(t) \big[ ( \p_t \pm i |\nabla|)^{-1} \Pi_\pm F(0)\big] \big\|_{[NF^+]^*(\kappa)} \\
   &\lesa \big\| ( \p_t \pm i |\nabla|)^{-1} \Pi_\pm F\big\|_{\dot{\mc{X}}^{\frac{1}{2}, 1}_+}  + \big\|  ( \p_t \pm i |\nabla|)^{-1} \Pi_\pm F \big\|_{L^\infty_t L^2_x}\\
   &\lesa d^{ - \frac{1}{2}} \| F \|_{L^2_{t, x}} \les 1
 \end{align*}
 as required. The $PW^-(\kappa)$ estimate is similar, we just replace the estimate (\ref{eqn - thm null frame bounds - Xsb controls NF*})
 with  an application of Proposition \ref{prop - Xsb controls PW} (where $b_s(t, x) = \ind_{[0, 1]}(s)$, $\psi_s(t, x) = ( \p_t \pm i |\nabla|)^{-1} \Pi_\pm F$, and $\kappa = \bar{\kappa}$). \\

  \textbf{Case 3: $F$ is a $NF^+_\lambda$ atom.}
By definition, we have a decomposition $F = \sum_{\bar{\kappa} \in \mc{C}_\beta} F_{\bar{\kappa}}$ where we may assume $F_{\bar{\kappa}} = \Pi_\pm F_{\kappa}$,
        $$ \supp \widetilde{\Pi_\pm F}_{\bar{\kappa}} \subset {^\natural A}^\pm_{\lambda, \alpha}(\kappa) \cap A^\pm_{\lambda, \beta}(\bar{\kappa})$$
  and
    $$ \Big( \sum_{\bar{\kappa}} \| F_{\bar{\kappa}} \|_{NF^+(\bar{\kappa})}^2 \Big)^\frac{1}{2} \les 1.$$
Define $u_{\bar{\kappa}}$ as the solution to $(\p_t + \sigma \cdot \nabla) u_{\bar{\kappa}} = F_{\bar{\kappa}}$ with $u_{\bar{\kappa}}(0) =0$. Assume for the moment that we have the cap localised estimates
        \begin{equation} \label{eqn - thm null frame bound - NF case cap localised NF*}
                \big\|   u_{\bar{\kappa}} \big\|_{[NF^+]^*(\kappa)} \lesa \|   F_{\bar{\kappa}} \|_{NF^+(\bar{\kappa})}
        \end{equation}
and
     \begin{equation} \label{eqn - thm null frame bound - NF case cap localised PW}
                \big\| \rho(\tfrac{t}{T})  u_{\bar{\kappa}} \big\|_{PW^-(\kappa)} \lesa (\min\{\alpha, \beta\} \lambda)^{\frac{n-1}{2}} \|  F_{\bar{\kappa}} \|_{NF^+(\bar{\kappa})}.
        \end{equation}
 Then, writing
    $$u = \sum_{\bar{\kappa} \in \mc{C}_\beta} u_{\bar{\kappa}} = \sum_{\substack{\bar{\kappa} \in \mc{C}_\beta \\ {^\natural \kappa} \cap \bar{\kappa} \not = \varnothing }} u_{\bar{\kappa}}$$
 and using the orthogonality given  by $(i)$ in  Corollary \ref{cor - orthog in null frame}, together with (\ref{eqn - thm null frame bound - NF case cap localised NF*}) we deduce that
    \begin{align*}
      \|  u \|_{[NF^+]^*(\kappa)}  \lesa \bigg( \sum_{\bar{\kappa} \in \mc{C}_\beta} \|   u_{\bar{\kappa}} \|_{[NF^+]^*(\kappa)}^2 \bigg)^{\frac{1}{2}}
      \lesa  \bigg( \sum_{ \bar{\kappa} \in \mc{C}_\beta} \|  F_{\bar{\kappa}} \|_{NF^+(\bar{\kappa})}^2 \bigg)^\frac{1}{2} \les 1
    \end{align*}
  as required. For the $PW^-(\kappa)$ estimate, the argument is slightly different as we don't have any orthogonality in the $\bar{\kappa}$ sum due to the fact that the $PW^-$ norm is built up of $L^\infty_{x_\omega}$ terms. This is not a problem in the case $\alpha \lesa \beta$ as the sum only contains $\mc{O}(1)$ terms. On the other hand, if $\alpha \gg \beta$, then the estimate (\ref{eqn - thm null frame bound - NF case cap localised PW}) has a much better constant than what is needed, since we want to end up with $(\alpha \lambda)^{\frac{n-1}{2}}$ and have $(\beta \lambda)^{\frac{n-1}{2}}$. Thus, in place of any orthogonality argument, we use the triangle inequality to reduce to a single cap $\bar{\kappa}$, followed by Holder to regain the square sum over the caps $\bar{\kappa}$. In more detail, from the cap localised estimate (\ref{eqn - thm null frame bound - NF case cap localised PW}), and an application of Holder in the $\bar{\kappa}$ sum, we obtain
  \begin{align*}
     \big\|  \rho(\tfrac{t}{T}) u \big\|_{PW^-(\kappa)} &\lesa \sum_{\substack{\bar{\kappa} \in \mc{C}_\beta \\ {^\natural \kappa} \cap \bar{\kappa} \not = \varnothing}} \big\| \rho(\tfrac{t}{T}) u_{\bar{\kappa}} \big\|_{PW^-(\kappa)} \\
     &\lesa (\min\{ \alpha, \beta\} \lambda)^{\frac{n-1}{2}}  \sum_{\substack{\bar{\kappa} \in \mc{C}_\beta \\ {^\natural \kappa} \cap \bar{\kappa} \not = \varnothing}} \big\| F_{\bar{\kappa}} \big\|_{NF^+(\bar{\kappa})} \\
     &\lesa (\min\{ \alpha, \beta\} \lambda)^{\frac{n-1}{2}} \Big( \frac{ \alpha}{\min\{ \alpha, \beta\}} \Big)^{\frac{n-1}{2}} \bigg( \sum_{\bar{\kappa} \in \mc{C}_\beta} \big\|   F_{\bar{\kappa}} \big\|_{NF^+(\bar{\kappa})}^2 \bigg)^{\frac{1}{2}} \\
     &\lesa (\alpha \lambda)^{\frac{n-1}{2}}
  \end{align*}
where we used the fact that, for a fixed $\kappa \in \mc{C}_\alpha$,  $\#\big\{ \bar{\kappa} \in \mc{C}_\beta \, | \,{^\natural\kappa} \cap \bar{\kappa} \not = \varnothing \, \big\} \lesa (\frac{\alpha}{\min\{ \alpha, \beta\}})^{n-1}$.

It remains to proof the cap localised estimates (\ref{eqn - thm null frame bound - NF case cap localised NF*}) and (\ref{eqn - thm null frame bound - NF case cap localised PW}). The atomic definition of $NF^+(\bar{\kappa})$, shows that it is enough to consider the case where $F_{\bar{\kappa}}$ is an atom, in other words there exists $\omega \not \in 2\bar{\kappa}$ such that
        $$ \big\| \Pi_\omega F_{\bar{\kappa}} \|_{L^1_{t_\omega} L^2_{x_\omega}} + \theta(\omega, \bar{\kappa})^{-1} \big\| \Pi_{-\omega} F_{\bar{\kappa}} \big\|_{L^1_{t_\omega} L^2_{x_\omega}} \les 1$$
and  (by  Lemma \ref{lem - mult are disposable}) we may assume that $F_{\bar{\kappa}} = \Pi_\pm F_{\bar{\kappa}}$ and $ \supp \widetilde{ \Pi_\pm F}_{\bar{\kappa}}\subset {^\natural A}^\pm_{\lambda, \alpha}(\kappa) \cap {^\natural A}^\pm_{\lambda, \beta}(\bar{\kappa})$. As $u_{\bar{\kappa}}$ satisfies $(\p_t + \sigma \cdot \nabla)  u_{\bar{\kappa}} =  F_{\bar{\kappa}}$ with $u_{\bar{\kappa}}(0) =0$, we have
            $$  u_{\bar{\kappa}} = \Pi_\pm E^+_\omega*  F_{\bar{\kappa}} -  \Pi_\pm \mc{U}_+(t)\big[ E^+_\omega*  F_{\bar{\kappa}}(0)\big].$$
The homogeneous term can be controlled by\footnote{Note that $\supp \widetilde{[E^+_\omega* F}_{\bar{\kappa}}] = \supp \widetilde{F}_{\bar{\kappa}} \subset {^\natural A}^\pm_{\lambda, \alpha}(\kappa)\cap {^\natural A}^\pm_{\lambda, \beta}(\bar{\kappa})$.}
 Corollary \ref{cor - null frame bounds hom case} followed by Corollary \ref{cor - LinftyL2 control of null fund soln}. For the $E^+_\omega* F_{\bar{\kappa}}$ term, we use an application of Lemma \ref{lem - null frame decomp into free waves} to write
  $$ E^+_\omega* F_{\bar{\kappa}} = \int_{-\infty}^{t_\omega} \psi_{a} d a + \Pi_{-\omega} G$$
  where $\psi_{a}$ is a homogeneous solution with $\supp  \widehat{\Pi_\pm \psi}_{a} \subset {^\natural A}^\pm_\lambda(\tfrac{3}{2} \kappa) \cap {^\natural A}^\pm_\lambda(\tfrac{3}{2} \bar{\kappa})$,  $\supp \widetilde{G} = \supp \widetilde{  F}_{\bar{\kappa}} $, and we have the bound
         \begin{equation}\label{eqn - thm null frame bound - est for free wave decomp} \begin{split} \int_{\RR} \| \psi_{a}\|_{L^\infty_t L^2_x} d a +& \big\|  G\big\|_{\dot{\mc{X}}^{\frac{1}{2}, 1}_+}  \\
                &\lesa  \big\| \Pi_\omega F_{\bar{\kappa}}\big\|_{L^1_{t_\omega} L^2_{x_\omega}} + \theta(\omega, \kappa)^{-1} \big\| \Pi_{-\omega} F_{\bar{\kappa}} \big\|_{L^1_{t_\omega} L^2_{x_\omega}} \les 1.\end{split} \end{equation}
  The integral term is easy to control via Proposition \ref{prop - Xsb controls PW} and Corollary \ref{cor - null frame bounds hom case}. For instance, using (\ref{eqn - thm null frame bound - est for free wave decomp}) and Corollary \ref{cor - null frame bounds hom case} we have
    \begin{align*}
      \Big\|\int_{-\infty}^{t_\omega} \psi_{a} da \Big\|_{[NF^+](\kappa)}
      \les \int_\RR \| \psi_{a} \|_{[NF^+]^*(\kappa)} da
      \lesa \int_\RR \| \psi_{a} \|_{L^\infty_t L^2_x} da
      \lesa 1.
    \end{align*}
Similarly, using Proposition \ref{prop - Xsb controls PW} and Lemma \ref{lem - Xsb, homogeneous solns, and time cutoffs}, gives
\begin{align*}
      \Big\|\rho(\tfrac{t}{T}) \, \int_{-\infty}^{t_\omega} \psi_{a} da \Big\|_{PW^-(\kappa)}
      &\lesa ( \min\{ \alpha, \beta\} \lambda)^{\frac{n-1}{2}} \int_\RR \big\| \rho( \tfrac{t}{T}) \psi_{a} \big\|_{\dot{\mc{X}}^{\frac{1}{2}, 1}_+} da\\
      &\lesa  (\min\{ \alpha, \beta\}\lambda)^{\frac{n-1}{2}}\int_\RR \| \psi_{a} \|_{L^\infty_t L^2_x} da
      \lesa  (\min\{ \alpha, \beta\}\lambda)^{\frac{n-1}{2}}.
    \end{align*}
Finally, the estimate for the $G$ term simply follows from (\ref{eqn - thm null frame bounds - Xsb controls NF*}) (in the $[NF^+]^*(\kappa)$ case) and Proposition \ref{prop - Xsb controls PW} (in the $PW^-(\kappa)$ case). This completes the proof of the $NF^+_\lambda$ case, and hence Theorem \ref{thm - null frame bounds} follows.
\end{proof}


\section{Strichartz Type Estimates} \label{sec - proof of stricharz est}

In this section our aim is to prove Theorem \ref{thm - F controls strichartz and angular sum}, i.e. we want to show that the norm $F^\pm_\lambda$ controls the Strichartz norms $L^q_t L^r_x$. The observation that it is possible to control the Strichartz norms by the null frame type norms was first observed by Sterbenz-Tataru in \cite[Lemma 5.8]{Sterbenz2010a} in work related to the wave maps equation. The proof use a version of the $X^{s, b}$ spaces, with the derivative in the ``time'' direction, replaced with spaces of bounded variation. Spaces of this type have been used in the work of Koch-Tataru \cite{Koch2005}, and Hadac-Herr-Koch \cite{Hadac2009}. The crucial point is an atomic decomposition contained in \cite[Lemma 6.4]{Koch2005}. The argument presented below  is based heavily on the arguments used by Sterbenz-Tataru in \cite{Sterbenz2010a}, although it has been slightly simplified, compressed, and adapted to our context.   \\

\subsection{The Spaces $V^p$}

We define the $p$-variation of a function $u : \RR \rightarrow L^2_x$ as
        $$ | u |_{V^p}^p = \sup_{ (t_k) \in \mc{Z}} \sum_{ k \in \ZZ}  \big\| u(t_{k+1}) -  u(t_k) \big\|_{L^2_x}^p$$
where $\mc{Z} = \{ (t_k)_{k \in \ZZ} \, | \, t_k \les t_{k+1} \}$ denotes the set of all increasing sequences on $\RR$. If $| u |_{V^p} < \infty$, then $u$ has at most countable discontinuities, and its left and right limits exist everywhere. In particular $\lim_{t \rightarrow \pm \infty} u(t)$ exists in $L^2_x$. These properties are all classical results, but for completeness we sketch the proof here.

\begin{lemma}\label{lem - left and right limits always exist in Vp spaces}
Let $0<p<\infty$ and $u: \RR \rightarrow L^2_x(\RR^n)$ with $|u|_{V^p} < \infty$. Then $u$ has left and right limits everywhere, and in particular $u(t)$ converges to some function $f_{\pm \infty} \in L^2_x(\RR^n)$ as $t \rightarrow \pm \infty$.
\begin{proof}
  Let
        $$ \rho(t) = \sup \Big\{ \sum_{k=1}^{N-1} \| u(t_{k+1}) - u(t_k) \|_{L^2_x}^p \, \,\Big|\,\, -\infty<t_1<t_2<...<t_N=t \Big\}.$$
 We claim that $\rho$ is increasing, and for $s<t$ we have the inequality
        \begin{equation}\label{eqn rho increasing appendix} \| u(t) - u(s) \|_{L^2_x}^p \les \rho(t) - \rho(s). \end{equation}
 This follows by observing that we have a sequence $-\infty< s_1 < ... < s_N=s$ such that
        $$ \rho(s) \les \epsilon + \sum_{k=1}^{N-1} \| u(s_{k+1}) - u(s_k) \|_{L^2_x}^p$$
 and consequently
       \begin{align*} \| u(t) - u(s) \|_{L^2_x}^p &\les \sum_{k=1}^{N-1} \| u(s_{k+1}) - u(s_k) \|_{L^2_x}^p + \| u(t) - u(s) \|_{L^2_x}^p - \Big( \sum_{k=1}^{N-1} \| u(s_{k+1}) - u(s_k) \|_{L^2_x}^p \Big)\\
       & \les \rho(t) - \big( \rho(s) - \epsilon\big) = \rho(t) - \rho(s) + \epsilon \end{align*}
 since this is true for every $\epsilon>0$, we obtain (\ref{eqn rho increasing appendix}) and consequently $\rho$ must be increasing.

 Now as $\rho$ is increasing and bounded (we clearly have $0 \les \rho \les |u|_{V^p}^p$), its left and right limits must exist everywhere. Hence, given any sequence $t_k \rightarrow t$ from below, $\rho(t_k)$ forms a Cauchy sequence in $\RR$, which implies by (\ref{eqn rho increasing appendix}), that $u(t_k)$ also forms a Cauchy sequence. Thus $u(t_k)$ must converge in $L^2$, and consequently, must have limits from the left and right everywhere.
\end{proof}
\end{lemma}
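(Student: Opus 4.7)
The plan is to reduce the problem to a question about monotone real-valued functions by introducing a ``running variation'' $\rho(t)$ that records the supremum of $\sum_k \| u(t_{k+1}) - u(t_k)\|_{L^2_x}^p$ taken over all finite increasing partitions $-\infty < t_1 < t_2 < \cdots < t_N = t$ ending at $t$. This $\rho$ is bounded above by $|u|_{V^p}^p$, and it is manifestly non-decreasing because any partition ending at $s$ can be extended by appending $t > s$ to obtain a candidate partition for $\rho(t)$.

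The central technical inequality I would establish is
\[
\| u(t) - u(s) \|_{L^2_x}^p \;\les\; \rho(t) - \rho(s) \qquad (s < t).
\]
The proof is a short $\epsilon$-argument: pick a partition ending at $s$ that nearly achieves $\rho(s)$, append $t$, and observe that the resulting partition is a candidate for $\rho(t)$, so the additional contribution $\|u(t)-u(s)\|^p$ is bounded by $\rho(t) - \rho(s) + \epsilon$. Letting $\epsilon \to 0$ yields the inequality. This step is really the only point where one has to pay attention; everything afterwards is formal.

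With this inequality in hand, the conclusion is immediate. Since $\rho : \RR \to [0, |u|_{V^p}^p]$ is monotone and bounded, its left-hand and right-hand limits exist at every point of $\RR$, as well as at $\pm \infty$. Given any sequence $t_k$ approaching $t$ monotonically from one side (or approaching $\pm\infty$), the sequence $\rho(t_k)$ is Cauchy in $\RR$, and the inequality above then forces $u(t_k)$ to be Cauchy in $L^2_x$. Completeness of $L^2_x$ delivers a limit in $L^2_x$, and a standard diagonal argument shows the limit is independent of the choice of approximating sequence, producing the one-sided limit of $u$ at $t$ (respectively $f_{\pm \infty}$ at $\pm \infty$).

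The plan therefore has no real obstacle; the one subtlety is the sub-additivity-style inequality relating $\|u(t) - u(s)\|^p$ to the increments of $\rho$, which is what transfers the clean monotonicity of a real-valued function back to Cauchyness of $u$ in the Hilbert space $L^2_x$.
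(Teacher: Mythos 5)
Your proposal is correct and follows essentially the same route as the paper's proof: introduce the running $p$-variation $\rho(t)$, establish the increment inequality $\|u(t)-u(s)\|_{L^2_x}^p \les \rho(t)-\rho(s)$ via an $\epsilon$-argument with a near-optimal partition ending at $s$, and then exploit that $\rho$ is bounded and monotone so its one-sided limits exist, which transfers to Cauchyness (hence convergence) of $u$ in $L^2_x$. The only addition in your write-up is the explicit remark that the limit is independent of the approximating sequence; the paper leaves this implicit, but it is harmless and correct.
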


We now define $V^p$ to be the set of all right continuous functions from $\RR$ into $L^2_x$ with norm
        $$ \| u \|_{V^p}^p = \| u \|_{L^\infty_t L^2_x}^p + | u |_{V^p}^p,$$
the $L^\infty_t$ term is needed to ensure that $\| \cdot \|_{V^p}$ is a norm. The key property of the $V^p$ spaces that we require is the following.

\begin{lemma}[Lemma 6.4 in \cite{Koch2005}]\label{lem - V2 decomp}
  Assume $u \in V^p$. Then we have a decomposition $u = \sum_{j=1}^\infty v_j$, where the sum converges in $L^\infty_t L^2_x$, and moreover
    \begin{enumerate}
      \item For each $j$, we have a partition $\mc{I}_j$ of $\RR$, into intervals
        $$I_0=\big(-\infty, t^{(j)}_0\big),\,\,\, I_1=\big[t^{(j)}_0, t^{(j)}_1\big),\,\, ...\,, \,\,\,I_N=\big[t^{(j)}_{N-1}, \infty\big),$$
       and we can write
            $$v_j(t) = \sum_{ I_k \in \mc{I}_j} \ind_{I_k}(t) f_k^{(j)}$$
      for functions $f^{(j)}_k \in L^2_x$.

    \item We have the bounds
                $$ \# \mc{I}_j \lesa 2^{pj}, \qquad \qquad \sup_{k} \| f^{(j)}_k \|_{L^2_x} \lesa 2^{-j} \| u \|_{V^p}.$$
    \end{enumerate}
\begin{proof} The proof follows from minor modifications of the argument in Lemma 6.4 in \cite{Koch2005}.
\end{proof}
\end{lemma}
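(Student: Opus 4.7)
The plan is to construct the $v_j$ as telescoping differences $v_j = u_j - u_{j-1}$ of piecewise constant approximations of $u$ at dyadic resolution, so that the decomposition $u = \sum_j v_j$ collapses to $u_N \to u$ as $N \to \infty$. Writing $M = \|u\|_{V^p}$ and $t^{(j)}_{-1} = -\infty$ (with $u(-\infty)$ provided by Lemma \ref{lem - left and right limits always exist in Vp spaces}), for each $j \g 1$ I would inductively sample $u$ by the greedy rule
\[
t^{(j)}_k = \inf\big\{t > t^{(j)}_{k-1} \, :\, \|u(t) - u(t^{(j)}_{k-1})\|_{L^2_x} \g 2^{-j} M\big\},
\]
terminating if the set is empty. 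The corresponding piecewise constant approximation is $u_j(t) = u(t^{(j)}_k)$ on $[t^{(j)}_k, t^{(j)}_{k+1})$ (and $u_j(t) = u(-\infty)$ on $(-\infty, t^{(j)}_0)$); with the convention $u_0 \equiv 0$ I then set $v_j = u_j - u_{j-1}$, which is piecewise constant on the common refinement $\mc{I}_j$ of the scale $j$ and scale $j-1$ partitions.

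Three facts then drive the estimates. First, right-continuity of $u$ forces the infimum defining $t^{(j)}_k$ to be attained: any sequence $s_n \to t^{(j)}_k{+}$ drawn from the infimum set transfers the $\g 2^{-j} M$ bound to the limit, so each successive sample genuinely jumps by at least $2^{-j} M$. Second, by the very definition of the partition, $\|u(t) - u_j(t)\|_{L^2_x} < 2^{-j} M$ for every $t$, which yields $\|v_j\|_{L^\infty_t L^2_x} \les (2^{-j} + 2^{-(j-1)}) M \lesa 2^{-j} M$ by the triangle inequality and, more importantly, gives $\|\sum_{j=1}^N v_j - u\|_{L^\infty_t L^2_x} = \|u_N - u\|_{L^\infty_t L^2_x} \les 2^{-N} M \to 0$. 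Third, summing the lower bound on the jumps against the $p$-variation,
\[
\#\{t^{(j)}_k\} \cdot (2^{-j} M)^p \les \sum_k \|u(t^{(j)}_k) - u(t^{(j)}_{k-1})\|_{L^2_x}^p \les |u|_{V^p}^p \les M^p,
\]
so the scale-$j$ partition has $\lesa 2^{pj}$ points, and hence $\# \mc{I}_j \lesa 2^{pj}$ as well.

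The main technical obstacle is making the greedy construction air-tight, especially ruling out accumulation of the sample times $t^{(j)}_k$ at a finite point $t^{**}$, since an infinite sequence of $L^2_x$-jumps of size $\g 2^{-j} M$ on a bounded interval would immediately violate $u \in V^p$. One also needs to check that $t^{(j)}_k > t^{(j)}_{k-1}$ strictly (again a consequence of right-continuity: since $u(s) \to u(t^{(j)}_{k-1})$ as $s \to t^{(j)}_{k-1}{+}$, no $s$ slightly larger than $t^{(j)}_{k-1}$ can lie in the infimum set), so that the partition is well-defined. These details aside, the argument is a direct adaptation of Lemma 6.4 in \cite{Koch2005}.
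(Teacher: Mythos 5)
Your proof correctly reconstructs the Koch--Tataru argument (Lemma~6.4 in \cite{Koch2005}), which is exactly the source the paper defers to without providing details. The greedy dyadic sampling, the telescoping decomposition $v_j = u_j - u_{j-1}$ with $u_0 \equiv 0$, the use of right-continuity to show the infimum defining $t^{(j)}_k$ is attained (and that consecutive sample times are strictly increasing), and the $p$-variation counting bound $\#\{t^{(j)}_k\} \lesa 2^{pj}$ are all handled correctly, so this fills in precisely the omitted steps.
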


 Recall that $\mc{U}_\pm(t)$ denotes the forward solution operator for $(\p_t \pm \sigma \cdot \nabla)u = 0$ with data at $t=0$.  With this notation in hand, the previous lemma then has the following important corollary.

 \begin{corollary}\label{cor - V2 controls strichartz}
 Let $2 \les q , r \les \infty$ with $q>2$ and $\frac{1}{q} + \frac{n-1}{2r} \les \frac{n-1}{4}$. Assume $\mc{U}_\pm(-t) u \in V^2$ with $\supp \widehat{u} \subset \{ |\xi| \approx \lambda\}$. Let $\mc{M}$ denote a spatial Fourier multiplier with matrix valued symbol $m(\xi)$ such that $|m(\xi)| \lesa \delta$ for all $\xi \in \supp \widehat{u}$. Then
  $$ \| \mc{M} u \|_{L^q_t L^r_x} \lesa \delta  \lambda^{ n ( \frac{1}{2} - \frac{1}{r}) - \frac{1}{q}} \| \mc{U}_\pm(-t) u \|_{V^2}. $$
  \begin{proof}
    Since $\mc{U}_\pm(-t) u \in V^2$, an application of Lemma \ref{lem - V2 decomp} gives a decomposition
                $$ u = \mc{U}_\pm(t) \mc{U}_\pm(-t) u = \sum_j \mc{U}_\pm(t) v_j $$
    with $v_j$ satisfying the properties in $(i)$ and $(ii)$ in Lemma \ref{lem - V2 decomp}. We may assume that $\supp \widehat{v_j} = \supp \widehat{u}$, and hence the same holds for the $L^2_x$ functions $f^{(j)}_k$ making up the sum in $v^j$. Then recalling that
        $$\mc{U}_\pm(t) = e^{\pm i t |\nabla|} \Pi_+ + e^{ \mp i t |\nabla|} \Pi_-$$
    we obtain from Lemma \ref{lem - V2 decomp}
        \begin{align*}
           \| \mc{M} u \|_{L^q_t L^r_x}  &\les \sum_j \| \mc{M} \mc{U}_\pm(t)  v_j \|_{L^q_t L^r_x} \\
                                                                &\les \sum_j \Big( \sum_{I_k \in \mc{I}_j} \big\| e^{\pm i t|\nabla|} \big(\mc{M} \Pi_+  f^{(j)}_k\big)  \big\|_{L^q_t L^r_x(I_k \times \RR^n)}^q  + \big\| e^{\mp i t|\nabla|} \big(\mc{M} \Pi_-  f^{(j)}_k\big)  \big\|_{L^q_t L^r_x(I_k \times \RR^n)}^q \Big)^{\frac{1}{q}}\\
                                                                &\lesa \lambda^{n(\frac{1}{2} - \frac{1}{r}) - \frac{1}{q}}  \sum_j \Big( \sum_{I_k \in \mc{I}_j} \big(\| \mc{M} \Pi_+f^{(j)}_k \|_{L^2_x}+ \| \mc{M} \Pi_+f^{(j)}_k \|_{L^2_x}\big)^q \Big)^{\frac{1}{q}}\\
                                                                &\les \delta  \lambda^{n(\frac{1}{2} - \frac{1}{r}) - \frac{1}{q}}  \sum_j \Big( \sum_{I_k \in \mc{I}_j} \big\| f^{(j)}_k \big\|_{L^2_x}^q\Big)^{\frac{1}{q}}.
        \end{align*}
  Now using the properties $\sup_k \| f^{(j)}_k \|_{L^2_x} \lesa 2^{-j} \| \mc{U}_\pm(-t) u \|_{V^2}$ and $\# \mc{I}_j \lesa 2^{2j}$ we obtain
   $$ \sum_j \Big( \sum_{I_k \in \mc{I}_j} \big\| f^{(j)}_k \big\|_{L^2_x}^q\Big)^{\frac{1}{q}} \lesa \| \mc{U}_{\pm}(-t) u \|_{V^2} \sum_j 2^{-j} \big( 2^{2j} \big)^{\frac{1}{q}} \lesa \| \mc{U}_{\pm}(-t) u \|_{V^2} $$
  where we needed $q>2$ to ensure that the sum converges.
  \end{proof}
\end{corollary}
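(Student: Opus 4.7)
The plan is to combine the atomic decomposition from Lemma \ref{lem - V2 decomp} with the classical homogeneous Strichartz estimates for the half-wave propagators $e^{\pm it|\nabla|}$. Since $\mc{U}_\pm(-t) u \in V^2$, I can write $\mc{U}_\pm(-t) u = \sum_{j\g 1} v_j$ in $L^\infty_t L^2_x$, where each $v_j$ is a step function $v_j(t) = \sum_{I_k \in \mc{I}_j} \ind_{I_k}(t) f^{(j)}_k$, with the crucial quantitative bounds $\#\mc{I}_j \lesa 2^{2j}$ and $\sup_k \| f^{(j)}_k \|_{L^2_x} \lesa 2^{-j} \| \mc{U}_\pm(-t) u\|_{V^2}$. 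Applying $\mc{U}_\pm(t)$ yields the representation $u = \sum_j \mc{U}_\pm(t) v_j$, and since convolution with a fixed Fourier multiplier preserves the spatial Fourier support, I may assume each $f^{(j)}_k$ is spectrally supported in $\{|\xi|\approx\lambda\}$.

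The next step is to fix $j$ and estimate $\|\mc{M}\,\mc{U}_\pm(t) v_j\|_{L^q_t L^r_x}$ by using disjointness of the intervals $I_k$ in time: this gives an $\ell^q_k$ sum. On each interval $I_k$, the function $\mc{U}_\pm(t) f^{(j)}_k$ splits via $\mc{U}_\pm = e^{\pm it|\nabla|}\Pi_+ + e^{\mp it|\nabla|}\Pi_-$ into two honest homogeneous half-wave solutions. At this point the standard frequency-localised Strichartz inequality $\| e^{\pm it|\nabla|} f\|_{L^q_t L^r_x} \lesa \lambda^{n(\frac12-\frac1r)-\frac1q}\| f\|_{L^2_x}$ (valid since the pair $(q,r)$ lies in the admissible range) applies. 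Combining with the boundedness $|m(\xi)|\lesa \delta$, which yields $\|\mc{M} \Pi_\pm f^{(j)}_k\|_{L^2_x}\lesa \delta \|f^{(j)}_k\|_{L^2_x}$ by Plancherel, I obtain
\[
    \|\mc{M}\,\mc{U}_\pm(t) v_j\|_{L^q_t L^r_x} \lesa \delta\,\lambda^{n(\frac12-\frac1r)-\frac1q} \Bigl(\sum_{I_k\in\mc{I}_j} \|f^{(j)}_k\|_{L^2_x}^q\Bigr)^{1/q}.
\]

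The final step is to sum over $j$. Here I use $\#\mc{I}_j \lesa 2^{2j}$ and the uniform bound on $\|f^{(j)}_k\|_{L^2_x}$ to control the inner sum by $2^{-j}(2^{2j})^{1/q} = 2^{-j(1-2/q)}\|\mc{U}_\pm(-t) u\|_{V^2}$. The strict inequality $q>2$ makes the exponent strictly negative, so the geometric series $\sum_j 2^{-j(1-2/q)}$ converges, yielding the claimed bound. The main (only real) obstacle is the endpoint $q=2$, where this summation fails; the hypothesis $q>2$ is precisely what is needed to make the atomic approach work, and reflects the well-known breakdown of this scheme at the endpoint Strichartz line.
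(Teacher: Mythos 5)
Your proposal is correct and follows exactly the same route as the paper: decompose $\mc{U}_\pm(-t)u$ via Lemma \ref{lem - V2 decomp}, exploit the disjointness of the intervals to get an $\ell^q_k$ sum, apply the frequency-localised homogeneous Strichartz estimate on each interval together with Plancherel for the multiplier, and then sum the geometric series in $j$ using $q>2$. No gaps.
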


\begin{remark}\label{rem - refined strichartz implies better bound for Vpm est}
  If we restrict the support of $u$ further to a ball of radius $\mu$ in the annulus $\{|\xi| \approx \lambda\}$, i.e. assume that $\widehat{u} \subset \{ |\xi - \xi^*| \les \mu\}$ for some $\mu \les \lambda$ and $|\xi^*| \approx \lambda$, then the refined Strichartz estimate of Klainerman-Tataru \cite{Klainerman1999} implies that
        $$ \| \mc{M} u \|_{L^q_t L^r_x} \lesa \delta  \Big(\frac{\mu}{\lambda} \Big)^{ n ( \frac{1}{2} - \frac{1}{r}) - \frac{2}{q}} \lambda^{ n ( \frac{1}{2} - \frac{1}{r}) - \frac{1}{q}} \| \mc{U}_\pm(-t) u \|_{V^2}. $$
  We have no need for this additional refinement here, but it may prove useful elsewhere.
\end{remark}

The final result we need for the $V^2$ spaces is the crucial fact that our iteration norm $F^\pm_\lambda$ controls $V^2$,  this theorem (together with the previous corollary) is the key reason why the $V^2$ norms are so useful.

\begin{theorem}\label{thm - F controls V2}
 Let $u \in F^\pm_\lambda$. Then we have
            $$ \| \mc{U}_\pm(-t) u \|_{V^2} \lesa \| u \|_{F^\pm_\lambda}.$$
 \begin{proof}
   As usual, by a reflection, we may assume that $\pm = +$.  Let $F=(\p_t + \sigma \cdot \nabla) u$, by the definition of $F^\pm_\lambda$, it is enough to show that
        $$ \sum_{j \in \ZZ} \| \mc{U}_+(-t_{j+1}) u(t_{j+1}) - \mc{U}_+(-t_{j})u(t_{j}) \|_{L^2_x}^2 \lesa \| F \|_{N^+_\lambda}^2$$
   with the constant independent of the sequence $ (t_j)_{j \in \ZZ} \in \mathcal{Z}$.   If we observe that
        \begin{align*} \mc{U}_+(-t_{j+1})u(t_{j+1}) -  \mc{U}_+(-t_{j}) u(t_{j}) &= \int_{t_j}^{t_{j+1}} \mc{U}_+(-s) F(s) ds \\
        &= \int_{t_j}^{t_{j+1}} \mc{U}_+(-s) \ind_{I_j} F(s) ds\\
        & =\int_0^{t_{j+1}} \mc{U}_+(-s) \ind_{I_j} F(s)ds - \int_{0}^{t_{j}} \mc{U}_+(-s) \ind_{I_j} F(s)ds
        \end{align*}
   where $I_j = [t_j, t_{j+1})$, then we have
        \begin{align*}
          \sum_{j} \| \mc{U}_+(-t_j) u(t_j) - \mc{U}_+(-t_{j+1}) u(t_{j+1}) \|_{L^2_x}^2
                &= \sum_j \Big\| \int_0^{t_{j+1}} \mc{U}_+(-s) \ind_{I_j} F(s)ds - \int_{0}^{t_{j}} \mc{U}_+(-s) \ind_{I_j} F(s)ds \Big\|_{L^2_x}^2 \\
                &\les  2 \sum_j \Big\| \int_0^{t}  \mc{U}_+(-s)\ind_{I_j} F(s) ds \Big\|_{L^\infty_t L^2_x}^2
        \end{align*}
  An application of Corollary \ref{cor - energy type est} shows that
    $$\Big\| \int_0^t \mc{U}_+(-s)\ind_{I_j} F(s) ds \Big\|_{L^\infty_t L^2_x}  =\Big\| \int_0^t \mc{U}_+(t-s)\ind_{I_j} F(s) ds \Big\|_{L^\infty_t L^2_x} \lesa \| \ind_{I_j} F \|_{\mc{N}^+_\lambda}$$
  and so we reduce to proving the inequality
            \begin{equation}\label{eqn l2 sum over intervals bounded on N+} \Big( \sum_j \| \ind_{I_j} F \|_{\mc{N}^+_\lambda}^2\Big)^\frac{1}{2} \lesa \| F \|_{\mc{N}^+_\lambda}.\end{equation}
   By the atomic definition of $\mc{N}^+_\lambda$, it suffices to consider separately the cases where $F$ is a $L^1_t L^2_x$ atom, $ F$ is a $\dot{\mc{X}}^{-\frac{1}{2}, 1}_+$ atom, and $F$ is a $NF^+_\lambda$ atom.\\

   \textbf{Case 1: $F$ is a $L^1_t L^2_x$ atom.} This is the easiest case as we in fact have the stronger estimate
     $$ \sum_j \| \ind_{I_j} F \|_{\mc{N}^+_\lambda} \les \sum_j \| \ind_{I_j} F \|_{L^1_t L^2_x} \les \| F \|_{L^1_t L^2_x} .$$

   \textbf{Case 2:  $ F$ is a $\dot{\mc{X}}^{-\frac{1}{2}, 1}_{+}$ atom.} By definition, there exists $d \in 2^\ZZ$ such that
    $$\supp \widetilde{\Pi_\pm F} \subset \big\{ |\xi| \approx \lambda, \,\, |\tau \pm |\xi| \approx d \big\}$$
   and $\| F \| \les d^{-\frac{1}{2}}$.    We start by decomposing $\ind_{I_j} F$ into close cone and far cone terms, and estimate
        \begin{align} \| \ind_{I_j}F \|_{\mc{N}^+_\lambda} &\les  \big\| \mathfrak{C}^+_{\ll d} \big( \ind_{I_j}  F \big) \big\|_{\mc{N}^+_\lambda} + \big\| \mathfrak{C}^+_{\gtrsim d} \big( \ind_{I_j}  F \big) \big\|_{\mc{N}^+_\lambda} \notag \\
        &\les \big\| \mathfrak{C}^+_{\ll d}\big( \ind_{I_j}   F \big) \big\|_{L^1_t L^2_x} + \sum_{d' \gtrsim d} (d')^{-\frac{1}{2}} \big\| \mathfrak{C}^+_{d'} \big( \ind_{I_j}  F \big) \big\|_{L^2_{t, x}}\\
        &\lesa \big\| \mathfrak{C}^+_{\ll d}\big( \ind_{I_j}   F \big) \big\|_{L^1_t L^2_x} + d^{-\frac{1}{2}} \big\| \ind_{I_j}  F  \big\|_{L^2_{t, x}}. \label{eqn case 2 decomp - Xsb case}
        \end{align}
 The second term is easy to control by simply summing up in $j$.  On the other hand, for the first term in (\ref{eqn case 2 decomp - Xsb case}) the argument is more complicated. By rescaling\footnote{I.e. use the identity
    $$C^\pm_{\ll d}(\ind_{I_j} C^\pm_d F \big)(t, x)  = d^{n+1}  C^\pm_{\ll 1}(\ind_{d I_j} C^\pm_1 F_d \big)(dt, dx)$$
 where $\widetilde{F_d}(\tau, \xi) = \widetilde{F}( d \tau, d \xi)$. Note that the rescaled intervals $d I_j$ satisfy the same properties as the original intervals...} it is enough to consider the case $d=1$. We start by decomposing the intervals $I_j$ into those intervals which are smaller than $1$, and those that are larger than $1$, i.e. we write $\{ I_j \} = \{ K_j \} \cup \{ K'_j \}$ where $|K_j|<1$ and $|K'_j| \g 1$. For the intervals smaller than $1$, we can simply discard the outer multipliers, apply Holder in time, and sum up in $j$
    $$ \sum_j \big\| \mathfrak{C}^+_{\ll 1} \big( \ind_{K_j}   F \big) \big\|_{L^1_t L^2_x}^2 \lesa \sum_j \big\| \ind_{K_j}   F  \big\|_{L^1_t L^2_x}^2 \les \sum_j \| \ind_{K_j} F  \big\|_{L^2_{t, x}}^2 \les  \|  F  \big\|_{L^2_{t, x}}^2.$$
To deal with the intervals greater than $1$, we note that since $|\tau - \tau'| = | \tau \pm |\xi| - (\tau' \pm |\xi|)|$, for any $\big|\tau \pm |\xi| \big| \ll 1 $  we have the identity
        \begin{align} \widetilde{\big( \Pi_\pm ( \ind_{K'_j} F) \big)}(\tau, \xi) = \int_{|\tau - \tau'| \approx 1} \widehat{\ind_{K'_j}}(\tau - \tau') \widetilde{\Pi_\pm F}(\tau', \xi) d\tau' =  \int_{\RR} \widehat{\rho}_j(\tau - \tau') \widetilde{F}(\tau', \xi) d\tau' \label{eqn - thm F con stri - Xsb case large intervals decomp}\end{align}
where $\widehat{\rho}_j(\tau)  = \sigma(\tau) \widehat{\ind_{K_j}}(\tau)$ and $\sigma$ has support in the set $\{ |\tau| \approx 1 \}$.  Now, as $\frac{\sigma(\tau)}{\tau}$ is smooth and bounded, we can use integration by parts to deduce that for any $N>0$
    \begin{align*}
      \rho_j(t) = \frac{1}{2\pi}\int_\RR \sigma(\tau) \,\widehat{\ind_{K'_j}}(\tau) e^{  i t \tau} d \tau
                                               = \frac{1}{2\pi}\int_\RR \frac{i \sigma(\tau)}{\tau}  \Big( e^{ i \tau ( t-b_j) } -   e^{ i \tau (t-a_j)} \Big) d \tau \lesa \frac{1}{(1+|t - b_j|)^N} + \frac{1}{(1+|t - a_j|)^N}
    \end{align*}
where we let $K'_j=[a_j, b_j)$.  Hence, applying Holder in $t$ and assuming $N$ large,
    \begin{align*}
     \sum_j \big\| \mathfrak{C}^+_{\ll 1} \big( \ind_{K'_j} F \big) \big\|_{L^1_t L^2_x}^2 &= \sum_j \big\| \mathfrak{C}^+_{\ll 1}   \big( \rho_j F \big) \big\|_{L^1_t L^2_x}^2 \\
     &\lesa \sum_j  \big\|  \rho_j  F  \big\|_{L^1_t L^2_x}^2 \\
      &\lesa \sum_j \Big\| |\rho_j|^\frac{1}{2}  \| F\|_{L^2_x} \Big\|_{L^2_t}^2\\
       &\lesa \Big\| \| F \|_{L^2_{t, x} } \sum_j  \Big( \frac{1}{(1+|t-b_j|)^\frac{N}{2} } + \frac{1}{(1+|t-a_j|)^\frac{N}{2} }\Big)  \Big\|_{L^2_t}^2 \lesa \| F \|_{L^2_{t, x}}
    \end{align*}
where the sum converges since $|K'_j| \g 1 \Rightarrow $ $|a_j - a_{j+1}|, |b_j - b_{j+1}| \g 1$.\\

\textbf{ Case 3: $F$ a $NF^+_\lambda$ atom.} By definition, we have $\alpha\ll 1$ and a  decomposition $F= \sum_{\kappa \in \mc{C}_\alpha} F_\kappa$ with         $ \supp \widetilde{\Pi_\pm F_\kappa} \subset A^\pm_{\lambda, \alpha}(\kappa)$ and $ \sum_{\kappa \in \mc{C}_\alpha} \| F_\kappa \|_{NF^+(\kappa)}^2 \les 1$. Our aim is to deduce that
    $$ \sum_j \| \ind_{I_j} F \|_{\mc{N}^+_\lambda}^2 \lesa 1.$$
To this end we decompose $\ind_{I_j} F$ into regions close to the cone, and far from the cone
        \begin{equation} \label{eqn - 3rd case decomp into close cone far cone} \ind_{I_j} F = \sum_\pm \Big( \Pi_\pm C^\pm_{\ll \alpha^2 \lambda} \big( \ind_{I_j} F \big) + \Pi_{\pm} C^\pm_{ \gtrsim \alpha^2 \lambda} \big( \ind_{I_j} F\big)\Big). \end{equation}
For the close cone case, since the spatial Fourier projections commute with $\ind_{I_j}(t)$, $\Pi_\pm C^\pm_{\ll \alpha^2 \lambda} \big( \ind_{I_j} F \big)$ forms a (perhaps scalar multiple) of a $NF^+_\lambda$ atom. Therefore we can write
    \begin{align*}
      \Big\| \sum_{\pm} \Pi_\pm C^\pm_{\ll \alpha^2 \lambda}\big( \ind_{I_j} F\big) \Big\|_{\mc{N}^+_\lambda} &\les \Big( \sum_{\kappa \in \mc{C}_\alpha} \big\| \sum_{\pm} \Pi_{\pm} C^\pm_{\ll \alpha^2 \lambda}\big( \ind_{I_j} F_\kappa\big)  \big\|_{NF^+(\kappa)}^2 \Big)^{\frac{1}{2}} \\
      &\lesa  \Big( \sum_{\kappa \in \mc{C}_\alpha} \big\|  \ind_{I_j} F_\kappa  \big\|_{NF^+(\kappa)}^2 \Big)^{\frac{1}{2}}
    \end{align*}
where we used Lemma \ref{lem - mult are disposable} to dispose of the outer multipliers. Since we have an $\ell^2$ sum in $j$, we can swap the $j$ and $\kappa$ summations, and reduce to proving the inequality
        \begin{equation} \label{eqn sum over j of NF+(kappa)}
                \sum_j \| \ind_{I_j} F_\kappa \|_{NF^+(\kappa)}^2 \lesa \| F_\kappa \|_{NF^+(\kappa)}^2.
        \end{equation}
To this end, we note that for every $\omega \not \in 2\kappa$, we have
    \begin{align*}
      \sum_j \| \ind_{I_j} G \|_{NF^+(\kappa)}^2 &\les \sum_j \big( \| \ind_{I_j} \Pi_\omega G \|_{L^1_{t_\omega} L^2_{x_\omega}} + \theta(\omega, \kappa)^{-1} \| \ind_{I_j} \Pi_{-\omega} G \|_{L^1_{t_\omega} L^2_{x_\omega}}\big)^2 \\
      &\lesa \Big\| \Big(\sum_j \ind_{I_j} \Big)^{\frac{1}{2}} \Pi_{\omega} G \Big\|_{L^1_{t_\omega} L^2_{x_\omega}}^2 + \theta(\omega, \kappa)^{-2} \Big\| \Big(\sum_j \ind_{I_j} \Big)^{\frac{1}{2}} \Pi_{-\omega} G \Big\|_{L^1_{t_\omega} L^2_{x_\omega}}^2 \\
      &= \| \Pi_{\omega} G \|_{L^1_{t_\omega} L^2_{x_\omega}}^2 + \theta(\omega, \kappa)^{-2} \| \Pi_{-\omega} G \|_{L^1_{t_\omega} L^2_{x_\omega}}^2.
    \end{align*}
Taking infimum over $\omega \not \in 2\kappa$, and then applying the previous inequality to $NF^+(\kappa)$ atoms, then gives (\ref{eqn sum over j of NF+(kappa)}).

For the remaining far cone term in (\ref{eqn - 3rd case decomp into close cone far cone}), if we put the left hand side into $\mc{X}^{-\frac{1}{2}, 1}_{\lambda, +}$, and use Lemma \ref{lem - L2 bound on NF atom} to control the resulting $L^2_{t, x}$ norm of the atom $F$, we deduce that
    \begin{align*}
      \sum_j \big\| \sum_\pm \Pi_\pm C^\pm_{\gtrsim \alpha^2 \lambda} \big( \ind_{I_j} F \big) \big\|_{N^+_\lambda}^2 &\les \sum_j \Big(\sum_\pm \sum_{d \gtrsim \alpha^2 \lambda} d^{-\frac{1}{2}} \big\| \Pi_\pm C^\pm_d \big( \ind_{I_j} F \big) \big\|_{L^2_{t, x}}\Big)^2 \\
                    &\lesa (\alpha^2 \lambda)^{-1} \sum_j \| \ind_{I_j} F \|_{L^2_{t, x}}^2 \\
                    &\lesa (\alpha^2 \lambda)^{-1}  \| F \|_{L^2_{t, x}}^2  \lesa 1
    \end{align*}
as required.
 \end{proof}
\end{theorem}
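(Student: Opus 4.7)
The plan is to reduce the $V^2$ bound to an $\ell^2$-summability statement for the $\mc{N}^\pm_\lambda$ norm under time truncation. First I would fix a sequence $(t_j) \in \mc{Z}$ and set $F = (\p_t \pm \sigma \cdot \nabla) u$. Writing $\mc{U}_\pm(-t_{j+1})u(t_{j+1}) - \mc{U}_\pm(-t_j)u(t_j) = \int_{I_j} \mc{U}_\pm(-s) \ind_{I_j}(s) F(s)\, ds$ with $I_j = [t_j, t_{j+1})$, and splitting as $\int_0^{t_{j+1}} - \int_0^{t_j}$, we may bound each contribution in $L^\infty_t L^2_x$ using the Duhamel/energy inequality of Corollary~\ref{cor - energy type est}. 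This reduces everything to proving $\sum_j \|\ind_{I_j} F\|_{\mc{N}^\pm_\lambda}^2 \lesa \|F\|_{\mc{N}^\pm_\lambda}^2$. By the atomic structure of $\mc{N}^\pm_\lambda$, it suffices to verify this square summability separately when $F$ is an $L^1_t L^2_x$ atom, a $\dot{\mc{X}}^{-\frac{1}{2},1}_\pm$ atom, or an $NF^\pm_\lambda$ atom.

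The first case is immediate, since one even has the stronger $\ell^1$ bound $\sum_j \|\ind_{I_j} F\|_{L^1_t L^2_x} \les \|F\|_{L^1_t L^2_x}$. For the $NF^\pm_\lambda$ case, writing the atom as $F = \sum_{\kappa \in \mc{C}_\alpha} F_\kappa$, the operation $\ind_{I_j}$ commutes with the spatial Fourier cutoffs, so one preserves the cap structure provided we absorb a modulation projection. I would split $\ind_{I_j} F_\kappa$ into a close-cone component (which, after cutting off at scale $\alpha^2 \lambda$, is again a multiple of an $NF^+(\kappa)$ atom thanks to the disposability properties in Lemma~\ref{lem - mult are disposable}) and a far-cone component that can be placed in $\dot{\mc{X}}^{-\frac{1}{2},1}_\pm$ and controlled by Lemma~\ref{lem - L2 bound on NF atom}. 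For the close-cone piece, the key observation is that, working in a fixed null frame $(t_\omega, x_\omega)$ with $\omega \notin 2\kappa$, the characteristic functions $\ind_{I_j}$ are bounded by $(\sum_j \ind_{I_j})^{1/2} = 1$, which gives the needed $\ell^2$-orthogonality in $L^1_{t_\omega} L^2_{x_\omega}$. Summation in $j$ and then in $\kappa$ closes the case.

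The hard part will be the $\dot{\mc{X}}^{-\frac{1}{2},1}_\pm$ atom, because the sharp sign-localised modulation cutoff $C^\pm_d$ does \emph{not} commute with time truncation: $\ind_{I_j}$ smears Fourier support transversally to the cone at scale $|I_j|^{-1}$. I would again split into modulation regions: the far-cone part $\mathfrak{C}^\pm_{\gtrsim d}(\ind_{I_j} F)$ can be put in $\dot{\mc{X}}^{-\frac{1}{2},1}_\pm$ and bounded by $d^{-\frac{1}{2}} \|\ind_{I_j} F\|_{L^2_{t,x}}$, which is $\ell^2$-summable to $\|F\|_{L^2_{t,x}} \les d^{\frac{1}{2}}$. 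The close-cone piece $\mathfrak{C}^\pm_{\ll d}(\ind_{I_j} F)$ must be placed in $L^1_t L^2_x$; here I would rescale to $d = 1$ and separate the intervals $\{I_j\}$ into short intervals (length $< 1$), handled by Cauchy--Schwarz and $\ind_{I_j} F \in L^2_{t,x}$, and long intervals (length $\g 1$). For the long intervals, the modulation condition $|\tau \pm |\xi|| \ll 1$ forces the time-Fourier transform of $\ind_{I_j}$ to be restricted to frequencies $\approx 1$, hence one can replace $\ind_{I_j}$ by a rapidly-decaying convolution kernel $\rho_j(t)$ obtained by integration by parts against $\sigma(\tau)/\tau$. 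The rapid decay of $\rho_j$ combined with the separation $|a_j - a_{j+1}|, |b_j - b_{j+1}| \g 1$ of interval endpoints then yields the $\ell^2$ bound via Schur-type summation in $j$.

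Altogether, the main technical obstacle is precisely this interaction between sharp time cutoffs and sign-localised modulation projections in the $X^{s,b}$ atomic case; the other atom types are comparatively routine once one has the energy estimate of Corollary~\ref{cor - energy type est} and the null-frame orthogonality machinery developed earlier.
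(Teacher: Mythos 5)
Your proposal follows essentially the same route as the paper: the same Duhamel/energy-estimate reduction via Corollary~\ref{cor - energy type est} to the $\ell^2$-summability statement \eqref{eqn l2 sum over intervals bounded on N+}, the same case split over the three atom types, the same close-cone/far-cone decomposition in both the $NF^\pm_\lambda$ and $\dot{\mc{X}}^{-\frac{1}{2},1}_\pm$ cases, and the same rescaling-plus-short/long-interval treatment with integration by parts and rapid decay in the $\dot{\mc{X}}^{-\frac{1}{2},1}_\pm$ case. You have also correctly identified the $\dot{\mc{X}}^{-\frac{1}{2},1}_\pm$ atom case as the genuine technical obstacle; the proposal is sound.
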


By repeating the proof of the previous theorem, we have the following corollary which will prove useful when we come to the proof of (ii) in Theorem \ref{thm - energy inequality + invariance under cutoffs}.

\begin{corollary}\label{cor - N invariant under cutoffs}
Let $\rho \in \dot{B}^{\frac{1}{2}}_{2, \infty}(\RR) \cap L^\infty(\RR)$ and $F \in \mc{N}^\pm_\lambda$. Then
        $$ \| \rho(t) F \|_{\mc{N}^\pm_\lambda} \lesa \Big( \| \rho \|_{\dot{B}^\frac{1}{2}_{2, \infty}(\RR)} + \| \rho \|_{L^\infty(\RR)}\Big) \| F \|_{\mc{N}^\pm_\lambda}.$$
Similarly, if $u \in G^\pm_\lambda$, then
     $$ \lambda^{ - \frac{n+1}{4n}} \| \rho(t) u \|_{\mc{Y}^\pm } \lesa \Big( \| \rho \|_{L^\infty(\RR)} +  \lambda^{ - \frac{n+1}{4n}} \| \p_t \rho \|_{L^{\frac{4n}{3n-1}}(\RR)} \Big) \| u \|_{G^\pm_\lambda}.$$
\begin{proof}
 Fix $\pm = +$, the $\pm = -$ case follows by a reflection in $x$.  As usual, we decompose $F$ into atoms. If $F$ is a $L^1_t L^2_x$ atom, we clearly have
    $$ \| \rho(t) F \|_{\mc{N}^\pm_\lambda} \les \| \rho(t) F \|_{L^1_t L^2_x} \les \| \rho\|_{L^\infty_t}.$$
  On the other hand, if $F$ is a $\dot{\mc{X}}^{-\frac{1}{2}, 1}_+$ atom with $\supp \widetilde{\Pi_\pm F} \subset \big\{ \big| \tau \pm |\xi| \big| \approx d \big\}$, then from (\ref{eqn case 2 decomp - Xsb case}) we have
        $$ \| \rho(t) F \|_{\mc{N}^\pm_\lambda} \lesa \| \mathfrak{C}^\pm_{\ll d}\big( \rho(t) F \big) \big\|_{L^1_t L^2_x} + d^{-\frac{1}{2}} \|  \rho(t) F \|_{L^2_{t, x}} \les \big\| \mathfrak{C}^\pm_{\ll d}\big( \rho(t) F \big) \big\|_{L^1_t L^2_x} + \|\rho\|_{L^\infty_t(\RR)}.$$
  To control the first term we use the identity (\ref{eqn - thm F con stri - Xsb case large intervals decomp}) to deduce that
          $$ \big\| \mathfrak{C}^\pm_{\ll d}\big( \rho(t) F \big) \big\|_{L^1_t L^2_x} \lesa \| \widehat{\rho} \|_{L^2_\tau( |\tau| \approx d)} \| F \|_{L^2_{t, x}} \les d^\frac{1}{2} \| \widehat{\rho} \|_{L^2_\tau( |\tau| \approx d)}  \les \| \rho \|_{\dot{B}^\frac{1}{2}_{2, \infty}}$$
  and hence the required estimate is true whenever $F$ is a $\dot{\mc{X}}^{\frac{1}{2}, 1}_+$ atom. Finally, suppose $F = \sum_{\kappa \in \mc{C}_\alpha} F_{\kappa}$ is a $NF^+_\lambda$ atom. As in the $NF^+_\lambda$ case above, we write
            $$ \rho(t) F = \mathfrak{C}^+_{ \ll \alpha^2 \lambda}(\rho(t) F) +  \mathfrak{C}^+_{\gtrsim \alpha^2 \lambda}( \rho(t) F)$$
  The first term is a scalar multiple of a $NF^+_\lambda$ atom, and hence via Lemma \ref{lem - mult are disposable} we obtain
            $$\big\| \mathfrak{C}^+_{ \ll \alpha^2 \lambda}(\rho(t) F)  \big\|_{\mc{N}^+_\lambda} \les \bigg( \sum_{\kappa} \big\| \mathfrak{C}^+_{ \ll \alpha^2 \lambda}(\rho(t) F)\big\|_{NF^+(\kappa)}^2 \bigg)^\frac{1}{2} \lesa \bigg( \sum_{\kappa} \big\| \rho(t) F \big\|_{NF^+(\kappa)}^2 \bigg)^\frac{1}{2} \les \| \rho \|_{L^\infty_x}   $$
  where we made use of the obvious bound $\| \rho F_{\kappa} \|_{NF^+(\kappa)} \les \| \rho \|_{L^\infty_t} \| F_\kappa \|_{NF^+(\kappa)}$. For the remaining term, we estimate $\mc{N}^+_\lambda$ by $\dot{\mc{X}}^{\frac{1}{2}, 1}_+$, and use the $L^2_{t, x}$ bound for Null Frame atoms in Lemma \ref{lem - L2 bound on NF atom} to deduce
      $$ \big\| \mathfrak{C}^+_{ \gtrsim \alpha^2 \lambda}(\rho(t) F)  \big\|_{\mc{N}^+_\lambda} \lesa (\alpha^2 \lambda)^{-\frac{1}{2}} \| \rho(t) F \|_{L^2_{t, x}} \les (\alpha^2 \lambda)^{-\frac{1}{2}} \| \rho \|_{L^\infty_t} \| F \|_{L^2_{t, x}} \lesa \| \rho \|_{L^\infty_t}.$$
  It only remains to prove the $\mc{Y}^\pm$ estimate. We again make use of a similar argument to that used to control the $\dot{\mc{X}}^{-\frac{1}{2}, 1}_\pm$ case above. We start by observing that
        $$ \| \rho(t) u \|_{ \mc{Y}^\pm} \les \sup_d \,d\, \big\| \mathfrak{C}^\pm_d \big( \rho(t) \mathfrak{C}^\pm_{\gtrsim d} u\big) \big\|_{ L^\frac{4n}{3n-1}_t L^2_x} +  \sup_d \,d\, \big\| \mathfrak{C}^\pm_d \big( \rho(t) \mathfrak{C}^\pm_{\ll d} u\big) \big\|_{ L^\frac{4n}{3n-1}_t L^2_x}. $$
  To control the first term, we discard the outer multiplier and put $\rho \in L^\infty(\RR)$
       $$ \sup_d \,d\, \big\| \mathfrak{C}^\pm_d \big( \rho(t) \mathfrak{C}^\pm_{\gtrsim d} u\big) \big\|_{ L^\frac{4n}{3n-1}_t L^2_x} \lesa \| \rho \|_{L^\infty} \sum_{d' \gtrsim d} d \| \mathfrak{C}^\pm_{d'} u\|_{L^\frac{4n}{3n-1}_t L^2_x} \lesa \| \rho \|_{L^\infty} \| u \|_{\mc{Y}^\pm} \sum_{ d' \gtrsim d} \frac{d}{d'} \lesa \lambda^{\frac{n+1}{4n}}  \| \rho \|_{L^\infty} \| u \|_{G^\pm_\lambda}. $$
  For the second term, the identity (\ref{eqn - thm F con stri - Xsb case large intervals decomp}) allows us to replace $\rho$ with $P_{\approx d} \rho$ where $P_{\approx d}$ restricts the Fourier support of $\rho$ to the region $\tau \approx d$. We now use some standard Harmonic analysis to deduce that
        $$\sup_d \,d\, \big\| \mathfrak{C}^\pm_d \big( \rho(t) \mathfrak{C}^\pm_{\ll d} u\big) \big\|_{ L^\frac{4n}{3n-1}_t L^2_x} \lesa \| u \|_{L^\infty_t L^2_x} \sup_d d \| P_{\approx d} \rho \|_{L^\frac{4n}{3n-1}_t} \lesa \| \p_t \rho \|_{L^\frac{4n}{3n-1}_t} \| u \|_{G^\pm_\lambda}$$
  as required.

\end{proof}
\end{corollary}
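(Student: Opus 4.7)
The plan is to follow the atomic strategy used throughout the paper, and in particular to recycle the key decompositions from the proof of Theorem \ref{thm - F controls V2}. By the reflection $x \mapsto -x$ (see Remark \ref{rem - + ---> - via reflection}) it suffices to treat $\pm = +$, and by the atomic definition of $\mc{N}^+_\lambda$ it is enough to bound $\|\rho F\|_{\mc{N}^+_\lambda}$ when $F$ is a single atom of one of the three types. For an $L^1_t L^2_x$ atom the estimate is immediate since $\|\rho F\|_{L^1_t L^2_x} \les \|\rho\|_{L^\infty}$. For a $NF^+_\lambda$ atom $F = \sum_{\kappa \in \mc{C}_\alpha} F_\kappa$, I split $\rho F = \mf{C}^+_{\ll \alpha^2 \lambda}(\rho F) + \mf{C}^+_{\gtrsim \alpha^2 \lambda}(\rho F)$. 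After disposing of the outer projection via Lemma \ref{lem - mult are disposable}, the first piece is a sum of $NF^+(\kappa)$ pieces with $\|\rho F_\kappa\|_{NF^+(\kappa)} \les \|\rho\|_{L^\infty} \|F_\kappa\|_{NF^+(\kappa)}$ (since $\rho$ passes through the $L^1_{t_\omega} L^2_{x_\omega}$ norm), and therefore is a scalar multiple of a new $NF^+_\lambda$ atom. For the far-cone piece I use the embedding $\mc{N}^+_\lambda \supset \dot{\mc{X}}^{-\frac{1}{2},1}_+$ together with the $L^2_{t,x}$ bound on atoms from Lemma \ref{lem - L2 bound on NF atom}: $\|\mf{C}^+_{\gtrsim \alpha^2 \lambda}(\rho F)\|_{\dot{\mc{X}}^{-\frac{1}{2},1}_+} \lesa (\alpha^2 \lambda)^{-\frac{1}{2}} \|\rho\|_{L^\infty} \|F\|_{L^2_{t,x}} \lesa \|\rho\|_{L^\infty}$.

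The nontrivial case is the $\dot{\mc{X}}^{-\frac{1}{2},1}_+$ atom, where $\supp \widetilde{\Pi_\pm F} \subset \{||\tau| \pm |\xi|| \approx d\}$ and $\|F\|_{L^2_{t,x}} \les d^{-\frac{1}{2}}$. Mimicking the decomposition at (\ref{eqn case 2 decomp - Xsb case}), I write $\rho F = \mf{C}^+_{\ll d}(\rho F) + \mf{C}^+_{\gtrsim d}(\rho F)$. The far-cone piece is bounded in $\dot{\mc{X}}^{-\frac{1}{2},1}_+$ by $d^{-\frac{1}{2}} \|\rho F\|_{L^2_{t,x}} \les \|\rho\|_{L^\infty}$. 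For the close-cone piece, placed into $L^1_t L^2_x$, I exploit the fact that if $|\tau' \pm |\xi|| \ll d$ and $|\tau \pm |\xi|| \approx d$ then $|\tau - \tau'| \approx d$; using the identity analogous to (\ref{eqn - thm F con stri - Xsb case large intervals decomp}) I obtain
\begin{equation*}
  \|\mf{C}^+_{\ll d}(\rho F)\|_{L^1_t L^2_x} \lesa \|\widehat{\rho}\|_{L^2_\tau(|\tau| \approx d)} \|F\|_{L^2_{t,x}} \les d^{\frac{1}{2}} \|\rho\|_{\dot{B}^{\frac{1}{2}}_{2,\infty}} \cdot d^{-\frac{1}{2}} = \|\rho\|_{\dot{B}^{\frac{1}{2}}_{2,\infty}}.
\end{equation*}
This is precisely where the Besov hypothesis is forced on us, and is the main obstacle of the proof: multiplication by $\rho$ can send modulation $d$ down to modulation $\ll d$, and this requires $\rho$ to carry frequency $\approx d$, which is exactly what the $\dot{B}^{\frac{1}{2}}_{2,\infty}$ norm measures uniformly in $d$.

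For the $\mc{Y}^+$ estimate, I bound $d\|\mf{C}^+_d(\rho u)\|_{L^{\frac{4n}{3n-1}}_t L^2_x}$ by splitting $u = \mf{C}^+_{\gtrsim d} u + \mf{C}^+_{\ll d} u$. Discarding the outer cutoff on the first piece and using $\rho \in L^\infty$ together with the definition of $\mc{Y}^+$ yields a geometric sum $\sum_{d' \gtrsim d} \frac{d}{d'}$ controlled by $\lambda^{\frac{n+1}{4n}} \|\rho\|_{L^\infty} \|u\|_{G^+_\lambda}$. For the $\ll d$ piece, the same Fourier-support argument as above shows that only the component of $\rho$ with temporal frequency $\approx d$ contributes, so I may replace $\rho$ by $P_{\approx d}\rho$; then Hölder in time with $u \in L^\infty_t L^2_x$ gives the bound $\|P_{\approx d}\rho\|_{L^{\frac{4n}{3n-1}}_t} \|u\|_{L^\infty_t L^2_x} \lesa d^{-1} \|\p_t \rho\|_{L^{\frac{4n}{3n-1}}_t} \|u\|_{G^+_\lambda}$, which is acceptable after multiplication by $d$ and renormalization by $\lambda^{-\frac{n+1}{4n}}$.
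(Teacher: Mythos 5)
Your proposal reproduces the paper's argument essentially step for step: the same atomic decomposition, the same $\mf{C}^\pm_{\ll d}$/$\mf{C}^\pm_{\gtrsim d}$ split inherited from (\ref{eqn case 2 decomp - Xsb case}), the same Fourier-support observation (via (\ref{eqn - thm F con stri - Xsb case large intervals decomp})) that only the frequency-$\approx d$ piece of $\rho$ contributes when the modulation drops from $\approx d$ to $\ll d$, and the same two-piece treatment of the $\mc{Y}^\pm$ norm. One small slip worth noting: a $\dot{\mc{X}}^{-\frac{1}{2},1}_\pm$ atom satisfies $\|F\|_{L^2_{t,x}}\les d^{\frac{1}{2}}$ (not $d^{-\frac{1}{2}}$), while the Besov bound reads $\|\widehat{\rho}\|_{L^2_\tau(|\tau|\approx d)}\les d^{-\frac{1}{2}}\|\rho\|_{\dot B^{\frac{1}{2}}_{2,\infty}}$ (not $d^{\frac{1}{2}}\|\rho\|_{\dot B^{\frac{1}{2}}_{2,\infty}}$); your two sign flips cancel, so the final estimate is nevertheless correct.
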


\subsection{Proof of Theorem \ref{thm - F controls strichartz and angular sum}}\label{subsec - proof of Strichartz bounds}

We now come to the proof of Theorem \ref{thm - F controls strichartz and angular sum}.

\begin{proof}[Proof of Theorem \ref{thm - F controls strichartz and angular sum}]
 By Corollary \ref{cor - V2 controls strichartz} and Theorem \ref{thm - F controls V2}, it is enough to show that the $C^\pm_{\les d}$ multipliers are disposable in $V^2$. If we use the boundedness of $C^\pm_{\les d}$ on $L^\infty_t L^2_x$ (which follows from (iii) in Lemma \ref{lem - mult are disposable}), it is enough to show that $| C^\pm_{\les d} u |_{V^2} \lesa |u|_{V^2}$. To this end, by noting the identity
    $$ \widehat{C^\pm_{\les d} u}(t) = \int_\RR \Phi_0\big(\tfrac{\tau \pm |\xi|}{d}\big) \widetilde{u}(\tau, \xi) e^{i t \tau} \,d\tau =\int_\RR \widehat{\Phi}_0(a) e^{ \pm i \tfrac{a}{d} |\xi|} \widehat{u}(t+\tfrac{a}{d}, \xi) \,\,da$$
 we have for any sequence $(t_k) \in \mc{Z}$,
    \begin{align*}
      \Big(\sum_k \big\| C^\pm_{\les d} u(t_{k+1}) - C^\pm_{\les d} u(t_k)\big\|_{L^2_x}^2 \Big)^\frac{1}{2} &\les \int_\RR \big| \widehat{\Phi}_0(a) \big| \Big( \sum_k \big\| \widehat{u}(t_{k+1} + \tfrac{a}{d}) - \widehat{u}(t_k + \tfrac{a}{d}\big) \big\|_{L^2_\xi}^2 \Big)^\frac{1}{2} da \\
      &\les \int_\RR \big|\widehat{\Phi_0}(a) \big|   \, |u |_{V^2} da \lesa |u |_{V^2}.
    \end{align*}
 Taking the sup over $(t_k) \in \mc{Z}$, then gives $| C^\pm_{\les d} u |_{V^2} \lesa | u|_{V^2}$ as required.
 \end{proof}


\section{The Energy Inequality} \label{sec - energy inequality}

Here we give the proof of Theorem \ref{thm - energy inequality + invariance under cutoffs}.

\begin{proof}[Proof of Theorem \ref{thm - energy inequality + invariance under cutoffs}]

\textbf{(i)} We start by noting that $F^\pm_\lambda$ is a Banach space, since if $u_j \in F^\pm_\lambda$ is a Cauchy sequence with respect to $\| \cdot \|_{F^\pm_\lambda}$, then it is Cauchy with respect to $\| \cdot \|_{L^\infty_t L^2_x}$ and hence converges to some $u \in L^\infty_t L^2_x$ with $\supp u \subset \{ |\xi| \approx \lambda\}$. On the other hand, as $\mc{N}^\pm_\lambda$ is a Banach space, there exists $F \in \mc{N}^\pm_\lambda$ such that $(\p_t \pm \sigma \cdot \nabla) u_j$ converges to $F$ (with respect to $\| \cdot \|_{\mc{N}^\pm_\lambda}$). Consequently, $(\p_t \pm \sigma \cdot \nabla) u_j$ converges to $F$ in $\mc{S}'$, and hence by uniqueness of limits, we must have $(\p_t \pm \sigma \cdot \nabla) u = F \in \mc{N}^\pm_\lambda $. Therefore $u \in F^\pm_\lambda$ as required and so $F^\pm_\lambda$ is a Banach space.

To prove $F^{s, \pm}$ is a Banach space follows a similar argument, namely, if we have a Cauchy sequence in $F^{s, \pm}$, then it is also Cauchy in $L^\infty_t \dot{H}^s_x$ and hence converges to some $u \in L^\infty_t \dot{H}^s_x$. By uniqueness of limits, and the fact that $F^\pm_\lambda$ and $\ell^2$ are Banach spaces, we then deduce that $u \in F^{s, \pm}$ as required. Thus $F^{s, \pm}$ is a Banach space.

To prove the energy inequality for $F^{s, \pm}$, we clearly have
    $$ \| u \|_{F^{s, \pm}} \les \bigg( \sum_\lambda \lambda^{2s} \| P_\lambda u \|_{L^\infty_t L^2_x}^2 \bigg)^\frac{1}{2} + \| (\p_t \pm \sigma \cdot \nabla) u \|_{\mc{N}^{s, \pm}}$$
thus it is enough to prove that if $(\p_t \pm \sigma \cdot \nabla) u = F$ with $u(0) = 0$, then
    $$ \| P_\lambda u \|_{L^\infty_t L^2_x} \lesa \| P_\lambda F \|_{\mc{N}^\pm_\lambda}$$
but this is just Corollary \ref{cor - energy type est}. Similarly, to the prove the energy inequality for $G^{s, \pm}$, again using Corollary \ref{cor - energy type est}, it suffices to show that
    $$  d \| C^\pm_d  u \|_{L^\frac{4n}{3n-1}_t L^2_x} \lesa \| (\p_t \pm i |\nabla|) u \|_{L^{\frac{4n}{3n-1}}_t L^2_x} .$$
But this is straightforward by writing
$$ \widehat{\big( C^\pm_d u\big)}(t, \xi) = \frac{e^{ \mp i t |\xi|} }{2\pi} \int_{\RR} \Phi(\tfrac{\tau}{d})\, \widetilde{u}(\tau \mp |\xi|, \xi) e^{ i \tau t} \,d\tau = \frac{e^{ \mp i t |\xi|} }{d} \big[\rho_d *_\RR \widehat{v}(\xi)\big](t)$$
where $\widehat{\rho}_d(\tau) = \tfrac{d}{\tau} \Phi(\tfrac{\tau}{d}) \in C^\infty_0$ and $\widetilde{v}(\tau, \xi) = \tau \widetilde{u}(\tau \mp |\xi|, \xi)$. If we now apply Plancheral, Holder, and a change of variables, we deduce that
    $$ d \| C^\pm_d  u \|_{L^\frac{4n}{3n-1}_t L^2_x} \lesa \| \rho_d \|_{L^1_t(\RR)} \| \widehat{v} \|_{L^\frac{4n}{3n-1}_t L^2_\xi} \lesa \| (\p_t \pm |\nabla|) u \|_{L^\frac{4n}{3n-1}_t L^2_x} $$
as required.

\textbf{(ii)} Let $\phi \in C^\infty_0$. An application of Corollary \ref{cor - N invariant under cutoffs} gives
     \begin{align*}\big\| ( \p_t \pm \sigma \cdot \nabla)\big[ \phi(t) u \big] \big\|_{\mc{N}^\pm_\lambda} &\les \big\| \phi(t) (\p_t \pm \sigma \cdot \nabla) u \big\|_{\mc{N}^\pm_\lambda} + \| (\p_t \phi )(t) u \big\|_{L^1_t L^2_x} \\
     &\lesa \Big( \| \phi \|_{L^\infty_t} + \| \phi \|_{\dot{B}^\frac{1}{2}_{2, \infty}}\Big) \| (\p_t \pm \sigma \cdot \nabla) u \|_{\mc{N}^\pm_\lambda} + \| \p_t \phi \|_{L^1_t} \| u \|_{L^\infty_t L^2_x} \\
     &\lesa \Big( \| \phi \|_{L^\infty_t} + \| \phi \|_{\dot{B}^\frac{1}{2}_{2, \infty}} + \| \p_t \phi \|_{L^1_t} \Big) \| u \|_{F^\pm_\lambda}
     \end{align*}
and hence
        $$  \big\| \phi(t) \, u \big\|_{F^\pm_\lambda} \lesa \Big( \| \phi \|_{L^\infty_t} + \| \phi \|_{\dot{B}^\frac{1}{2}_{2, \infty}} + \| \p_t \phi \|_{L^1_t} \Big) \|  u \|_{F^\pm_\lambda}.$$
Applying this inequality with $\phi(t) = \rho(\frac{t}{T})$ and noting that $\| \rho(\frac{t}{T}) \|_{\dot{B}^\frac{1}{2}_{2, \infty}} \approx \| \rho(t) \|_{\dot{B}^\frac{1}{2}_{2, \infty}}$, we deduce that $\| \rho(\frac{t}{T}) u \|_{F^\pm_\lambda} \lesa \| u \|_{F^\pm_\lambda} $ as required. The $G^\pm_\lambda$ version follows from the $F^\pm_\lambda$ estimate together with  another  application of Corollary \ref{cor - N invariant under cutoffs} to deduce that
        $$\lambda^{ - \frac{n+1}{4n}} \big\| \rho( \tfrac{t}{T}) u \|_{\mc{Y}^\pm} \lesa \Big( \| \rho\|_{L^\infty_t} + \lambda^{ - \frac{n+1}{4n}} T^{ \frac{3n-1}{4n} -1} \| \p_t \rho \|_{L^\frac{4n}{3n-1}_t} \Big) \| u \|_{ G^\pm_\lambda} \lesa \| u \|_{G^\pm_\lambda}$$
provided $T\lambda \g 1$. Finally, the $\mc{N}^\pm_\lambda$ estimate follows by again applying Corollary \ref{cor - N invariant under cutoffs} and noting that  since $\ind_{(-1, 1)} \in \dot{B}^\frac{1}{2}_{2, \infty}$, by rescaling we have
    $$ \| \ind_{(-T,T)}(t) \|_{\dot{B}^\frac{1}{2}_{2, \infty}(\RR)} \approx \| \ind_{(1, 1)}(t) \|_{\dot{B}^\frac{1}{2}_{2, \infty}(\RR)}< \infty.$$

\textbf{(iii)} There are a number of ways to prove this, for instance it is possible to argue directly using the definition of $\mc{N}^\pm_\lambda$ see \cite[Prop 6.2]{Tataru2001}. Here we use an alternative argument based on Theorem \ref{thm - F controls V2}. For maps $\phi : \RR \rightarrow \dot{H}^s(\RR^n)$ we let
            $$ |\phi|_{V^{s, p}}^p = \sup_{(t_j) \in \mc{Z}} \sum_j \| \phi(t_{j+1}) - \phi(t_j) \|_{\dot{H}^s}^p.$$
Arguing as in Lemma \ref{lem - left and right limits always exist in Vp spaces}, we deduce that if $|\phi|_{V^{s, 2}}<\infty$, then there exists $\phi_{\pm \infty} \in \dot{H}^s$ such that $\lim_{t \rightarrow \pm \infty} \| \phi(t) - \phi_{\pm \infty} \|_{\dot{H}^s} = 0$. In particular, the scattering result we require would follow by showing that $| \mc{U}_\pm(-t) u |_{V^{s, 2}} < \infty$. To this end note that an application of Theorem \ref{thm - F controls V2} together with the fact that multiplication by $\rho(t)$ commutes with the homogeneous solution operator $\mc{U}_\pm(-t)$ gives
        $$ \big| \rho(\tfrac{t}{T}) \mc{U}_\pm(-t) u \big|_{V^{s, 2}} \lesa \big\| \rho(\tfrac{t}{T}) u \big\|_{F^{s, \pm}}.$$
Consequently it is enough to show that $ |\phi|_{V^{s, 2}} \les \sup_T \big| \rho( \tfrac{t}{T}) \phi \big|_{V^{s, 2}}$. To this end, take any increasing sequence $(t_j) \in \mc{Z}$. Let $N \in \NN$ and choose $T> \max_{|j|\les N+1} |t_j|$. Then
    $$\sum_{|j| \les N} \| \phi(t_{j+1}) - \phi(t_j) \|_{\dot{H}^s}^2 = \sum_{|j| \les N} \| \rho(\tfrac{t_{j+1}}{T}) \phi(t_{j+1}) - \rho(\tfrac{t_{j}}{T})\phi(t_j) \|_{\dot{H}^s}^2 \les \sup_{T>0} \big| \rho( \tfrac{t}{T}) \phi \big|_{V^{s, 2}}.$$
Hence letting $N \rightarrow \infty$ and taking the sup over $(t_j) \in \mc{Z}$, we get $|\phi|_{V^{s, 2}} \les \sup_{T>0} |\rho(\tfrac{t}{T}) \phi |_{V^{s, 2}}$ as required.
\end{proof}


\bibliographystyle{amsplain}

\providecommand{\bysame}{\leavevmode\hbox to3em{\hrulefill}\thinspace}
\providecommand{\MR}{\relax\ifhmode\unskip\space\fi MR }
\providecommand{\MRhref}[2]{%
  \href{http://www.ams.org/mathscinet-getitem?mr=#1}{#2}
}
\providecommand{\href}[2]{#2}

\end{document}